\documentclass[10pt,reqno,a4paper]{article} 

\usepackage{anysize}
\marginsize{3.0cm}{3.0cm}{2.2cm}{2.2cm}

\usepackage[english]{babel}
\usepackage[centertags]{amsmath}
\usepackage{amsfonts,amssymb,amsthm}
\usepackage[svgnames]{xcolor}
\usepackage{comment}
\usepackage{hyperref} 
\usepackage{mathrsfs}
\usepackage[sans]{dsfont}
\usepackage{accents}

\let\OLDthebibliography\thebibliography
\renewcommand\thebibliography[1]{
  \OLDthebibliography{#1}
  \setlength{\parskip}{0pt}
  \setlength{\itemsep}{0pt plus 0.3ex} }

\numberwithin{equation}{section}

\setcounter{tocdepth}{2}

\theoremstyle{plain}
\newtheorem{theorem}{Theorem}[section]
\newtheorem{proposition}[theorem]{Proposition}
\newtheorem{lemma}[theorem]{Lemma}

\theoremstyle{definition}

\newenvironment{remark}{\pushQED{\qed} \remarkbase}{\popQED\endremarkbase}

\renewcommand{\Im}{\mathrm{Im}\,}
\renewcommand{\Re}{\mathrm{Re}\,}

\newcommand{\N}{{\mathbb N}}
\newcommand{\R}{{\mathbb R}}

\newcommand{\Z}{\mathbb Z}

\newcommand{\T}{{\mathbb T}}
\renewcommand{\S}{{\mathbb S}}

\newcommand{\mA}{\mathcal{A}}
\newcommand{\mB}{\mathcal{B}}

\newcommand{\mD}{\mathcal{D}}
\newcommand{\mE}{\mathcal{E}}
\newcommand{\mF}{\mathcal{F}}

\newcommand{\mH}{\mathcal{H}}
\newcommand{\mI}{\mathcal{I}}
\newcommand{\mJ}{\mathcal{J}}
\newcommand{\mK}{\mathcal{K}}

\newcommand{\mM}{\mathcal{M}}

\newcommand{\mR}{\mathcal{R}}
\newcommand{\mS}{\mathcal{S}}
\newcommand{\mT}{\mathcal{T}}
\newcommand{\mU}{\mathcal{U}}
\newcommand{\mV}{\mathcal{V}}

\newcommand{\mY}{\mathcal{Y}}

\renewcommand{\a}{\alpha}
\renewcommand{\b}{\beta}
\newcommand{\g}{\gamma}

\newcommand{\e}{\varepsilon}
\newcommand{\ph}{\varphi}
\newcommand{\lm}{\lambda}
\newcommand{\Lm}{\Lambda}

\newcommand{\Om}{\Omega}
\newcommand{\om}{\omega}

\newcommand{\s}{\sigma}

\renewcommand{\th}{\vartheta}


\newcommand{\la}{\langle}
\newcommand{\ra}{\rangle}
\newcommand{\pa}{\partial}
\renewcommand{\div}{\mathrm{div}\,}
\newcommand{\grad}{\nabla}

\newcommand{\pois}{\mM}

\title{Bifurcation from multiple eigenvalues of rotating traveling waves 
on a capillary liquid drop}
\author{\normalsize{Pietro Baldi, Domenico Angelo La Manna, Giuseppe La Scala}}
\date{} 

\pagestyle{plain}

\begin{document}

\maketitle

\noindent
\textbf{Abstract.}  
We consider the free boundary problem for a liquid drop of nearly spherical shape with capillarity, 
and we study the existence of nontrivial (i.e., non spherical) rotating traveling profiles 
bifurcating from the spherical shape, where the bifurcation parameter is the angular velocity. 
We prove that every eigenvalue of the linearized problem is a bifurcation point, 
extending the known result for simple eigenvalues 
to the general case of eigenvalues of any multiplicity. 
We also obtain a lower bound on the number of bifurcating solutions. 

The proof is based on the Hamiltonian structure of the problem 
and on the variational argument of constrained critical points  
for traveling waves of Craig and Nicholls (2000, SIAM J.\ Math.\ Anal.\ 32, 323-359),
adapted to the nearly spherical geometry; 
in particular, the role of the action functional is played here by the angular momentum with respect to 
the rotation axis.  

Moreover, the bifurcation equation presents a 2-dimensional degeneration, 
related to some symmetries of the physical problem.  
This additional difficulty is overcome 
thanks to a crucial transversality property,  
obtained by using the Hamiltonian structure  
and the prime integrals corresponding to those symmetries by Noether theorem,
which are the fluid mass 
and the component along the rotation axis of the velocity of the fluid barycenter.

\emph{MSC 2020:} 35R35, 35B32, 35C07, 76B45, 35B38.

\tableofcontents

\section{Introduction and main results} 

We consider the free boundary problem for 
the motion of a drop of incompressible fluid with capillarity. 
The problem is described by the system   
\begin{align}
\pa_t u + u \cdot \grad u + \grad p 
& = 0 \quad \text{in} \ \Om_t, 
\label{dyn.eq.01} \\
\div u 
& = 0 \quad \text{in} \ \Om_t,
\label{div.eq.01}
\\
p & = \sigma_0 H_{\Om_t} \quad \text{on} \ \pa \Om_t, 
\label{pressure.eq.01}
\\
V_t & = \la u , \nu_{\Om_t} \ra \quad \text{on} \ \pa \Om_t,
\label{kin.eq.01}
\end{align}
where 
$\Om_t \subset \R^3$ is the time-dependent, open, bounded region occupied by the fluid,
$\pa \Om_t$ is its boundary,
$u$ is the fluid velocity vector field, 
$p$ is the pressure of the fluid, 
$\sigma_0$ is the capillarity coefficient, 
$H_{\Om_t}$ is the mean curvature of the boundary $\pa \Om_t$, 
$V_t$ is the normal velocity of the boundary, 
$\nu_{\Om_t}$ is the unit outer normal of the boundary, 
and $\la \cdot , \cdot \ra$ denotes the scalar product of vectors in $\R^3$. 
Equations \eqref{dyn.eq.01}, \eqref{div.eq.01} are 
the Euler equations of incompressible fluid mechanics, 
\eqref{pressure.eq.01} gives the pressure at the boundary in terms of capillarity, 
and \eqref{kin.eq.01} is 
the kinematic condition that the movement of the boundary $\pa \Om_t$ 
in its normal direction is due to the movement of the liquid particles on $\pa \Om_t$. 
The unknowns are the domain $\Om_t$, 
the velocity vector field $u$, 
and the pressure $p$.

We assume that the boundary $\pa \Om_t$ is the graph of a function 
over the unit sphere,  
\begin{equation} \label{def:omega}
\pa \Om_t  =\{ (1+h(t,x)) x : x \in \S^2\},
\quad \ 
\S^2 = \{ x \in \R^3 : |x|=1 \},
\end{equation}
where the elevation function $h$ satisfies $1+h > 0$. 
When $|h|$ is small, we say that $\Om_t$ is a nearly spherical domain. 
We also assume that the motion of the fluid is irrotational, 
so that the velocity vector field $u$ is given by the gradient  
of a scalar function, the velocity potential $\Phi$. 
For $u = \grad \Phi$, 
\eqref{dyn.eq.01} becomes the equation
$\grad(\pa_t \Phi + \frac12 |\grad \Phi|^2 + p) = 0$ in $\Om_t$, that is,
$\pa_t \Phi + \frac12 |\grad \Phi|^2 + p =$ independent of $x$
in $\Om_t$,
\eqref{div.eq.01} becomes $\Delta \Phi = 0$ in $\Om_t$,
and \eqref{kin.eq.01} becomes $V_t = \la \grad \Phi , \nu_{\Om_t} \ra$ on $\pa \Om_t$.

As is proved in \cite{Beyer.Gunther, Shao.initial.notes, B.J.LM}, 
this problem admits a Craig-Sulem formulation, 
which is the equivalent system
\begin{equation} \label{syst.pat.h.pat.psi} 
\pa_t h = X_1(h, \psi), \quad \ 
\pa_t \psi \sim X_2(h,\psi),
\end{equation}
where, in the notations of \cite{B.J.LM},  
\begin{align} 
X_1(h,\psi) 
& := \frac{\sqrt{(1+h)^2 + |\grad_{\S^2} h|^2}}{1 + h} \, G(h)\psi,
\label{def.X.1}
\\
X_2(h, \psi) 
& := \frac12 \Big( G(h)\psi + \frac{\la \grad_{\S^2} \psi , \grad_{\S^2} h \ra}{(1+h) \sqrt{(1+h)^2 
	+ |\grad_{\S^2} h|^2}} \Big)^2
- \frac{|\grad_{\S^2} \psi|^2}{2(1+h)^2} 
- \s_0 H(h),
\label{def.X.2}
\end{align}
and $f \sim g$ means that the difference $f - g$ is independent of $x$.
In system \eqref{syst.pat.h.pat.psi} the unknowns $h, \psi$ 
are real valued functions of the variables $(t,x) \in \R \times \S^2$, 
$\grad_{\S^2}$ denotes the tangential gradient on $\S^2$ 
(defined in \eqref{def.grad.S2.Lap.S2} below), 
$H(h)$ is the mean curvature of the boundary $\pa \Om_t$ in \eqref{def:omega} expressed in terms 
of the elevation function $h$ (see formula \eqref{eq:meancurvature}), 
and $G(h)\psi$ is the Dirichlet-Neumann operator defined 
(omitting to indicate the time-dependence of functions and domains) as
\begin{equation} \label{def:Dirichlet-Neumann}
G(h) \psi (x) = \langle (\nabla \Phi)(\gamma(x)), \nu_{\Omega}(\gamma(x)) \rangle
\end{equation}
at all points $\gamma(x) = (1 + h(x)) x \in \pa \Om$, i.e., for all $x \in \S^2$,
where $\Phi:\Omega \to \R$ is the unique solution in $H^1(\Om)$ of the boundary value problem 
\begin{equation}
    \label{def:Dirichlet-Neumann2}
    \Delta \Phi = 0 \,\, \text{in } \, \Omega \quad \text{and} \quad 
		\Phi(\gamma(x)) = \psi(x)  \,\, \text{for all } \, \g(x) \in \pa \Omega, 
		\ \text{i.e., for all } \, x \in \S^2. 
\end{equation}  
We refer to \cite{B.J.LM} for details about the derivation of system \eqref{syst.pat.h.pat.psi}
and about some properties of the Dirichlet-Neumann operator. 
Note that system \eqref{syst.pat.h.pat.psi} is not a free boundary problem, 
as the unknown functions $h, \psi$ are defined on the fixed surface $\S^2$. 

One has $X_k(h, \psi_1) = X_k(h,\psi)$ for all $\psi_1 \sim \psi$, $k=1,2$,
so that \eqref{syst.pat.h.pat.psi} can also be viewed as a system where $h$ is a function 
and $\psi$ is an equivalence class of functions. 
However, in the present paper we find it more convenient to work with functions, 
instead of with equivalence classes. 
For this reason, we consider the system 
\begin{equation} \label{syst.pat.h.pat.psi.2.sigma.0}
\pa_t h = X_1(h, \psi), \quad \ 
\pa_t \psi = X_2(h,\psi) + 2 \sigma_0,
\end{equation}
where the unknowns $h,\psi$ are both functions, 
and the constant $2\sigma_0$ is determined by the fact that this is the only term 
for which the pair $(h,\psi) = (0,0)$ solves system \eqref{syst.pat.h.pat.psi.2.sigma.0}.
Note that $2 = H(0)$ is the mean curvature of the unit sphere $\S^2$, see \eqref{eq:meancurvature}.
Systems \eqref{syst.pat.h.pat.psi} and \eqref{syst.pat.h.pat.psi.2.sigma.0} are equivalent:
if $(h,\psi)$ solves \eqref{syst.pat.h.pat.psi.2.sigma.0}, 
then it also solves \eqref{syst.pat.h.pat.psi}; 
vice versa, if $(h, \psi)$ solves \eqref{syst.pat.h.pat.psi}, 
then there exists $\psi_1$ such that $\psi_1 \sim \psi$ and 
$(h, \psi_1)$ solves \eqref{syst.pat.h.pat.psi.2.sigma.0}.

\bigskip

Rotating traveling waves on the capillary drop 
are solutions of system \eqref{syst.pat.h.pat.psi.2.sigma.0} 
given by a fixed profile that rotates with constant angular velocity 
around a fixed symmetry axis. 
Given the absence of gravity in the equations, the problem has no privileged direction,
and hence there is no loss of generality in fixing the $x_3$ axis as the rotation symmetry axis. 
Thus, we consider functions $h, \psi$ of the form 
\begin{equation} \label{ansatz}
h(t,x) = \eta(R(\omega t)x), \quad \ 
\psi(t,x) = \beta(R(\omega t)x), \quad \ 
t \in \R, \ x \in \S^2,
\end{equation}
where $\eta, \beta : \S^2 \to \R$ are scalar functions defined on $\S^2$, independent of time, 
$\om \in \R$ is the angular velocity, 
and $R(\th)$, $\th \in \R$, 
is the rotation matrix 
\begin{equation}  \label{def.R(th)}
R(\th) = \begin{pmatrix} 
\cos \th & - \sin \th & 0 \\
\sin \th & \cos \th & 0 \\
0 & 0 & 1
\end{pmatrix}.
\end{equation}
As is observed in \cite{B.J.LM}, 
for functions $h,\psi$ of the form \eqref{ansatz}, 
system \eqref{syst.pat.h.pat.psi.2.sigma.0} becomes the rotating traveling wave system 
\begin{equation}
\label{syst.mM.eta.mM.beta}
\om \mM \eta = X_1(\eta, \beta), \quad \ 
\om \mM \beta = X_2(\eta, \beta) + 2 \sigma_0,
\end{equation}
where $\mM$ is the differential operator defined by 
\begin{equation}  \label{def.mM}
\mM \eta(x) := \la \mJ x , \grad_{\S^2} \eta(x) \ra \quad \ \forall x \in \S^2, 
\end{equation}
and $\mJ$ is the matrix 
\begin{equation}  \label{def.mJ}
\mJ := \begin{pmatrix} 
0 & - 1 & 0 \\
1 & 0 & 0 \\
0 & 0 & 0
\end{pmatrix}.
\end{equation}

Since $(\eta, \beta) = (0,0)$ is a solution of system \eqref{syst.mM.eta.mM.beta} 
for all values of $\om$, the existence of nontrivial solutions 
of \eqref{syst.mM.eta.mM.beta} can be studied as a bifurcation problem  
where the angular velocity $\om$ is the bifurcation parameter.
This study has been initiated in \cite{B.J.LM}, 
where the bifurcation from simple eigenvalues is proved by Crandall-Rabinowitz theorem. 
In the present paper we go beyond simple eigenvalues, and, 
following the approach of Craig-Nicholls \cite{Craig.Nicholls}, we prove that the bifurcation  
of nontrivial solutions of \eqref{syst.mM.eta.mM.beta} occur from eigenvalues of any multiplicity. 
In particular, we prove the following result. 
Let 
\begin{equation} \label{def.T.in.the.intro}
T := \{ (\ell,m) \in \Z^2 : \ell \geq 0, \ |m| \leq \ell \},
\end{equation}
the set of indices of the real spherical harmonics on $\S^2$. 
For $(\ell_0, m_0) \in T$, with $m_0 \neq 0$, define 
\begin{equation} \label{om.fix.in.the.intro}
\om_0 := \sqrt{\sigma_0} \frac{ \sqrt{(\ell_0+2)(\ell_0-1)\ell_0} }{m_0}
\end{equation}
and 
\begin{equation} \label{def.S.in.the.intro}
S := \{ (\ell, m) \in T :  (\ell+2)(\ell-1)\ell m_0^2 = (\ell_0+2)(\ell_0-1) \ell_0 m^2 \}.
\end{equation}
Note that the set $S$ has a finite number of elements, which are $(0,0)$, $(1,0)$, 
$(\ell_0, \pm m_0)$, and possibly finitely many other pairs $(\ell, \pm m)$ 
with $\ell \geq 2$ and $1 \leq |m| \leq \ell$.

\begin{theorem} \label{thm:main} 
Let $(\ell_0, m_0) \in T$, $\ell_0 \geq 2$, $m_0 \neq 0$,
where $T$ is in \eqref{def.T.in.the.intro}. 
Let $\om_0$ be defined in \eqref{om.fix.in.the.intro}. 
Let $2n+2$ be the cardinality of the set $S$ in \eqref{def.S.in.the.intro}.
Let $s > 1$.
Then there exist $a_0 > 0$, $C > 0$ such that, for every $a \in (0, a_0)$, 
there exist at least $n$ distinct orbits 
\begin{equation}  \label{n.orbits}
\{ (\eta_a^{(i)} , \beta_a^{(i)} ) \circ R(\th) : \th \in \T \}, \quad i = 1, \ldots, n, 
\end{equation}
of nontrivial solutions of the rotating traveling wave equations \eqref{syst.mM.eta.mM.beta}
with angular velocity $\om = \om_a^{(i)}$, 
angular momentum 
\begin{equation} \label{ang.mom.a}
\mI(\eta_a^{(i)} , \beta_a^{(i)}) = a
\end{equation}
($\mI$ is defined in \eqref{def.mI}), 
and 
\begin{equation} \label{est.sol}
|\om_a^{(i)} - \om_0| + \| \eta_a^{(i)} \|_{H^{s+\frac32}(\S^2)} + \| \beta_a^{(i)} \|_{H^{s+1}(\S^2)} 
\leq C \sqrt{a}.
\end{equation}
If $n=1$, the orbit depends analytically on $a$ in the interval $(0, a_0)$. 
\end{theorem}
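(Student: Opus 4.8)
The plan is to realize the rotating traveling wave system \eqref{syst.mM.eta.mM.beta} as the equation for constrained critical points of a suitable functional on a scale of Sobolev spaces, following the variational scheme of Craig--Nicholls. First I would recognize that, because the problem is Hamiltonian (as recalled from \cite{B.J.LM}), the traveling wave equations \eqref{syst.mM.eta.mM.beta} are exactly the Euler--Lagrange equations for critical points of the Hamiltonian (energy) $E(\eta,\beta)$ subject to the constraint $\mI(\eta,\beta) = a$ that the angular momentum about the $x_3$-axis be fixed, with the angular velocity $\om$ playing the role of a Lagrange multiplier. Concretely, I would set up the functional framework on $H^{s+3/2}(\S^2) \times H^{s+1}(\S^2)$ (or an equivalent energy space), expand $E$ and $\mI$ near $(0,0)$, and isolate the quadratic part: the Hessian of $E$ is diagonalized by the real spherical harmonics $Y_{\ell,m}$, and a mode $(\ell,m)$ contributes to the kernel of the linearized-at-$\om_0$ operator precisely when $(\ell,m) \in S$, i.e., when the dispersion relation is satisfied. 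This identifies the degenerate subspace; after removing the trivial modes $(0,0)$ and $(1,0)$, which correspond to the conserved quantities "fluid mass" and "vertical velocity of the barycenter", the genuinely resonant part has dimension $2n$.

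Next I would perform a Lyapunov--Schmidt reduction adapted to the constraint. Splitting the space as (resonant $2n$-dimensional part) $\oplus$ (complement), the infinite-dimensional part of the equation is solved by the implicit function theorem, because the Hessian of $E$ restricted to the complement is invertible near $\om_0$; the resonant variables, once rescaled by $\sqrt a$ (dictated by \eqref{ang.mom.a}, since $\mI$ is quadratic and $E$ near the origin has a cubic-order correction over its quadratic part), satisfy a reduced bifurcation equation on a $2n$-dimensional space, which is itself variational: it is the equation for critical points of the reduced functional restricted to the sphere $\{\mI = a\}$ intersected with the resonant subspace. Because this constraint set is (diffeomorphic to) a sphere $\S^{2n-1}$, Lusternik--Schnirelmann category or the standard count of critical points of an even functional on a sphere yields at least $n$ distinct critical orbits — here orbits because of the $\T$-action by $R(\th)$ in \eqref{n.orbits}, under which both $E$ and $\mI$ are invariant. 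The estimate \eqref{est.sol} then follows from the $\sqrt a$ rescaling and the quantitative bounds in the implicit function theorem step.

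The main obstacle is the two-dimensional degeneration flagged in the abstract: the trivial modes $(0,0)$ and $(1,0)$ lie in the kernel of the Hessian of $E$ at $\om_0$ for structural reasons (they are directions along which $E$ and $\mI$ are both degenerate, coming from the Noether symmetries), so the naive Lyapunov--Schmidt splitting does not have an invertible infinite-dimensional part, and the constraint $\mI = a$ does not by itself pin these directions down. Overcoming this requires the transversality property announced in the abstract: I would use the two prime integrals — fluid mass $\mathcal{M}_{\mathrm{mass}}(\eta,\beta)$ and vertical barycenter velocity $\mathcal{B}(\eta,\beta)$ — as two additional constraints, fixing them (say) to the values they take at the spherical solution. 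The key point is that the gradients of these two integrals, together with the gradient of $\mI$, are linearly independent (transversal) along the relevant directions, which is precisely what the Hamiltonian structure guarantees; this lets me quotient out the degenerate 2-plane cleanly, restoring invertibility on the complement and reducing honestly to the $2n$-dimensional resonant problem. Once this transversality is established, the remaining steps are the routine (if technical) verification of smoothness/analyticity of $E$, $\mI$ on the Sobolev scale, the quantitative implicit function theorem, and the Lusternik--Schnirelmann count; the analytic dependence on $a$ when $n=1$ comes from the analytic implicit function theorem, since in that case the reduced equation on $\S^1$ has, after using the $\T$-symmetry, a unique orbit of solutions depending analytically on the parameter.
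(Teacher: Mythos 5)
Your overall skeleton matches the paper's: the same variational formulation (critical points of $\mH_{\sigma_0}-\om\mI$ at fixed angular momentum), a Lyapunov--Schmidt reduction, reduction of the resonant part to a sphere-like constraint in the $2n$-dimensional nondegenerate space, a $\T$-equivariant critical point count, and the $\sqrt a$ scaling for \eqref{est.sol}. But there is a genuine gap exactly at the step you flag as the main obstacle, the two degenerate directions. First, your diagnosis is slightly off: the modes $(0,0)$ and $(1,0)$ are finitely many and lie in the kernel $V$, so the infinite-dimensional range equation is perfectly solvable by the implicit function theorem; the real problem is in the finite-dimensional bifurcation equation, where the reduced angular momentum does not control these two kernel components (its level set is a cylinder $\R^2\times\S^{2n-1}$, see Lemma \ref{IZ.formula}), so both the choice of $\om$ as a function of the kernel variable and the compactness of the constraint fail along them. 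Second, and more seriously, your proposed cure --- imposing $\mV$ and $\mB_3$ as two additional constraints and invoking linear independence of the gradients --- does not close the argument as described. Constrained critical points on that codimension-3 set satisfy an Euler--Lagrange equation with \emph{three} Lagrange multipliers, and you give no mechanism for showing that the multipliers attached to $\grad\mV$ and $\grad\mB_3$ vanish (the $\grad\mB_3$ term in particular is not of traveling-wave form, so a nonzero multiplier there destroys the conclusion); transversality of the constraint gradients only makes the constraint set a manifold, it does not ``quotient out'' anything. Moreover $\mV$ is cubic in $h$ and $\mB_3$ is genuinely nonlinear, so your constraint set is an infinite-dimensional nonlinear manifold of codimension 2 --- precisely the route the paper deliberately avoids. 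The paper's actual mechanism is different and is the missing idea: the degenerate kernel components are simply omitted from the ansatz ($v\in V_N$ only), $\om(v)$ is fixed by an implicit function theorem (Lemma \ref{lemma:choice.of.omega}) so that $\{\mI_{V_N}=a\}$ becomes a natural constraint, and then the two leftover degenerate components of the bifurcation equation are shown to vanish \emph{automatically}, because $\mF(\om,u)$ is identically $L^2$-orthogonal to $(0,1)$ and to a perturbation of $(x_3,0)$ for every $u$ near $0$ (a consequence of the vanishing Poisson brackets with $\mV$ and $\mB_3$, Lemma \ref{lemma:orthogonality.mF}), and these two vectors are transversal to, indeed close to, the degenerate kernel directions (Lemma \ref{lemma:sol.bif}). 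Nothing in your sketch produces this cancellation.

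A secondary, repairable inaccuracy: the multiplicity count cannot come from plain Lusternik--Schnirelmann category of $\S^{2n-1}$ (which is $2$) nor from evenness (the reduced functional has no $\Z_2$ symmetry); what gives $n$ orbits is the $S^1$-equivariant theory (Benci index, used in the paper via Lemma \ref{lemma:6.10.Mawhin.Willem}) applied to the $\mT_\th$-invariant reduced functional, together with the compatibility of the Lyapunov--Schmidt splitting and of the map $v\mapsto w(\om(v),v)$ with the torus action --- a point that also requires proof (Lemmas \ref{lemma:V.W.group.action}, \ref{lemma:range.eq}, \ref{lemma:choice.of.omega}) and is absent from your outline.
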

Before continuing, let us explicitly write down an interesting consequence 
of Theorem \ref{thm:main} concerning the Cauchy problem for system \eqref{dyn.eq.01}-\eqref{kin.eq.01}.

\medskip

\textit{There exist infinitely many initial data
(i.e., the initial shape $\Omega_0$ and the initial velocity field $u_0$) 
arbitrarily close to the equilibrium (i.e., the sphere with null velocity) 
such that the only solution to the Cauchy problem for system \eqref{dyn.eq.01}-\eqref{kin.eq.01}
is obtained as a pure rotation of the initial data. 
In particular, these solutions do not develop singularities and exist for all times.}

 \medskip 

Let us briefly discuss why such an existence result is not really obvious at a heuristic level.
First, assume that we are given an irrotational solution 
of system \eqref{dyn.eq.01}-\eqref{kin.eq.01} 
with $\Om_t = \Om_0$ independent of time. 
Since the boundary $\pa \Om_t = \pa \Om_0$ does not change in time,
its normal velocity $V_t$ is zero for all $t$, 
and therefore, by \eqref{kin.eq.01}, $\la \grad \Phi , \nu_{\Om_0} \ra$ is also zero. 
By \eqref{div.eq.01}, $\Delta \Phi = \div u = 0$ in $\Om_0$, 
and, integrating the product $\Phi \Delta \Phi$ over $\Om_0$, 
using the divergence theorem and the boundary identity $\la \grad \Phi , \nu_{\Om_0} \ra = 0$, 
one obtains that $u = \grad \Phi = 0$ in $\Om_0$. 
Thus, by \eqref{dyn.eq.01}, $p$ is constant on the boundary $\pa \Om_0$, 
and hence, by \eqref{pressure.eq.01}, the curvature $H_{\Om_0}$ is also constant.
As a consequence, by Alexandrov theorem 
(a connected compact embedded surface with constant mean curvature is a sphere), 
$\Om_0$ is a ball. 
Thus, the ball is the only possible shape for an irrotational solution 
of \eqref{dyn.eq.01}-\eqref{kin.eq.01} with $\Om_t = \Om_0$.

Next, consider a solution of \eqref{dyn.eq.01}-\eqref{kin.eq.01}
where $\Om_t$ is given by the translation of a time-independent domain $\Om_0$ by constant velocity, 
i.e., $\Om_t = \Om_0 + ct$, for some fixed vector $c \in \R^3$. 
Then $\Om_t$, $u(t, x) = u_0(t, x-ct) + c$, $p(t,x) = p_0(t, x-ct)$ 
solve \eqref{dyn.eq.01}-\eqref{kin.eq.01}, 
where $\Om_0, u_0, p_0$ solve the same system. 
Hence, by the argument above, $\Om_0$ is a ball. 
Thus, the ball is also the only possible shape for an irrotational solution 
of \eqref{dyn.eq.01}-\eqref{kin.eq.01} with $\Om_t = \Om_0 + ct$ and $\Om_0$ independent of time.

From these observations one might be led to think 
that the same rigidity property that holds for the translations also holds for the rotations, 
namely, if the velocity field is irrotational, 
and if the domain $\Om_t$ is given by a rotation $R(\om t)$ 
of constant angular velocity applied to a time-independent domain $\Om_0$, 
then $\Om_0$ is a ball. 
Such a guess turns out to be false,
as is actually proved by Theorem \ref{thm:main}.
In particular, the solutions obtained in Theorem \ref{thm:main} have domain $\Om_t = R(\om t) \Om_0$, 
where $\Om_0$ is independent of time and it is not a ball.
This is related to the fact that system \eqref{syst.pat.h.pat.psi.2.sigma.0} 
concernes the time evolution not only of the shape $h$, but also of the velocity potential $\psi$, 
which can be suitably tuned so that a fixed nonspherical shape persists as time evolves.

Another, even less intuitive, consequence of Theorem \ref{thm:main} is that, in general, 
rotating solutions need not to enjoy any symmetry property to exist.

\medskip

Theorem \ref{thm:main} improves the bifurcation result in \cite{B.J.LM} 
in these aspects: 
\begin{itemize}
\item[-] in \cite{B.J.LM} bifurcation is proved only 
from frequencies $(\ell_0, m_0) \in T$ such that 
the set $S$ in \eqref{def.S.in.the.intro} has exactly the 4 elements 
$(0,0), (1,0), (\ell_0, \pm m_0)$, 
whereas in Theorem \ref{thm:main} there is no restriction on the cardinality of $S$;

\item[-] in \cite{B.J.LM} bifurcation is proved only in the subspace of functions 
$(\eta, \beta)$ with $\eta$ even in $x_3$, even in $x_2$, 
and $\beta$ even in $x_3$, odd in $x_2$, 
where the frequency $(\ell_0, m_0)$ becomes simple,
whereas in Theorem \ref{thm:main} there is no restriction on the symmetry properties of $(\eta, \beta)$; 

\item[-] in \cite{B.J.LM} a bifurcating curve of nontrivial solutions is obtained, 
whereas Theorem \ref{thm:main} gives a multiplicity result, 
proving the bifurcation of $n$ orbits of nontrivial solutions.
\end{itemize}

We also note that the analytic dependence on the bifurcation parameter proved in \cite{B.J.LM}
for simple eigenvalues is lost in Theorem \ref{thm:main}, 
as one expects in the case of multiple eigenvalues; 
for $n=1$, on the other hand, the dependence is analytic also in Theorem \ref{thm:main}.  

\medskip

The multiplicity result in Theorem \ref{thm:main} relies on the torus action 
given by the rotations $R(\th)$, see \eqref{n.orbits}, 
and it does not make use of the reversible structure of the problem. 
Reversibility would enter into the result in the following way.
Given any solution $u = (\eta, \beta)$ of equation \eqref{syst.mM.eta.mM.beta} 
with angular velocity $\om > 0$,  
the pair 
\[
\mS u := (\eta, - \beta)
\]
is a solution of the same equation  
but with $\om$ replaced by its opposite $- \om < 0$. 
Since the $n$ orbits in \eqref{n.orbits} are all made by solutions 
of \eqref{syst.mM.eta.mM.beta} with positive angular velocity 
($\om_a^{(i)}$ is close to $\om_0 > 0$, see \eqref{est.sol}), 
no one of them can be obtained from another one by the reflection $\mS$, 
i.e., 
\[
(\eta^{(i)}, \beta^{(i)}) \neq \mS \{ (\eta^{(j)}, \beta^{(j)}) \circ R(\th) \} 
\quad \ \forall \th \in \T, \ \ i \neq j. 
\]
Thus, taking into account the involution $\mS$, we obtain $2n$ orbits of rotating traveling waves, 
$n$ of which having anti-clockwise rotation, 
and the other $n$ with clockwise rotation. 
The same happens for $\mS$ replaced by the involution operator $\mS_1$ 
that maps $u = (\eta, \beta)$ into 
\[
(\mS_1 u)(x) := (\eta(-x), - \beta(-x)), 
\]
which is the composition of $\mS$ with the map $x \to -x$, 
considered, e.g., in \cite{BBMM}.
This change of sign of the angular velocity is due to the fact that the angular momentum $\mI$ 
defined in \eqref{def.mI} satisfies 
$\mI \circ \mS = - \mI$ and $\mI \circ \mS_1 = - \mI$, 
unlike the linear momentum considered in the nearly flat case, 
for instance in \cite{Craig.Nicholls, BBMM}.
This difference is ultimately due to the fact that the space derivative operator 
in the momentum integral is $\pa_x$ or $\grad_x$ in the nearly flat case, 
while it is $x_1 \pa_{x_2} - x_2 \pa_{x_1}$ in the nearly spherical one. 

\medskip

Concerning symmetric solutions, we prove the following result. 

\begin{theorem} \label{thm:symmetric}
$(i)$ 
\emph{(Solutions even in $x_3$)}.
Let $(\ell_0, m_0) \in T$, $\ell_0 \geq 2$, $m_0 \neq 0$, 
with $\ell_0 - m_0$ an even integer,
where $T$ is in \eqref{def.T.in.the.intro}. 
Let $\om_0$ be defined in \eqref{om.fix.in.the.intro}. 
Let $2n+1$ be the cardinality of the set $S \cap \Z^2_{even}$, 
where $S$ is defined in \eqref{def.S.in.the.intro}, 
and $\Z^2_{even}$ is the set of pairs $(\ell,m) \in \Z^2$ 
such that $\ell - m$ is even.
Let $s > 1$.
Then there exist $a_0 > 0$, $C > 0$ such that, for every $a \in (0, a_0)$, 
there exist at least $n$ distinct orbits \eqref{n.orbits}
of solutions of the rotating traveling wave equations \eqref{syst.mM.eta.mM.beta}
with symmetry with respect to the third variable $x_3$
\begin{equation} \label{symm.3}
( \eta_a^{(i)} , \beta_a^{(i)} )(x_1, x_2, - x_3) 
= ( \eta_a^{(i)} , \beta_a^{(i)} )(x_1, x_2, x_3),
\quad \ x = (x_1, x_2, x_3) \in \S^2,
\end{equation}
angular velocity $\om = \om_a^{(i)}$, 
angular momentum \eqref{ang.mom.a}, 
and estimates \eqref{est.sol}.
If $n=1$, the orbit depends analytically on $a$ in the interval $(0, a_0)$. 

\medskip

$(ii)$ 
\emph{(Solutions even/odd in $x_2$)}.
Let $(\ell_0, m_0) \in T$, $\ell_0 \geq 2$, $m_0 \neq 0$. 
Let $\om_0$ be defined in \eqref{om.fix.in.the.intro}. 
Let $s > 1$.
Then there exist $a_0 > 0$, $C > 0$ such that, for every $a \in (0, a_0)$, 
there exist at least $2$ solutions 
\begin{equation}  \label{2.sol}
(\eta_a^{(i)} , \beta_a^{(i)} ), \quad i = 1,2,
\end{equation}
of \eqref{syst.mM.eta.mM.beta} 
with symmetry with respect to the second variable $x_2$ 
\begin{equation} \label{symm.2}
\eta_a^{(i)} (x_1, - x_2, x_3) 
= \eta_a^{(i)} (x_1, x_2, x_3), 
\quad \ 
\beta_a^{(i)} (x_1, - x_2, x_3)
= - \beta_a^{(i)} (x_1, x_2, x_3),
\end{equation}
with angular velocity $\om = \om_a^{(i)}$, 
angular momentum \eqref{ang.mom.a}, 
and estimates \eqref{est.sol}.

\medskip

$(iii)$ 
\emph{(Solutions even in $x_3$ and even/odd in $x_2$)}.
Let $(\ell_0, m_0) \in T$, $\ell_0 \geq 2$, $m_0 \neq 0$, 
with $\ell_0 - m_0$ even. 
Let $\om_0$ be defined in \eqref{om.fix.in.the.intro}. 
Let $s > 1$.
Then there exist $a_0 > 0$, $C > 0$ such that, for every $a \in (0, a_0)$, 
there exist at least $2$ solutions \eqref{2.sol}
of \eqref{syst.mM.eta.mM.beta} 
with both symmetries \eqref{symm.3} and \eqref{symm.2}, 
angular velocity $\om = \om_a^{(i)}$, 
angular momentum \eqref{ang.mom.a} and estimates \eqref{est.sol}.
\end{theorem}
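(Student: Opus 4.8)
The plan is to deduce Theorem \ref{thm:symmetric} from the same variational machinery used to prove Theorem \ref{thm:main}, by restricting the whole construction to the relevant closed subspaces on which the symmetries are preserved. The starting observation is that each of the linear operators in \eqref{syst.mM.eta.mM.beta} — namely $\mM$, $X_1$ and $X_2$ linearized at the origin, the Dirichlet--Neumann operator $G(h)$, and the mean curvature operator $H(h)$ — commutes with the reflections $\tau_3 : (x_1,x_2,x_3) \mapsto (x_1,x_2,-x_3)$ and $\tau_2 : (x_1,x_2,x_3)\mapsto (x_1,-x_2,x_3)$ in the appropriate sense, together with the sign change on $\beta$ in case $(ii)$, $(iii)$. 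Concretely, if $(\eta,\beta)$ solves \eqref{syst.mM.eta.mM.beta} then so does $(\eta\circ\tau_3,\beta\circ\tau_3)$ (because $\mM$ and $\tau_3$ commute, since $\mJ x$ is tangent to $\S^2$ and $\tau_3$ fixes the $x_1,x_2$ coordinates), and $(\eta\circ\tau_2,-\beta\circ\tau_2)$ with angular velocity unchanged (here one uses that $\mM$ anticommutes with $\tau_2$, matching the sign flip on $\beta$, and that $X_2$ is quadratic in $\beta$). Hence the subspaces
\[
\mathcal{X}_3 := \{(\eta,\beta) : \eta\circ\tau_3=\eta,\ \beta\circ\tau_3=\beta\},
\quad
\mathcal{X}_2 := \{(\eta,\beta) : \eta\circ\tau_2=\eta,\ \beta\circ\tau_2=-\beta\},
\]
and their intersection $\mathcal{X}_3\cap\mathcal{X}_2$, are invariant under the flow and under the rotation action $R(\th)$, so the entire Lyapunov--Schmidt reduction, the angular momentum constraint $\mI=a$, and the associated critical point theory can be carried out verbatim inside each of them.

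The key steps, in order, are as follows. First I would verify the commutation/anticommutation identities for $\mM$, $G(h)$, $H(h)$ and the nonlinearities $X_1,X_2$ under $\tau_3$ and under the twisted action $(\eta,\beta)\mapsto(\eta\circ\tau_2,-\beta\circ\tau_2)$; this is where the hypothesis that $\beta$ changes sign in $(ii)$ and $(iii)$ is forced, and where one checks that $\om$ is not flipped (unlike under $\mS$). Second, I would identify the kernel of the linearized operator inside each symmetry subspace: the real spherical harmonics $Y_{\ell m}$ split according to the parity of $\ell-m$ under $\tau_3$ and, on the velocity component, under $\tau_2$; the surviving modes in the degenerate eigenspace are indexed by $S\cap\Z^2_{even}$ in case $(i)$, which under the hypothesis $\ell_0-m_0$ even is nonempty and contains $(0,0),(1,0)$ plus $2n$ genuinely oscillating modes, explaining the count $2n+1$. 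Third, I would rerun the reduction and the min--max exactly as in the proof of Theorem \ref{thm:main} on the reduced finite-dimensional space, now of the reduced dimension, to extract $n$ (resp.\ $2$) orbits, together with estimates \eqref{est.sol} and the analytic dependence when the reduced problem is one-dimensional.

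The one genuinely delicate point — the analogue of the 2-dimensional degeneration flagged in the abstract — is the transversality argument that removes the trivial directions $(0,0)$ and $(1,0)$ from the bifurcation equation. In case $(i)$ both of these modes are even in $x_3$, so they remain in $\mathcal{X}_3$ and the transversality property established via the prime integrals (fluid mass and barycenter velocity along the axis) must be checked to survive the restriction to $\mathcal{X}_3$; this is fine because both conserved quantities are themselves $\tau_3$-invariant. In cases $(ii)$ and $(iii)$, however, the constant mode $(0,0)$ and the $(1,0)$ mode of $\eta$ are even in $x_2$ but the corresponding $\beta$-directions that would realize them as kernel elements are killed by the condition $\beta\circ\tau_2=-\beta$; consequently the degeneration inside $\mathcal{X}_2$ is lower-dimensional (generically the kernel becomes $2$-dimensional: $(\ell_0,\pm m_0)$ with the $x_2$-symmetry selecting one combination each), which is exactly why the conclusion there is only $2$ solutions rather than an orbit count. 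So the main obstacle is bookkeeping the precise dimension of the symmetric kernel and confirming that the transversality lemma — proved earlier using the Hamiltonian structure and Noether integrals — applies with the conserved quantities restricted to the symmetry subspace; once that is in place, the rest is a routine specialization of the proof of Theorem \ref{thm:main}.
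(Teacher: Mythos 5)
Your overall strategy is the same as the paper's: check that the reflections (with the sign flip on $\beta$ in cases $(ii)$--$(iii)$) commute with the equations, restrict the whole Lyapunov--Schmidt/constrained-variational construction of Theorem \ref{thm:main} to the invariant subspaces, and redo the critical point count there. However, your parity bookkeeping for the two degenerate directions is wrong in both directions, and this is exactly the delicate point. In case $(i)$ you assert that both $(0,0)$ and $(1,0)$ survive the restriction to the even-in-$x_3$ subspace and that both conserved quantities are $\tau_3$-invariant. Neither is true: the degenerate kernel vector attached to $(1,0)$ is $(\ph_{1,0},0)$ with $\ph_{1,0}(x)=c\,x_3$, which is \emph{odd} in $x_3$, so it is absent from the symmetric kernel (this is why $S\cap\Z^2_{even}$ has odd cardinality $2n+1$, containing $(0,0)$ but not $(1,0)$ --- your count ``$(0,0),(1,0)$ plus $2n$ modes gives $2n+1$'' is also arithmetically inconsistent); and $\mB_3$ is anti-invariant, $\mB_3\circ\mY_3=-\mB_3$, not invariant. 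In case $(i)$ only the mass orthogonality is needed, since only the $(0,0)$ degenerate direction remains.

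In cases $(ii)$--$(iii)$ the situation is reversed from what you claim: the $(0,0)$ kernel vector $(0,\ph_{0,0})$ is indeed killed by the oddness of $\beta$, but the $(1,0)$ kernel vector $(\ph_{1,0},0)$ is a pure $\eta$-direction, even in $x_2$ (and, in case $(iii)$, note $\ell-m=1$ odd so it is excluded there by the $x_3$-parity instead), so in case $(ii)$ a degenerate direction \emph{does} survive, and the transversality coming from the barycenter-velocity orthogonality \eqref{eq:orthogonal}--\eqref{eq:orthogonal.vectors} is still required to kill the Lagrange multiplier along $\mathtt v_{1,0}$; your plan omits it precisely where it is needed. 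Finally, the reason the conclusion in $(ii)$--$(iii)$ is only two solutions is not that the symmetric kernel is two-dimensional (it has dimension $n$ in the nondegenerate part, from each pair $(\ell,\pm m)$ exactly one combination survives), but that $R(\th)$ does not commute with the reflection $x_2\mapsto -x_2$, so the $\T$-action is lost on $Y_2$ and the $\S^1$-index argument (Lemma \ref{lemma:6.10.Mawhin.Willem}) cannot be used; one only retains the minimum and maximum of the restricted functional on the constraint. With these corrections the restriction argument does go through as in the paper (parities of $\ph_{\ell,m}$ and of $\mathtt v_{\ell,m}$ as in \eqref{ttv.mY}, equivariance of $\mF$ as in \eqref{mF.mY}, and $\mY_3\circ\mT_\th=\mT_\th\circ\mY_3$ for case $(i)$), but as written your treatment of the degenerate directions would fail in case $(ii)$.
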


Theorem \ref{thm:symmetric} improves the bifurcation result in 
\cite{B.J.LM}, because in \cite{B.J.LM} only the case \textit{(iii)} is considered, 
and only when $S$ has exactly 4 elements.  

We observe that from the torus action one deduces the invariance 
of the subspaces of functions $(\eta, \beta)$ which are supported only on spherical harmonics 
$\ph_{\ell,m}$ with $m$ integer multiple of a fixed integer $k > 1$. 
This is usually called $k$-fold symmetry, and it corresponds 
to the discrete rotational symmetry of angle $2 \pi / k$ around the $x_3$ axis. 
Results analogous to those of Theorems \ref{thm:main} and \ref{thm:symmetric}
can be also proved in the $k$-fold symmetry subspaces.

\bigskip 

\emph{Sketch of the proof.}
A fundamental ingredient in Craig-Nicholls approach \cite{Craig.Nicholls} 
to the bifurcation of traveling waves is the variational structure of the problem, 
namely the key observation that the traveling wave system is the equation of the critical points 
of a functional; thus, our analysis begins with showing 
that system \eqref{syst.mM.eta.mM.beta}
exhibits a variational structure analogous to the one in \cite{Craig.Nicholls}. 
More precisely, in Section \ref{sec:variational.formulation} we recall the Hamiltonian structure of the problem,
and we show that \eqref{syst.mM.eta.mM.beta} 
is in fact the equation 
\begin{equation} \label{eq.critical.point.in.the.intro}
\grad \mH_{\sigma_0}(\eta, \beta) 
- \om \grad \mI(\eta, \beta) = 0
\end{equation}
of the critical points of the functional $\mH_{\sigma_0} - \om \mI$,
where $\mH_{\sigma_0}$ is the Hamiltonian defined in \eqref{def.mH.sigma.0}
and $\mI$ is the angular momentum defined in \eqref{def.mI}.

Next, we show some consequences of the geometric nature of the problem: 
in Section \ref{sec:symmetries} we use some geometric considerations to 
find symmetries of the problem
and we follow the ideas of Noether Theorem to find the corresponding conserved quantities. 
An important tool that we also study in Section \ref{sec:symmetries} is the operator $\mT_{\theta}$ 
in \eqref{def.mT.theta}, which is the natural torus action of the problem, 
for which we prove some key invariance properties, see Lemma \ref{lemma:group.action}. 
At this point, we are ready to start the study of equation \eqref{eq.critical.point.in.the.intro}. 

In Section \ref{sec:nonlinear} we 
prove some orthogonality (Lemma \ref{lemma:orthogonality.mF}) 
and conjugation (Lemma \ref{lemma:conj.mF}) properties 
of the nonlinear operator in \eqref{eq.critical.point.in.the.intro}.
These two simple lemmas will have important consequences: 
orthogonality will be used at the end of the proof of Theorem \ref{thm:main}, 
see Lemma \ref{lemma:sol.bif}, and it will be essential 
in order to show the existence of at least two solutions 
to \eqref{eq.critical.point.in.the.intro}, 
while the conjugation property will be used to obtain $n$ orbits of solutions, 
see Lemma \ref{lemma:n.critical.orbits}.

In Section \ref{sec:linearized} we linearize the system at the origin 
and list some properties of the linearized operator, 
building the appropriate set up to implement the Lyapunov-Schmidt decomposition 
in Section \ref{sec:LS.decomposition}. 
The original problem is then reduced to a finite-dimensional one
by solving the range equation by the implicit function theorem, see Lemma \ref{lemma:range.eq}. 
It is important to observe that the reduced problem is now an equation in even dimensional space; 
more precisely, we are left to solve the bifurcation equation in a vector space of dimension $2n+2$. 

Following \cite{Craig.Nicholls} and the classical methods described in 
\cite{Chow.Hale, Mahwin.Willem, Struwe},  
we note that, even after performing the Lyapunov Schmidt decomposition, 
the bifurcation equation 
has still a variational structure, see Lemma \ref{lemma:variational.nature.ZN}. 
Hence, to solve the bifurcation equation, we look for critical points 
of the reduced functional $\mE_{V_N}$. 
To this purpose, we introduce the level set of the angular momentum  
as a constraint where to look for constrained critical points.

A close inspection of the constraint $\mI(u)=a$ makes clear the necessity of a further reduction, 
as the level sets of the angular momentum resemble a cylinder of the type 
$\mathbb{R}^2 \times \mathbb{S}^{2n-1}$ (see Lemma \ref{IZ.formula}).  
Hence, in Section \ref{sec:degenerate.decomposition} 
we further decompose the problem separating the two \emph{degenerate} directions in $\R^2$
from the other $2n$ \emph{nondegenerate} directions on $\S^{2n-1}$. 
Note that the formula of the main order $\mI_0(v)$ in the Taylor expansion 
of the reduced functional $\mI_{V_N}(v)$ in Lemma \ref{IZ.formula} 
does not depend on the degenerate variables.

Then, in Section \ref{sec:choice.of.omega}, 
we choose the angular velocity parameter $\om$ in an efficient way
as a function of the nondegenerate component of the kernel 
(Lemma \ref{lemma:choice.of.omega}). 
This choice of $\om$ guarantees that constrained critical points of the reduced functional 
are, in fact, critical points, and therefore solutions of the nondegenerate bifurcation equation
(Lemma \ref{lemma:sol.bif.ZN}). Thus the level sets of the reduced angular momentum functional 
$\mI_{V_N}(v) = a$ become \emph{natural constraints} (in the sense of \cite{Ambrosetti.Malchiodi})
for the reduced functional $\mE_{V_N}(v)$. 
Actually, we have that critical points of the restricted functional 
(defined on a $2n$ dimensional space) 
invariant under the group action $\mT_\theta$ on a manifold 
diffeomorphic to $\S^{2n-1}$ 
are solutions to the bifurcation equation. 
Hence, by using classical theorems of critical point theory 
(Benci $\S^1$-index), we infer the existence of $n$ distinct orbits of solutions, 
see Lemma \ref{lemma:n.critical.orbits}.

Finally, in Section \ref{sec:sol.bif.deg}, we solve the bifurcation equation 
also in the two degenerate directions, thanks to the orthogonality property 
of Lemma \ref{lemma:orthogonality.mF}.

\medskip

\emph{Difficulties in the proof.} 
Although the strategy of the proof is well established, 
we encounter some nontrivial challenges in the way to prove Theorem \ref{thm:main}, 
which seem to be peculiar of the problem of the drop with its spherical geometry, 
in the sense that they are not present for the more studied case 
of traveling waves on a flat reference manifold (the ocean). 
We try to explain this point.

In the problem of traveling gravity-capillary waves on a flat 2-dimensional lattice 
considered by Craig and Nicholls \cite{Craig.Nicholls}, 
there is one degenerate direction, which corresponds to the zeroth Fourier coefficient 
$\hat \psi_0$ of the velocity potential unknown $\psi$. 
This degeneration is easily removed in \cite{Craig.Nicholls} 
by restricting all the analysis to the \emph{linear} invariant subspace of functions 
$(h, \psi)$ where $\psi$ has zero average over the lattice, i.e., $\hat \psi_0 = 0$.  
Also, the conservation of the fluid mass in \cite{Craig.Nicholls} 
corresponds to the \emph{linear} identity $\hat h_0 = $ constant. 
The same happens, for example, also in the recent paper \cite{BBMM} with constant vorticity. 
In fact, this can also be achieved for the drop: 
the first orthogonality relation in \eqref{eq:orthogonal.mV} 
(that is, the first orthogonality relation in \eqref{eq:orthogonal.vectors}) 
gives an invariant \emph{linear} subspace, 
while the mass conservation, despite its cubic expression \eqref{def.volume} 
in the elevation function $h$, 
could become the \emph{linear} identity $\hat h_0 = $ constant 
by means of a simple change of variable, as explained in Lemma 3.4 in \cite{B.J.LM}. 

However, the problem for the drop contains \emph{another} degenerate direction, 
which corresponds to the conservation of the $x_3$-component $\mB_3$ 
of the barycenter velocity $\mB$, see \eqref{def.mB}.
To better explain, let us describe the simple action of moving the barycenter of the drop 
along the $x_3$ axis. 
Geometrically and physically, this is probably the simplest operation one can perform 
and yet the reparametrization of a translated drop is an implicitly defined nonlinear diffeomorphism. 
From a physical point of view, it is clear that if a drop rotating around the 
$x_3$ axis is a solution to \eqref{syst.pat.h.pat.psi}, 
any other drop that rotates in the same fashion and translates along $x_3$ 
with constant velocity is still a solution. 

Now, unlike the previous degeneration, the conservation of $\mB_3$ 
is not linear, and it is hard, if not even impossible, 
to find a change of variable that simultaneously turns both the conservation relations 
into linear ones. 
This would force one to work over infinitely-dimensional invariant \emph{manifolds} 
of codimension 2. 
This technical issue is not present in the flat case. 
At a linear level, this is related to the fact that the Laplace-Beltrami operator 
$- \Delta_{\S^2}$ has eigenvalues $\ell (\ell - 1)$, 
which vanishes for $\ell = 0$ and also for $\ell = 1$, 
in contrast with $- \Delta_{\T^2}$. 

From a technical point of view, we encounter the degeneracy 
in Section \ref{sec:degenerate.decomposition}, 
where, after performing the Lyapunov-Schmidt decomposition, 
following the method of \cite{Chow.Hale, Craig.Nicholls, Mahwin.Willem, Struwe}, 
we have to fix the bifurcation parameter $\om$ as a function 
of the kernel component $v$ by implicit function theorem, 
and we face the fact that the dominant term \eqref{approx.I.Z.norm}
depends nontrivially on all the components of $v$ but two, 
$\hat v_{0,0}$ and $\hat v_{1,0}$, which are the degenerate directions.

In fact, we choose not to perform any change of variable 
to make the mass conservation a linear expression of $h$, 
and we also choose not to work on infinitely-dimensional invariant manifolds. 
Instead, the strategy that we find more convenient to overcome this obstacle 
is to exploit the two orthogonality conditions 
\eqref{eq:orthogonal.mV} (that is, \eqref{eq:orthogonal.vectors}) 
coming from the Hamiltonian structure of our problem and Noether Theorem.
While the first relation in \eqref{eq:orthogonal.vectors} is a simple linear expression, 
the second one is not, but it gives a geometrical \emph{transversality condition} 
in the vicinity of the bifurcation point. 
This transversality property is sufficient to cancel out the Lagrange multipliers 
related to the two degenerate directions in the proof of Lemma \ref{lemma:sol.bif}.

Another nontrivial aspect of working on $\S^2$ 
concerns the compatibility of the Lyapunov-Schmidt decomposition 
with the torus action $\mT_\th$, which does not seem to be a priori obvious, 
because of the nontrivial algebraic expression of the spherical harmonics. 
This compatibility is proved in Lemma \ref{lemma:V.W.group.action}.

\bigskip

\emph{Related literature about the drop.} 
The problem of the fluid motion of a capillary nearly spherical drop 
dates back to the paper \cite{Lord.Rayleigh.jets} by Lord Rayleigh. 
The formulation of the free boundary problem for the drop in the irrotational regime 
as a system of equations on $\S^2$ 
has been used in \cite{B.J.LM, Beyer.Gunther, Shao.initial.notes, Shao.G.paralinearization}. 
Its Hamiltonian structure has been obtained in \cite{Beyer.Gunther} 
and, in the present formulation, in \cite{B.J.LM}. 
The Dirichlet-Neumann operator for the drop 
has been studied in \cite{Shao.G.paralinearization} (paralinearization)
and \cite{B.J.LM} (linearization, analyticity, tame estimates). 
Local well-posedness results for the Cauchy problem have been obtained in 
\cite{Beyer.Gunther, CS, Shao.G.paralinearization}, 
and continuation results and a priori estimates in \cite{JL, Shatah.Zeng}.
The existence of rotating traveling waves has been analytically proved, 
under symmetry assumptions, in \cite{B.J.LM} and, in dimension 2, in \cite{Moon.Wu}, 
while its numerical evidence is shown in \cite{Dyachenko}; 
we mention also \cite{MNS} for the existence of 2D steady bubbles for capillary drops and \cite{MRS} for the existence of 2D steady vortex sheets.
 
\medskip

\emph{Related literature about the bifurcation and critical point theory.}
Concerning the bifurcation technique, the present paper is inspired by \cite{Craig.Nicholls}, 
where the existence of capillary-gravity traveling waves on 2D and 3D flat torus is proved, 
relying on the proof of the Lyapunov Centre Theorem in \cite{Weinstein} and \cite{Moser}; 
a similar approach can also be found in the very recent paper \cite{BBMM}. 
We refer to \cite{Benci, Chow.Hale, Mahwin.Willem, Struwe} as far as the Benci index 
and its applications in proving multiplicity of critical points are concerned. 
We refer to \cite{Ambrosetti.Malchiodi, Ambrosetti.Prodi, Ambrosetti.Rabinowitz, 
Buffoni.Toland, Chow.Hale, Crandall.Rabinowitz, Fadell.Rabinowitz, Mahwin.Willem, Struwe}
for further literature about bifurcation and critical point theories,  
and to \cite{Berti.Bolle, Berti.Bolle.1, BLS} for some interesting applications to PDEs. 

\medskip

\emph{Related literature about traveling water waves.} 
Starting from the pioneering works \cite{Stokes}, \cite{Struik}, \cite{Levi.Civita} and \cite{Nekrasov},  
the literature about the water wave problem in the flat (i.e., non spherical) case, 
and in particular on traveling waves, is huge. 
For a recent review, we refer to \cite{HHSTWWW}.
Concerning the Hamiltonian structure and the Dirichlet-Neumann operator, 
important references are \cite{AM, CSS, Craig.Sulem, Lannes, Zakharov}; 
see also \cite{BMV, Wahlen.1} for recent developments.
For the Cauchy problem we refer, for instance, to \cite{ABZ, AD, Berti.Delort, Lannes}. 
For traveling waves we mention, e.g., 
\cite{Groves.Sun, Iooss.Plotnikov, Iooss.Plotnikov.1} for 3D problems,
\cite{ASW} for global bifurcation, 
\cite{GNPW} for Beltrami flows, 
\cite{Wahlen} for nonzero vorticity,
\cite{Jones.Toland} for secondary bifurcations, 
\cite{BFM, BFMT, CMT, Feola.Giuliani} for quasi-periodic traveling waves.

\bigskip

\noindent
\textbf{Acknowledgments}. 
The authors warmly thank Vittorio Coti Zelati and Massimiliano Berti 
for interesting discussions and suggestions.
This work is supported by Italian INdAM group GNAMPA, 
by Italian Project PRIN 2022E9CF89 \emph{Geometric Evolution Problems and Shape Optimization}, 
and by Italian Project PRIN 2020XB3EFL \emph{Hamiltonian and dispersive PDEs}.

\section{Notations and preliminaries} 
\label{sec:prelim}

\underline{Tangential gradient}, \underline{tangential Laplacian}. 
Given a function $f: \S^2\to \R$, 
we denote by $\mE_0 f$ its 0-homogenous extension to $\R^3 \setminus \{ 0 \}$
\[
\mE_0 f(x) = f \Big( \frac{x}{|x|} \Big), \quad \ x \in \R^3 \setminus \{ 0 \}.
\]  
Using the extension operator $\mE_0$, we define 
the tangential gradient $\nabla_{\S^2} f$ 
and the tangential Laplacian $\Delta_{\S^2} f$  
of a function $f : \S^2 \to \R$ as 
the classical gradient and Laplacian of $\mE_0 f$,
\begin{equation} \label{def.grad.S2.Lap.S2}
\nabla_{\S^2} f = \nabla \mE_0 f, 
\quad \  
\Delta_{\S^2} f = \Delta \mE_0 f. 
\end{equation}
By using the definition of tangential gradient in \eqref{def.grad.S2.Lap.S2}, 
it also holds that, if $\tilde f$ is any diffentiable extension of $f$ in a neighborhood of $\S^2$, then
\begin{equation} \label{tang.grad.any.ext}
\nabla_{\S^2} f = \nabla \tilde f - \langle \nabla \tilde f, x\rangle x.
\end{equation}

\noindent
\underline{Mean curvature}. 
The mean curvature term $H(h)$ in equation \eqref{def.X.2} is given by
\begin{equation} \label{eq:meancurvature}
H(h) 
= - \frac{\Delta_{\S^2} h}{(1+h)J} 
+ \frac{2}{J} 
+ \frac{ \la (D_{\S^2}^2 h) \nabla_{\S^2} h, \nabla_{\S^2} h \ra}{(1+h) J^3} 
+ \frac{ |\nabla_{\S^2} h|^2}{J^3},
\end{equation}
where 
\begin{equation}  \label{def.J}
J = \sqrt{(1+h)^2+ |\nabla_{\S^2}h |^2},
\quad \ 
\la (D_{\S^2}^2 h) \nabla_{\S^2} h, \nabla_{\S^2} h \ra 
= \la (D^2 \mE_0 h) \grad \mE_0 h , \grad \mE_0 h \ra,
\end{equation}
and $D^2 \mE_0 h$ is the Hessian matrix of $\mE_0 h$. 
Formula \eqref{eq:meancurvature} is proved in Lemma 2.3 of \cite{B.J.LM}.

\medskip

\noindent
\underline{Divergence Theorem on $\S^2$}.
Given a vector field $F$ on $\S^2$, one has  
\begin{equation} \label{div.thm.S2} 
\int_{\S^2} \div_{\S^2} F \, d\sigma 
= 2 \int_{\S^2} \la F, x \ra \, d\sigma,
\end{equation}
where $d \sigma$ is the 2-dimensional Hausdorff measure, and 
the tangential divergence is 
\[
\div_{\S^2} F = \div \mE_0 F.
\]

\noindent
\underline{Spherical harmonics}. 
The real Hilbert space $L^2(\S^2, \R)$ admits orthonormal Hilbert basis 
made of real spherical harmonics. 
In particular, we consider the $L^2(\S^2, \R)$ orthonormal Hilbert basis 
\begin{equation}  \label{def.ph.ell.m.intro}
\{ \ph_{\ell,m} : (\ell, m) \in T \}, \quad \
T := \{ (\ell, m) \in \Z^2 : \ell \geq 0, \ m = - \ell, \ldots, \ell \},
\end{equation}
where $\ph_{\ell,m}(x)$, $m = - \ell, \ldots, \ell$, 
are the Legendre real spherical harmonics 
on $\S^2$ of degree $\ell$ written in Cartesian coordinates $x \in \S^2 \subset \R^3$, 
namely 
\begin{align}
\ph_{\ell,0}(x) & := c_{\ell}^{(0)} P_\ell(x_3), 
\notag \\ 
\ph_{\ell,m}(x) & 
:= c_{\ell}^{(m)} P_\ell^{(m)}(x_3) \Re[ (x_1 + i x_2)^m ], 
\quad m = 1, \ldots, \ell,
\notag \\
\ph_{\ell,-m}(x) & 
:= c_{\ell}^{(m)} P_\ell^{(m)}(x_3) \Im[ (x_1 + i x_2)^{m} ], 
\quad m = 1, \ldots, \ell, 
\label{def.ph.ell.m.Re.Im}
\end{align}
where $P_\ell$ is the ordinary Legendre polynomial of degree $\ell$, 
$P_\ell^{(m)}$ is its $m$th derivative, 
and $c_\ell^{(m)} \in \R$ is a normalizing coefficient; 
see, e.g., \cite{Atkinson.Han}, Example 2.48 in Section 2.11, 
and \cite{B.J.LM}, Section 6.1. 
For all $(\ell,m) \in T$, one has
$$
-\Delta_{\S^2} \ph_{\ell,m} = \ell(\ell+1) \ph_{\ell,m}.
$$

\medskip

\noindent
\underline{Sobolev spaces}. 
Given any function $u\in L^2(\S^2,\R)$, one has 
\[
u= \sum_{(\ell,m) \in T} \hat u_{\ell,m} \ph_{\ell,m}, \quad \ 
\hat u_{\ell,m} 
= \la u , \ph_{\ell,m} \ra_{L^2(\S^2)}
= \int_{\S^2} u \ph_{\ell,m} \, d \sigma. 
\]
For any real $s \geq 0$, we define the Sobolev space $H^s(\S^2,\R)$ as
\begin{equation}
H^s(\S^2,\R) = \{u \in L^2(\S^2,\R) : \| u \|_{H^s(\S^2)} < \infty \}, 
\quad \ 
\| u \|_{H^s(\S^2)}^2 
= \hat u^2_{0,0} + \sum_{\begin{subarray}{c} (\ell,m) \in T \\ \ell \geq 1 \end{subarray}} 
\ell^{2s} \hat u^2_{\ell,m}.
\end{equation}
Sometimes, we shortly write $L^2(\S^2), H^s(\S^2)$ 
instead of $L^2(\S^2,\R), H^s(\S^2,\R)$.

\section{Variational formulation of the problem} 
\label{sec:variational.formulation}

As is proved in \cite{Beyer.Gunther} and \cite{B.J.LM}, 
system \eqref{syst.pat.h.pat.psi.2.sigma.0} has a Hamiltonian structure. 
Consider the quantity 
\begin{align} 
\mathcal{H}(h,\psi) 
& := \frac12 \int_{\mathbb{S}^2} \psi (G(h)\psi) (1+h) \sqrt{(1+h)^2 + |\nabla_{\mathbb{S}^2} h|^2} \, d\sigma
\notag \\ & \quad \ 
+ \sigma_0 \int_{\mathbb{S}^2} (1+h) \sqrt{(1+h)^2 + |\nabla_{\mathbb{S}^2} h|^2} \, d\sigma,
\label{def.mH}
\end{align}
which is the sum of the kinetic energy and the capillarity energy of the drop 
expressed in terms of the functions $h, \psi$.  
Then (see Proposition 3.2 in \cite{B.J.LM}) 
\begin{equation}  \label{quasi.Ham.syst}
X_1(h, \psi) 
= \frac{ \pa_\psi \mathcal H(h,\psi) }{ (1+h)^2 },
\quad \ 
X_2(h, \psi) 
= - \frac{ \pa_h \mathcal H(h,\psi) }{ (1+h)^2 },
\end{equation}
where $X_1, X_2$ are defined in \eqref{def.X.1}, \eqref{def.X.2} 
and $\pa_h \mH$, $\pa_\psi \mH$ are the gradients of $\mH$ 
with respect to the $L^2(\S^2)$ scalar product. 
The factors $(1+h)^{-2}$ in \eqref{quasi.Ham.syst} can be easily removed with a change of variables 
to obtain a Hamiltonian system in Darboux coordinates, 
see Lemma 3.4 in \cite{B.J.LM}; 
however, this is not necessary for the analysis of the present paper, 
and therefore we do not do it. 
We also consider the quantity
\begin{equation}  \label{def.volume}
\mV(h,\psi) := \mV(h) := \frac13 \int_{\S^2} (1+h)^3 \, d\sigma, 
\end{equation}
which is the volume of the fluid domain $\Om_t$ written in terms of the elevation function $h$.  
Its gradients with respect to the $L^2(\S^2)$ scalar product are 
\begin{equation} \label{pa.mV}
\pa_h \mV(h,\psi) = (1+h)^2, \quad \ 
\pa_\psi \mV(h,\psi) = 0.
\end{equation}
Hence, by \eqref{quasi.Ham.syst} and \eqref{pa.mV}, one has 
\begin{equation}  \label{quasi.Ham.syst.2.sigma.0}
X_1(h, \psi) = \frac{ \pa_\psi \mathcal H_{\sigma_0}(h,\psi) }{ (1+h)^2 },
\quad \ 
X_2(h, \psi) + 2 \sigma_0 
= - \frac{ \pa_h \mathcal H_{\sigma_0}(h,\psi) }{ (1+h)^2 },
\end{equation}
where 
\begin{equation} \label{def.mH.sigma.0}
\mH_{\sigma_0} := \mH - 2 \sigma_0 \mV,
\end{equation}
and \eqref{syst.pat.h.pat.psi.2.sigma.0} is the Hamiltonian system 
\begin{equation}  \label{quasi.Ham.syst.2.sigma.0.bis}
\pa_t h = \frac{ \pa_\psi \mathcal H_{\sigma_0}(h,\psi) }{ (1+h)^2 },
\quad \ 
\pa_t \psi = - \frac{ \pa_h \mathcal H_{\sigma_0}(h,\psi) }{ (1+h)^2 }.
\end{equation}
Also, by \eqref{quasi.Ham.syst.2.sigma.0}, 
the rotating traveling wave system \eqref{syst.mM.eta.mM.beta} can be written as 
\begin{equation}  \label{quasi.Ham.syst.eta.beta}
\om \mM \eta = \frac{ \pa_\beta \mathcal H_{\sigma_0}(\eta,\beta) }{ (1+\eta)^2 },
\quad \ 
\om \mM \beta = - \frac{ \pa_\eta \mathcal H_{\sigma_0}(\eta,\beta) }{ (1+\eta)^2 }.
\end{equation}

Now we show that also the terms $\mM \eta, \mM \beta$ 
are the product of the factor $(1 + \eta)^{-2}$ with the gradients 
of a functional with respect to the $L^2(\S^2)$ scalar product, 
and this functional is the angular momentum $\mI$ with respect to the rotation symmetry axis.
Before proving this, we show in the following lemma
that the differential operator $\mM$ is anti-symmetric, 
i.e., one has a simple integration by parts formula for $\mM$,
and we calculate its commutator with the tangential gradient operator.

\begin{lemma} 
\label{lem:commutation.formula}
$(i)$ One has 
\begin{equation}  \label{mM.is.anti.symm}
\int_{\S^2} f \mM g \, d\sigma = - \int_{\S^2} (\mM f) g \, d\sigma
\end{equation}
for all $f, g \in C^1(\S^2, \R)$.

$(ii)$ One has 
\begin{equation} \label{mM.explicit}
\mM f(x) 
= \la \mJ x , \grad \tilde f(x) \ra 
= x_1 \pa_{x_2} \tilde f(x) - x_2 \pa_{x_1} \tilde f(x) 
\end{equation} 
for all $x \in \S^2$, all $f \in C^1(\S^2, \R)$, 
where $\tilde f$ is any extension of $f$ to a neighborhood of $\S^2$; 
in particular, \eqref{mM.explicit} holds even if $\tilde f$ is not a homogeneous function. 

$(iii)$ One has the commutator identity
\begin{equation}  \label{mM.grad.commutator}
\mM (\grad_{\S^2} f) = \grad_{\S^2} (\mM f) + \mJ \grad_{\S^2} f
\end{equation} 
for all $f \in C^2(\S^2, \R)$. 
\end{lemma}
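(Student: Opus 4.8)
The plan is to prove the three parts in sequence, since $(ii)$ and $(iii)$ build on the explicit formula obtained in $(ii)$, and $(i)$ follows from $(ii)$ together with the divergence theorem on $\S^2$.

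For part $(ii)$, the key observation is that $\mM f(x) = \la \mJ x, \grad_{\S^2} f(x)\ra$ by definition, where $\grad_{\S^2} f = \grad \mE_0 f$, and by \eqref{tang.grad.any.ext} one has $\grad_{\S^2} f = \grad \tilde f - \la \grad \tilde f, x\ra x$ for any differentiable extension $\tilde f$. Substituting, $\mM f(x) = \la \mJ x, \grad \tilde f(x)\ra - \la \grad \tilde f, x\ra \la \mJ x, x\ra$. The crucial algebraic fact is that $\mJ$ is antisymmetric, so $\la \mJ x, x\ra = 0$ for every $x$; hence the second term vanishes and $\mM f(x) = \la \mJ x, \grad \tilde f(x)\ra$ for any extension $\tilde f$, homogeneous or not. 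Writing out $\mJ x = (-x_2, x_1, 0)$ gives the coordinate expression $x_1 \pa_{x_2}\tilde f - x_2 \pa_{x_1}\tilde f$. This part is essentially a one-line computation once one notices the antisymmetry of $\mJ$.

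For part $(i)$, I would apply the divergence theorem on $\S^2$, equation \eqref{div.thm.S2}, to the vector field $F := f g \, \mJ x$ (extended $0$-homogeneously, or using that \eqref{div.thm.S2} is stated for vector fields on $\S^2$; one can take $\mE_0$ of it). First compute $\la F, x\ra = fg \la \mJ x, x\ra = 0$, so the right-hand side of \eqref{div.thm.S2} is zero. Then $\div_{\S^2} F = \div \mE_0(fg\,\mJ x)$; expanding the divergence of the product, $\div(fg\,\mJ x) = \la \grad(fg), \mJ x\ra + fg\,\div(\mJ x)$, and $\div(\mJ x) = \mathrm{tr}(\mJ) = 0$. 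Using the Leibniz rule $\grad(fg) = g\grad f + f\grad g$ and part $(ii)$ (via \eqref{mM.explicit}), $\div_{\S^2} F = g\,\mM f + f\,\mM g$ on $\S^2$, so $\int_{\S^2}(f\mM g + g\mM f)\,d\sigma = 0$, which is \eqref{mM.is.anti.symm}. The only mild subtlety is to make sure the extension chosen is admissible for \eqref{div.thm.S2}; using $\mE_0$ throughout, or just invoking \eqref{mM.explicit} which permits arbitrary extensions, handles this cleanly.

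For part $(iii)$, the commutator identity, I would work componentwise using the coordinate formula from $(ii)$. Write $\mM = x_1 \pa_{x_2} - x_2 \pa_{x_1}$ as a first-order differential operator acting on (extensions of) functions; it is a genuine vector field on $\R^3$, namely the generator of rotations about the $x_3$-axis. The commutator of this vector field with the partial derivatives is classical: $[\mM, \pa_{x_1}] = \pa_{x_2}$, $[\mM, \pa_{x_2}] = -\pa_{x_1}$, $[\mM, \pa_{x_3}] = 0$, and the matrix encoding $(\pa_{x_2}, -\pa_{x_1}, 0)^\top$ acting on the gradient is exactly $\mJ\grad$. I would apply $\mM$ to each component $\pa_{x_j}\mE_0 f$ of $\grad_{\S^2} f = \grad\mE_0 f$, use these commutators to get $\mM(\pa_{x_j}\mE_0 f) = \pa_{x_j}(\mM \mE_0 f) + (\mJ\grad\mE_0 f)_j$, then restrict to $\S^2$; since $\mM\mE_0 f$ is itself $0$-homogeneous and agrees with $\mE_0(\mM f)$ (using $(ii)$), its gradient restricted to $\S^2$ is $\grad_{\S^2}(\mM f)$. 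Assembling the three components yields \eqref{mM.grad.commutator}. The one point requiring a little care — and the part I'd expect to be the main obstacle — is bookkeeping the extension/restriction: $\mM$ is defined intrinsically on $\S^2$ but the commutator computation is most transparent in $\R^3$, so I must verify that $\mM$ applied to a homogeneous extension commutes appropriately with the $0$-homogeneous extension operator $\mE_0$, i.e., that $\mM\mE_0 f = \mE_0(\mM f)$, which follows because $\mM$ (as the rotation vector field) is tangent to all spheres centered at the origin and hence preserves $0$-homogeneity.
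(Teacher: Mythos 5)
Your proposal is correct and follows essentially the same route as the paper: part $(ii)$ via \eqref{tang.grad.any.ext} and $\la \mJ x, x\ra = 0$, part $(i)$ via the divergence theorem \eqref{div.thm.S2} applied to $fg\,\mJ x$ (the paper uses the genuinely $0$-homogeneous extension $(\mE_0 f)(\mE_0 g)\mJ x|x|^{-1}$, whose divergence also vanishes, which settles the extension subtlety you flag), and part $(iii)$ via the Euclidean computation on $\mE_0 f$ together with the key observation $\mM\mE_0 f = \mE_0(\mM f)$, which the paper packages as a one-line product-rule identity rather than named commutators. One small slip: the commutators are in fact $[\mM,\pa_{x_1}] = -\pa_{x_2}$, $[\mM,\pa_{x_2}] = \pa_{x_1}$ (not the signs you wrote), and $\mJ\grad u = (-\pa_{x_2}u, \pa_{x_1}u, 0)$, so your two sign errors cancel and the componentwise identity $\mM(\pa_{x_j}\mE_0 f) = \pa_{x_j}(\mM\mE_0 f) + (\mJ\grad\mE_0 f)_j$ and the final formula \eqref{mM.grad.commutator} are nevertheless correct.
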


\begin{proof}
$(i)$ Consider the vector field $F(x) = f(x) g(x) \mJ x$ on $\S^2$, 
and consider its 0-homogeneous extension $\mE_0 F = (\mE_0 f) (\mE_0 g) \mJ x |x|^{-1}$.
Since the matrix $\mJ$ in \eqref{def.mJ} satisfies 
\[
\div (\mJ x) = 0, \quad \ 
\la \mJ x , x \ra = 0, \quad \ 
\div(\mJ x |x|^{-1}) = 0 
\]
for all $x \in \R^3$, $x \neq 0$, 
the tangential divergence of $F$ on $\S^2$ is 
\[
\div_{\S^2} F = \div (\mE_0 F) 
= \la \grad \{ (\mE_0 f)(\mE_0 g) \} , \mJ x \ra 
= (\mM f) g + f \mM g,
\]
and the divergence theorem on $\S^2$ in \eqref{div.thm.S2} 
gives identity \eqref{mM.is.anti.symm}.

$(ii)$ By \eqref{tang.grad.any.ext}, since $\la \mJ x , x \ra = 0$, one has 
\[
\mM f 
= \la \mJ x , \grad \tilde f - \la \grad \tilde f , x \ra x \ra 
= \la \mJ x , \grad \tilde f \ra.
\]

$(iii)$ 
One has $\grad_{\S^2} f = \grad \mE_0 f$ on $\S^2$, 
and $\grad \mE_0 f$ is defined in $\R^3 \setminus \{ 0 \}$. 
Hence $\grad \mE_0 f$ is an extension of $\grad_{\S^2} f$, 
and, by \eqref{mM.explicit}, 
\begin{equation} \label{mM.grad.temp.01}
\mM ( \grad_{\S^2} f ) = (x_1 \pa_{x_2} - x_2 \pa_{x_1}) \grad \mE_0 f.
\end{equation}
The scalar function $g := (x_1 \pa_{x_2} - x_2 \pa_{x_1}) \mE_0 f$  
is defined in $\R^3 \setminus \{ 0 \}$, 
it is a 0-homogeneous function, and it coincides with $\mM f$ on $\S^2$. 
Hence $g$ is the 0-homogeneous extension $\mE_0 (\mM f)$ of the function $\mM f$, 
and, by definition \eqref{def.grad.S2.Lap.S2}, one has 
\[
\grad_{\S^2} (\mM f) 
= \grad \mE_0 (\mM f) 
= \grad g 
= \grad \{ (x_1 \pa_{x_2} - x_2 \pa_{x_1}) \mE_0 f \}
= (x_1 \pa_{x_2} - x_2 \pa_{x_1}) \grad \mE_0 f - \mJ \grad \mE_0 f,
\]
where the last identity is an elementary calculation. 
Thus, recalling \eqref{mM.grad.temp.01}, we have proved \eqref{mM.grad.commutator}. 
\end{proof}

Define 
\begin{equation}  \label{def.mI}
\mI(\eta, \beta) := \int_{\S^2} (1+\eta)^2 (\mM \eta) \beta \, d\sigma
= - \frac13 \int_{\S^2} (1+\eta)^3 \mM \beta \, d\sigma, 
\end{equation}
where the last identity in \eqref{def.mI} follows from formula \eqref{mM.is.anti.symm}. 
By \eqref{def.mI}, 
the gradients of $\mI$ with respect to the $L^2(\S^2)$ scalar product are 
\begin{equation} 
\label{pa.mI}
\pa_\eta \mI(\eta, \beta) = - (1 + \eta)^2 \mM \beta,  
\quad \  
\pa_\beta \mI(\eta, \beta) = (1+\eta)^2 \mM \eta.
\end{equation}

Using \eqref{pa.mI} to substitute $\mM \eta, \mM \beta$, 
system \eqref{quasi.Ham.syst.eta.beta} becomes 
\[
\pa_\beta \mH_{\sigma_0}(\eta, \beta) - \om \pa_\beta \mI(\eta, \beta) = 0, 
\quad \ 
\pa_\eta \mH_{\sigma_0}(\eta, \beta) - \om \pa_\eta \mI(\eta, \beta) = 0, 
\]
that is, 
\begin{equation} \label{eq.critical.point}
\grad \mH_{\sigma_0}(\eta, \beta) 
- \om \grad \mI(\eta, \beta) = 0,
\end{equation}
where $\grad = (\pa_\eta, \pa_\beta)$ is the gradient 
with respect to the $L^2(\S^2, \R^2)$ scalar product.
Thus \eqref{eq.critical.point} is the equation of the critical points 
of the functional $\mH_{\sigma_0}(\eta, \beta) - \om \mI(\eta, \beta)$. 

Before starting with the study of equation \eqref{eq.critical.point}, 
in the next section we study some symmetries of the Hamiltonian $\mH$ in \eqref{def.mH}
and conserved quantities of system \eqref{quasi.Ham.syst.2.sigma.0.bis}.

\section{Symmetries, prime integrals and group action} 
\label{sec:symmetries}

In this section we study some symmetries of the Hamiltonians $\mH$ in \eqref{def.mH} 
and $\mH_{\sigma_0} = \mH - 2 \sigma_0 \mV$ in \eqref{def.mH.sigma.0} 
and the prime integrals corresponding to them by Noether's theorem, which are 

- the angular momentum $\mI(h,\psi)$ with respect to the $x_3$ axis (Lemma \ref{lem:conservation}); 

- the mass, or volume, $\mV(h)$ (Lemma \ref{lemma:mass});

- the barycenter velocity $\mB(h,\psi)$ (Lemma \ref{lem.invariance}). 

Then we observe an invariance property under a torus action (Lemma \ref{lemma:group.action}).

\medskip

It is convenient to introduce the Poisson brackets notation 
corresponding to the Hamiltonian structure of system \eqref{quasi.Ham.syst.2.sigma.0.bis}: 
given any two functionals $A(h,\psi)$, $B(h,\psi)$, we denote 
\begin{equation} \label{Poisson.brackets} 
\{ A, B \}(h, \psi) := \la \pa_h A(h,\psi) , \frac{\pa_\psi B(h,\psi)}{(1+h)^2} \ra_{L^2(\S^2)} 
- \la \pa_\psi A(h,\psi) , \frac{\pa_h B(h,\psi)}{(1+h)^2} \ra_{L^2(\S^2)},
\end{equation}
where $\pa_h A, \pa_\psi A, \pa_h B, \pa_\psi B$ are the gradients of $A, B$ 
with respect to the scalar product of $L^2(\S^2)$. 
We recall that, if $\{ A, \mH_{\s_0} \} = 0$, then, 
by the chain rule, the time derivative of $A(h(t),\psi(t))$ vanishes 
along any solution $(h(t), \psi(t))$ of \eqref{quasi.Ham.syst.2.sigma.0.bis},  
and $A$ is a prime integral of system \eqref{quasi.Ham.syst.2.sigma.0.bis}.

Now we show first that $\mI$ in \eqref{def.mI} is a prime integral of \eqref{quasi.Ham.syst.2.sigma.0.bis}
associated to the invariance of $\mH_{\sigma_0}$ with respect to the action of the rotations $R(\th)$.

\begin{lemma} \label{lem:conservation}
Let $R(\th)$, $\th \in \T := \R / 2 \pi \Z$, be the rotation matrix \eqref{def.R(th)}, 
and let $\mT_{\th}$ denote the composition operator 
\begin{equation}  \label{def.mT.theta}
\mT_\th f = f \circ R(\th), 
\end{equation}
that is, $(\mT_\th f)(x) = f(R(\th)x)$ for all $x \in \S^2$,
for any function $f$ defined on $\S^2$. Then

$(i)$ the Hamiltonian $\mathcal{H}$ in \eqref{def.mH} 
and the volume $\mV$ in \eqref{def.volume}
are $\mT_\th$-invariant, i.e., 
\begin{equation}  \label{Ham.is.invariant}
\mH \circ \mT_\th = \mH,
\quad 
\mV \circ \mT_\th = \mV,
\end{equation}
that is, $\mH( \mT_\th h, \mT_\th \psi) = \mH( h, \psi )$ 
and $\mV(\mT_\th h) = \mV(h)$ 
for all $(h, \psi)$; 

$(ii)$ the infinitesimal generator $\tfrac{d}{d\th} \big|_{\th=0} R(\th)$ 
of the one-parameter group of rotations $\{ R(\th) : \th \in \T \}$  
is the matrix $\mJ$ in \eqref{def.mJ};

$(iii)$ the infinitesimal generator $\tfrac{d}{d\th} \big|_{\th=0} \mT_\th$ 
of the one-parameter group of composition operators $\{ \mT_\th : \th \in \T \}$
is the differential operator $\mM$ in \eqref{def.mM};

$(iv)$ the functional $\mI$ defined in \eqref{def.mI} 
satisfies $\{ \mI , \mH \} = 0$ and $\{ \mI, \mV \} = 0$, 
it is a prime integral of the Hamiltonian system \eqref{quasi.Ham.syst.2.sigma.0.bis},  
and it is the angular momentum associated to the rotation invariance \eqref{Ham.is.invariant}. 
\end{lemma}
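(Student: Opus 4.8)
The plan is to verify the four items essentially by direct computation, exploiting the structure of the rotation group and the earlier formulas for the gradients.

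For part $(i)$, I would argue that $\mT_\th$ preserves both summands of $\mH$ and the volume $\mV$ because $R(\th)$ is an orthogonal matrix fixing $\S^2$, so it preserves the surface measure $d\sigma$, it commutes with $\Delta_{\S^2}$, and it acts isometrically on tangent vectors so that $|\grad_{\S^2}(\mT_\th h)|(x) = |\grad_{\S^2} h|(R(\th)x)$. More precisely, one checks $\grad_{\S^2}(f\circ R(\th)) = R(\th)^{\top}(\grad_{\S^2} f)\circ R(\th)$, which gives $J(\mT_\th h) = (\mT_\th)(J(h))$ with $J$ as in \eqref{def.J}; similarly the Dirichlet--Neumann operator is equivariant, $G(\mT_\th h)(\mT_\th\psi) = \mT_\th(G(h)\psi)$, since the harmonic extension of $\psi\circ R(\th)$ on the rotated domain $R(\th)^{-1}\Om$ is just $\Phi\circ R(\th)$ and the normal derivative transforms covariantly. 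Plugging these equivariances into \eqref{def.mH} and \eqref{def.volume} and changing variables $x\mapsto R(\th)x$ in the integrals yields \eqref{Ham.is.invariant}.

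Parts $(ii)$ and $(iii)$ are immediate: differentiating \eqref{def.R(th)} at $\th=0$ gives the matrix $\mJ$ in \eqref{def.mJ}, and then for $f\in C^1(\S^2)$ one has $\frac{d}{d\th}\big|_{\th=0}(\mT_\th f)(x) = \frac{d}{d\th}\big|_{\th=0} f(R(\th)x) = \la \grad \tilde f(x), \mJ x\ra = \mM f(x)$ by the chain rule and part $(ii)$ of Lemma \ref{lem:commutation.formula}, recalling \eqref{def.mM}.

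For part $(iv)$, I would first establish $\{\mI,\mH\}=0$ and $\{\mI,\mV\}=0$. The cleanest route is Noether's theorem in the Hamiltonian formulation: differentiating the invariance $\mH\circ\mT_\th=\mH$ of part $(i)$ at $\th=0$ gives, by parts $(iii)$, the identity $\la\pa_h\mH,\mM h\ra_{L^2} + \la\pa_\psi\mH,\mM\psi\ra_{L^2}=0$ for all $(h,\psi)$; combining this with the gradient formulas \eqref{pa.mI} for $\mI$ and the definition \eqref{Poisson.brackets} of the Poisson bracket, the factors $(1+h)^2$ cancel against the $(1+h)^2$ in \eqref{pa.mI} and one gets $\{\mI,\mH\}=0$; the same computation with $\mV$ in place of $\mH$, using \eqref{Ham.is.invariant} for $\mV$ and \eqref{pa.mV}, gives $\{\mI,\mV\}=0$. (Alternatively one can compute both brackets directly using \eqref{pa.mI}, \eqref{quasi.Ham.syst.2.sigma.0}, \eqref{pa.mV}, and the antisymmetry \eqref{mM.is.anti.symm} of $\mM$; this is more hands-on but elementary.) Since $\mH_{\sigma_0}=\mH-2\sigma_0\mV$, bilinearity of the bracket gives $\{\mI,\mH_{\sigma_0}\}=0$, so by the chain rule $\mI$ is constant along solutions of \eqref{quasi.Ham.syst.2.sigma.0.bis}, i.e., a prime integral; and the derivation via differentiating the rotation invariance identifies $\mI$ precisely as the conserved momentum associated to the one-parameter group $\{R(\th)\}$.

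The main obstacle is part $(i)$, specifically the equivariance $G(\mT_\th h)(\mT_\th\psi)=\mT_\th(G(h)\psi)$ of the Dirichlet--Neumann operator: one must be careful that rotating the elevation function corresponds to rotating the fluid domain by $R(\th)^{-1}$, that the unit normal transforms as $\nu_{R(\th)^{-1}\Om}(R(\th)^{-1}y)=R(\th)^{-1}\nu_{\Om}(y)$, and that the harmonic extension transforms accordingly, so that the scalar product in \eqref{def:Dirichlet-Neumann} is preserved. Once this geometric equivariance is pinned down, everything else is bookkeeping with change of variables in integrals over $\S^2$ and the chain rule.
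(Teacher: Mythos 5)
Your proposal is correct and follows essentially the same route as the paper: change of variables $R(\th)x=y$ together with the equivariance of $\grad_{\S^2}$ and of the Dirichlet--Neumann operator for $(i)$, elementary differentiation for $(ii)$--$(iii)$, and the Noether-type identity obtained by differentiating $\mH\circ\mT_\th=\mH$ (resp. $\mV\circ\mT_\th=\mV$) at $\th=0$, combined with \eqref{pa.mI} and \eqref{Poisson.brackets}, for $(iv)$. The only difference is that the paper simply cites the equivariance facts $G(\mT_\th h)[\mT_\th\psi]=\mT_\th(G(h)\psi)$ and $|\grad_{\S^2}(\mT_\th h)|=|\mT_\th(\grad_{\S^2}h)|$ from Lemma 6.2 of \cite{B.J.LM}, whereas you sketch their direct verification via the rotated harmonic extension; your sketch is accurate.
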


\begin{proof} 
$(i)$ As noticed in Lemma 6.2 of \cite{B.J.LM}, one has 
\[
G( \mT_\th h)[ \mT_\th \psi] = \mT_\th ( G(h)\psi ), 
\quad \  
| \grad_{\S^2} (\mT_\th h) | = | \mT_\th (\grad_{\S^2} h) |. 
\]
Hence \eqref{Ham.is.invariant} follows from the change of integration variable 
$R(\th) x = y$ in formulas \eqref{def.mH} and \eqref{def.volume}. 
$(ii)$ By \eqref{def.R(th)}, the identity is elementary. 
$(iii)$ The identity follows from $(ii)$ and the chain rule. 
$(iv)$ By \eqref{Ham.is.invariant}, one has $\tfrac{d}{d\th} \mH(\mT_\th h, \mT_\th \psi) = 0$. 
This identity at $\th = 0$, using the chain rule and $(iii)$, gives 
\begin{equation} \label{Noether.temp.01}
\la \pa_h \mH(h,\psi) , \mM h \ra_{L^2(\S^2)} 
+ \la \pa_\psi \mH(h,\psi) , \mM \psi \ra_{L^2(\S^2)}
= 0.
\end{equation}
Using \eqref{pa.mI}, we deduce that $\{ \mI , \mH \} = 0$. 
Similarly, one proves that $\{ \mI , \mV \} = 0$.
\end{proof}

Second, in the next lemma we prove that $\mV$ itself 
is a prime integral of system \eqref{quasi.Ham.syst.2.sigma.0.bis}.
This is closely related to the invariance already observed in the lines 
surrounding equation \eqref{syst.pat.h.pat.psi.2.sigma.0}, 
corresponding to the basic fact that the velocity potential $\Phi$ is defined up to a constant.

\begin{lemma} \label{lemma:mass}
Let $\mS_a$, $a \in \R$, denote the shift operator 
\[
\mS_a (h,\psi) = (h, \psi + a).
\]
Then
$(i)$ the Hamiltonian $\mathcal{H}$ in \eqref{def.mH} is $\mS_a$-invariant, i.e., 
\begin{equation}  \label{Ham.is.invariant.shift}
\mH \circ \mS_a = \mH;
\end{equation}

$(ii)$ the infinitesimal generator $\tfrac{d}{da} \big|_{a=0} \mS_a$ 
of the one-parameter group of operators $\{ \mS_a : a \in \R \}$
is the constant map $(h,\psi) \mapsto (0,1)$;

$(iii)$ the functional $\mV$ defined in \eqref{def.volume} 
satisfies $\{ \mV , \mH \} = 0$, 
it is a prime integral of the Hamiltonian system \eqref{quasi.Ham.syst.2.sigma.0.bis},  
and it is the fluid mass, associated to the invariance \eqref{Ham.is.invariant.shift}.
\end{lemma}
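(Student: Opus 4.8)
The plan is to prove the three items in order; each is a short computation built on the formulas already recorded above, the only slightly non-trivial input being one standard property of the Dirichlet--Neumann operator.

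For $(i)$, I would substitute $\psi+a$ for $\psi$ in \eqref{def.mH}. The capillarity term $\sigma_0\int_{\S^2}(1+h)\sqrt{(1+h)^2+|\grad_{\S^2}h|^2}\,d\sigma$ does not involve $\psi$. For the kinetic term, since $G(h)$ is linear and $G(h)[1]=0$ (the harmonic extension of a constant is that constant, whose normal derivative vanishes), one has $G(h)(\psi+a)=G(h)\psi$, whence $(\psi+a)\,G(h)(\psi+a)=\psi\,G(h)\psi+a\,G(h)\psi$, and it remains only to check
\[
\int_{\S^2}(G(h)\psi)\,(1+h)\sqrt{(1+h)^2+|\grad_{\S^2}h|^2}\,d\sigma=0.
\]
This equals $\int_{\pa\Om}\la\grad\Phi,\nu_\Om\ra\,dS=\int_\Om\Delta\Phi\,dx=0$ by the divergence theorem and harmonicity of $\Phi$ (equivalently, it is the symmetry of $G(h)$ in the relevant weighted $L^2$ pairing together with $G(h)[1]=0$), a property of $G(h)$ recalled from \cite{B.J.LM}. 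Hence $\mH\circ\mS_a=\mH$.

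Item $(ii)$ is immediate: $\tfrac{d}{da}\big|_{a=0}\mS_a(h,\psi)=\tfrac{d}{da}\big|_{a=0}(h,\psi+a)=(0,1)$. For $(iii)$, I would first show $\{\mV,\mH\}=0$. By \eqref{pa.mV} one has $\pa_\psi\mV=0$ and $\pa_h\mV=(1+h)^2$, so the definition \eqref{Poisson.brackets} collapses to
\[
\{\mV,\mH\}(h,\psi)=\Big\langle(1+h)^2,\,\frac{\pa_\psi\mH(h,\psi)}{(1+h)^2}\Big\rangle_{L^2(\S^2)}=\int_{\S^2}\pa_\psi\mH(h,\psi)\,d\sigma.
\]
By \eqref{quasi.Ham.syst} and \eqref{def.X.1}, $\pa_\psi\mH=(1+h)^2X_1(h,\psi)=(1+h)\sqrt{(1+h)^2+|\grad_{\S^2}h|^2}\,G(h)\psi$, and its integral over $\S^2$ vanishes by the identity used in $(i)$; alternatively, differentiating \eqref{Ham.is.invariant.shift} at $a=0$ and using $(ii)$ gives $\la\pa_\psi\mH,1\ra_{L^2(\S^2)}=0$ directly, which is the Noether shortcut. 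Thus $\{\mV,\mH\}=0$, and $\{\mV,\mV\}=0$ is trivial since $\pa_\psi\mV=0$; therefore $\{\mV,\mH_{\sigma_0}\}=\{\mV,\mH\}-2\sigma_0\{\mV,\mV\}=0$. By the chain rule recalled before the lemma, $\mV$ is then constant along solutions of \eqref{quasi.Ham.syst.2.sigma.0.bis}, hence a prime integral, and it is the fluid mass/volume by its definition \eqref{def.volume}.

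The only step that is not pure bookkeeping is the vanishing of $\int_{\S^2}(G(h)\psi)(1+h)\sqrt{(1+h)^2+|\grad_{\S^2}h|^2}\,d\sigma$, but this is the familiar consequence of $G(h)[1]=0$ and Green's identity for the Dirichlet--Neumann operator, so I do not anticipate a genuine obstacle; the remainder is a direct manipulation of \eqref{def.mH}, \eqref{pa.mV}, \eqref{quasi.Ham.syst} and \eqref{Poisson.brackets}.
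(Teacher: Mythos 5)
Your proposal is correct and follows essentially the same route as the paper: $G(h)1=0$ plus linearity for the shift invariance, the trivial computation for $(ii)$, and the Noether/Poisson-bracket argument for $(iii)$. The only cosmetic difference is that you justify the key identity $\int_{\S^2}(G(h)\psi)(1+h)\sqrt{(1+h)^2+|\grad_{\S^2}h|^2}\,d\sigma=0$ by pulling back to $\pa\Om$ and applying the divergence theorem to the harmonic extension, whereas the paper invokes the symmetry of $G(h)$ in the weighted pairing (cited from \cite{B.J.LM}) together with $G(h)1=0$ --- an equivalent argument that you yourself mention as an alternative.
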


\begin{proof} 
$(i)$ By definition \eqref{def:Dirichlet-Neumann}, one has $G(h) 1 = 0$ and, by linearity, 
$G(h)(\psi + a) = G(h) \psi$. Moreover, 
\[
\int_{\S^2} \psi_1 (G(h)\psi_2) (1 + h) J \, d\sigma 
= \int_{\S^2} \psi_2 (G(h)\psi_1) (1 + h) J \, d\sigma 
\]
for all $\psi_1, \psi_2$, with $J$ in \eqref{def.J}, see (3.22) in \cite{B.J.LM}. 
Hence \eqref{Ham.is.invariant.shift} follows from formula \eqref{def.mH}. 
$(ii)$ Trivial. 
$(iii)$ By \eqref{Ham.is.invariant.shift}, one has $\tfrac{d}{da} \mH(\mS_a (h,\psi)) = 0$. 
This identity at $a = 0$, by the chain rule and $(ii)$, gives 
\begin{equation} \label{Noether.temp.02}
\la \pa_h \mH(h,\psi) , 0 \ra_{L^2(\S^2)} 
+ \la \pa_\psi \mH(h,\psi) , 1 \ra_{L^2(\S^2)}
= 0.
\end{equation}
Hence, by \eqref{pa.mV}, $\{ \mV , \mH \} = 0$. 
Also $\{ \mV , \mV \}$ trivially vanishes.
\end{proof}

Third, we consider the quantity
\begin{equation}  \label{def.mB}
\mB(h,\psi) := \frac12 \int_{\S^2} (1+h)^2 \grad_{\S^2} \psi \, d\sigma
= \int_{\S^2} x (1+h)^2 \psi \, d\sigma 
- \int_{\S^2} (1+h) \psi \grad_{\S^2} h \, d\sigma,
\end{equation}
which is the fluid barycenter velocity multiplied by the volume of $\Om_t$, 
written in terms of $(h, \psi)$. 
Note that $\mB(h,\psi)$ is a vector in $\R^3$. 
The second identity in \eqref{def.mB} is obtained by applying the divergence theorem on $\S^2$ 
\eqref{div.thm.S2} to the vector field $F = (1 + h)^2 \psi e_k$ for $k = 1,2,3$. 
By \eqref{def.mB}, the gradients of $\mB$ with respect to the $L^2(\S^2)$ scalar product are 
\begin{equation} \label{pa.mB}
\pa_h \mB(h,\psi) = (1+h) \grad_{\S^2} \psi, 
\quad \ 
\pa_\psi \mB(h,\psi) = x (1+h)^2 - (1+h) \grad_{\S^2} h.
\end{equation}
The symmetry of $\mH$ associated to $\mB$ is the translation invariance 
of the original liquid drop problem, 
whose definition, in our geometric formulation where the fluid domain is parametrized 
as a graph over the (fixed, i.e., non translated) unit sphere, is nontrivial. 
In the next lemma we prove that, for any small elevation function $h$, 
the translated boundary $\pa \Om + \alpha = \{ x (1 + h(x)) + \alpha : x  \in \S^2 \}$ 
is also parametrized as $\{ \xi ( 1 + h_\alpha(\xi)) : \xi \in \S^2 \}$, 
where $h_\alpha$ is another suitable elevation function.

\begin{lemma} 
\label{lemma:costruz.h.alpha}
Let $\a \in \R^3$, $|\a| < \tfrac14$, 
and let $h \in C^1(\S^2, \R)$, $\| h \|_{W^{1,\infty}(\S^2)} < \frac16$. 

\medskip

$(i)$ For every $x \in \S^2$ there exists a unique pair $(\xi, \lm) \in \S^2 \times (0, \infty)$ 
such that 
\begin{equation} \label{eq.x.xi.lm}
(1 + h(x)) x + \alpha = \lm \xi,
\end{equation}
which is 
\begin{equation} \label{formula.xi.lm}
\xi = \frac{ (1 + h(x)) x + \alpha }{ |(1 + h(x)) x + \alpha| }, \quad \ 
\lm = |(1 + h(x)) x + \alpha|.
\end{equation}

\medskip

$(ii)$ For every $\xi \in \S^2$ there exists a unique pair $(x, \lm) \in \S^2 \times (0, \infty)$ 
satisfying \eqref{eq.x.xi.lm}.

\medskip

$(iii)$ Let $f : \S^2 \to \S^2$, $x \mapsto f(x)$ 
be the function mapping any point $x \in \S^2$ to the point $\xi$ given by item $(i)$,  
and let $g : \S^2 \to \S^2$, $\xi \mapsto g(\xi)$  
be the function mapping any point $\xi \in \S^2$ to the point $x$ given by item $(ii)$. 
Both $f$ and $g$ are bijective maps of $\S^2$, 
and one is the inverse of the other.

\medskip

$(iv)$ Let $h_\a : \S^2 \to \R$, $\xi \mapsto h_\a (\xi)$ 
be the function mapping any point $\xi \in \S^2$ into $\lm - 1$, 
where $\lm$ is the real number given by item $(ii)$. 
Then 
\begin{align} \label{eq.h.alpha.xi}
(1 + h(g(\xi))) g(\xi) + \alpha = (1 + h_\a(\xi)) \xi, \quad \ 
(1 + h(x)) x + \alpha = (1 + h_\a( f(x) )) f(x) 
\end{align}
for all $x, \xi \in \S^2$. 
The functions $f, g, h_\alpha$ are determined by $\alpha, h$, 
in the sense that there are no other functions satisfying \eqref{eq.h.alpha.xi}.

\medskip

$(v)$ One has $f, g \in C^1(\S^2,\S^2)$, $h_\a \in C^1(\S^2,\R)$, with 
\begin{align} 
\| f - \mathrm{id} \|_{W^{1,\infty}(\S^2)}
+ \| g - \mathrm{id} \|_{W^{1,\infty}(\S^2)} 
+ \| h_\alpha - h \circ g \|_{W^{1,\infty}(\S^2)}
& \leq C |\a|. 
\label{est.h.alpha}
\end{align}
 \end{lemma}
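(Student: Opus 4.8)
The entire statement reduces to a single fixed-point / implicit-function argument for the scalar equation \eqref{eq.x.xi.lm}, so the plan is to handle $(i)$ directly, then deduce $(ii)$--$(v)$ as essentially bookkeeping consequences. For $(i)$, note that the pair $(\xi,\lm)$ solving $(1+h(x))x+\alpha = \lm\xi$ with $\xi\in\S^2$ and $\lm>0$ must be given by \eqref{formula.xi.lm}, provided the vector $(1+h(x))x+\alpha$ is nonzero; since $\|h\|_{W^{1,\infty}}<\tfrac16$ gives $1+h(x)>\tfrac56$ and $|\alpha|<\tfrac14$, one has $|(1+h(x))x+\alpha|\ge \tfrac56-\tfrac14>0$, so the formula makes sense and gives existence and uniqueness simultaneously. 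That is the whole of $(i)$.

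For $(ii)$, I would solve the same equation with $\xi$ prescribed and $x\in\S^2$, $\lm>0$ unknown. Writing $x = (1/(1+h(x)))(\lm\xi-\alpha)$ and imposing $|x|=1$, the problem becomes: find $\lm>0$ with $|\lm\xi-\alpha| = 1+h\!\big((\lm\xi-\alpha)/|\lm\xi-\alpha|\big)$, and then set $x=(\lm\xi-\alpha)/|\lm\xi-\alpha|$. I would set this up as a fixed point: define $\Psi(\lm) := $ the unique positive root (in $\lm$) of $|\lm\xi-\alpha|=1+h(\cdots)$ — more cleanly, observe that the map $x\mapsto (1+h(x))x+\alpha$ is a $C^1$ perturbation of the identity of size $<\tfrac14+\tfrac16<\tfrac12$ on $\S^2$ (after the radial normalization), hence injective, and its image is a $C^1$ surface enclosing the origin; the ray $\R_{>0}\xi$ meets this surface exactly once. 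Concretely, the function $\lm\mapsto |\lm\xi - \alpha|$ is strictly increasing for $\lm$ large and the balance equation $|\lm\xi-\alpha| = 1+h(\text{unit vector})$ has a unique solution by a monotonicity/continuity argument on $[\,\tfrac14,\,2\,]$, using that the right side lies in $(\tfrac56,\tfrac76)$ while the left side sweeps monotonically past that window. This gives existence and uniqueness in $(ii)$.

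Items $(iii)$ and $(iv)$ are then formal: $f$ and $g$ are defined by $(i)$ and $(ii)$; the two identities in \eqref{eq.h.alpha.xi} are just \eqref{eq.x.xi.lm} read with $h_\alpha(\xi):=\lm-1$, and $f\circ g=\mathrm{id}$, $g\circ f=\mathrm{id}$ follow from the uniqueness clauses in $(i)$--$(ii)$ (if $\xi=f(x)$ then $(x,\lm)$ solves \eqref{eq.x.xi.lm} for that $\xi$, and by uniqueness in $(ii)$ this $x$ equals $g(\xi)$, etc.). Uniqueness of the triple $(f,g,h_\alpha)$ satisfying \eqref{eq.h.alpha.xi} again follows from the pointwise uniqueness in $(i)$--$(ii)$. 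For $(v)$, smoothness of $f$ comes from \eqref{formula.xi.lm}: the numerator is a $C^1$ function of $x$ that is bounded away from $0$, so $\xi=f(x)$ is $C^1$; then $g=f^{-1}$ is $C^1$ by the inverse function theorem once one checks $df$ is invertible — which holds since $df = \mathrm{id} + O(|\alpha|+\|h\|_{W^{1,\infty}})$ and the perturbation is $<1$. The estimate \eqref{est.h.alpha} is obtained by differentiating \eqref{formula.xi.lm} and using $|(1+h(x))x+\alpha - x| \le |h(x)| + |\alpha| \le C(|\alpha|+\|h\|_{W^{1,\infty}})$; for the term $\|h_\alpha - h\circ g\|_{W^{1,\infty}}$, note $h_\alpha(\xi) - h(g(\xi)) = |(1+h(g(\xi)))g(\xi)+\alpha| - 1 - h(g(\xi)) = |(1+h(g(\xi)))g(\xi)+\alpha| - |(1+h(g(\xi)))g(\xi)|$, which by the reverse triangle inequality is bounded by $|\alpha|$ pointwise, and the $W^{1,\infty}$ bound follows by differentiating and using the already-established estimate on $\|g-\mathrm{id}\|_{W^{1,\infty}}$.

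The main obstacle is $(ii)$: establishing existence and uniqueness of $x$ (equivalently $\lm$) for prescribed $\xi$, because here the unknown appears both as the radial scaling and inside the argument of $h$. The clean way around it is to phrase it as the invertibility of the $C^1$ near-identity map $\S^2\ni x\mapsto (1+h(x))x+\alpha$ composed with radial projection, i.e. to prove this composite is a $C^1$ diffeomorphism of $\S^2$ by a global inverse function / degree argument using the smallness $|\alpha|+\|h\|_{W^{1,\infty}}<\tfrac12$; once that is in hand, $(ii)$ is immediate and $(iii)$--$(v)$ follow with only routine estimates. The quantitative thresholds $|\alpha|<\tfrac14$ and $\|h\|_{W^{1,\infty}}<\tfrac16$ are precisely what make all the denominators controllable and the perturbation a genuine contraction, so I would keep track of them throughout.
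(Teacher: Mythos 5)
Your items $(i)$, $(iii)$, $(iv)$ and the outline of $(v)$ are fine and match the paper's logic, but there is a genuine gap exactly at the step you yourself flag as the main obstacle: uniqueness in $(ii)$. Your "concrete" argument says that $\lambda\mapsto|\lambda\xi-\alpha|$ is monotone and "the right side lies in $(\tfrac56,\tfrac76)$ while the left side sweeps monotonically past that window". That localizes the solutions and gives \emph{existence} by the intermediate value theorem, but it does not give uniqueness: the right-hand side $1+h\big((\lambda\xi-\alpha)/|\lambda\xi-\alpha|\big)$ moves with $\lambda$, so inside that window the two sides could a priori cross several times. To close this you must show that the $\lambda$-derivative of the composed term is dominated by that of $|\lambda\xi-\alpha|$, e.g.\ using $\big|\tfrac{d}{d\lambda}h(u(\lambda))\big|\le \|\nabla_{\S^2}h\|_{L^\infty}\,|\alpha|/|\lambda\xi-\alpha|^2$ for $u(\lambda)=(\lambda\xi-\alpha)/|\lambda\xi-\alpha|$, together with the a priori bound $|\lambda\xi-\alpha|=1+h>\tfrac56$ coming from \eqref{eq.x.xi.lm}; with $|\alpha|<\tfrac14$, $\|h\|_{W^{1,\infty}(\S^2)}<\tfrac16$ the balance function is then strictly increasing where roots can lie, and uniqueness follows. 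The same remark applies to your alternative phrasing "the ray $\R_{>0}\xi$ meets this surface exactly once": injectivity of $x\mapsto(1+h(x))x+\alpha$ and the fact that the surface encloses the origin do not by themselves exclude a ray crossing it three times; what is really needed (and is equivalent to the missing estimate) is star-shapedness of the translated domain with respect to the origin, which again requires a quantitative use of the bound on $\nabla_{\S^2}h$. As written, both versions assert the conclusion of $(ii)$ rather than prove it.

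For comparison, the paper avoids this issue by a different mechanism: it first solves the scalar condition $|\lambda\xi-\alpha|=1+h(x)$ exactly, obtaining $\lambda$ as the unique positive root of a quadratic in terms of $(x,\xi)$, and then recasts \eqref{eq.x.xi.lm} as a fixed-point equation $x=\phi(x)$ on $\S^2$ with $\phi(x)=(r(x)\xi-\alpha)/(1+h(x))$, showing $|\phi(x)-\phi(y)|\le 6\|h\|_{W^{1,\infty}(\S^2)}|x-y|<|x-y|$; the contraction mapping theorem then yields existence and uniqueness simultaneously, and this uniqueness is what drives $(iii)$–$(iv)$. Your route (monotonicity in $\lambda$, or a global inverse function/degree argument for the radially projected map) can be made to work with the stated thresholds, but only after supplying the derivative estimates sketched above, which are absent from your proposal; the remaining parts of your argument would then go through essentially as in the paper.
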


\begin{proof} 
$(i)$ Let $x \in \S^2$. One has 
\[
|(1 + h(x)) x + \alpha| 
\geq |(1 + h(x))x| - |\alpha| 
\geq 1 - \| h \|_\infty - |\alpha| 
> \tfrac12.
\]
Hence the vector $(1 + h(x)) x + \alpha$ is nonzero, 
and therefore the pair $(\xi, \lm)$ in \eqref{formula.xi.lm} 
is well defined, it belongs to $\S^2 \times (0,\infty)$ 
and it solves \eqref{eq.x.xi.lm}. 
If $(\xi', \lm') \in \S^2 \times (0, \infty)$ is another solution of \eqref{eq.x.xi.lm}, 
then $\lm \xi = \lm' \xi'$, 
whence $\lm = |\lm \xi| = |\lm' \xi'| = \lm'$, 
and therefore also $\xi = \xi'$.

$(ii)$ First, we prove a necessary condition for $\lm$. 
Suppose that $x, \xi \in \S^2$, $\lm \in (0, \infty)$ satisfy \eqref{eq.x.xi.lm}.
Then 
\begin{equation}  \label{S2.cond.phi}
| \lm \xi - \alpha | = | (1 + h(x)) x | = 1 + h(x),
\end{equation}
and, taking the square, 
\[
\lm^2 - 2 \lm \la \xi , \alpha \ra + |\a|^2 - (1 + h(x))^2 = 0.
\]
This is a polynomial in $\lm$ of degree 2 with discriminant
\begin{equation} \label{bound.Delta}
\tfrac14 \Delta = \la \xi , \alpha \ra^2 - |\a|^2 + (1 + h(x))^2 
\geq \la \xi , \alpha \ra^2 - |\a|^2 + (1 - \| h \|_\infty)^2 
\geq \la \xi , \alpha \ra^2 + \tfrac12,
\end{equation}
having two distinct real roots of opposite sign. 
Hence $\lm$ is the unique real positive root
\begin{equation} \label{formula.lm}
\lm = \la \xi , \alpha \ra + \sqrt{ \la \xi, \alpha \ra^2 - |\alpha|^2 + (1 + h(x))^2 }.
\end{equation}
We also observe that, vice versa, \eqref{formula.lm} implies \eqref{S2.cond.phi}, 
that is, given $x, \xi \in \S^2$, 
if $\lm$ is the number in \eqref{formula.lm},  
then identity \eqref{S2.cond.phi} holds.

Now we use the necessary condition \eqref{formula.lm} to solve equation \eqref{eq.x.xi.lm}
with $\xi$ given and $(x,\lm)$ as unknown.  
Let $\xi \in \S^2$. For any $x \in \S^2$, denote $r(x)$ the right-hand side of \eqref{formula.lm}. 
A pair $(x, \lm) = (x, r(x))$ solves equation \eqref{eq.x.xi.lm} if and only if 
$x$ solve the fixed point equation 
\begin{equation} \label{fix.pt.eq}
x = \phi(x), \quad \ 
\phi(x) := \frac{ r(x) \xi - \alpha }{ 1+h(x) }.
\end{equation}
For all $x \in \S^2$ one has $|\phi(x)| = 1$ because \eqref{formula.lm} implies \eqref{S2.cond.phi}.
Hence $\phi$ is a map of $\S^2$ into itself. 
We consider $\S^2$ as the complete metric space where the distance between two points $x, y \in \S^2$ 
is the $\R^3$ Euclidean norm $|x-y|$, and we show that $\phi$ is a contraction mapping on $\S^2$.  
For all $x, y \in \S^2$, one has 
\begin{align*}
\phi(x) - \phi(y) 
& = \frac{ ( r(x) - r(y) ) \xi }{ 1+h(x) } 
+ \frac{ ( r(y) \xi - \alpha ) (h(y) - h(x)) }{ (1 + h(x)) (1 + h(y)) },
\\
|\phi(x) - \phi(y)| 
& \leq \frac{ |r(x) - r(y)| + |h(y) - h(x)| }{ 1 + h(x) }
\end{align*}
because $|r(y) \xi - \alpha| = 1 + h(y)$, 
and
\begin{align*}
r(x) - r(y) & = \frac{ (2 + h(x) + h(y)) (h(x) - h(y)) }
{ \sqrt{ \la \xi, \alpha \ra^2 - |\alpha|^2 + (1 + h(x))^2 } 
+ \sqrt{ \la \xi, \alpha \ra^2 - |\alpha|^2 + (1 + h(y))^2 }},
\\ 
|r(x) - r(y)| & \leq 3 | h(x) - h(y) |
\end{align*}
by \eqref{bound.Delta}. Hence 
\begin{equation} \label{lip.phi.lip.h}
|\phi(x) - \phi(y)| \leq 6 |h(y) - h(x)|.
\end{equation} 
Moreover, 
\begin{equation} \label{h.lip}
|h(y) - h(x)| \leq \| h \|_{W^{1,\infty}(\S^2)} |x-y|.
\end{equation}
To prove \eqref{h.lip}, consider the 1-homogeneous extension $\mE_1 h(x) := |x| \mE_0 h(x)$, 
integrate the derivative of the function $s \mapsto \mE_1 h (y + s(x-y))$ over the interval $[0,1]$, 
use the fact that $\grad \mE_1 h$ is $0$-homogeneous to obtain 
$\| \grad \mE_1 h \|_{L^\infty(B_1)} = \| \grad \mE_1 h \|_{L^\infty(\S^2)}$, 
and $\grad \mE_1 h(x) = \grad_{\S^2} h(x) + h(x) x$ on $\S^2$, 
$|\grad \mE_1 h| \leq |\grad_{\S^2} h| + |h|$ on $\S^2$. 

By \eqref{lip.phi.lip.h} and \eqref{h.lip}, 
for $\| h \|_{W^{1,\infty}(\S^2)} < \tfrac16$, 
$\phi$ is a contraction map on the complete metric space $(\S^2, | \cdot |)$, 
and therefore it has a unique fixed point $x = \phi(x) \in \S^2$. 
Then, by the definition of $\phi$ in \eqref{fix.pt.eq}, 
the pair $(x,\lm) = (x, r(x))$ is a solution of \eqref{eq.x.xi.lm}.

To prove the uniqueness, assume that a pair $(x,\lm) \in \S^2 \times (0,\infty)$ solves \eqref{eq.x.xi.lm}. 
Then, as proved above, $\lm$ must be the number in \eqref{formula.lm}, 
that is, $\lm = r(x)$, and therefore, by identity \eqref{eq.x.xi.lm},
$x$ is a fixed point of $\phi$. Hence $(x,\lm)$ is the pair $(x, r(x))$ where $x = \phi(x)$.

$(iii)$ 
Let $\S^2 \to \S^2 \times (0, \infty)$, $x \mapsto (f(x), \lm_0(x))$ 
be the function that maps any $x \in \S^2$ to the unique solution $(\xi, \lm)$ 
of \eqref{eq.x.xi.lm}, whose existence and uniqueness is given by item $(i)$.  
Similarly, let $\S^2 \to \S^2 \times (0, \infty)$, $\xi \mapsto (g(\xi), \lm_1(\xi))$ 
be the function that maps any $\xi \in \S^2$ to the unique solution $(x, \lm)$ 
of \eqref{eq.x.xi.lm}, whose existence and uniqueness is given by item $(ii)$.  
For every $x \in \S^2$, the pair $(\xi, \lm) = (f(x), \lm_0(x))$ solves \eqref{eq.x.xi.lm}. 
Hence the pair $(x,\lm) = (x, \lm_0(x))$ is the unique solution $(x,\lm)$ of equation \eqref{eq.x.xi.lm} 
with datum $\xi = f(x)$, that is, $(x,\lm_0(x)) = (g(f(x)), \lm_1(f(x)))$. 
Therefore $g(f(x)) = x$ and $\lm_1(f(x)) = \lm_0(x)$ for all $x \in \S^2$.
Similarly one proves that $f(g(\xi)) = \xi$ 
and $\lm_1(\xi) = \lm_0(g(\xi))$ for all $\xi \in \S^2$. 
Hence $f$ and $g$ are invertible maps of $\S^2$, and one is the inverse of the other.

$(iv)$ By construction, the functions $f,\lm_0,g,\lm_1$ defined above satisfy
\[
(1 + h(x)) x + \alpha = \lm_0(x) f(x), 
\quad \ 
( 1 + h(g(\xi)) ) g(\xi) + \alpha = \lm_1(\xi) \xi
\]
for all $x, \xi \in \S^2$. 
The function $h_\alpha$ in the statement is $h_\alpha(\xi) = \lm_1(\xi) - 1$, 
and therefore \eqref{eq.h.alpha.xi} is satisfied. 
The uniqueness of $f, g, h_\alpha$ directly follows from the construction.

$(v)$ It follows from identities \eqref{eq.h.alpha.xi} and the construction above.
\end{proof}

For $\a \in \R^3$, $|\a| < \tfrac14$, we denote $\mA_\a$ the operator 
\begin{equation} \label{def.mA.alpha}
\mA_\a (h,\psi) := (h_\alpha, \psi \circ g_\alpha),
\end{equation}
where $h_\alpha$ is the function in item $(iv)$ of Lemma \ref{lemma:costruz.h.alpha}, 
and $g_\alpha$ is the function $g$ in item $(iii)$ of the same lemma, 
where the dependence on $\alpha$ was not explicitly indicated.

\begin{lemma}
Let $\a, \b \in \R^3$, with $|\a|, |\b|, |\a+\b| < \frac14$. 
Then 
\[
g_\alpha \circ g_\beta = g_{\alpha + \beta}, \quad \ 
(h_\alpha)_\beta = h_{\alpha + \beta}, \quad \  
\mA_\a \circ \mA_\b = \mA_{\a + \b}.
\] 
\end{lemma}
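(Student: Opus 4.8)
The plan is to read off all three identities from the geometric characterization of $f_\alpha, g_\alpha, h_\alpha$ in Lemma~\ref{lemma:costruz.h.alpha} --- the reparametrization identities \eqref{eq.h.alpha.xi} together with the uniqueness in items $(i)$--$(iv)$ --- exploiting the elementary fact that translating the graph $\{(1+h(x))x : x\in\S^2\}$ first by $\beta$ and then by $\alpha$ produces the same surface as translating it directly by $\alpha+\beta$. The point to fix from the outset is that $h_\alpha$ and $g_\alpha$ implicitly depend on the elevation function being transformed: in $\mA_\alpha\circ\mA_\beta$ the maps furnished by the \emph{second} application of Lemma~\ref{lemma:costruz.h.alpha} are built from $h_\beta$, not from $h$. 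I will write $g'_\alpha$ and $(h_\beta)_\alpha$ for these maps, associated with the elevation $h_\beta$ and the translation $\alpha$; they are well defined because $|\alpha|,|\beta|,|\alpha+\beta|<\tfrac14$ and, by \eqref{est.h.alpha}, $\|h_\beta\|_{W^{1,\infty}(\S^2)}$ stays as small as needed when $|\beta|$ is small.

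The core step is a two-line chaining of \eqref{eq.h.alpha.xi}. Fix $\zeta\in\S^2$, set $\xi := g'_\alpha(\zeta)$ and $x := g_\beta(\xi)$. The first relation in \eqref{eq.h.alpha.xi}, written for the elevation $h_\beta$ and the translation $\alpha$, gives $(1+h_\beta(\xi))\xi + \alpha = (1+(h_\beta)_\alpha(\zeta))\zeta$; the same relation, written for the elevation $h$ and the translation $\beta$ at the point $\xi$, gives $(1+h(x))x + \beta = (1+h_\beta(\xi))\xi$. Adding $\alpha$ to the latter and substituting the former yields $(1+h(x))x + (\alpha+\beta) = (1+(h_\beta)_\alpha(\zeta))\zeta$. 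Since $1+(h_\beta)_\alpha(\zeta)>0$, the pair $\big(x,\,1+(h_\beta)_\alpha(\zeta)\big)\in\S^2\times(0,\infty)$ solves \eqref{eq.x.xi.lm} with $\alpha$ replaced by $\alpha+\beta$; by the uniqueness in items $(ii)$ and $(iv)$ of Lemma~\ref{lemma:costruz.h.alpha} it must coincide with $\big(g_{\alpha+\beta}(\zeta),\,1+h_{\alpha+\beta}(\zeta)\big)$. Letting $\zeta$ vary over $\S^2$, this gives at once $g_\beta\circ g'_\alpha = g_{\alpha+\beta}$ and $(h_\beta)_\alpha = h_{\alpha+\beta}$, which are the first two assertions. (A variant that avoids the uniqueness step: substitute the explicit formula \eqref{formula.xi.lm} for $f$ into itself; using \eqref{eq.h.alpha.xi}, $f'_\alpha(f_\beta(x))$ is the normalization of $(1+h(x))x+\beta+\alpha$, hence equals $f_{\alpha+\beta}(x)$, and comparing moduli gives $(h_\beta)_\alpha=h_{\alpha+\beta}$; the $g$-identity then follows by inverting, using item $(iii)$.)

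The third identity is then immediate from \eqref{def.mA.alpha}:
\[
(\mA_\alpha\circ\mA_\beta)(h,\psi)
= \mA_\alpha(h_\beta,\psi\circ g_\beta)
= \big((h_\beta)_\alpha,\ (\psi\circ g_\beta)\circ g'_\alpha\big)
= \big(h_{\alpha+\beta},\ \psi\circ(g_\beta\circ g'_\alpha)\big)
= \big(h_{\alpha+\beta},\ \psi\circ g_{\alpha+\beta}\big)
= \mA_{\alpha+\beta}(h,\psi).
\]
I do not expect a genuine obstacle here: the argument involves no estimates and no analysis, only the geometry of translations and the uniqueness already established in Lemma~\ref{lemma:costruz.h.alpha}. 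The one place where care is needed --- and where a hasty computation would go astray --- is keeping track of the fact that the intermediate maps $g'_\alpha$ and $(h_\beta)_\alpha$ refer to the intermediate elevation $h_\beta$, and checking that the smallness hypotheses of Lemma~\ref{lemma:costruz.h.alpha} are met for all three invocations (for $h$ and $\beta$; for $h_\beta$ and $\alpha$; for $h$ and $\alpha+\beta$), which is guaranteed by \eqref{est.h.alpha}.
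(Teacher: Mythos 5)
Your proof is correct and takes essentially the same route as the paper's: both chain the two instances of \eqref{eq.h.alpha.xi} (first for the data $(h,\beta)$, then for $(h_\beta,\alpha)$) to show that $\big(g_\beta\circ g'_\alpha(\zeta),\,1+(h_\beta)_\alpha(\zeta)\big)$ solves \eqref{eq.x.xi.lm} with translation $\alpha+\beta$, and conclude by the uniqueness established in Lemma \ref{lemma:costruz.h.alpha}. Your explicit bookkeeping that the second application of the lemma is built from the intermediate elevation $h_\beta$ is a point the paper leaves implicit in its notation, so no changes are needed.
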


\begin{proof}
By \eqref{eq.h.alpha.xi} with $\beta$ instead of $\alpha$, one has 
\begin{equation} \label{group.01}
(1 + h \circ g_\beta(\xi)) g_\beta(\xi) + \beta 
= (1 + h_\beta(\xi)) \xi
\end{equation}
for all $\xi \in \S^2$. 
By \eqref{eq.h.alpha.xi} applied with $h_\beta$ in the role of $h$, one has
\begin{equation} \label{group.02}
(1 + h_\beta \circ g_\alpha(\th)) g_\alpha(\th) + \alpha
= (1 + (h_\beta)_\alpha(\th)) \th 
\end{equation}
for all $\th \in \S^2$. 
Identity \eqref{group.01} at $\xi = g_\alpha(\th)$ gives
\begin{equation} \label{group.03}
\big( 1 + h \circ g_\beta \circ g_\alpha(\th) \big) g_\beta \circ g_\alpha(\th) + \beta 
= \big(1 + h_\beta \circ g_\alpha(\th) \big) g_\alpha(\th)
\end{equation}
for all $\th \in \S^2$. 
The left-hand side of \eqref{group.02} 
is the sum of the right-hand side of \eqref{group.03} with $\alpha$, 
whence 
\[
\big( 1 + h \circ g_\beta \circ g_\alpha(\th) \big) g_\beta \circ g_\alpha(\th) + \beta + \alpha 
= (1 + (h_\beta)_\alpha(\th)) \th 
\]
for all $\th \in \S^2$. 
On the other hand, by \eqref{eq.h.alpha.xi} with $\alpha + \beta$ instead of $\alpha$, 
one also has 
\[
\big( 1 + h \circ g_{\alpha + \beta} (\th) \big) g_{\alpha + \beta} (\th) + \alpha + \beta 
= (1 + h_{\alpha + \beta}(\th) ) \th 
\]
for all $\th \in \S^2$. 
Hence, by the uniqueness property in item $(iv)$ of Lemma \ref{lemma:costruz.h.alpha}, 
$g_\beta \circ g_\alpha = g_{\alpha + \beta}$ 
and $h_{\alpha + \beta} = (h_\beta)_\alpha$. 
Since the sum is commutative, the lemma is proved. 
\end{proof}

\begin{lemma}\label{lem.translations}
One has 
\begin{equation}  \label{pa.mA}
\frac{d}{d\alpha} \Big|_{\alpha = 0} \mA_\a (h,\psi)
= \Big( \frac{ (1+h)x - \grad_{\S^2} h }{ 1+h } \,, \, 
- \frac{ \grad_{\S^2} \psi }{ 1+h } \Big).
\end{equation}
\end{lemma}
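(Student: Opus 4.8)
\emph{Proof proposal.}
The plan is to differentiate the defining identities \eqref{eq.h.alpha.xi} with respect to $\alpha$ at $\alpha = 0$; note first that, by \eqref{formula.xi.lm} with $\alpha = 0$ and $1+h>0$, one has $g_0 = \mathrm{id}_{\S^2}$ and $h_0 = h$. The preliminary point is that, for each fixed $x \in \S^2$, the maps $\alpha \mapsto g_\alpha(x)$ and $\alpha \mapsto h_\alpha(x)$ are differentiable at $\alpha = 0$. This does not follow directly from Lemma \ref{lemma:costruz.h.alpha}, which only provides the Lipschitz-type bound \eqref{est.h.alpha}, so I would go back to the explicit formula \eqref{formula.xi.lm}: the inverse map $f_\alpha := (g_\alpha)^{-1}$ is $f_\alpha(x) = \bigl((1+h(x))x + \alpha\bigr)\big/\bigl|(1+h(x))x + \alpha\bigr|$, which, since $h \in C^1(\S^2,\R)$ and $|(1+h(x))x+\alpha| > \tfrac12$, is $C^1$ jointly in $(x,\alpha)$, with $f_0 = \mathrm{id}_{\S^2}$ and $d_x f_0 = \mathrm{id}$ on each tangent plane; hence the implicit function theorem (applied to $f_\alpha(x) = \xi$ in local charts of $\S^2$) gives that $(\xi,\alpha) \mapsto g_\alpha(\xi)$ is $C^1$ near $\alpha = 0$. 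Taking the scalar product of \eqref{eq.h.alpha.xi} with $\xi$,
\begin{equation} \label{h.alpha.scalar}
1 + h_\alpha(\xi) = \bigl\langle (1 + h(g_\alpha(\xi)))\, g_\alpha(\xi) + \alpha , \, \xi \bigr\rangle ,
\end{equation}
then shows that $\alpha \mapsto h_\alpha(\xi)$ is $C^1$ as well.

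Next, fix a direction $v \in \R^3$ and let $\dot g(x)$ denote the derivative of $\alpha \mapsto g_\alpha(x)$ at $\alpha = 0$ applied to $v$. Differentiating the explicit formula for $f_\alpha$ at $\alpha = 0$ gives that the derivative of $\alpha\mapsto f_\alpha(x)$ applied to $v$ is $\bigl(v - \langle x,v\rangle x\bigr)\big/(1+h(x)) \in T_x\S^2$; differentiating $g_\alpha(f_\alpha(x)) = x$ at $\alpha = 0$ and using $g_0 = \mathrm{id}_{\S^2}$ yields
\begin{equation} \label{dot.g}
\dot g(x) = - \frac{v - \langle x, v\rangle x}{1 + h(x)} \in T_x\S^2 .
\end{equation}
In particular, for any $\phi \in C^1(\S^2,\R)$, the chain rule together with \eqref{tang.grad.any.ext} and $\langle \grad_{\S^2}\phi(x), x\rangle = 0$ gives that the derivative of $\alpha\mapsto \phi(g_\alpha(x))$ at $\alpha=0$ applied to $v$ equals $\langle \grad_{\S^2}\phi(x), \dot g(x)\rangle = - \langle \grad_{\S^2}\phi(x), v\rangle/(1+h(x))$. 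Taking $\phi = \psi$ identifies the second component of \eqref{pa.mA}, read as the $\R^3$-valued function paired against $v$.

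For the first component, I would differentiate \eqref{h.alpha.scalar} at $\alpha = 0$: the derivative of $\alpha\mapsto (1 + h(g_\alpha(\xi)))g_\alpha(\xi)$ applied to $v$ is, by the product rule and the chain-rule identity with $\phi = h$, equal to $\langle \grad_{\S^2}h(\xi), \dot g(\xi)\rangle\,\xi + (1+h(\xi))\dot g(\xi)$, whose scalar product with $\xi$ is $\langle \grad_{\S^2}h(\xi), \dot g(\xi)\rangle$ since $\dot g(\xi) \perp \xi$; this equals $-\langle \grad_{\S^2}h(\xi), v\rangle/(1+h(\xi))$ by \eqref{dot.g}. Adding the contribution $\langle v,\xi\rangle$ from the term $\langle\alpha,\xi\rangle$, the derivative of $\alpha\mapsto h_\alpha(\xi)$ applied to $v$ is $\langle \xi, v\rangle - \langle \grad_{\S^2}h(\xi), v\rangle/(1+h(\xi)) = \bigl\langle \bigl((1+h(\xi))\xi - \grad_{\S^2}h(\xi)\bigr)\big/(1+h(\xi)), \, v \bigr\rangle$, which is exactly the first component of \eqref{pa.mA}.

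The only genuinely nontrivial step is the differentiability in $\alpha$; everything else is the routine normal/tangential splitting at $\xi$ sketched above. One could equivalently obtain differentiability by applying the implicit function theorem directly to the system $(1 + h(x))x + \alpha = \mu\xi$, $|x| = 1$, in the unknowns $(x,\mu) \in \R^3 \times (0,\infty)$, whose Jacobian in $(x,\mu)$ at $(\xi, 1 + h(\xi))$ is invertible precisely because $1 + h > 0$ (using $\grad \mE_0 h(\xi) = \grad_{\S^2}h(\xi) \perp \xi$).
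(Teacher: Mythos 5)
Your proposal is correct and follows essentially the same route as the paper: differentiate the explicit formula for $f_\alpha$, use the inverse relation with $g_\alpha$ to get $\dot g(\xi) = -\bigl(v - \langle \xi,v\rangle\xi\bigr)/(1+h(\xi))$, apply the chain rule for the $\psi$-component, and differentiate the defining identity \eqref{eq.h.alpha.xi} (you take its scalar product with $\xi$ first, the paper differentiates the vector identity and reads off the normal component — equivalent). Your explicit justification of differentiability in $\alpha$ via the implicit function theorem is a small extra care that the paper leaves implicit, but otherwise the two arguments coincide.
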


\begin{proof}
From \eqref{formula.xi.lm} and item $(iii)$ in Lemma \ref{lemma:costruz.h.alpha},
we have the explicit formula 
\begin{equation} \label{formula.f.alpha}
f_\alpha(x) = \frac{ (1 + h(x)) x + \alpha }{ |(1 + h(x)) x + \alpha| }.
\end{equation}
Hence we calculate the derivatives of $f_\alpha$ at $\alpha = 0$
with respect to $\alpha$ in direction $\beta \in \R^3$ 
and with respect to $x$ in direction $v \in T_x \S^2$,  
\[
\frac{d}{d\alpha} \Big|_{\alpha = 0} f_\alpha(x)[\beta] 
= \frac{I - x \otimes x}{1+h(x)} \beta
= \frac{\beta - x \la x , \beta \ra}{1+h(x)}, 
\quad \ 
D f_\alpha(x) \big|_{\alpha = 0}[v] = (I - x \otimes x)v = v.
\]
The last identity is immediate because, at $\alpha = 0$, 
$f_\alpha$ is the identity map of $\S^2$.
Concerning $g_\alpha$, one has the identity $f_\a(g_\a(\xi)) = \xi$ on $\S^2$, 
which, with a temporary, helpful notation, can also be written as 
$f( \alpha , g (\alpha , \xi)) = \xi$.    
Differentiating this identity, and using the fact that, for $\a = 0$, $g_\alpha$ 
is the identity map of $\S^2$, we get  
\[
\frac{d}{d\alpha} \Big|_{\alpha = 0} g_\alpha(\xi) [\beta]
= - \frac{I - \xi \otimes \xi}{1+h(\xi)} \beta 
= \frac{ - \beta + \xi \la \xi , \beta \ra}{1+h(\xi)},
\quad \ 
D g_\alpha(\xi) \big|_{\alpha = 0}[v] = (I - \xi \otimes \xi) v = v.
\]
Also, we have the chain rule
\begin{equation} \label{chain.rule.h}
\frac{d}{d\alpha} \Big|_{\alpha = 0} h \circ g_\alpha(\xi) [\beta]
= \la \grad_{\S^2} h(\xi) , \frac{d}{d\alpha} \Big|_{\alpha = 0} g_\alpha(\xi) [\beta] \ra 
= - \frac{ \la \grad_{\S^2} h(\xi) , \beta \ra }{ 1+h(\xi) }.
\end{equation}
Recalling \eqref{eq.h.alpha.xi}, one has 
\begin{equation} \label{recall.eq.h.alpha.xi}
(1 + h \circ g_\alpha(\xi)) g_\alpha(\xi) + \alpha = (1 + h_\a(\xi)) \xi
\end{equation}
on $\S^2$. Differentiating this identity with respect to $\a$ in direction $\beta$ at $\alpha = 0$
we get 
\[
- \frac{ \la \grad_{\S^2} h(\xi) , \beta \ra }{ 1+h(\xi) } \xi 
+ (1 + h(\xi)) \frac{ - \beta + \xi \la \xi , \beta \ra}{1+h(\xi)} 
+ \beta = \frac{d}{d \alpha} \Big|_{\alpha = 0}  h_\a (\xi)[\beta] \, \xi,
\]
namely
\[
\frac{d}{d \alpha} \Big|_{\alpha = 0}  h_\a (\xi)[\beta]
= \la \frac{ (1+h(\xi)) \xi - \grad_{\S^2} h(\xi) }{ 1+h(\xi) } , \beta \ra. 
\]
This gives the first component in the statement. 
The second component follows from \eqref{chain.rule.h} with $\psi$ instead of $h$. 
\end{proof}

We note that at $\alpha = 0$ one has $h_\alpha = h$, 
and therefore the derivative of $h_\alpha(\xi)$ with respect to $\xi$ in direction $v \in T_\xi \S^2$ 
is 
\[
D h_\alpha (\xi) \big|_{\alpha = 0} [v] 
= D h (\xi)[v] 
= \la \grad_{\S^2} h(\xi) , v \ra.
\]

\begin{lemma} \label{lem.invariance}
Let $\mA_\alpha$ be the operator defined in \eqref{def.mA.alpha} 
for all $\a \in \R^3$, $|\a| < \frac14$. Then 

$(i)$ the Hamiltonian $\mathcal{H}$ in \eqref{def.mH} and the volume $\mV$ in \eqref{def.volume}
are  $\mA_\a$-invariant, i.e., 
\begin{equation}  \label{Ham.is.invariant.mA}
\mH \circ \mA_\a = \mH, \quad 
\mV \circ \mA_\a = \mV;
\end{equation}

$(ii)$ the functional $\mB$ defined in \eqref{def.mB} 
satisfies $\{ \mB , \mH \} = 0$ and $\{ \mB , \mV \} = 0$, 
it is a prime integral of the Hamiltonian system \eqref{quasi.Ham.syst.2.sigma.0.bis},  
and it is the fluid barycenter velocity, 
associated to the invariance \eqref{Ham.is.invariant.mA}.
\end{lemma}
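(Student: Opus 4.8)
\emph{Proof proposal.} Both assertions would follow the template already used for Lemmas \ref{lem:conservation} and \ref{lemma:mass}: first one establishes the invariance $(i)$ by a geometric argument, and then one differentiates the invariance at $\a=0$, uses the chain rule, and matches the resulting Noether-type identity with the Poisson brackets via the gradient formulas \eqref{pa.mB}.

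For $(i)$, the plan is to recognize that $\mA_\a(h,\psi)$ is precisely the Craig--Sulem data of the fluid configuration translated by $\a$. Indeed, by item $(iv)$ of Lemma \ref{lemma:costruz.h.alpha} the function $h_\a$ parametrizes the translated boundary $\pa\Om + \a$ over $\S^2$, and a direct substitution using \eqref{eq.h.alpha.xi} shows that, if $\Phi$ is the harmonic extension of $\psi$ on $\Om$ as in \eqref{def:Dirichlet-Neumann2}, then the translate $\Phi(\cdot - \a)$ is the harmonic extension on $\Om + \a$ of the boundary datum $\psi \circ g_\a$; in particular $G(h_\a)(\psi\circ g_\a) = (G(h)\psi)\circ g_\a$. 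Now $\mH(h,\psi)$ equals the sum of the kinetic energy $\tfrac12\int_\Om |\grad\Phi|^2$ (which is the first integral in \eqref{def.mH}, by the divergence theorem) and $\sigma_0$ times the area of $\pa\Om$ (the second integral in \eqref{def.mH}), while $\mV(h)$ equals the volume of $\Om$. Since the Dirichlet energy, the surface area and the volume are all invariant under the rigid translation $y\mapsto y+\a$, and since $\mA_\a$ produces exactly the data of the translated drop, the invariance \eqref{Ham.is.invariant.mA} follows.

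For $(ii)$, I would differentiate the identities $\mH\circ\mA_\a = \mH$ and $\mV\circ\mA_\a = \mV$ at $\a=0$ in an arbitrary direction $\b\in\R^3$. By the chain rule together with the formula \eqref{pa.mA} for the infinitesimal generator of $\{\mA_\a\}$, differentiating $\mH\circ\mA_\a$ gives the $\R^3$-valued identity
\[
\Big\la \pa_h \mH(h,\psi)\,,\, \frac{(1+h)x - \grad_{\S^2}h}{1+h} \Big\ra_{L^2(\S^2)}
- \Big\la \pa_\psi \mH(h,\psi)\,,\, \frac{\grad_{\S^2}\psi}{1+h} \Big\ra_{L^2(\S^2)} = 0,
\]
where the scalar functions $\pa_h\mH$, $\pa_\psi\mH$ are paired componentwise against the $\R^3$-valued functions; the same identity with $\mV$ in place of $\mH$ reduces, by \eqref{pa.mV}, to $\la (1+h)((1+h)x - \grad_{\S^2}h)\,,\,1\ra_{L^2(\S^2)} = 0$. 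On the other hand, inserting the gradients \eqref{pa.mB} of $\mB$ into the Poisson bracket \eqref{Poisson.brackets}, one computes directly that $\{\mB,\mH\}$ equals (up to an overall sign) the left-hand side of the displayed identity, while $\{\mB,\mV\}$ equals $-\la (1+h)((1+h)x - \grad_{\S^2}h)\,,\,1\ra_{L^2(\S^2)}$. Hence $\{\mB,\mH\} = 0$ and $\{\mB,\mV\} = 0$, so $\{\mB,\mH_{\sigma_0}\} = 0$ by \eqref{def.mH.sigma.0}, and therefore $\mB$ is a prime integral of \eqref{quasi.Ham.syst.2.sigma.0.bis}. The identification of $\mB$ with the fluid barycenter velocity (multiplied by the volume of $\Om_t$) associated to the translation invariance \eqref{Ham.is.invariant.mA} is the content of the remark following \eqref{def.mB}.

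I expect the only delicate step to be $(i)$: as emphasized in the introduction, the reparametrization $h\mapsto h_\a$ of a translated drop is an implicitly defined nonlinear diffeomorphism, so \eqref{Ham.is.invariant.mA} cannot be verified by direct algebra on the formulas \eqref{def.mH}, \eqref{def.volume}; the argument must pass through the geometric identification of $\mA_\a$ with the translation of the fluid configuration and the manifest translation invariance of energy, surface area and volume. Once $(i)$ is established, the remainder is the routine Noether-type bookkeeping already carried out in Lemmas \ref{lem:conservation}--\ref{lemma:mass}.
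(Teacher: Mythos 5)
Your proposal is correct and follows essentially the same route as the paper: part $(i)$ is proved by identifying $\mA_\a(h,\psi)$ with the Craig--Sulem data of the translated drop (via Lemma \ref{lemma:costruz.h.alpha} and the translated harmonic potential $\Phi(\cdot-\a)$) and invoking the translation invariance of the Dirichlet energy, the surface area and the volume in the physical formulas for $\mH$ and $\mV$; part $(ii)$ is obtained, exactly as in the paper, by differentiating the invariances at $\a=0$ with the generator \eqref{pa.mA} and matching the resulting Noether identities with the Poisson brackets through \eqref{pa.mB} and \eqref{pa.mV}. The extra observations you add (the conjugation $G(h_\a)(\psi\circ g_\a)=(G(h)\psi)\circ g_\a$ and the divergence-theorem identification of the kinetic energy, which the paper cites from \cite{B.J.LM}) are correct but not needed beyond what the paper's argument already uses.
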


\begin{proof} 
$(i)$ By Section 3.2 of \cite{B.J.LM}, one has 
\begin{equation} \label{physical.Ham}
\mH(h,\psi) = \frac12 \int_\Om |\grad \Phi|^2 \, dx + \sigma_0 \mathrm{Area}(\pa \Om),
\quad \ \mV(h) = \int_\Om dx = \mathrm{Volume}(\Om),
\end{equation}
where $\Om$ is the bounded domain with boundary $\pa \Om = \{ (1 + h(x)) x : x \in \S^2 \}$
and $\Phi$ is the solution in $H^1(\Om)$ of problem \eqref{def:Dirichlet-Neumann2}. 
Let $\Om_\alpha := \Om + \alpha$. 
Its boundary is $\pa \Om_\alpha = \pa \Om + \alpha$, 
and, by the translation invariance of the Hausdorff measure and that of the Lebesgue measure, 
\[
\mathrm{Area}( \pa \Omega_\alpha ) = \mathrm{Area}( \pa \Omega ), 
\quad \ 
\mathrm{Volume}( \Omega_\alpha ) = \mathrm{Volume}( \Omega ).
\]
Let $\Phi_\alpha (x) := \Phi(x-\alpha)$. Then $\Phi_\alpha \in H^1(\Om_\alpha)$, 
$\grad \Phi_\alpha(x) = (\grad \Phi)(x - \alpha)$ for all $x \in \Omega_\alpha$, 
and, with the change of variable $x - \alpha = y$, one has
\[
\int_{\Omega_\alpha} |\grad \Phi_\alpha|^2 \, dx = \int_{\Omega} |\grad \Phi|^2 \, dx. 
\]
Moreover $\Delta \Phi_\alpha = 0$ in $\Omega_\alpha$ and 
\[
\Phi_\alpha( (1 + h(x)) x + \alpha ) 
= \Phi (1 + h(x)) x ) 
= \psi(x)  
\quad \forall x \in \S^2.
\]
The diffeomorphism $g_\alpha$ of $\S^2$ satisfies \eqref{recall.eq.h.alpha.xi}, 
and therefore the change of variable $x = g_\alpha(\xi)$ 
gives the identities 
\[
\pa \Om_\alpha = \{ (1 + h_\alpha(\xi)) \xi : \xi \in \S^2 \}, 
\quad \  
\Phi_\alpha ( (1 + h_\alpha(\xi)) \xi ) = \psi \circ g_\alpha (\xi) 
\quad \ \forall \xi \in \S^2.
\]
Hence formulas \eqref{physical.Ham} applied with $\Om_\alpha, h_\alpha, \psi \circ g_\alpha$ 
in place of $\Omega, h,\psi$ give
\[
\mH (h_\alpha, \psi \circ g_\alpha) = 
\frac12 \int_{\Om_\alpha} |\grad \Phi_\alpha|^2 \, dx + \sigma_0 \mathrm{Area}(\pa \Om_\alpha), 
\quad \ 
\mV(h_\alpha) = \mathrm{Volume}(\Om_\alpha).
\]
This proves that $\mH ( \mA_\alpha (h, \psi)) = \mH(h,\psi)$ and 
$\mV ( \mA_\alpha (h, \psi)) = \mV(h,\psi)$.

$(ii)$ By \eqref{Ham.is.invariant.mA}, one has $\tfrac{d}{d\alpha} \mH(\mA_\alpha (h, \psi)) = 0$. 
This identity at $\alpha = 0$, using the chain rule and \eqref{pa.mA}, gives 
\begin{equation} \label{Noether.temp.05}
\la \pa_h \mH(h,\psi) , \frac{(1+h) x - \grad_{\S^2} h}{1+h} \ra_{L^2(\S^2)} 
+ \la \pa_\psi \mH(h,\psi) , - \frac{\grad_{\S^2} \psi}{1+h} \ra_{L^2(\S^2)}
= 0.
\end{equation}
By \eqref{pa.mB}, we deduce that $\{ \mB , \mH \} = 0$. 
Similarly, one proves that $\{ \mB , \mV \} = 0$. 
\end{proof}

As a consequence of Lemma \ref{lem:commutation.formula} one also has 
$\{ \mI, \mV \}=0$ and $\{\mI, \mB_3\}=0$, 
as we prove in the next Lemma. 

\begin{lemma}\label{lem:poisson.Mi.MB}
One has 
\begin{equation} \label{eq:poisson.mI.mB}
\{\mI, \mB\} = \mJ \mB, \quad \ 
\{ \mI, \mB_1 \} = - \mB_2, \quad \ 
\{ \mI, \mB_2 \} = \mB_1, \quad \ 
\{ \mI, \mB_3 \} = 0,
\end{equation}
where $\mI, \mB, \mJ$ are defined in \eqref{def.mI}, 
\eqref{def.mB}, \eqref{def.mJ}, and $\mB_k$ is the $k$-th component of $\mB$.
\end{lemma}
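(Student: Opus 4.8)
The plan is to establish the single vector identity $\{\mI,\mB\}=\mJ\mB$; the three scalar relations are then obtained by reading off the components of
$\mJ\mB=(-\mB_2,\mB_1,0)$ (see the shape of $\mJ$ in \eqref{def.mJ}), which in particular gives $\{\mI,\mB_3\}=0$ since the third row of $\mJ$ vanishes. To prove $\{\mI,\mB\}=\mJ\mB$ I would simply insert the $L^2(\S^2)$-gradients $\pa_h\mI=-(1+h)^2\mM\psi$, $\pa_\psi\mI=(1+h)^2\mM h$ from \eqref{pa.mI} and $\pa_h\mB=(1+h)\grad_{\S^2}\psi$, $\pa_\psi\mB=x(1+h)^2-(1+h)\grad_{\S^2}h$ from \eqref{pa.mB} into the bracket \eqref{Poisson.brackets}. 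Setting $\rho:=1+h$, the weight factors $\rho^{\pm2}$ cancel and one is left with
\[
\{\mI,\mB\}=-\int_{\S^2}(\mM\psi)\,x\,\rho^2\,d\sigma+\int_{\S^2}(\mM\psi)\,\rho\,\grad_{\S^2}h\,d\sigma-\int_{\S^2}(\mM h)\,\rho\,\grad_{\S^2}\psi\,d\sigma .
\]

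The elementary input is the identity $\mM x=\mJ x$ componentwise, i.e. $\mM x_1=-x_2$, $\mM x_2=x_1$, $\mM x_3=0$, which is immediate from \eqref{mM.explicit}. For the first integral, integrating by parts with the antisymmetry \eqref{mM.is.anti.symm} and the Leibniz rule $\mM(\rho^2)=2\rho\,\mM h$ (together with $\mM 1=0$) gives $-\int(\mM\psi)\,x\,\rho^2=\int\psi\,(\mJ x)\rho^2+2\int\psi\,x\,\rho\,\mM h$. For the remaining two integrals I would use $\rho\,\grad_{\S^2}h=\grad_{\S^2}(\tfrac12\rho^2)$, integrate by parts componentwise on $\S^2$ via \eqref{div.thm.S2} (applied to $F=\mE_0((\mM\psi)\tfrac12\rho^2)e_j$), and then commute $\mM$ past $\grad_{\S^2}$ using \eqref{mM.grad.commutator} (applied componentwise, $\mM$ being a scalar operator), invoking \eqref{mM.is.anti.symm} and $\mM x=\mJ x$ once more. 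After these manipulations all the terms of the form $\int\psi\,x\,\rho\,\mM h$ cancel, and the crucial point is the exact cancellation
\[
\int_{\S^2}(\grad_{\S^2}\psi)\,\rho\,(\mM h)\,d\sigma-\int_{\S^2}(\mM h)\,\rho\,(\grad_{\S^2}\psi)\,d\sigma=0 ;
\]
what survives is precisely $\{\mI,\mB\}=\mJ\big(\int_{\S^2}x\rho^2\psi\,d\sigma-\int_{\S^2}\rho\,\psi\,\grad_{\S^2}h\,d\sigma\big)=\mJ\mB$, where one recognizes $\mJ\mB$ through the second expression for $\mB$ in \eqref{def.mB} (equivalently $\mJ\mB=\tfrac12\int_{\S^2}\rho^2\,\mJ\grad_{\S^2}\psi\,d\sigma$ from the first expression).

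The main obstacle is purely organizational: keeping track of the three vector components and making sure that the boundary terms from \eqref{div.thm.S2} and the commutator contributions from \eqref{mM.grad.commutator} recombine into the cancellations above rather than leaving spurious residual terms. As a conceptual sanity check — and in fact an alternative proof — one may note that, by \eqref{pa.mI}, the Hamiltonian flow of $\mI$ relative to the bracket \eqref{Poisson.brackets} is $\pa_s h=\mM h$, $\pa_s\psi=\mM\psi$, whose solution by Lemma \ref{lem:conservation}$(iii)$ is $(h,\psi)\mapsto(h\circ R(s),\psi\circ R(s))$; since $\grad_{\S^2}(f\circ R(s))=R(s)^{\top}(\grad_{\S^2}f)\circ R(s)$, the change of variable $y=R(s)x$ in \eqref{def.mB} yields $\mB(h\circ R(s),\psi\circ R(s))=R(s)^{\top}\mB(h,\psi)$, and differentiating at $s=0$ (using Lemma \ref{lem:conservation}$(ii)$) gives $\{\mB,\mI\}=-\mJ\mB$, i.e. $\{\mI,\mB\}=\mJ\mB$, consistently. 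This viewpoint also makes transparent why $\{\mI,\mB_3\}=0$: the rotation about the $x_3$-axis commutes with the translation along it.
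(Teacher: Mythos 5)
Your main computation is correct and is essentially the paper's own proof run in the opposite order: the paper computes $\{\mB,\mI\}$ and shows it equals $-\mJ\mB$ using exactly the same three ingredients you invoke, namely the gradients \eqref{pa.mI}, \eqref{pa.mB} inserted into \eqref{Poisson.brackets} (so that the weights $(1+h)^{\pm2}$ cancel), the antisymmetry \eqref{mM.is.anti.symm}, the divergence identity obtained from \eqref{div.thm.S2} applied to $F=g\,e_k$, and the commutator \eqref{mM.grad.commutator}; the componentwise reading of $\mJ\mB$ then gives the three scalar identities, as you say. Two small remarks. First, your displayed ``crucial cancellation'' $\int(\grad_{\S^2}\psi)\rho(\mM h)\,d\sigma-\int(\mM h)\rho(\grad_{\S^2}\psi)\,d\sigma=0$ is vacuous as written (the two integrands coincide), so it cannot be the decisive step; the terms that actually cancel in your route are the pairs $\int\psi\,(\mJ x)\rho^2\,d\sigma$ and $2\int\psi\,x\,\rho\,(\mM h)\,d\sigma$ produced by the first integral and reproduced with opposite sign when you integrate the other two terms by parts (equivalently, in the paper's grouping with $f=\tfrac12\rho^2$, the two $\int f\,\grad_{\S^2}(\mM\psi)\,d\sigma$ contributions), after which only $\mJ\int f\,\grad_{\S^2}\psi\,d\sigma=\mJ\mB$ survives — I checked that your manipulations do close up this way, so this is a presentational slip, not a gap. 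Second, your alternative argument is a genuinely different and valid route: the Hamiltonian vector field of $\mI$ for the bracket \eqref{Poisson.brackets} is $(\mM h,\mM\psi)$, and the identity $\mB\circ\mT_\th=R(\th)^{T}\mB$ (which the paper proves independently in Lemma \ref{lemma:group.action}, so there is no circularity) differentiated at $\th=0$ gives $\{\mB,\mI\}=-\mJ\mB$ directly; this buys a more conceptual, computation-free proof and explains $\{\mI,\mB_3\}=0$ as commutation of the rotation about $x_3$ with translations along it, whereas the paper's direct calculation keeps the lemma self-contained and independent of Section \ref{sec:symmetries}'s later material.
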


\begin{proof}
Let $f := \frac12 (1+h)^2$. Then 
$\grad_{\S^2} f = (1+h) \grad_{\S^2} h$ 
and $\mM f = (1+h) \mM h$. 
By definition \eqref{Poisson.brackets} 
and formulas \eqref{pa.mB} and \eqref{pa.mI}, 
one has 
\begin{align} 
\{ \mB, \mI \}(h, \psi)
& = \int_{\S^2} (\mM \psi) \big[ x (1+h)^2 - (1+h) \grad_{\S^2} h \big] \, d\sigma
+ \int_{\S^2} (\mM h) (1+h) \nabla_{\S^2} \psi \, d\sigma
\notag \\
& = \int_{\S^2} 2x f \mM \psi \, d\sigma
- \int_{\S^2} (\mM \psi) \grad_{\S^2} f \, d\sigma
+ \int_{\S^2} (\mM f) \nabla_{\S^2} \psi \, d\sigma.
\label{eq:poisson.mB.mI}
\end{align}
Given any scalar function $g$, the divergence theorem \eqref{div.thm.S2} 
applied to the vector field $F = e_k g$, $k=1,2,3$, gives the identity
\begin{equation} \label{int.x.is.int.grad}
\int_{\S^2} 2x g \, d\sigma = \int_{\S^2} \grad_{\S^2} g \, d\sigma.
\end{equation}
Applying \eqref{int.x.is.int.grad} to the function $g = f \mM \psi$, 
the first integral in the right hand side of \eqref{eq:poisson.mB.mI} is 
\begin{equation}\label{eq:poisson.mB.mI1}
\int_{\S^2} 2x f \mM \psi \, d\sigma 
= \int_{\S^2} \grad_{\S^2} (f \mM \psi) \, d\sigma 
= \int_{\S^2} (\grad_{\S^2} f) (\mM \psi)  \, d\sigma
+ \int_{\S^2} f \grad_{\S^2} (\mM \psi) \, d\sigma.
\end{equation} 
Combining \eqref{eq:poisson.mB.mI} and \eqref{eq:poisson.mB.mI1} we find that
\begin{align*} 
\{ \mB , \mI \}(h,\psi) 
& = \int_{\S^2} f \grad_{\S^2} (\mM \psi) \, d\sigma
+ \int_{\S^2} (\mM f) \nabla_{\S^2} \psi \, d\sigma
\\
& = \int_{\S^2} f \grad_{\S^2} (\mM \psi) \, d\sigma
- \int_{\S^2} f \mM( \nabla_{\S^2} \psi) \, d\sigma,
\end{align*}
where in the last identity we have used \eqref{mM.is.anti.symm}. 
Then, from the commutator identity \eqref{mM.grad.commutator}, 
we obtain $\{ \mB, \mI \} = - \mJ \mB$, which gives the first identity in \eqref{eq:poisson.mI.mB}.
The other three identities in \eqref{eq:poisson.mI.mB} follow from the first one, 
because $\{ \mI, \mB \}_k = \{ \mI, \mB_k \}$ for all $k=1,2,3$. 
\end{proof}

To conclude this section, we prove that 
the prime integrals $\mH, \mV, \mI, \mB_3$ have the following invariance property.

\begin{lemma}  \label{lemma:group.action}
The functionals $\mH, \mV, \mI, \mB_3$ 
defined in \eqref{def.mH}, \eqref{def.volume}, \eqref{def.mI}, \eqref{def.mB} 
(where $\mB_3$ is the third component of $\mB$) 
are invariant under the group action $\mT_\th$ 
defined in \eqref{def.mT.theta}, namely 
\begin{equation} \label{invariance.group.action}
\mH \circ \mT_\th = \mH, \quad \ 
\mV \circ \mT_\th = \mV, \quad \ 
\mI \circ \mT_\th = \mI, \quad \ 
\mB_3 \circ \mT_\th = \mB_3
\end{equation}
for all $\th \in \T := \R / 2 \pi \Z$. 
\end{lemma}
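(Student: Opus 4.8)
The first two identities in \eqref{invariance.group.action}, namely $\mH \circ \mT_\th = \mH$ and $\mV \circ \mT_\th = \mV$, have already been proved in item $(i)$ of Lemma \ref{lem:conservation}, so the plan is to establish only the invariance of $\mI$ and of $\mB_3$.

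For $\mI$, the first step is to observe that the differential operator $\mM$ commutes with the group action, i.e.\ $\mM(\mT_\th f) = \mT_\th(\mM f)$ for all $f \in C^1(\S^2,\R)$ and all $\th \in \T$. This follows from the fact that the one-parameter group of rotations $\{R(\th) : \th \in \T\}$ is abelian, so that $R(\th) R(s) = R(s) R(\th)$, together with item $(iii)$ of Lemma \ref{lem:conservation}, which identifies $\mM$ with $\tfrac{d}{ds}\big|_{s=0}\mT_s$; concretely, $\mM(\mT_\th f)(x) = \tfrac{d}{ds}\big|_{s=0} f(R(s)R(\th)x) = \tfrac{d}{ds}\big|_{s=0} f(R(\th)R(s)x) = (\mM f)(R(\th)x) = \mT_\th(\mM f)(x)$. (Equivalently, one can use $\mM f(x) = \la \mJ x, \grad_{\S^2} f(x)\ra$ and the fact that $R(\th)$ commutes with $\mJ$, the latter being the infinitesimal generator of $\{R(\th)\}$.) Then, using the definition \eqref{def.mI} of $\mI$ and the consequent identity $(1 + \mT_\th \eta)^2 (\mM \mT_\th \eta)(\mT_\th \beta) = \mT_\th\big( (1+\eta)^2 (\mM\eta)\beta\big)$, the change of variable $y = R(\th)x$ — whose Jacobian equals $1$ since $R(\th) \in SO(3)$, and under which the Hausdorff measure $d\sigma$ on $\S^2$ is invariant — immediately gives $\mI(\mT_\th \eta, \mT_\th \beta) = \mI(\eta,\beta)$.

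For $\mB_3$, the idea is to track the transformation of the whole vector $\mB \in \R^3$ under $\mT_\th$ and then extract the third component. The key computational point is the transformation rule for the tangential gradient: since $\mE_0(\mT_\th \psi) = (\mE_0 \psi) \circ R(\th)$, the chain rule gives $\grad_{\S^2}(\mT_\th \psi)(x) = R(\th)^\top (\grad_{\S^2}\psi)(R(\th)x)$ for all $x \in \S^2$. Substituting this into the first expression for $\mB$ in \eqref{def.mB} and again changing variables by $y = R(\th)x$, one obtains $\mB \circ \mT_\th = R(\th)^\top \mB$. Finally, since the rotations $R(\th)$ in \eqref{def.R(th)} fix the $x_3$ axis, one has $R(\th) e_3 = e_3$, hence $\mB_3 \circ \mT_\th = \la e_3 , R(\th)^\top \mB \ra = \la R(\th) e_3 , \mB \ra = \la e_3 , \mB \ra = \mB_3$, which completes the proof.

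This lemma is essentially bookkeeping, so there is no serious obstacle; the only point requiring a little care is getting the transformation rules for $\mM$ and for $\grad_{\S^2}$ under $\mT_\th$ exactly right — in particular handling the $0$-homogeneous extension $\mE_0$ correctly and keeping track of the orthogonal matrix $R(\th)$, so that both the scalar measure $d\sigma$ and the vector-valued integrand in $\mB$ transform as expected — together with the use of $R(\th) e_3 = e_3$ to pass from the full vector $\mB$ to its invariant third component.
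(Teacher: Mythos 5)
Your proof is correct and follows essentially the same route as the paper: the $\mH$ and $\mV$ invariances by reference to Lemma \ref{lem:conservation}(i), the commutation $\mM \circ \mT_\th = \mT_\th \circ \mM$ deduced from the abelianness of the rotation group together with Lemma \ref{lem:conservation}(iii), a change of variables $y = R(\th)x$ for $\mI$, and the gradient transformation rule $\grad_{\S^2}(\mT_\th f) = R(\th)^{\top}\mT_\th(\grad_{\S^2} f)$ plus $R(\th)e_3 = e_3$ for $\mB_3$. The only cosmetic difference is that you derive the gradient transformation rule directly via $\mE_0$ and the chain rule, whereas the paper cites it from \cite{B.J.LM}.
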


\begin{proof}
The first two identities are proved in \eqref{Ham.is.invariant}. 
For all $\th, \alpha \in \T$ one has
$\mT_\th \circ \mT_\alpha = \mT_{\alpha} \circ \mT_\th$.
Differentiating this identity with respect to $\alpha$ at $\alpha = 0$, 
and recalling $(iii)$ of Lemma \ref{lem:conservation}, one has 
\[
\mT_\th \circ \mM = \mM \circ \mT_\th
\]
for all $\th \in \T$. Thus the third identity in \eqref{invariance.group.action}
follows from the change of integration variable $R(\th) x = y$ in formula \eqref{def.mI}. 
As noticed in Lemma 6.2 of \cite{B.J.LM}, one has 
\begin{equation} \label{grad.mT.th}
\grad_{\S^2}( \mT_\th f) = R(\th)^T \mT_\th ( \grad_{\S^2} f ),
\end{equation}
where $R(\th)^T$ is the transpose 
of the matrix $R(\th)$ in \eqref{def.R(th)}. 
From \eqref{grad.mT.th}, with the change of integration variable $R(\th) x = y$ in formula \eqref{def.mB}, 
we deduce that 
\[
\mB \circ \mT_\th = R(\th)^T \mB.
\]
Taking the third component of this identity, by formula \eqref{def.R(th)}, 
we obtain the last identity in \eqref{invariance.group.action}.
\end{proof}

\section{The nonlinear operator}
\label{sec:nonlinear}
We define 
\begin{equation} \label{def.mF}
\mF(\om, u) := \grad \mH_{\sigma_0}(u) - \om \grad \mI(u) 
= \begin{pmatrix} 
\pa_\eta \mH_{\sigma_0}(\eta, \beta) - \om \pa_\eta \mI(\eta, \beta) \\ 
\pa_\beta \mH_{\sigma_0}(\eta, \beta) - \om \pa_\beta \mI(\eta, \beta) 
\end{pmatrix}
=: \begin{pmatrix} 
\mF_1(\om,u) \\ 
\mF_2(\om,u)
\end{pmatrix}, 
\end{equation}
where $u = (\eta, \beta)$, 
$\mH_{\s_0}$ is defined in \eqref{def.mH.sigma.0}, \eqref{def.mH}, \eqref{def.volume},   
and $\mI$ in \eqref{def.mI},
so that the critical point equation \eqref{eq.critical.point} is shortly written as 
\begin{equation} \label{eq.mF=0}
\mF(\om,u) = 0.
\end{equation}
By \eqref{quasi.Ham.syst.2.sigma.0} and \eqref{pa.mI}, 
the operators $\mF_1$ and $\mF_2$ defined in \eqref{def.mF} are
\begin{align}
\mF_1(\om,u) & = - (1 + \eta)^2 \big( X_2(\eta, \beta) + 2 \sigma_0 - \om \mM \beta \big),
\label{def.mF.1}
\\
\mF_2(\om,u) & = (1 + \eta)^2 \big( X_1(\eta, \beta) - \om \mM \eta \big),
\label{def.mF.2}
\end{align}
where $X_1, X_2$ are defined in \eqref{def.X.1}, \eqref{def.X.2}.
To take advantage of the results proved in \cite{B.J.LM}, 
where these operators are studied using a different order and sign convention 
for the two components of $\mF$, it is convenient to also define 
\begin{equation} \label{def.mF.old}
\mF_0(\om,u)
:= \begin{pmatrix} 
\mF_{0,1}(\om,u) \\ 
\mF_{0,2}(\om,u) 
\end{pmatrix}
:= \begin{pmatrix} 
\om \mM \eta - X_1(\eta, \beta) \\ 
\om \mM \beta - X_2(\eta, \beta)
\end{pmatrix},
\end{equation}
so that 
\[
\mF_1(\om,u) = (1+\eta)^2 (\mF_{0,2}(\om,u) - 2 \sigma_0), 
\quad \  
\mF_2(\om,u) = - (1+\eta)^2 \mF_{0,1}(\om,u), 
\]
that is,  
\begin{equation} \label{mF.mF.old}
\mF(\om,u) = (1+\eta)^2 J_0^{-1} \Big[ \mF_0(\om,u) - \begin{pmatrix} 0 \\ 2 \sigma_0 \end{pmatrix} \Big],
\quad \ 
J_0 := \begin{pmatrix}
0 & -1 \\ 
1 & 0 
\end{pmatrix}, 
\quad 
J_0^{-1} = - J_0.
\end{equation}
We underline that $\mF_0$ is the operator studied in \cite{B.J.LM}. 
Note that the order and sign convention of $\mF$ is motivated 
by the variational structure of the problem, 
as $\mF$ in \eqref{def.mF} is the gradient of a functional,
while the order and sign convention of $\mF_0$ 
is that of system \eqref{syst.mM.eta.mM.beta}. 

The next lemma concerns the regularity of $\mF$.  

\begin{lemma} 
\label{lemma:mF.Hs}
Let $s_0, s \in \R$, $s \geq s_0 > 1$. 
There exists $\delta_0 > 0$, depending on $s_0$, with the following property. 
Let
\[
U := \{ u = (\eta, \beta) : 
\eta \in H^{s+\frac32}(\S^2,\R), \  
\beta \in H^{s+1}(\S^2,\R), \  
\| \eta \|_{H^{s_0 + \frac32}(\S^2)} < \delta_0 \}.
\]
Then 
$\mF_1 (\om,u) \in H^{s-\frac12}(\S^2,\R)$,
$\mF_2(\om,u) \in H^s(\S^2,\R)$  
for all $(\om,u) \in \R \times U$, 
and the map 
\[
\mF : \R \times U \to H^{s-\frac12}(\S^2,\R) \times H^{s}(\S^2,\R)
\]
is analytic.
\end{lemma}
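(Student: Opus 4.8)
The plan is to reduce the analyticity of $\mF$ to known results from \cite{B.J.LM} via the explicit algebraic relation \eqref{mF.mF.old}. First I would recall from \cite{B.J.LM} that the operator $\mF_0$ in \eqref{def.mF.old} is analytic as a map from $\R \times U$ into $H^{s-\frac12}(\S^2,\R) \times H^{s}(\S^2,\R)$; this is essentially the content of the tame estimates and analyticity results for the Dirichlet--Neumann operator $G(h)$ and for the mean curvature $H(h)$ established there, combined with the fact that $\mM$ is a first-order constant-coefficient-type differential operator that loses one derivative (so $\mM\eta \in H^{s+\frac12}$, $\mM\beta \in H^{s}$, consistent with the stated target spaces). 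The key point is that $X_1(\eta,\beta)$ involves $G(h)\psi$, which gains regularity appropriately, and $X_2(\eta,\beta)$ involves $G(h)\psi$, the Hessian term in $H(h)$, and lower-order products, all of which are handled by the product estimates in $H^s(\S^2)$ for $s>1$ (an algebra) and the analyticity of $h \mapsto G(h)$ proved in \cite{B.J.LM}.

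Next I would observe that the passage from $\mF_0$ to $\mF$ given by \eqref{mF.mF.old} consists of three operations, each of which preserves analyticity on the relevant function spaces: subtraction of the constant vector $(0, 2\sigma_0)$; left-multiplication by the fixed invertible matrix $J_0^{-1} = -J_0$; and multiplication by the scalar function $(1+\eta)^2$. The first two are affine-linear and bounded, hence analytic. For the third, I would invoke that for $s \geq s_0 > 1$ the space $H^s(\S^2)$ is a Banach algebra, so multiplication $(f,g) \mapsto fg$ is a bounded bilinear map $H^s \times H^s \to H^s$ and also $H^{s-\frac12}\times H^s \to H^{s-\frac12}$ (using $s - \frac12 > \frac12$ together with $s > 1$, or more simply the Moser-type product estimate); since $\eta \in H^{s+\frac32} \hookrightarrow H^s$, the map $\eta \mapsto (1+\eta)^2 = 1 + 2\eta + \eta^2$ is a polynomial, hence analytic, into $H^s(\S^2)$, and $u \mapsto (1+\eta)^2 \cdot w(u)$ is analytic whenever $w$ is. The smallness condition $\|\eta\|_{H^{s_0+\frac32}} < \delta_0$ is exactly what is needed to keep $(\eta,\beta) \in U$ inside the domain of analyticity of $G(h)$ from \cite{B.J.LM}, and it also guarantees $1 + \eta > 0$ so that the geometric quantities make sense; I would choose $\delta_0$ to be the smaller of the threshold from \cite{B.J.LM} and (say) $\tfrac12$.

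Finally I would assemble these: write $\mF(\om,u) = (1+\eta)^2 J_0^{-1}\big(\mF_0(\om,u) - (0,2\sigma_0)^T\big)$, note that $\mF_0(\om,u) - (0,2\sigma_0)^T$ is analytic $\R \times U \to H^{s-\frac12}\times H^s$ by the cited result, that $J_0^{-1}$ is a fixed linear isomorphism, and that multiplication by $(1+\eta)^2$ is analytic and maps $H^{s-\frac12}\times H^s$ into itself; composition and product of analytic maps is analytic, giving the claim. The componentwise regularity statements $\mF_1(\om,u) \in H^{s-\frac12}$ and $\mF_2(\om,u) \in H^s$ follow by reading off the two components of \eqref{mF.mF.old}: the first component of $J_0^{-1}(\mF_0 - (0,2\sigma_0)^T)$ is $\mF_{0,2} - 2\sigma_0 \in H^{s-\frac12}$ (note $X_2$ and $H(h)$ lose more derivatives than $X_1$), and the second is $-\mF_{0,1} \in H^s$, each then multiplied by $(1+\eta)^2 \in H^s \subset H^{s-\frac12}$ without loss.

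I expect the only genuine obstacle to be the bookkeeping of which precise result of \cite{B.J.LM} gives analyticity (as opposed to mere smoothness or tame boundedness) of $\mF_0$ in these Sobolev scales, and the matching of derivative counts so that the target spaces $H^{s-\frac12}$ and $H^s$ come out exactly as stated; once that citation is pinned down, the rest is a routine composition-of-analytic-maps argument using that $H^s(\S^2)$ is an algebra for $s > 1$.
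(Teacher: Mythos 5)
Your proposal is correct and matches the paper's proof exactly: the paper also reduces to Lemma 6.3 of \cite{B.J.LM} (analyticity of $\mF_0$) and then transfers analyticity to $\mF$ through identity \eqref{mF.mF.old}. You have simply written out in more detail the routine steps (algebra property of $H^s$ for $s>1$, multiplication by the analytic factor $(1+\eta)^2$, the fixed linear isomorphism $J_0^{-1}$) that the paper leaves implicit.
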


\begin{proof}
The lemma follows from Lemma 6.3 of \cite{B.J.LM}, where the corresponding statement is proved 
for $\mF_0$, and from identity \eqref{mF.mF.old} which gives $\mF$ in terms of $\mF_0$. 
\end{proof}

In the next lemma we use the properties of the conserved quantities $\mH, \mI,  \mV, \mB$ 
of Section \ref{sec:symmetries} to obtain important orthogonality properties 
for the image of the nonlinear operator $\mF$.

\begin{lemma} \label{lemma:orthogonality.mF}
Assume the hypotheses of Lemma \ref{lemma:mF.Hs}.
Then, for all $(\om, u) \in \R \times U$, 
one has the orthogonality identities 
\begin{align} 
& \langle \mF(\om, u) \,,\, (1+\eta)^{-2} J_0^{-1} \nabla \mV(u) \rangle_{L^2(\S^2)} = 0,
\label{eq:orthogonal.mV}
\\
& \langle \mF(\om, u) \,,\, (1+\eta)^{-2} J_0^{-1} \nabla \mB_3(u) \rangle_{L^2(\S^2)} = 0,
\label{eq:orthogonal}
\end{align}
where $\mB_3$ is the third component of $\mB$ in \eqref{def.mB},
and $\mV$ is in \eqref{def.volume}.
\end{lemma}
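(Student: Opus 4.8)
The plan is to derive each orthogonality identity from the fact that the corresponding functional Poisson-commutes with the Hamiltonian $\mH_{\sigma_0}$ and with the angular momentum $\mI$, which are the two building blocks of $\mF$. Recall from \eqref{def.mF} that $\mF(\om,u) = \grad \mH_{\sigma_0}(u) - \om \grad \mI(u)$, so that for any functional $A$ one has
\[
\langle \mF(\om,u) \,,\, (1+\eta)^{-2} J_0^{-1} \grad A(u) \rangle_{L^2(\S^2)}
= \{ A, \mH_{\sigma_0} \}(u) - \om \{ A, \mI \}(u),
\]
once one checks that the pairing on the left is exactly the Poisson bracket in the form \eqref{Poisson.brackets}. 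This is the key algebraic step: writing $\grad A = (\pa_\eta A, \pa_\beta A)$ and using $J_0^{-1} = -J_0$ with $J_0$ as in \eqref{mF.mF.old}, one has $(1+\eta)^{-2} J_0^{-1} \grad A = \big( -(1+\eta)^{-2}\pa_\beta A, \ (1+\eta)^{-2}\pa_\eta A \big)$, and pairing this against $\mF = (\mF_1, \mF_2) = (\pa_\eta \mH_{\sigma_0} - \om\pa_\eta\mI, \ \pa_\beta\mH_{\sigma_0} - \om\pa_\beta\mI)$ in $L^2(\S^2, \R^2)$ reproduces precisely $\{A, \mH_{\sigma_0}\} - \om\{A,\mI\}$ after matching signs with \eqref{Poisson.brackets}.

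Granting this, \eqref{eq:orthogonal.mV} follows by taking $A = \mV$: Lemma \ref{lemma:mass}(iii) gives $\{\mV, \mH\} = 0$, and since $\mH_{\sigma_0} = \mH - 2\sigma_0 \mV$ and $\{\mV,\mV\} = 0$ we get $\{\mV, \mH_{\sigma_0}\} = 0$; Lemma \ref{lem:conservation}(iv) gives $\{\mI, \mV\} = 0$, hence $\{\mV, \mI\} = 0$ by antisymmetry of the bracket. Therefore the right-hand side is $0 - \om \cdot 0 = 0$. Similarly, \eqref{eq:orthogonal} follows by taking $A = \mB_3$: Lemma \ref{lem.invariance}(ii) gives $\{\mB_3, \mH\} = 0$ and $\{\mB_3, \mV\} = 0$, so $\{\mB_3, \mH_{\sigma_0}\} = 0$; and Lemma \ref{lem:poisson.Mi.MB} gives $\{\mI, \mB_3\} = 0$, hence $\{\mB_3, \mI\} = 0$. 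So again the right-hand side vanishes identically in $\om$.

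The only genuinely delicate point is the regularity bookkeeping needed to justify that all the pairings and brackets make sense: $\grad \mV(u) = ((1+\eta)^2, 0)$ by \eqref{pa.mV} is smooth, and $\grad \mB_3(u)$ by \eqref{pa.mB} involves $(1+\eta)\grad_{\S^2}\psi$ and $x(1+\eta)^2 - (1+\eta)\grad_{\S^2} h$, which lie in $H^{s}(\S^2)$ and $H^{s+\frac12}(\S^2)$ respectively on the set $U$; meanwhile $\mF_1(\om,u) \in H^{s-\frac12}$ and $\mF_2(\om,u) \in H^{s}$ by Lemma \ref{lemma:mF.Hs}, and multiplication by $(1+\eta)^{-2}$ preserves these spaces for $\eta$ small in $H^{s_0+\frac32}$. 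Hence every $L^2$ inner product above is finite, and the identities $\{\mV,\mH_{\sigma_0}\} = \{\mB_3,\mH_{\sigma_0}\} = 0$ — originally established in Section \ref{sec:symmetries} via Noether's theorem on smooth data — extend to all $u \in U$ by the density of smooth functions and the continuity of $\mF$ (Lemma \ref{lemma:mF.Hs}) and of $\grad \mV, \grad \mB_3$. I expect the density/continuity extension argument to be the main obstacle to state cleanly; everything else is a direct unwinding of definitions and an appeal to the Poisson-bracket computations already done.
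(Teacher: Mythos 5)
Your proposal is correct and takes essentially the same route as the paper: rewrite the pairing $\langle \mF(\om,u), (1+\eta)^{-2} J_0^{-1}\nabla A(u)\rangle_{L^2(\S^2)}$ as Poisson brackets of $A=\mV$ or $A=\mB_3$ with $\mH$, $\mV$, $\mI$, and invoke the vanishing brackets from Lemmas \ref{lem:conservation}, \ref{lemma:mass}, \ref{lem.invariance}, \ref{lem:poisson.Mi.MB}. Two minor remarks: since $J_0^{-1}\grad A=(\pa_\beta A,\,-\pa_\eta A)$, the pairing actually equals $\{\mH_{\sigma_0},A\}-\om\{\mI,A\}$ rather than its negative (immaterial here, as all brackets vanish by antisymmetry), and the density/continuity extension you flag is not needed, because the bracket identities of Section \ref{sec:symmetries} hold directly for all $u\in U$.
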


\begin{proof}
By definition, $\mF = \nabla \mH_{\sigma_0} - \omega \nabla \mI$
and $\mH_{\sigma_0} = \mH - 2 \sigma_0 \mV$, 
see \eqref{def.mF} and \eqref{def.mH.sigma.0}. 
By $(iii)$ in Lemma \ref{lemma:mass}, 
$(ii)$ in Lemma \ref{lem.invariance}, 
and the last identity in Lemma \ref{lem:poisson.Mi.MB}, 
the Poisson brackets
\[
\{ \mH , \mB_3 \}, \quad 
\{ \mV , \mB_3 \}, \quad 
\{ \mI , \mB_3 \}, \quad 
\{ \mH , \mV \}, \quad 
\{ \mV , \mV \}, \quad 
\{ \mI , \mV \}
\]
all vanish. 
Then \eqref{eq:orthogonal.mV} and \eqref{eq:orthogonal} are an immediate consequence 
of definition \eqref{Poisson.brackets}:
\[
\langle \mF(\om, u) \,,\, (1+\eta)^{-2} J_0^{-1} \nabla \mB_3(u) \rangle_{L^2(\S^2)}
= \{ \mH, \mB_3 \}(u) - 2 \sigma_0 \{ \mV , \mB_3 \}(u) - \omega \{ \mI, \mB_3 \}(u) = 0,
\]
and similarly for $\mV$. 
\end{proof}

Let us emphasize the meaning of Lemma \ref{lemma:orthogonality.mF}.
By \eqref{pa.mV}, \eqref{pa.mB}, and the definition of $J_0^{-1}$ in \eqref{mF.mF.old}, 
one has 
\begin{equation} \label{orthogonal.vectors}
(1+\eta)^{-2} J_0^{-1} \nabla \mV(u) 
= \begin{pmatrix} 0 \\ -1 \end{pmatrix}, 
\quad \ 
(1+\eta)^{-2} J_0^{-1} \nabla \mB_3(u) 
= \begin{pmatrix} x_3 - (1+\eta)^{-1} (\grad_{\S^2} \eta)_3 \\ 
- (1+\eta)^{-1} (\grad_{\S^2} \beta)_3 \end{pmatrix}.  
\end{equation}
Thus, identities \eqref{eq:orthogonal.mV}, \eqref{eq:orthogonal} guarantee 
the $L^2(\S^2)$ orthogonality 
\begin{equation}\label{eq:orthogonal.vectors}
\mF(\om, u) \perp \begin{pmatrix} 0 \\ 1 \end{pmatrix}, 
\quad \ 
\mF(\om, u) \perp \, \begin{pmatrix} x_3 \\ 0 \end{pmatrix}
- \frac{1}{1+\eta} \begin{pmatrix}
(\nabla_{\S^2} \eta)_3 \\
(\nabla_{\S^2} \beta)_3 \end{pmatrix},
\end{equation}
meaning that $\mF(\om, u)$ is orthogonal to the pair $(0, 1)$ 
and to a perturbation of the pair $(x_3, 0)$, 
for $u = (\eta, \beta)$ in a neighborhood of the origin.
As we will see, this is a key observation in order to prove our main result.

We conclude this section by observing the following conjugation property. 

\begin{lemma}  \label{lemma:conj.mF}
For all $\th \in \T$, $(\om,u) \in \R \times U$, one has  
\begin{alignat}{2}
(\grad \mH) \circ \mT_\th & =  \mT_\th \circ (\grad \mH), 
\quad \ &
(\grad \mV) \circ \mT_\th & =  \mT_\th \circ (\grad \mV), 
\quad \ 
(\grad \mI) \circ \mT_\th =  \mT_\th \circ (\grad \mI), 
\notag \\ 
(\grad \mB_3) \circ \mT_\th & =  \mT_\th \circ (\grad \mB_3),
\quad \ & 
\mF( \om, \mT_\th u ) & = \mT_\th \mF( \om, u ).
\label{conj.mF}
\end{alignat}
\end{lemma}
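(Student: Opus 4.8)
The plan is to obtain all five identities in \eqref{conj.mF} from one elementary principle: if a $C^1$ functional $A$ is invariant under a linear operator $T$ that is orthogonal for the ambient $L^2$ scalar product, then its $L^2$-gradient conjugates with $T$, i.e.\ $(\grad A) \circ T = T \circ (\grad A)$. Indeed, differentiating $A \circ T = A$ at $u$ in direction $v$ gives $dA(Tu)[Tv] = dA(u)[v]$ for all $v$; rewriting the differential through the gradient, $\langle \grad A(Tu), Tv \rangle = \langle \grad A(u), v \rangle$, hence $\langle T^* \grad A(Tu) - \grad A(u), v \rangle = 0$ for all $v$, so $T^* \grad A(Tu) = \grad A(u)$, and $T^* = T^{-1}$ yields the claim. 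I would apply this with $T = \mT_\th$ acting diagonally on pairs $u = (\eta,\beta)$.

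First I would record the two facts this needs. The operator $\mT_\th$ is an isometry of $L^2(\S^2)$: the change of variable $y = R(\th)x$, under which $d\sigma$ is invariant since $R(\th)$ is a rotation, gives $\langle \mT_\th f, g \rangle_{L^2(\S^2)} = \langle f, \mT_{-\th} g \rangle_{L^2(\S^2)}$, so $\mT_\th^* = \mT_{-\th} = \mT_\th^{-1}$, and the same holds componentwise on $L^2(\S^2,\R^2)$. Moreover $\mT_\th$ maps the degree-$\ell$ spherical harmonics among themselves, hence preserves every norm $\|\cdot\|_{H^s(\S^2)}$ and maps the set $U$ of Lemma \ref{lemma:mF.Hs} into itself, so all the compositions in \eqref{conj.mF} are well defined. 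The required invariances are already available: $\mH \circ \mT_\th = \mH$ and $\mV \circ \mT_\th = \mV$ from \eqref{Ham.is.invariant} (Lemma \ref{lem:conservation}$(i)$), and $\mI \circ \mT_\th = \mI$, $\mB_3 \circ \mT_\th = \mB_3$ from \eqref{invariance.group.action} (Lemma \ref{lemma:group.action}); the $C^1$ regularity on the relevant Sobolev spaces is guaranteed by the explicit gradient formulas \eqref{pa.mV}, \eqref{pa.mI}, \eqref{pa.mB}, and, for $\mH$, by \eqref{quasi.Ham.syst} together with the regularity of the Dirichlet–Neumann operator recalled from \cite{B.J.LM}. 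Applying the principle to $A = \mH, \mV, \mI, \mB_3$ gives the first four identities. The last one follows by linearity of $\mT_\th$: writing $\mF(\om,u) = \grad \mH(u) - 2\sigma_0 \grad \mV(u) - \om \grad \mI(u)$ via \eqref{def.mF} and \eqref{def.mH.sigma.0}, each summand commutes with $\mT_\th$, so $\mF(\om, \mT_\th u) = \mT_\th \mF(\om, u)$.

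I do not expect a genuine obstacle here; the only point deserving care is the first step — that $\mT_\th$ is an $L^2$-isometry preserving the function-space domains — so that the ``differentiate and invoke Riesz'' argument is rigorous. This abstract route is what makes the proof uniform and short: it handles $\grad \mB_3$ without having to track the factors $x_3$ and $(\grad_{\S^2}\cdot)_3$ in \eqref{pa.mB}, and it handles $\grad \mH$ without separately conjugating $G(h)$ and $X_1, X_2$ with $\mT_\th$. Should one prefer a direct verification, the first four identities can alternatively be read off from the explicit gradient formulas using $\mM \circ \mT_\th = \mT_\th \circ \mM$ (from the proof of Lemma \ref{lemma:group.action}), the multiplicativity $\mT_\th(fg) = (\mT_\th f)(\mT_\th g)$, and, for $\mB_3$, the identity $(R(\th)x)_3 = x_3$ together with \eqref{grad.mT.th}.
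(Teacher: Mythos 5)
Your proposal is correct and follows essentially the same route as the paper: differentiate the invariance $A\circ\mT_\th=A$, pass to $L^2$-gradients, and use $\mT_\th^{\,T}=\mT_{-\th}=\mT_\th^{-1}$, then get the identity for $\mF$ by linearity from $\mF(\om,u)=\grad\mH(u)-2\sigma_0\grad\mV(u)-\om\grad\mI(u)$. The extra remarks you add (that $\mT_\th$ preserves $U$ and the $H^s$ norms, and the alternative direct verification via \eqref{grad.mT.th}) are harmless and do not change the argument.
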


\begin{proof}
By Lemma \ref{lemma:group.action}, $\mH (\mT_\th u) = \mH(u)$. 
Taking the derivative with respect to $u$ in direction $\tilde u$ gives 
$\mH'(\mT_\th u)[\mT_\th \tilde u] = \mH'(u)[\tilde u]$, i.e., 
\[
\la (\grad \mH)(\mT_\th u) , \, \mT_\th \tilde u \ra_{L^2(\S^2)} 
= \la \grad \mH(u) , \, \tilde u \ra_{L^2(\S^2)}.
\]
Hence 
\[
\mT_\th^{\,T} (\grad \mH)(\mT_\th u) = \grad \mH(u),
\]
where $\mT_\th^{\,T}$ is the transpose of $\mT_\th$ with respect to the $L^2(\S^2)$ scalar product. 
One can easily notice that $\mT_\th^{\,T} = \mT_{- \th} = \mT_\th^{-1}$, 
so that the first identity of the lemma is proved. 
The other identities are proved similarly; 
recall that $\mF(\om, u) = \grad \mH(u) - 2 \sigma_0 \grad \mV(u) - \om \grad \mI(u)$
by \eqref{def.mF} and \eqref{def.mH.sigma.0}. 
\end{proof}

\section{The linearized operator}\label{sec:linearized}

To study the nonlinear equation \eqref{eq.mF=0}, 
we consider its linearized operator at $u=0$
\[
L_\om := \pa_u \mF(\om,0) : H^{s+\frac32}(\S^2) \times H^{s+1}(\S^2) 
\to H^{s - \frac12}(\S^2) \times H^s(\S^2),
\]
\begin{equation} \label{def.L}
L_\om (\eta, \beta)  
= \begin{pmatrix} 
- \s_0 (2 + \Delta_{\S^2}) & \om \pois 
\\
-\om \pois & G(0) 
\end{pmatrix}
\begin{pmatrix}
\eta \\ 
\beta
\end{pmatrix},
\end{equation}
and we recall that the bifurcation from zero of nontrivial solutions of \eqref{eq.mF=0} 
can only occur from values of $\om$ for which $L_\om$ has a nontrivial kernel. 
Owing to \eqref{mF.mF.old}, one has 
\[
L_\om = J_0^{-1} L_{\mF_0, \om}, 
\quad \ 
L_{\mF_0, \om} := \pa_u \mF_0(\om,0),
\]
where $J_0^{-1}$ is defined in \eqref{mF.mF.old}. 
The operator $L_{\mF_0,\om}$ is studied in \cite{B.J.LM} 
thanks to the simple identities 
\begin{equation}  \label{eigenv.rule}
- (2 + \Delta_{\S^2}) \ph_{\ell,m} = (\ell + 2)(\ell - 1) \ph_{\ell,m}, \quad 
G(0) \ph_{\ell,m} = \ell \ph_{\ell,m}, \quad  
\mM \ph_{\ell,m} = - m \ph_{\ell,-m},
\end{equation}
which hold for all $(\ell, m)\in T$, 
where $\ph_{\ell,m}$ and $T$ are given in \eqref{def.ph.ell.m.intro}, \eqref{def.ph.ell.m.Re.Im}.
In the next proposition we collect some properties of $L_{\mF_0,\om}$ 
proved in \cite[Section 6]{B.J.LM}.

\begin{proposition}[From \cite{B.J.LM}]
\label{prop:L0}
Let $(\ell_0, m_0) \in T$ with $\ell_0 \geq 2$ and $m_0 \neq 0$.  
Let 
\begin{equation} \label{om.fix}
\om_0 := \sqrt{\sigma_0} \frac{ \sqrt{(\ell_0+2)(\ell_0-1)\ell_0} }{m_0}
\end{equation}
and 
\begin{equation} \label{def.S}
S:=\{ (\ell, m) \in T :  (\ell+2)(\ell-1)\ell = c_0 m^2 \}, \quad \ 
c_0 := (\ell_0+2)(\ell_0-1) \ell_0 m_0^{-2}.
\end{equation}
Then the set $S$ has a finite number of elements, which are $(0,0)$, $(1,0)$, 
$(\ell_0, \pm m_0)$, and possibly finitely many other pairs $(\ell, \pm m)$ 
with $\ell \geq 2$ and $1 \leq |m| \leq \ell$.  
The kernel of the linear operator $L_{\mF_0,\om_0}$ is the finite dimensional space 
\begin{equation} \label{def.V}
V := \ker L_{\mF_0,\om_0} = \bigg\{ 
\begin{pmatrix} \eta \\ \beta \end{pmatrix} 
= \lm_{0,0} \begin{pmatrix} 0 \\ \ph_{0,0} \end{pmatrix} 
+ \sum_{ \begin{subarray}{c} (\ell,m) \in S \\ \ell \geq 1 \end{subarray}} 
\lm_{\ell,m} \begin{pmatrix} \ell \ph_{\ell,m} \\  - \om_0 m \ph_{\ell,-m} \end{pmatrix}
: \lm_{\ell,m} \in \R \bigg\}.
\end{equation}
Its orthogonal complement in $L^2(\S^2) \times L^2(\S^2)$ is the vector space
\begin{multline*}
W:= \bigg\{ 
\begin{pmatrix} \eta \\ \beta \end{pmatrix} 
= \lm_{0,0} \begin{pmatrix} \ph_{0,0} \\ 0 \end{pmatrix} 
+ \sum_{ \begin{subarray}{c} (\ell,m) \in S \\ \ell \geq 1 \end{subarray}}
\lm_{\ell,m} \begin{pmatrix} \om_0 m \ph_{\ell,m} \\ \ell \ph_{\ell,-m} \end{pmatrix}
+ \sum_{(\ell,m) \in T \setminus S} 
\begin{pmatrix} \hat \eta_{\ell,m} \ph_{\ell,m} \\  \hat \beta_{\ell,m} \ph_{\ell,m} \end{pmatrix}
\\ 
: \lm_{\ell,m}, \hat \eta_{\ell,m}, \hat \beta_{\ell,m}  \in \R, \  
(\eta, \beta) \in L^2(\S^2) \times L^2(\S^2) \bigg\}.
\end{multline*} 
The range of $L_{\mF_0,\om_0}$ is contained in
\begin{multline*} 
R_{\mF_0} := \bigg\{ 
\begin{pmatrix} f \\ g \end{pmatrix} 
= \hat g_{0,0} \begin{pmatrix} 0 \\ \ph_{0,0} \end{pmatrix} 
+ \sum_{ \begin{subarray}{c} (\ell,m) \in S \\ \ell \geq 1 \end{subarray}}
\hat f_{\ell,-m} \begin{pmatrix} \ph_{\ell,-m} \\ -\om_0 m \ell^{-1} \ph_{\ell,m} \end{pmatrix}
+ \sum_{(\ell,m) \in T \setminus S} 
\begin{pmatrix} \hat f_{\ell,m} \ph_{\ell,m} \\ \hat g_{\ell,m} \ph_{\ell,m} \end{pmatrix}  
\\ 
: \hat f_{\ell,m}, \hat g_{\ell,m} \in \R, \ 
(f,g) \in L^2(\S^2) \times L^2(\S^2) \bigg\}.
\end{multline*}
The orthogonal complement of $R_{\mF_0}$ with respect to the scalar product of 
$L^2(\S^2) \times L^2(\S^2)$ is the finite-dimensional space 
\begin{equation} \label{def.Z} 
Z_{\mF_0} := \bigg\{ 
\begin{pmatrix} f \\ g \end{pmatrix} 
= \lm_{0,0} \begin{pmatrix} \ph_{0,0} \\ 0 \end{pmatrix} 
+ \sum_{ \begin{subarray}{c} (\ell,m) \in S \\ \ell \geq 1 \end{subarray}} 
\lm_{\ell,m} \begin{pmatrix} \om_0 m \ph_{\ell,-m} \\ \ell \ph_{\ell,m} \end{pmatrix}
: \lm_{\ell,m} \in \R \bigg\}.
\end{equation}
Define
\begin{equation} \label{def.Ws.R0s}
W^s := W \cap (H^{s+\frac32}(\S^2)\times H^{s+1} (\S^2)), 
\quad \ 
R_{\mF_0}^s := R_{\mF_0} \cap (H^s (\S^2)\times H^{s-\frac12}(\S^2)  ).
\end{equation}
The linear map $L_{\mF_0,\om_0}|_{W^s} : W^s \to R_{\mF_0}^s$ is bijective. 
Its inverse $(L_{\mF_0,\om_0}|_{W^s})^{-1} : R_{\mF_0}^s \to W^s$ is bounded, with
\begin{equation} \label{inv.est.L}
\| (L_{\mF_0,\om_0}|_{W^s})^{-1} (f,g) \|_{H^{s+\frac32}(\S^2) \times H^{s+1}(\S^2)} 
\leq C_s \| (f,g) \|_{H^{s}(\S^2) \times H^{s-\frac12}(\S^2)} 
\end{equation}
for all $(f,g) \in R_0^s$. 
The constant $C_s$ depends on $\sigma_0, \ell_0, s$.  
\end{proposition}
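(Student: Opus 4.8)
\textit{Sketch of proof.} The statement is essentially a compilation of the results of \cite[Section~6]{B.J.LM}, and it can be proved either by citing them or, self-contained, by reducing everything to finite-dimensional linear algebra via the eigenvalue rules \eqref{eigenv.rule}. I will describe the latter. By \eqref{mF.mF.old} and \eqref{def.L} one has $L_{\mF_0,\om}=J_0 L_\om$, that is,
\[
L_{\mF_0,\om}=\begin{pmatrix}\om\mM & -G(0)\\ -\s_0(2+\Delta_{\S^2}) & \om\mM\end{pmatrix},
\]
and each scalar operator occurring here preserves the space of spherical harmonics of a fixed degree $\ell$, while $\mM$ moreover only exchanges $\ph_{\ell,m}\leftrightarrow\ph_{\ell,-m}$. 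Hence, for each $(\ell,m)\in T$ with $0\le m\le\ell$, the operator $L_{\mF_0,\om_0}$ leaves invariant the finite-dimensional space $E_{\ell,m}$ spanned by $(\ph_{\ell,m},0),(\ph_{\ell,-m},0),(0,\ph_{\ell,m}),(0,\ph_{\ell,-m})$ (which is $2$-dimensional when $m=0$ and $4$-dimensional when $m\ge1$), and $L^2(\S^2)\times L^2(\S^2)$ is the orthogonal Hilbert sum of the $E_{\ell,m}$. Everything then reduces to linear algebra on each block.

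On $E_{\ell,m}$ with $m\ge1$, using \eqref{eigenv.rule} one checks that $L_{\mF_0,\om_0}$ swaps the two $2$-dimensional subspaces $\mathrm{span}\{(\ph_{\ell,m},0),(0,\ph_{\ell,-m})\}$ and $\mathrm{span}\{(\ph_{\ell,-m},0),(0,\ph_{\ell,m})\}$, acting on each by a $2\times2$ matrix of determinant $\s_0\ell(\ell+2)(\ell-1)-\om_0^2m^2$; for $m=0$ the block $E_{\ell,0}$ is itself $2$-dimensional with determinant $\s_0\ell(\ell+2)(\ell-1)$. Thus $L_{\mF_0,\om_0}|_{E_{\ell,m}}$ is singular exactly when $\s_0\ell(\ell+2)(\ell-1)=\om_0^2m^2$, which by \eqref{om.fix} is exactly the condition $(\ell,m)\in S$ of \eqref{def.S}; since $|m|\le\ell$ gives $\om_0^2m^2\le\om_0^2\ell^2$ while $\s_0\ell(\ell+2)(\ell-1)$ grows cubically, only finitely many $\ell$, hence finitely many pairs, lie in $S$, and $(0,0),(1,0),(\ell_0,\pm m_0)$ are among them. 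For every singular block one solves the relevant $2\times2$ systems explicitly: the kernel, the image, and their orthogonal complements inside $E_{\ell,m}$ turn out to be spanned exactly by the vectors listed in \eqref{def.V} and in the displayed formulas for $W$, $R_{\mF_0}$, $Z_{\mF_0}$; on each non-singular block $L_{\mF_0,\om_0}$ restricts to an isomorphism of the whole block, which accounts for the remaining sums over $T\setminus S$. Assembling the blocks yields the stated descriptions of $V=\ker L_{\mF_0,\om_0}$, of $W=V^\perp$, of $R_{\mF_0}\supseteq\mathrm{ran}\,L_{\mF_0,\om_0}$, and of $Z_{\mF_0}=R_{\mF_0}^\perp$.

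For the last part, $L_{\mF_0,\om_0}|_{W^s}\colon W^s\to R_{\mF_0}^s$ is, block by block, precisely the finite-dimensional isomorphism $E_{\ell,m}\cap W\to E_{\ell,m}\cap R_{\mF_0}$ identified above, so bijectivity is immediate; the real content is the quantitative bound \eqref{inv.est.L}. This is obtained by inverting the blocks and tracking the spherical-harmonic weights: since $2+\Delta_{\S^2}$ raises the degree by $2$ while $G(0)$ and $\mM$ raise it by $1$, the entries of the inverse block matrix have size $O(\ell^{-1})$ (and $O(\ell^{-2})$ for the entry recovering $\eta$ from the $g$-component), using $|m|\le\ell$ and a lower bound on the block determinant; this yields the two-derivative gain on the $\eta$-component and the one-derivative gain on the $\beta$-component encoded in \eqref{inv.est.L}. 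The main—indeed the only genuinely analytic—point is the uniform-in-$\ell$ lower bound on $\s_0\ell(\ell+2)(\ell-1)-\om_0^2m^2$ on $W^s$: for large $\ell$ it is $\ge c\,\ell^3$ (using $\ell(\ell+2)(\ell-1)\ge\ell^3$ for $\ell\ge2$ and $m^2\le\ell^2$, so the resonant term is lower order), while on the finitely many remaining non-resonant blocks it is simply nonzero; once combined with the careful bookkeeping of which output component controls which unknown in the correct Sobolev scale, \eqref{inv.est.L} follows. (Alternatively, one invokes the corresponding results of \cite[Section~6]{B.J.LM}.)
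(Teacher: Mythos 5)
Your argument is correct, and it is the expected one: the paper itself gives no proof of Proposition \ref{prop:L0} (it is imported from \cite{B.J.LM}, Section 6), and the proof there is exactly the mode-by-mode reduction you describe, based on the identities \eqref{eigenv.rule}, the invariant blocks spanned by $(\ph_{\ell,\pm m},0),(0,\ph_{\ell,\pm m})$, the block determinant $\s_0\,\ell(\ell+2)(\ell-1)-\om_0^2m^2$ whose vanishing is precisely membership in $S$, and the explicit $2\times 2$ inverses together with the lower bound $\det\gtrsim \ell^3$ for large $\ell$ (using $m^2\le\ell^2$) and the nondegeneracy of the finitely many remaining nonresonant blocks. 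One small bookkeeping slip, harmless because the explicit entries are at hand: in the inverse block the $O(\ell^{-1})$ entry is the one recovering $\beta$ from the first output component, namely $\s_0(\ell+2)(\ell-1)/\det$, whereas the entry recovering $\eta$ from that component is $\om_0 m/\det=O(\ell^{-2})$ (not merely $O(\ell^{-1})$); it is this $O(\ell^{-2})$ decay that produces the $\tfrac32$-derivative gain of $\eta\in H^{s+\frac32}$ over $f\in H^{s}$ required by \eqref{inv.est.L}, while $\s_0(\ell+2)(\ell-1)/\det=O(\ell^{-1})$ gives the one-derivative gain of $\beta$ over $f$, and the two entries $\ell/\det$, $\om_0 m/\det=O(\ell^{-2})$ give the gains over $g\in H^{s-\frac12}$. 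With that correction the derivative bookkeeping closes exactly as you indicate.
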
 

We are interested in the operator $L_{\om_0} = J_0^{-1} L_{\mF_0,\om_0}$. 
Observe that $J_0^{-1}$ acts as a rotation on the arrival space of the operator, leaving the domain unchanged.
Thus, 
\[
\operatorname{ker}(L_{\om_0}) = \operatorname {ker}(L_{\mF_0,\om_0})=V.
\]
The range of $L_{\om_0}$ satisfies 
\[
R := \operatorname{Range}(L_{\om_0}) = J_0^{-1} \operatorname{Range}(L_{\mF_0,\om_0}),
\]
whence
\begin{multline} 
R = \bigg\{ 
\begin{pmatrix} f \\ g \end{pmatrix} 
= \hat f_{0,0} \begin{pmatrix} \ph_{0,0} \\ 0 \end{pmatrix} 
+ \sum_{ \begin{subarray}{c} (\ell,m) \in S \\ \ell \geq 1 \end{subarray}}
\hat f_{\ell,-m} \begin{pmatrix} \om_0  m \ph_{\ell,m} \\ \ell \ph_{\ell,-m} \end{pmatrix}
+ \sum_{(\ell,m) \in T \setminus S} 
\begin{pmatrix} \hat f_{\ell,m} \ph_{\ell,m} \\ \hat g_{\ell,m} \ph_{\ell,m} \end{pmatrix}  
\\ 
: \hat f_{\ell,m}, \hat g_{\ell,m} \in \R, \ 
(f,g) \in L^2(\S^2) \times L^2(\S^2) \bigg\}.
\label{def.R}
\end{multline}
We also define 
\[
R^s := J_0^{-1} R_{\mF_0}^s
= R \cap (H^{s-\frac12}(\S^2) \times H^s (\S^2)), 
\quad \ 
Z := J_0^{-1} Z_{\mF_0}.
\]
Thus $Z$ is the orthogonal complement of $R$ in $L^2(\S^2) \times L^2(\S^2)$. 
We also note that 
\begin{equation} \label{Z.is.V.R.is.W}
Z = V, \quad \ 
R = W,
\end{equation}
while, for any real $s, \sigma$, the spaces $R^s$ and $W^\s$ are not equal 
(if $R^s = W^\sigma$, then both $s - \frac12 = \sigma + \frac32$ and $s = \sigma + 1$, 
which is impossible);  
on the contrary, $Z = V$ because they are finite-dimensional vector spaces,  
and $R = W$ as subspaces of $L^2(\S^2) \times L^2(\S^2)$. 
Since $V$ is finite-dimensional, we denote 
\begin{equation} \label{def.norma.V}
|v| := \| v \|_{L^2(\S^2) \times L^2(\S^2)}
\end{equation}
for all $v \in V$. 
Also, we introduce the notation 
\begin{equation} \label{def.norma.Ws.Rs}
\| (\eta, \beta) \|_{W^s} := \| \eta \|_{H^{s+\frac32}(\S^2)} + \| \beta \|_{H^{s+1}(\S^2)}, 
\quad \ 
\| (f,g) \|_{R^s} := \| f \|_{H^{s-\frac12}(\S^2)} + \| g \|_{H^s(\S^2)}, 
\end{equation}
for all $(\eta, \beta) \in W^s$, all $(f,g) \in R^s$.
We denote by 
\[
\Pi_R \colon R \oplus Z \to R, 
\quad \ 
\Pi_Z \colon R \oplus Z \to Z
\]
the projection maps onto $R$ and $Z$, 
and we also write $\Pi_V = \Pi_Z$, $\Pi_W = \Pi_R$.
To keep track of the invariance property under the group action $\mT_\th$, 
we make the following observations. 
Recall that $T$ is the set of indices defined in \eqref{def.ph.ell.m.intro}.

\begin{lemma}  \label{lemma:V.W.group.action}
For all $(\ell, m) \in T$ with $m \geq 0$, 
for all $\th \in \T$, one has 
\begin{align}
\mT_\th \ph_{\ell,m} & = \cos(m \th) \ph_{\ell,m} - \sin(m \th) \ph_{\ell,-m}, 
\notag \\ 
\mT_\th \ph_{\ell,-m} & = \sin(m \th) \ph_{\ell,m} + \cos(m \th) \ph_{\ell,-m}.
\label{mT.th.ph.ell.m}
\end{align}
As a consequence, for all $(\ell, m) \in T$, 
the linear space generated by $\ph_{\ell,m}, \ph_{\ell, - m}$ is invariant for $\mT_\th$. 
Moreover, $V$ and $W$ are also invariant for $\mT_\th$, namely 
\begin{equation} \label{mT.th.proj}
\mT_\th(V) = V, \quad \ 
\mT_\th(W) = W, \quad \ 
\mT_\th \Pi_V = \Pi_V \mT_\th, \quad \ 
\mT_\th \Pi_W = \Pi_W \mT_\th.
\end{equation}
\end{lemma}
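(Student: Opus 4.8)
The plan is to first establish the transformation rule \eqref{mT.th.ph.ell.m} by a direct computation from the explicit Cartesian formulas \eqref{def.ph.ell.m.Re.Im}, and then to read off all the remaining assertions from it, using that $\mT_\th$ is unitary. For \eqref{mT.th.ph.ell.m}: the rotation $R(\th)$ in \eqref{def.R(th)} leaves the third coordinate fixed, so $P_\ell^{(m)}$ evaluated at the third coordinate is unchanged by $\mT_\th$, and an elementary computation gives
\[
(R(\th)x)_1 + i (R(\th)x)_2 = e^{i\th}(x_1 + i x_2),
\qquad \text{hence} \qquad
\big( (R(\th)x)_1 + i (R(\th)x)_2 \big)^m = e^{i m \th} (x_1 + i x_2)^m .
\]
Writing $e^{i m \th} = \cos(m\th) + i \sin(m\th)$ and separating real and imaginary parts in $\Re[(x_1+ix_2)^m]$ and $\Im[(x_1+ix_2)^m]$ then yields \eqref{mT.th.ph.ell.m} (the case $m = 0$ being trivial since $\mT_\th \ph_{\ell,0} = \ph_{\ell,0}$). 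In particular, for every $(\ell,m) \in T$ the space $\mathrm{span}\{\ph_{\ell,m}, \ph_{\ell,-m}\}$ is $\mT_\th$-invariant.

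Next I would record two elementary facts. First, $\mT_\th$ is a unitary operator on $L^2(\S^2)$, and hence on $L^2(\S^2)\times L^2(\S^2)$ with the product scalar product, because $R(\th)$ preserves the measure $d\sigma$ (the same change of integration variable $R(\th)x = y$ already used in Lemma \ref{lem:conservation}). Second, the index sets $S$ in \eqref{def.S} and $T \setminus S$ are symmetric under $m \mapsto -m$, since the defining relation in \eqref{def.S} depends on $m$ only through $m^2$. To prove $\mT_\th(V) = V$, I would apply \eqref{mT.th.ph.ell.m} to each generator of $V$ in \eqref{def.V}: the generator $(0,\ph_{0,0})$ is fixed by $\mT_\th$, since $\ph_{0,0}$ is constant; the generator $(\ph_{1,0},0)$ coming from $(1,0) \in S$ is fixed, since $\ph_{1,0}$ is a multiple of the third coordinate; and for $(\ell, m) \in S$ with $\ell \geq 1$, $m \neq 0$, a short computation using \eqref{mT.th.ph.ell.m} on both components shows
\[
\mT_\th \begin{pmatrix} \ell \ph_{\ell,m} \\ - \om_0 m \ph_{\ell,-m} \end{pmatrix}
= \cos(m\th) \begin{pmatrix} \ell \ph_{\ell,m} \\ - \om_0 m \ph_{\ell,-m} \end{pmatrix}
- \sin(m\th) \begin{pmatrix} \ell \ph_{\ell,-m} \\ \om_0 m \ph_{\ell,m} \end{pmatrix},
\]
where the right-hand side is a combination of the generators of $V$ associated with $(\ell,m)$ and with $(\ell,-m)$, the latter also lying in $S$. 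Hence $\mT_\th$ maps the generating set of $V$ into $V$, so $\mT_\th(V) \subseteq V$, and, applying this with $-\th$ in place of $\th$, $\mT_\th(V) = V$.

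For the invariance of $W$, since $W$ is the $L^2(\S^2)\times L^2(\S^2)$-orthogonal complement of $V$ (recall $Z = V$, $R = W$ by \eqref{Z.is.V.R.is.W}) and $\mT_\th$ is unitary, $\mT_\th(W) = \mT_\th(V^\perp) = (\mT_\th V)^\perp = V^\perp = W$; alternatively, one can check this directly from the description of $W$ in Proposition \ref{prop:L0}, treating the ``$S$-part'' exactly as for $V$ and the ``$(T\setminus S)$-part'' componentwise via \eqref{mT.th.ph.ell.m} together with the symmetry of $T\setminus S$, or one can observe that $L_{\om_0} \mT_\th = \mT_\th L_{\om_0}$ (obtained by differentiating at $u = 0$ the conjugation identity $\mF(\om, \mT_\th u) = \mT_\th \mF(\om,u)$ of Lemma \ref{lemma:conj.mF}) and deduce the invariance of both $\ker L_{\om_0} = V$ and $\operatorname{Range} L_{\om_0} = W$. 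Finally, for the commutation with the projections, write $u = v + w$ with $v \in V$, $w \in W$; then $\mT_\th u = \mT_\th v + \mT_\th w$ with $\mT_\th v \in V$ and $\mT_\th w \in W$, so $\Pi_V \mT_\th u = \mT_\th v = \mT_\th \Pi_V u$, and likewise $\Pi_W \mT_\th = \mT_\th \Pi_W$.

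I do not expect any real obstacle here: the only computation with genuine content is the transformation rule \eqref{mT.th.ph.ell.m}, which collapses to the identity $(R(\th)x)_1 + i (R(\th)x)_2 = e^{i\th}(x_1 + i x_2)$, and the rest is bookkeeping with the explicit bases of $V$ and $W$ from Proposition \ref{prop:L0} and the symmetry $m \mapsto -m$ of the index set $S$. The only point requiring mild care is matching linear combinations of the generators correctly (keeping track of the factors $\ell$ and $-\om_0 m$ and of the swap $m \leftrightarrow -m$), and recalling that it is precisely the unitarity of $\mT_\th$ that lets one pass from the invariance of $V$ to that of its orthogonal complement $W$ for free.
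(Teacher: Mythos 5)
Your proof is correct and follows essentially the same route as the paper: the explicit computation $(R(\th)x)_1 + i(R(\th)x)_2 = e^{i\th}(x_1 + ix_2)$ to obtain \eqref{mT.th.ph.ell.m}, checking the action on each generator of $V$ from \eqref{def.V} (giving the paper's \eqref{mT.th.ttv}), and then passing to $W$ as the orthogonal complement. The paper proves $\mT_\th(W) \subseteq W$ by an inline scalar-product computation that amounts to verifying that $\mT_\th$ is unitary, which you simply state up front; that and your explicit spelling-out of $\Pi_V \mT_\th = \mT_\th \Pi_V$ (which the paper leaves implicit) are the only differences, both cosmetic.
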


\begin{proof}
Let $x \in \R^3$ and $y := R(\th) x$, where $R(\th)$ is in \eqref{def.R(th)}. 
Then the complex number $y_1 + i y_2$ is equal to the product $e^{i \th} (x_1 + i x_2)$, 
while $y_3 = x_3$. Therefore, for any integer $m \geq 0$, one has 
\[
(y_1 + i y_2)^m = e^{i m \th} (x_1 + i x_2)^m
\]
and, by \eqref{def.ph.ell.m.Re.Im} and \eqref{def.mT.theta}, 
\begin{align*}
\mT_\th \ph_{\ell,m}(x) 
& = \ph_{\ell,m} (R(\th) x) 
\\ 
& = \ph_{\ell,m} (y) 
\\
& = c_{\ell}^{(m)} P_\ell^{(m)}(y_3) \Re[ (y_1 + i y_2)^m ]
\\ 
& = c_{\ell}^{(m)} P_\ell^{(m)}(x_3) \Re[ e^{i m \th} (x_1 + i x_2)^m ]
\\ 
& = c_{\ell}^{(m)} P_\ell^{(m)}(x_3) \{ \cos(m\th) \Re[ (x_1 + i x_2)^m ]
- \sin(m\th) \Im[ (x_1 + i x_2)^m ] \},
\end{align*}
which is the first identity in \eqref{mT.th.ph.ell.m}. 
The second identity in \eqref{mT.th.ph.ell.m} is proved similarly. 
By \eqref{mT.th.ph.ell.m}, for $(\ell,m) \in T$, with $m \geq 0$, one has 
\begin{align}
\mT_\th \begin{pmatrix} 
\ell \ph_{\ell,m} \\ 
- \om_0 m \ph_{\ell, -m} 
\end{pmatrix}
& = \cos(m \th) \begin{pmatrix} 
\ell \ph_{\ell,m} \\ 
- \om_0 m \ph_{\ell,-m} 
\end{pmatrix}
- \sin(m \th)
\begin{pmatrix} 
\ell \ph_{\ell,-m} \\ 
\om_0 m \ph_{\ell,m} 
\end{pmatrix},
\notag \\
\mT_\th \begin{pmatrix} 
\ell \ph_{\ell,-m} \\ 
\om_0 m \ph_{\ell, m} 
\end{pmatrix}
& = \cos(m \th) \begin{pmatrix} 
\ell \ph_{\ell,-m} \\ 
\om_0 m \ph_{\ell,m} 
\end{pmatrix}
+ \sin(m \th)
\begin{pmatrix} 
\ell \ph_{\ell,m} \\ 
- \om_0 m \ph_{\ell,-m} 
\end{pmatrix}.
\label{mT.th.ttv}
\end{align}
Hence, recalling the definition \eqref{def.V} of $V$, 
we deduce that $\mT_\th v \in V$ for all $v \in V$. 
This means that $\mT_\th(V) \subseteq V$ for all $\th \in \T$. 
Applying $\mT_{- \th}$ to this inclusion, we get $V \subseteq \mT_{-\th}(V)$
for all $\th \in \T$, whence $V = \mT_\th(V)$ for all $\th$. 
Now let $w \in W$, and consider $\mT_\th w$, which is the sum $\mT_\th w = v_1 + w_1$ 
for some $v_1 \in V$, $w_1 \in W$. One has 
\[
\la \mT_\th w , v_1 \ra_{L^2(\S^2)} 
= \la w , \mT_\th^{\,T} v_1 \ra_{L^2(\S^2)} 
= \la w , \mT_{-\th} v_1 \ra_{L^2(\S^2)} 
= 0
\]
because $\mT_{-\th} v_1 \in V$ and $w \in W$. 
On the other hand, one also has 
\[
\la \mT_\th w , v_1 \ra_{L^2(\S^2)} 
= \la v_1 + w_1 , v_1 \ra_{L^2(\S^2)} 
= |v_1|^2,
\]
whence $v_1 = 0$. Hence $\mT_\th w = w_1 \in W$. 
This proves that $\mT_{\th}(W) \subseteq W$ for all $\th \in \T$. 
Applying $\mT_{-\th}$ to this inclusion gives $W \subseteq \mT_{-\th}(W)$ 
for all $\th \in \T$, whence $\mT_\th (W) = W$. 
\end{proof}

\begin{remark}
\label{rem:why.not.using.mF.old}
Concerning the definitions \eqref{def.mF} and \eqref{def.mF.old}, 
one could wonder why introducing $\mF$ should be more convenient 
than just keeping $\mF_0$, which is already studied in \cite{B.J.LM}.
The reason is that, with $\mF_0$, the matrix $(L_0)_{(\ell,m)}$ is a Jordan block for $(\ell,m) = (0,0)$ 
and $(1,0)$, while using $\mF$ the corresponding matrices are diagonal, with one zero eigenvalue. 
Jordan blocks create troubles proving that the bifurcation equation 
is still a variational problem, but this difficulty is 
easy to overcome, as it can be removed by 
$J_0$ in \eqref{def.mF.old}.
This problem is not visible in \cite{B.J.LM}, 
both because there the variational structure is not explicitly used, 
and because the frequencies $(0,0)$ and $(1,0)$ are removed by considering 
invariant subspaces of even/odd functions. 
On the contrary, Theorem \ref{thm:main} does not rely on any symmetry assumption for the unknown functions.
\end{remark}

\section{Lyapunov-Schmidt decomposition and solution of the range equation}\label{sec:LS.decomposition}

We perform the Lyapunov-Schmidt decomposition. 
Decomposing any $u \in H^{s+\frac32}(\S^2)\times H^{s+1}(\S^2)$ 
as the sum $u=v+w$ where $v\in V$ and $w\in W^s$, 
the equation $\mF (\om, u)=0$ can be reformulated as
\begin{align}
&\Pi_{R^s} \mathcal{F}(\omega , v+w) =0,\label{range.equation} \\
&\Pi_{Z} \mathcal{F}(\omega , v+w) =0.\label{Bifurcation.equation}
\end{align}
Here \eqref{range.equation} is the range equation, 
while \eqref{Bifurcation.equation} is the bifurcation equation. 

For any $\e > 0$, recalling the notation in \eqref{def.norma.V}, denote 
\begin{equation} \label{def.mU.e}
\mU_\e := \{ (\om, v) \in \R \times V : |\om - \om_0| < \e, \ |v| < \e \}.
\end{equation}
In the next lemma we observe that the range equation can be solved 
by the implicit function theorem.

\begin{lemma}[Solution of the range equation] 
\label{lemma:range.eq} 
There exist $\e_0 > 0$, $C > 0$, and an analytic function $\mU_{\e_0} \to W^s$, 
$(\om, v) \mapsto w(\om,v)$ such that 
\begin{equation} \label{IFT.w}
\Pi_{R^s} \mF ( \om, v + w(\om,v)) = 0
\end{equation}
for all $(\om,v) \in \mU_{\e_0}$, and the only solution $(\om,v,w)$ 
of the range equation \eqref{range.equation} 
with $(\om , v) \in \mU_{\e_0}$ and $\| w \|_{W^s} < C \e_0^2$ 
is the triplet $(\om, v, w(\om,v))$.
Moreover, for all $(\om, v) \in \mU_{\e_0}$, all $\th \in \T$, one has 
\begin{align} \label{w.pav.w}
w(\om,0) & = 0 \quad \forall |\om-\om_0| < \e_0, 
\qquad 
\pa_v w(\om_0, 0) = 0, 
\\
\label{w.estimate}
\| w(\om,v) \|_{W^s} 
& \leq C ( |v|^2 + |v| |\om-\om_0| ),
\\
\label{w.group.action}
w(\om,\mT_\th v) 
& = \mT_\th w(\om, v).
\end{align}
\end{lemma}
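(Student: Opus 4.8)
The plan is to apply the analytic implicit function theorem to the map
\[
\Psi : \R \times V \times W^s \to R^s, \qquad
\Psi(\om, v, w) := \Pi_{R^s} \mF(\om, v + w),
\]
at the point $(\om_0, 0, 0)$. First I would observe that $\mF(\om, 0) = 0$ for every $\om$ --- this is precisely the reason the constant $2\sigma_0$ was inserted in \eqref{syst.pat.h.pat.psi.2.sigma.0}, and it follows from \eqref{def.mF.1}--\eqref{def.mF.2} together with $X_1(0,0)=0$, $X_2(0,0)=-2\sigma_0$ and $\mM 0 = 0$ --- so that $\Psi(\om, 0, 0) = 0$ for all $\om$. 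By Lemma \ref{lemma:mF.Hs}, $\mF$ is analytic on $\R \times U$, hence so is $\Psi$, and its partial differential in $w$ at $(\om_0, 0, 0)$ is $\Pi_{R^s} L_{\om_0}|_{W^s}$. Since $L_{\om_0} = J_0^{-1} L_{\mF_0, \om_0}$ with $J_0^{-1}$ an $L^2$-isometry, and since $L_{\om_0}(W^s) \subseteq R^s$ so that $\Pi_{R^s}$ acts as the identity on that image, Proposition \ref{prop:L0} together with $R^s = J_0^{-1} R_{\mF_0}^s$ and $R = W$ shows that $\Pi_{R^s} L_{\om_0}|_{W^s} : W^s \to R^s$ is a bounded isomorphism, with inverse bound inherited from \eqref{inv.est.L}. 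The analytic IFT then produces $\e_0 > 0$ and an analytic map $w : \mU_{\e_0} \to W^s$ with $\Psi(\om, v, w(\om,v)) \equiv 0$ and $w(\om_0, 0) = 0$, unique among solutions of small $W^s$-norm; after shrinking $\e_0$ and choosing $C$ so that the slab $\|w\|_{W^s} < C\e_0^2$ lies inside the IFT neighbourhood, this is exactly the uniqueness clause of the lemma.

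Next I would read off the stated properties. The identity $w(\om, 0) = 0$ for all $|\om - \om_0| < \e_0$ is forced by local uniqueness, since $\Psi(\om, 0, 0) = 0$ and $0$ has small $W^s$-norm. Differentiating $\Psi(\om, v, w(\om,v)) = 0$ in $v$ at $(\om_0, 0)$ gives $\pa_v \Psi(\om_0, 0, 0) + \big(\pa_w \Psi(\om_0, 0, 0)\big)\pa_v w(\om_0, 0) = 0$; but $\pa_v \Psi(\om_0, 0, 0) = \Pi_{R^s} L_{\om_0}|_V = 0$ because $V = \ker L_{\om_0}$, and $\pa_w \Psi(\om_0, 0, 0)$ is invertible, so $\pa_v w(\om_0, 0) = 0$. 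For the estimate \eqref{w.estimate}, writing $w(\om, v) = \int_0^1 \pa_v w(\om, tv)[v]\,dt$ (using $w(\om, 0) = 0$), then expanding $\pa_v w(\om, tv) = \pa_v w(\om, 0) + O(|v|)$ by analyticity and $\pa_v w(\om, 0) = \pa_v w(\om_0, 0) + O(|\om-\om_0|) = O(|\om - \om_0|)$, I get $\|w(\om, v)\|_{W^s} \le C|v|\big(|\om - \om_0| + |v|\big)$ after a further shrinking of $\e_0$.

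For the equivariance \eqref{w.group.action} I would use the conjugation identity $\mF(\om, \mT_\th u) = \mT_\th \mF(\om, u)$ of Lemma \ref{lemma:conj.mF}, the fact that $\mT_\th$ is linear, an $L^2$-isometry, and preserves $V$ and $W$ (hence $W^s$), and that it commutes with $\Pi_{R^s}$ (Lemma \ref{lemma:V.W.group.action}). Given $(\om, v) \in \mU_{\e_0}$ and $\th \in \T$, one has $(\om, \mT_\th v) \in \mU_{\e_0}$ and
\[
\Pi_{R^s} \mF\big(\om, \mT_\th v + \mT_\th w(\om, v)\big)
= \Pi_{R^s}\,\mT_\th\,\mF\big(\om, v + w(\om, v)\big)
= \mT_\th\,\Pi_{R^s}\,\mF\big(\om, v + w(\om, v)\big) = 0,
\]
while $\mT_\th w(\om, v) \in W^s$ has the same, sufficiently small, norm as $w(\om,v)$; by the uniqueness clause already proved, $\mT_\th w(\om, v) = w(\om, \mT_\th v)$.

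The main obstacle is essentially absent at the conceptual level: all the work has been front-loaded into Proposition \ref{prop:L0}, which supplies the invertibility of the restricted linearized operator between the correctly tuned Sobolev spaces. What does require care is the bookkeeping --- keeping the asymmetric derivative losses built into the definitions of $W^s$ and $R^s$ consistent so that $\mF$, $\Pi_{R^s}$ and $L_{\om_0}$ all act between compatible spaces --- and checking that the IFT neighbourhood can be taken of product form $\mU_{\e_0} \times \{\|w\|_{W^s} < C\e_0^2\}$, which is what makes both the uniqueness clause and the equivariance argument close up.
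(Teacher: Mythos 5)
Your proposal is correct and follows essentially the same approach as the paper: analytic implicit function theorem applied to $\Pi_{R^s}\mF(\om,v+w)$ using the invertibility of $L_{\om_0}|_{W^s} : W^s \to R^s$ from Proposition~\ref{prop:L0}, uniqueness for $w(\om,0)=0$, implicit differentiation for $\pa_v w(\om_0,0)=0$, Taylor expansion for \eqref{w.estimate}, and the conjugation identity \eqref{conj.mF} together with $\Pi_{R^s}\mT_\th = \mT_\th\Pi_{R^s}$ plus uniqueness for \eqref{w.group.action}. The additional remark that $\Pi_{R^s}$ is the identity on the image of $L_{\om_0}|_{W^s}$, and the explicit integral form of the Taylor argument, are minor elaborations of what the paper states more tersely.
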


\begin{proof} 
The operator in the left-hand side of \eqref{range.equation} is analytic in $\om,v,w$ 
(see Lemma \ref{lemma:mF.Hs}), and its linearization with respect to $w$ 
at the point $(\om,v,w) = (\om_0,0,0)$ is the operator $L_{\om_0}$, 
which is a linear homeomorphism of $W^s$ onto $R^s$.  
Hence the existence and uniqueness in the statement comes from the implicit function theorem 
for analytic operators in Banach spaces.
The first identity in \eqref{w.pav.w} holds by the uniqueness of the solution 
and because $\mF(\om,0) = 0$ for all $\om$.  
The derivative of identity \eqref{IFT.w} with respect to $v$ 
in any direction $\tilde v \in V$ at $(\om,v) = (\om_0,0)$ gives 
\[
0 = \Pi_{R^s} (\pa_u \mF)(\om_0,0) \big[ \tilde v + \pa_v w(\om_0,0)[\tilde v] \big] 
= \Pi_{R^s} L_{\om_0}|_{W^s} \big[ \pa_v w(\om_0,0)[\tilde v] \big],
\]
which implies the second identity in \eqref{w.pav.w}. 
Then \eqref{w.estimate} follows by \eqref{w.pav.w} and the Taylor expansion of $w(\om,v)$ 
around $(\om_0,0)$.
Applying $\mT_\th$ to identity \eqref{IFT.w}, 
and using the third identity in \eqref{mT.th.proj}
and the last one in \eqref{conj.mF},  
one has 
\[
0 = \mT_\th \Pi_{R^s} \mF ( \om, v + w(\om,v))
= \Pi_{R^s} \mT_\th \mF ( \om, v + w(\om,v))
= \Pi_{R^s} \mF ( \om, \mT_\th v + \mT_\th w(\om,v)).
\]
On the other hand, \eqref{IFT.w} applied with $\mT_\th v$ in place of $v$ gives 
\[
\Pi_{R^s} \mF ( \om, \mT_\th v + w(\om, \mT_\th v)) = 0. 
\]
Thus \eqref{w.group.action} follows from the uniqueness property of the implicit function. 
\end{proof}

\section{Degenerate and non-degenerate decomposition}
\label{sec:degenerate.decomposition}
For notation convenience, recalling \eqref{def.V}, we define 
\begin{equation} \label{def.ttv}
\mathtt v_{0,0} := \begin{pmatrix} 0 \\ \ph_{0,0} \end{pmatrix}, 
\quad \ 
\mathtt v_{\ell,m} := \frac{ 1 }{ \sqrt{\ell^2 + \om_0^2 m^2} }
\begin{pmatrix} \ell \ph_{\ell,m} \\ - \om_0 m \ph_{\ell,-m} \end{pmatrix},
\quad \ (\ell,m) \in S, \ \ell \geq 1.
\end{equation}
Since each $\ph_{\ell,m}$ has unitary $L^2(\S^2,\R)$ norm, one has 
\begin{equation}  \label{norm.ttv}
|\mathtt v_{\ell,m}| = 1 \quad \forall (\ell,m) \in S, 
\end{equation}
where notation \eqref{def.norma.V} is used. 
Thus the vectors in \eqref{def.ttv} form an orthonormal basis of $V$ 
with respect to the $L^2(\S^2,\R) \times L^2(\S^2,\R)$ scalar product. 
We split the set $S$ defined in \eqref{def.S} 
into its ``degenerate'' and ``non-degenerate'' part, 
\begin{equation} \label{def.SD.SN}
S_D := \{ (0,0), (1,0) \}, \quad \ 
S_N := \{ (\ell,m) \in S : \ell > 1 \},
\end{equation}
and we decompose $V$ into the corresponding linear subspaces
\begin{equation} \label{def.VD.VN}
V_D := \mathrm{span}_\R \{ \mathtt v_{(\ell,m)} : (\ell,m) \in S_D \}, \quad \ 
V_N := \mathrm{span}_\R \{ \mathtt v_{(\ell,m)} : (\ell,m) \in S_N \}.
\end{equation}
Thus, $S = S_D \cup S_N$ and $V = V_D \oplus V_N$.
Since $Z=V$, we also define $Z_D := V_D$, $Z_N := V_N$. 
From Lemma \ref{lemma:V.W.group.action} it follows that 
$V_D$ and $V_N$ are invariant spaces for $\mT_\th$; more precisely, one has
\begin{equation}  \label{VD.VN.group.action}
\mT_\th |_{V_D} = \mathrm{id}, \quad \ 
\mT_\th (V_N) = V_N.
\end{equation}

By \eqref{mT.th.ttv} and \eqref{def.ttv}, 
one has 
\begin{equation} \label{mT.th.ttv.bis}
\mT_\th \mathtt{v}_{\ell,m} 
= \cos(m \th) \mathtt{v}_{\ell,m} 
- \sin(m \th) \mathtt{v}_{\ell,-m}
\end{equation}
for all $(\ell,m) \in S_N$; note that \eqref{mT.th.ttv.bis} 
holds not only for $m$ positive, but also for $m$ negative. 
By \eqref{mT.th.ttv.bis}, 
for any $v = \sum_{(\ell,m) \in S_N} \hat v_{\ell,m} \mathtt{v}_{\ell,m} \in V_N$ one has 
\begin{align}
\mT_\th v 
& = \sum_{(\ell,m) \in S_N} \hat v_{\ell,m} \cos(m \th) \mathtt{v}_{\ell,m} 
- \sum_{(\ell,m) \in S_N} \hat v_{\ell,m} \sin(m \th) \mathtt{v}_{\ell,-m}
\notag \\
& = \sum_{(\ell,m) \in S_N} \hat v_{\ell,m} \cos(m \th) \mathtt{v}_{\ell,m} 
- \sum_{(\ell,m) \in S_N} \hat v_{\ell,-m} \sin(-m \th) \mathtt{v}_{\ell,m}
\notag \\
& = \sum_{(\ell,m) \in S_N} \{ \cos(m \th) \hat v_{\ell,m} 
+ \sin(m \th) \hat v_{\ell,-m} \} \mathtt{v}_{\ell,m},
\label{mT.th.v}
\end{align}
where we have made a change of summation variable in the second sum. 
In other words, if $S_N = \{ (\ell_1, m_1), (\ell_1, - m_1), \ldots, (\ell_n, m_n), (\ell_n, - m_n) \}$,  
and $\hat v = (\hat v_{\ell_1, m_1} , \ldots, \hat v_{\ell_n, - m_n}) \in \R^{2n}$ 
is the vector of the coefficients of $v$ with respect to the basis 
$\{ \mathtt v_{\ell_1, m_1} , \ldots, \mathtt v_{\ell_n, - m_n} \}$ of $V_N$, 
then the linear map $\mT_\th : V_N \to V_N$ 
is represented by the block-diagonal matrix $M = \mathrm{diag}(M_1, \ldots, M_n)$, where 
the $j$-th block is 
\[
M_j = \begin{pmatrix} 
\cos(m_j \th) & \sin(m_j \th) \\ 
- \sin(m_j \th) & \cos(m_j \th) 
\end{pmatrix}, 
\quad \ j = 1, \ldots, n.
\]

\section{Choice of $\om(v)$}
\label{sec:choice.of.omega}

In this section we determine $\om$ as a function of $v$, 
see Lemma \ref{lemma:choice.of.omega}.
Let us consider the scalar product 
\begin{align} \label{def.F}
F(\om,v) & := \la f(\om, v) , \, g(\om,v) \ra_{L^2(\S^2)},
\\
\label{def.f.g}
f(\om, v) & := \Pi_{Z_N} \mF(\om, v + w(\om, v)), \quad \ 
g(\om, v) := (\grad \mI)(v + w(\om,v)).
\end{align}

\begin{lemma} \label{lemma:F.f.g.analytic}
The functions 
$f : \mU_{\e_0} \to Z_N$, 
$g : \mU_{\e_0} \to L^2(\S^2,\R^2)$, 
and $F : \mU_{\e_0} \to \R$ 
defined in \eqref{def.F}, \eqref{def.f.g} 
are analytic in the open neighborhood $\mU_{\e_0}$ 
of $(\om,v) = (\om_0, 0)$ given by Lemma \ref{lemma:range.eq}. 
\end{lemma}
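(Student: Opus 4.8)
The plan is to prove analyticity by assembling $F$ as a composition and product of functions already known to be analytic from earlier in the paper. The only genuinely new ingredients needed are: (a) analyticity of the implicit function $w$, which is Lemma \ref{lemma:range.eq}; (b) analyticity of the nonlinear operator $\mF$, which is Lemma \ref{lemma:mF.Hs}; (c) analyticity of $\grad \mI$, which follows from the explicit polynomial formula \eqref{pa.mI} for $\pa_\eta \mI$ and $\pa_\beta \mI$; and (d) the fact that the bounded bilinear map $(a,b) \mapsto \la a, b \ra_{L^2(\S^2)}$ is analytic on any pair of Hilbert spaces continuously embedded in $L^2(\S^2)$, together with the standard fact that compositions and finite products of analytic maps between Banach spaces are analytic.

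First I would treat $g$. The map $(\om, v) \mapsto v + w(\om, v)$ is analytic from $\mU_{\e_0}$ into $H^{s+\frac32}(\S^2) \times H^{s+1}(\S^2) = W^s \oplus V$ (indeed $V$ is finite-dimensional, so $v \mapsto v$ is trivially analytic, and $w$ is analytic by Lemma \ref{lemma:range.eq}). Next, by \eqref{pa.mI} the components of $\grad \mI(\eta,\beta)$ are $-(1+\eta)^2 \mM\beta$ and $(1+\eta)^2 \mM\eta$; since multiplication $H^{s+\frac32}(\S^2) \times H^{s+\frac12}(\S^2) \to H^{s+\frac12}(\S^2)$ is a bounded bilinear (hence analytic) map for $s > 1$ by the algebra property of Sobolev spaces on the compact manifold $\S^2$, and $\mM$ is a bounded linear operator losing one derivative, the map $(\eta,\beta) \mapsto \grad\mI(\eta,\beta)$ is analytic from $H^{s+\frac32}(\S^2) \times H^{s+1}(\S^2)$ into, say, $H^{s+\frac12}(\S^2) \times H^{s+\frac12}(\S^2) \hookrightarrow L^2(\S^2,\R^2)$. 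Composing, $g = (\grad\mI) \circ (v + w(\om,v))$ is analytic from $\mU_{\e_0}$ into $L^2(\S^2,\R^2)$.

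Next I would treat $f$. By Lemma \ref{lemma:mF.Hs} the map $\mF$ is analytic from $\R \times U$ into $H^{s-\frac12}(\S^2) \times H^s(\S^2)$; since $v + w(\om,v)$ stays in $U$ for $\e_0$ small (this is part of Lemma \ref{lemma:range.eq}), the composition $(\om,v)\mapsto \mF(\om, v+w(\om,v))$ is analytic into $H^{s-\frac12}(\S^2)\times H^s(\S^2)$. The projection $\Pi_{Z_N}$ is the composition of the $L^2$-orthogonal projection onto the finite-dimensional space $Z = V$ with the further projection onto the summand $Z_N$; it is a bounded linear map (it is given by integrating against the finitely many smooth basis vectors $\mathtt v_{\ell,m}$, $(\ell,m)\in S_N$, from \eqref{def.ttv}), hence analytic. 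Therefore $f = \Pi_{Z_N} \circ \mF(\cdot, \cdot + w)$ is analytic from $\mU_{\e_0}$ into $Z_N$, a finite-dimensional space. Finally, $F(\om,v) = \la f(\om,v), g(\om,v)\ra_{L^2(\S^2)}$; since $f$ takes values in the finite-dimensional subspace $Z_N \subset L^2(\S^2,\R^2)$ and $g$ takes values in $L^2(\S^2,\R^2)$, and the $L^2$-pairing is a continuous bilinear form on $L^2(\S^2,\R^2) \times L^2(\S^2,\R^2)$, the scalar product is analytic; composing the analytic map $(\om,v)\mapsto (f(\om,v),g(\om,v))$ with this bilinear form gives that $F$ is analytic from $\mU_{\e_0}$ into $\R$.

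I do not expect a serious obstacle here: the proof is essentially bookkeeping about analyticity being preserved under composition, finite products, and bounded linear maps. The only points requiring a word of care are checking that $v+w(\om,v)$ indeed lands in the domain $U$ of $\mF$ (so that the composition makes sense) — which is immediate from the construction in Lemma \ref{lemma:range.eq} after possibly shrinking $\e_0$ — and choosing the intermediate Sobolev indices so that all the bilinear multiplication maps are bounded, for which $s > 1$ is exactly what is needed via the Sobolev algebra property on $\S^2$. Everything else is standard.
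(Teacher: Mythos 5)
Your proof is correct and matches the paper's one-line justification, which simply cites \eqref{pa.mI} and Lemmas \ref{lemma:mF.Hs} and \ref{lemma:range.eq}; you have merely filled in the routine details about compositions, projections, and the $L^2$ pairing that the authors left implicit. No issues.
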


\begin{proof} 
It trivially follows from \eqref{pa.mI} and Lemmas \ref{lemma:mF.Hs}, \ref{lemma:range.eq}.
\end{proof}

Our goal is to obtain a function $\om = \om(v)$ such that $F(\om(v),v)=0$ around $(\om,v) = (\om_0, 0)$
by the implicit function theorem. 
To this purpose, we calculate the derivatives of $F$ at $(\om_0,0)$. 
Since $v \mapsto F(\om,v)$ is an analytic function starting with quadratic terms, 
we have to consider second order derivatives of $F$ with respect to $v$.

\begin{lemma} 
One has 
\begin{align}
f(\om, 0) & = 0, 
\quad 
\pa_v f(\om_0,0) = 0, 
\quad 
\pa_{\om v} f(\om_0,0) 
= - \Pi_{Z_N} J_0 \mM,
\label{der.f}
\\
g(\om, 0) & = 0, 
\quad 
\pa_v g(\om_0,0) 
= d (\grad \mI)(0) 
= J_0 \mM
\label{der.g}
\end{align}
for any $|\om - \om_0| \leq \e_0$, 
where $J_0$ is defined in \eqref{mF.mF.old}, 
and, recalling the notation \eqref{def.norma.V}, 
\begin{align}
& F(\om, 0) = 0, 
\quad \ 
\pa_v F(\om, 0) = 0, 
\quad \ 
\pa_{vv} F(\om_0,0) = 0, 
\notag 
\\ 
& \pa_{\om v v} F(\om_0,0) [\tilde v, \tilde v] 
= - | \Pi_{Z_N} J_0 \mM \tilde v |^2 
\quad \forall \tilde v \in V.
\label{der.F}
\end{align}
\end{lemma}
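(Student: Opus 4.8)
The plan is to obtain every identity by differentiating the definitions \eqref{def.F}--\eqref{def.f.g} and feeding in the properties of the implicit function $w(\om,v)$ from Lemma \ref{lemma:range.eq} together with the structure of the linearization from Proposition \ref{prop:L0}. I would begin with the values at $v=0$. Since $\mF(\om,0)=0$ (the origin solves \eqref{syst.mM.eta.mM.beta} for every $\om$) and $w(\om,0)=0$ by \eqref{w.pav.w}, we get $f(\om,0)=\Pi_{Z_N}\mF(\om,0)=0$; similarly $g(\om,0)=(\grad\mI)(0)=0$, because $\grad\mI$ vanishes at the origin by \eqref{pa.mI}. Hence $F(\om,0)=\la f(\om,0),g(\om,0)\ra_{L^2(\S^2)}=0$, and the Leibniz rule together with $f(\om,0)=g(\om,0)=0$ immediately gives $\pa_vF(\om,0)=0$.

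For the first $v$-derivatives at $(\om_0,0)$, the chain rule applied to \eqref{def.f.g} gives $\pa_vf(\om,0)[\tilde v]=\Pi_{Z_N}\,\pa_u\mF(\om,0)\big[\tilde v+\pa_vw(\om,0)[\tilde v]\big]$; at $\om=\om_0$ we use $\pa_vw(\om_0,0)=0$ from \eqref{w.pav.w} and the fact that $\pa_u\mF(\om_0,0)=L_{\om_0}$ annihilates $\tilde v\in V=\ker L_{\om_0}$, so $\pa_vf(\om_0,0)=0$. The same computation for $g$ yields $\pa_vg(\om_0,0)[\tilde v]=d(\grad\mI)(0)[\tilde v]$, and a one-line linearization of the formulas \eqref{pa.mI} at the origin gives $d(\grad\mI)(0)=J_0\mM$, which is \eqref{der.g}.

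The only slightly delicate point is the mixed derivative $\pa_{\om v}f(\om_0,0)$, and this is really the crux of the lemma. Differentiating $\pa_vf(\om,0)[\tilde v]=\Pi_{Z_N}\pa_u\mF(\om,0)[\tilde v+\pa_vw(\om,0)[\tilde v]]$ once more in $\om$ at $\om_0$, and again using $\pa_vw(\om_0,0)=0$, produces the two terms $\Pi_{Z_N}\big(\pa_\om\pa_u\mF(\om_0,0)[\tilde v]\big)$ and $\Pi_{Z_N}L_{\om_0}\big[\pa_{\om v}w(\om_0,0)[\tilde v]\big]$. Since $\mF=\grad\mH_{\sigma_0}-\om\grad\mI$, one has $\pa_\om\pa_u\mF(\om_0,0)=-d(\grad\mI)(0)=-J_0\mM$; and the second term vanishes because $\pa_{\om v}w(\om_0,0)[\tilde v]\in W^s$, hence $L_{\om_0}$ maps it into $R$, which is $L^2(\S^2)$-orthogonal to $Z_N\subseteq Z$. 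This gives $\pa_{\om v}f(\om_0,0)=-\Pi_{Z_N}J_0\mM$, i.e. \eqref{der.f}. What makes the argument work is precisely that the unknown quantity $\pa_{\om v}w(\om_0,0)$ drops out, leaving only the explicit linear operator.

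Finally, the derivatives of $F=\la f,g\ra_{L^2(\S^2)}$ follow from the Leibniz rule. At $(\om_0,0)$ we have $f=g=0$ and $\pa_vf=0$, so the quadratic-in-$v$ part of $F$ vanishes, giving $\pa_{vv}F(\om_0,0)=0$; and in $\pa_{\om vv}F(\om_0,0)$ the only term surviving these cancellations is the one pairing $\pa_{\om v}f(\om_0,0)[\tilde v]$ with $\pa_vg(\om_0,0)[\tilde v]$, namely $\la-\Pi_{Z_N}J_0\mM\tilde v,\,J_0\mM\tilde v\ra_{L^2(\S^2)}$. Since $\Pi_{Z_N}$ is the orthogonal projection onto $Z_N$, one has $\la\Pi_{Z_N}a,a\ra=|\Pi_{Z_N}a|^2$, which yields $\pa_{\om vv}F(\om_0,0)[\tilde v,\tilde v]=-|\Pi_{Z_N}J_0\mM\tilde v|^2$. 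The whole proof is bookkeeping of chain and product rules; no genuinely hard estimate is needed, the analyticity and the required Taylor expansions being supplied by Lemmas \ref{lemma:mF.Hs}, \ref{lemma:range.eq} and \ref{lemma:F.f.g.analytic}.
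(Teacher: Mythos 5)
Your proof is correct and follows essentially the same route as the paper: compute the chain rule for $f,g$ from \eqref{def.f.g}, use $w(\om,0)=0$ and $\pa_v w(\om_0,0)=0$ from \eqref{w.pav.w}, kill the term involving $\pa_{\om v}w(\om_0,0)$ via $\Pi_{Z_N}\circ L_{\om_0}=0$, identify $\pa_{\om u}\mF=-d(\grad\mI)$, and finish with the Leibniz rule and $\la\Pi_{Z_N}a,a\ra=|\Pi_{Z_N}a|^2$. The only cosmetic difference is that for $\pa_v f(\om_0,0)=0$ you invoke $L_{\om_0}\tilde v=0$ (kernel) while the paper invokes $\Pi_{Z_N}L_{\om_0}=0$ (range $\perp Z$); both are valid and, given $\pa_v w(\om_0,0)=0$, interchangeable.
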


\begin{proof}
Recall that $w(\om,0) = 0$, see \eqref{w.pav.w}. 
Hence $g(\om,0) = \grad \mI(0) = 0$ by \eqref{pa.mI}. 
The second identity in \eqref{der.g} follows from \eqref{pa.mI}.
One has $f(\om,0) = 0$ because $\mF(\om,0) = 0$ for all $\om$.  
Also, 
\[
\pa_v f(\om,v)[\tilde v] 
= \Pi_{Z_N} (\pa_u \mF)(\om, v + w(\om,v)) [ \tilde v + \pa_v w(\om,v)[\tilde v] ].
\]
At $v=0$, this gives 
\begin{equation} \label{pav.f.om.0}
\pa_v f(\om, 0)[\tilde v] 
= \Pi_{Z_N} \pa_u \mF(\om, 0) [ \tilde v + \pa_v w(\om, 0)[\tilde v] ].
\end{equation}
This implies the second identity in \eqref{der.f} because, at $\om = \om_0$, 
$\pa_u \mF(\om_0,0)$ is the operator $L_{\om_0}$,  
whose range is orthogonal to $Z$. 
From \eqref{pav.f.om.0} we also obtain 
\[
\pa_{\om v} f(\om ,0) [\tilde v] 
= \Pi_{Z_N} \pa_{\om u} \mF(\om, 0) [ \tilde v + \pa_v w(\om, 0)[\tilde v] ]
+ \Pi_{Z_N} \pa_u \mF(\om, 0) [ \pa_{\om v} w(\om, 0)[\tilde v] ],
\]
and, at $\om = \om_0$, 
\[
\pa_{\om v} f(\om_0 ,0) [\tilde v] 
= \Pi_{Z_N} \pa_{\om u} \mF(\om_0 , 0) [\tilde v] 
\]
because $\pa_v w(\om_0, 0) = 0$, see \eqref{w.pav.w},
and because the range of $\pa_u \mF(\om_0, 0) = L_{\om_0}$ is orthogonal to $Z$.   
By the definition \eqref{def.mF} of $\mF$, 
one has $\pa_{\om u} \mF(\om,u) = - d (\grad \mI)(u)$, 
and, as already observed in the last identity of \eqref{der.g}, 
$d(\grad \mI)(0) = J_0 \mM$. This proves the third identity in \eqref{der.f}. 

The identities for $F$ and its derivatives follow from \eqref{der.f}, \eqref{der.g} 
and the product rule; for the last identity, we also use the basic fact that 
$\la \Pi_{Z_N} a, a \ra = \la \Pi_{Z_N} a, \Pi_{Z_N} a \ra = | \Pi_{Z_N} a |^2$.
\end{proof}

\begin{lemma} \label{technical.computations} 
Let $v \in V$, with 
$v = \sum_{ (\ell,m)\in S } \hat{v}_{\ell,m} \, \mathtt v_{\ell,m}$ 
for some coefficients $\hat v_{\ell,m} \in \R$, 
where $\mathtt{v}_{\ell,m}$ is defined in \eqref{def.ttv}. Then 
\begin{align}
J_0 \mM v 
& = \sum_{(\ell,m) \in S_N } 
\hat{v}_{\ell,m} m (\ell^2 + \om_0^2 m^2)^{-\frac12} 
\begin{pmatrix}
\omega_0 m \ph_{\ell,m} \\ 
- \ell \ph_{\ell,-m}
\end{pmatrix},
\label{formula.J0.mM.v} 
\\ 
\Pi_{Z_N} J_0 \mM v  
& = \sum_{(\ell,m) \in S_N }
2\om_0 m^2\ell (\ell^2 + \om_0^2 m^2)^{-1} \hat{v}_{\ell,m} 
\, \mathtt v_{\ell,m}, 
\notag 
\\
|\Pi_{Z_N} J_0 \mM v|^2
& = \sum_{(\ell,m) \in S_N }
4 \om_0^2 m^4 \ell^2 (\ell^2 + \om_0^2 m^2)^{-2} \hat{v}_{\ell,m}^2,
\label{approx.I.Z.norm}
\end{align}
where $S_N$ is defined in \eqref{def.SD.SN}.
\end{lemma}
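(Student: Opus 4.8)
The plan is to compute all three formulas by direct substitution, using only the eigenvalue rules \eqref{eigenv.rule} and the definitions \eqref{def.ttv}, \eqref{mF.mF.old}, \eqref{def.SD.SN}. First I would compute $\mM \mathtt v_{\ell,m}$ for a single basis vector $(\ell,m)\in S$. Since $\mathtt v_{0,0} = (0,\ph_{0,0})^T$ and $\mM \ph_{0,0} = 0$ (because $m=0$ in the rule $\mM \ph_{\ell,m} = -m\ph_{\ell,-m}$), the degenerate vector $\mathtt v_{0,0}$ is annihilated by $\mM$; similarly $\mathtt v_{1,0}$ has $m=0$ and is annihilated. So only the $S_N$ part survives, which already explains why the sums in the statement run over $S_N$ rather than $S$. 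For $(\ell,m)\in S_N$, applying $\mM$ componentwise via $\mM \ph_{\ell,m} = -m\ph_{\ell,-m}$ and $\mM\ph_{\ell,-m} = m\ph_{\ell,m}$ (the latter from the same rule with $m$ replaced by $-m$, noting $\ph_{\ell,-(-m)}=\ph_{\ell,m}$), and then multiplying on the left by $J_0 = \begin{pmatrix} 0 & -1 \\ 1 & 0\end{pmatrix}$, gives $J_0 \mM \mathtt v_{\ell,m} = m(\ell^2+\om_0^2 m^2)^{-1/2}(\om_0 m\,\ph_{\ell,m}, -\ell\,\ph_{\ell,-m})^T$. Summing over the coefficients $\hat v_{\ell,m}$ yields \eqref{formula.J0.mM.v}.

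Next I would project onto $Z_N = V_N$. The key observation is that the vector $(\om_0 m\,\ph_{\ell,m}, -\ell\,\ph_{\ell,-m})^T$ appearing in \eqref{formula.J0.mM.v} is, up to normalization, one of the basis vectors of $W$ listed in Proposition \ref{prop:L0} (namely the component $\lm_{\ell,m}(\om_0 m\,\ph_{\ell,m}, \ell\,\ph_{\ell,-m})^T$ type, with a sign); more directly, I can expand it in the orthonormal basis $\{\mathtt v_{\ell,m}, \mathtt v_{\ell,-m}\}$ of the two-dimensional block of $V$. Writing out the inner products $\la (\om_0 m\,\ph_{\ell,m}, -\ell\,\ph_{\ell,-m})^T, \mathtt v_{\ell,m}\ra$ and $\la \cdot, \mathtt v_{\ell,-m}\ra$ using orthonormality of the spherical harmonics, one finds the $\mathtt v_{\ell,m}$-component equals $2\om_0 m\ell(\ell^2+\om_0^2 m^2)^{-1/2}$ and the $\mathtt v_{\ell,-m}$-component vanishes (the cross terms $\ell\cdot\om_0 m$ and $-\om_0 m\cdot\ell$ cancel). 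Multiplying by the scalar factor $m(\ell^2+\om_0^2 m^2)^{-1/2}$ from \eqref{formula.J0.mM.v} and by $\hat v_{\ell,m}$, and summing, gives the stated formula for $\Pi_{Z_N} J_0\mM v$ with coefficient $2\om_0 m^2\ell(\ell^2+\om_0^2 m^2)^{-1}$.

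Finally, since $\{\mathtt v_{\ell,m}\}_{(\ell,m)\in S_N}$ is orthonormal (by \eqref{norm.ttv}), the squared norm \eqref{approx.I.Z.norm} is just the sum of the squares of the coefficients just computed, i.e. $\sum_{(\ell,m)\in S_N} 4\om_0^2 m^4\ell^2(\ell^2+\om_0^2 m^2)^{-2}\hat v_{\ell,m}^2$. I do not expect any genuine obstacle here: the whole lemma is a bookkeeping computation. The only point requiring mild care is the handling of $m<0$ indices — one must consistently use $\ph_{\ell,-m}$ with $-m$ possibly negative and apply the rule $\mM\ph_{\ell,m}=-m\ph_{\ell,-m}$ for all signs of $m$, and likewise check that the basis expansion in each two-dimensional $(\ell,m),(\ell,-m)$ block is consistent — but since \eqref{mT.th.ttv.bis} and the surrounding discussion already establish these sign conventions, this is routine.
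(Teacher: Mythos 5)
Your proposal is correct and follows essentially the same approach as the paper: apply the eigenvalue identity $\mM\ph_{\ell,m}=-m\ph_{\ell,-m}$, multiply by $J_0$, project onto $Z_N=V_N$ by expanding in the orthonormal basis $\{\mathtt v_{\ell,m}\}_{(\ell,m)\in S_N}$, and take the Euclidean norm. The paper's proof is one sentence of this form, and your computation is a faithful expansion of it, including the correct observation that $\mathtt v_{0,0}$ and $\mathtt v_{1,0}$ drop out because they carry $m=0$.
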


\begin{proof} 
Use the third identity in \eqref{eigenv.rule} and basic calculations.  
\end{proof}

\begin{lemma}[Choice of $\omega(v)$]
\label{lemma:choice.of.omega} 
There exist $\e_1 \in (0, \e_0]$, $b_1, C > 0$ 
and a function $\om : B_{V_N}(\e_1) \to \R$, $v \mapsto \om(v)$, 
where $B_{V_N}(\e_1) := \{ v \in V_N : |v| < \e_1 \}$, 
which is Lipschitz continuous in $B_{V_N}(\e_1)$, 
analytic in $B_{V_N}(\e_1) \setminus \{ 0 \}$, 
such that $\om(0) = \om_0$,
\begin{equation}  \label{IFT.om}
F(\om(v), v) = 0
\end{equation}
for all $v \in B_{V_N}(\e_1)$, 
and, if $(\om,v)$ satisfies $F(\om,v) = 0$ 
with $|\om - \om_0| < b_1$, $v \in B_{V_N}(\e_1)$, 
then $\om = \om(v)$. 
Moreover, the graph $\{ (\om(v) , v) : v \in B_{V_N}(\e_1) \} \subset \R \times V_N$ 
is contained in the open set $\mU_{\e_0} \subset \R \times V$ 
where the function $w$ constructed in Lemma \ref{lemma:range.eq} is defined, and 
\begin{equation} \label{om.estimate}
|\om(v)-\om_0| \leq C |v|
\quad \forall v \in B_{V_N}(\e_1).
\end{equation}
Also, for all $\th \in \T$, one has 
\begin{equation} \label{om.group.action}
\om(\mT_\th v) = \om(v).
\end{equation}
\end{lemma}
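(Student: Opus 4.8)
\emph{Plan of proof.} The difficulty is the degeneracy recorded in \eqref{der.F}: the scalar function $F$ vanishes together with its first $v$-derivative at $v=0$, and $\pa_\om F(\om,0)=0$ as well (since $F(\om,0)=0$ identically in $\om$), so the implicit function theorem cannot be applied directly to solve $F(\om,v)=0$ for $\om$. The plan is to remove this degeneracy by rescaling. Since $F$ is analytic on $\mU_{\e_0}$ (Lemma \ref{lemma:F.f.g.analytic}) and, by \eqref{der.F}, $F(\om,0)=0$ and $\pa_v F(\om,0)=0$ for every $\om$ with $|\om-\om_0|\le\e_0$, the power series of $F$ at $(\om_0,0)$ contains only monomials of degree $\ge 2$ in the $V$-variable. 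Hence, writing $v=r\xi$ with $r=|v|\ge 0$ and $\xi\in\S(V_N):=\{\xi\in V_N:|\xi|=1\}$, I would obtain $F(\om,r\xi)=r^2 G(\om,r,\xi)$, with $G$ analytic on a neighborhood of $\{\om_0\}\times\{0\}\times\S(V_N)$ in $\R\times\R\times V_N$ and $G(\om,0,\xi)=\tfrac12\,\pa_{vv}F(\om,0)[\xi,\xi]$.

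Next I would verify the hypotheses of the implicit function theorem for $G$ at the points $(\om_0,0,\xi_*)$, $\xi_*\in\S(V_N)$. By \eqref{der.F} one has $G(\om_0,0,\xi)=\tfrac12\pa_{vv}F(\om_0,0)[\xi,\xi]=0$ and
\[
\pa_\om G(\om_0,0,\xi)=\tfrac12\,\pa_{\om vv}F(\om_0,0)[\xi,\xi]=-\tfrac12\,|\Pi_{Z_N}J_0\mM\xi|^2 .
\]
By formula \eqref{approx.I.Z.norm} of Lemma \ref{technical.computations}, $\xi\mapsto|\Pi_{Z_N}J_0\mM\xi|^2$ is a positive definite quadratic form on $V_N$, since every coefficient $4\om_0^2 m^4\ell^2(\ell^2+\om_0^2m^2)^{-2}$ is strictly positive ($\ell\ge 2$, $m\ne 0$ for $(\ell,m)\in S_N$, and $\om_0\ne 0$); if $S_N=\varnothing$ then $V_N=\{0\}$ and the lemma is trivial. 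So there is $c_*>0$ with $\pa_\om G(\om_0,0,\xi)\le-\tfrac{c_*}{2}<0$ for all $\xi\in\S(V_N)$, uniformly by compactness of $\S(V_N)$. Applying the analytic implicit function theorem at each $(\om_0,0,\xi_*)$ and gluing the local solutions by uniqueness over the compact sphere $\S(V_N)$, I get $\delta>0$, $b_1>0$, $C>0$ and an analytic function $(r,\xi)\mapsto\tilde\om(r,\xi)$, defined for $|r|<\delta$ and $\xi$ in a $\delta$-neighborhood of $\S(V_N)$, with $\tilde\om(0,\xi)=\om_0$, $G(\tilde\om(r,\xi),r,\xi)=0$, $|\tilde\om(r,\xi)-\om_0|\le C|r|$, and such that $\tilde\om(r,\xi)$ is the unique zero of $G(\cdot,r,\xi)$ in $(\om_0-b_1,\om_0+b_1)$.

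Then I would define $\om(v):=\tilde\om(|v|,v/|v|)$ for $v\in V_N$, $0<|v|<\e_1$, and $\om(0):=\om_0$, where $\e_1\in(0,\e_0]$ is chosen with $\e_1<\delta$ and $C\e_1<\min(\e_0,b_1)$. All the claimed properties then follow: $F(\om(v),v)=|v|^2 G(\om(v),|v|,v/|v|)=0$ for $v\ne 0$ and $F(\om(0),0)=F(\om_0,0)=0$, which is \eqref{IFT.om}; estimate \eqref{om.estimate} is the bound $|\tilde\om-\om_0|\le C|r|$, and it also gives $(\om(v),v)\in\mU_{\e_0}$; analyticity on $B_{V_N}(\e_1)\setminus\{0\}$ holds because $v\mapsto(|v|,v/|v|)$ is real-analytic on $V_N\setminus\{0\}$; Lipschitz continuity on $B_{V_N}(\e_1)$ follows by writing $\om(v)=\om_0+|v|\,a(|v|,v/|v|)$ with $a$ analytic and bounded and using $\bigl||v_1|-|v_2|\bigr|\le|v_1-v_2|$ and $\bigl|\tfrac{v_1}{|v_1|}-\tfrac{v_2}{|v_2|}\bigr|\le\tfrac{2|v_1-v_2|}{\max(|v_1|,|v_2|)}$; and the uniqueness statement holds because, for $v\ne 0$, $F(\om,v)=0$ is equivalent to $G(\om,|v|,v/|v|)=0$ (as $|v|^2>0$), so $|\om-\om_0|<b_1$ forces $\om=\tilde\om(|v|,v/|v|)=\om(v)$.

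Finally, for the equivariance \eqref{om.group.action}: using the conjugation identities \eqref{conj.mF}, the equivariance \eqref{w.group.action} of $w$, and the commutation $\mT_\th\Pi_{Z_N}=\Pi_{Z_N}\mT_\th$ (from \eqref{mT.th.proj} and $\mT_\th(V_N)=V_N$ in \eqref{VD.VN.group.action}), I would deduce $f(\om,\mT_\th v)=\mT_\th f(\om,v)$ and $g(\om,\mT_\th v)=\mT_\th g(\om,v)$; since $\mT_\th$ is an $L^2(\S^2)$-isometry, $F(\om,\mT_\th v)=F(\om,v)$, hence $G(\om,r,\mT_\th\xi)=G(\om,r,\xi)$ (using $|\mT_\th v|=|v|$), and the local uniqueness forces $\tilde\om(r,\mT_\th\xi)=\tilde\om(r,\xi)$, i.e., $\om(\mT_\th v)=\om(v)$. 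The only genuinely nonroutine point is the rescaling step together with the use of the positivity in \eqref{approx.I.Z.norm}; passing back from the $(r,\xi)$ coordinates to $v$ near the origin, and the regularity bookkeeping there, is elementary but has to be carried out with some care.
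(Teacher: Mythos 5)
Your proposal is correct and follows essentially the same route as the paper: expand $F$ to see that it vanishes to second order in $v$, pass to polar-type coordinates $v=\rho y$ and set $\Phi(\om,\rho,y)=\rho^{-2}F(\om,\rho y)$ (your $G(\om,r,\xi)$), use the positivity of $\pa_\om\Phi(\om_0,0,y)=-\tfrac12|\Pi_{Z_N}J_0\mM y|^2$ from \eqref{approx.I.Z.norm} together with compactness of the unit sphere of $V_N$ to apply the analytic implicit function theorem, define $\om(v)=\Om(|v|,v/|v|)$, and deduce \eqref{om.group.action} from $F(\om,\mT_\th v)=F(\om,v)$ and uniqueness. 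The only minor difference is that you spell out the Lipschitz bound on all of $B_{V_N}(\e_1)$ via $\om(v)=\om_0+|v|\,a(|v|,v/|v|)$, where the paper records only the estimate $|\Om(\rho,y)-\Om(0,y)|\le C\rho$; this is a small improvement in explicitness, not a change of method.
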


\begin{proof} 
By \eqref{der.F}, the expansion of the analytic function $v \mapsto F(\om,v)$ is 
\begin{equation} \label{F.Taylor.in.v}
F(\om,v) = \frac12 \pa_{vv} F(\om, 0)[v, v] + O(v^3).
\end{equation} 
Consider $v \in V_N$, that is, $v \in V$ with coefficients $\hat v_{0,0} = \hat v_{1,0} = 0$, 
and introduce polar coordinates 
$\rho = |v|$, $y = v / |v|$ on $V_N \setminus \{ 0 \}$. 
The function  
\[
\Phi(\om, \rho, y) := \rho^{-2} F(\om, \rho y)
\]
is defined for $|\om - \om_0| < \e_0$, 
$\rho \in (0, \e_0)$, $y$ in the unit sphere $\{ |y| = 1 \}$ of $V_N$.
In fact, replacing $v$ with $\rho y$ in the converging Taylor series of $F(\om,v)$ 
centered at $(\om_0, 0)$, 
one obtains that $\Phi(\om, \rho, y)$ is a well-defined, converging power series 
in the open set $\mD := \{ (\om, \rho, y) : 
|\om - \om_0| < \e_0$, $\rho \in (0, \frac12 \e_0)$, $|y| < 2\}$. 
By \eqref{F.Taylor.in.v}, $\Phi(\om, \rho, y)$ converges to $\frac12 \pa_{vv} F(\om,0)[y,y]$ 
as $\rho \to 0$, and therefore $\Phi$ has a removable singularity at $\rho = 0$. 
Hence $\Phi$ has an analytic extension to the open set 
$\mD_1 := \{ (\om, \rho, y) : |\om - \om_0| < \e_0$, $|\rho| < \frac12 \e_0$, $|y| < 2\}$. 
We also denote this extension by $\Phi$. 
By \eqref{der.F} and \eqref{approx.I.Z.norm}, one has 
\begin{align*}
\Phi(\om_0, 0, y_0) 
& = \tfrac12 \pa_{vv} F(\om_0, 0)[y_0, y_0] = 0, 
\\ 
\pa_\om \Phi (\om_0, 0, y_0) 
& = \tfrac12 \pa_{\om vv} F(\om_0, 0)[y_0, y_0] 
= - \tfrac12 |\Pi_{Z_N} J_0 \mM y_0|^2
\neq 0
\end{align*}
for any $|y_0| = 1$. 
Hence, by the implicit function theorem for analytic functions, 
there exists a function $\Om(\rho, y)$ such that $\Om(0, y_0) = \om_0$ and 
\[
\Phi( \Om(\rho, y), \rho, y) = 0
\]
for all $(\rho,y)$ with $|\rho| < \e_1$, $|y - y_0| < \e_1$, 
for some $\e_1 \in (0, \e_0]$. Moreover, $\e_1$ is independent of $y_0$  
because the unit sphere of $V_N$ is compact. 
Finally, given $v \in V_N$, $|v| < \e_1$, we define 
$\om(v) := \Om(\rho, y)$ with $\rho = |v|$ and $y = v/|v|$ for $v \neq 0$,
and $\om(0) := \om_0$. 
The Lipschitz estimate \eqref{om.estimate} holds because 
$|\om(v) - \om_0| = |\Om(\rho,y) - \Om(0,y)| \leq C \rho$. 

By \eqref{def.f.g}, 
the second identity in \eqref{VD.VN.group.action}, 
the last one in \eqref{conj.mF},
and \eqref{w.group.action},
one has 
\[
\mT_\th f(\om, v) = f(\om, \mT_\th v), \quad \ 
\mT_\th g(\om, v) = g(\om, \mT_\th v).
\]
Hence, by \eqref{def.F}, one has 
\begin{equation} \label{F.group.action}
F(\om, \mT_\th v) = F(\om, v)
\end{equation}
because $\mT_\th^{\,T} \mT_\th = \mathrm{id}$.
By \eqref{IFT.om} with $\mT_\th v$ in place of $v$, one has 
$F(\om(\mT_\th v), \mT_\th v) = 0$. On the other hand, 
by \eqref{F.group.action}, one also has that 
$F(\om(\mT_\th v), \mT_\th v) = F(\om(\mT_\th v), v)$. 
Therefore 
\[
F(\om(\mT_\th v), v) = 0,
\]
and, by the uniqueness property of the implicit function, 
we obtain \eqref{om.group.action}. 
\end{proof}

\begin{remark}
In general, even if the function $\Om(\rho,y)$ in the proof of Lemma \ref{lemma:choice.of.omega}
is analytic around $\rho = 0$,  
the function $\om(v)$ is not analytic around $v=0$, 
because of the singularity of the polar coordinates at the origin;
in fact, $\om(v)$ is not even differentiable at $v=0$, but it is merely Lipschitz continuous. 
For example, consider the variable $v = x = (x_1, \ldots, x_n) \in \R^n$ 
and the function
\[
F(\om, v) = (\om - \om_0) (x_1^2 + \ldots + x_n^2) + x_1^3.
\]
The function $F$ is analytic in $\R^{n+1}$ (it is a polynomial), 
and $F(\om, 0) = 0$, $\pa_v F(\om, 0) = 0$, $\pa_{vv} F(\om_0 , 0) = 0$, 
and $\pa_{\om v v} F(\om_0, 0)[v, v] = 2 |v|^2$, 
so that the identities in \eqref{der.F} are all essentially satisfied.
Solving the equation $F=0$ in the unknown $\om$, we directly calculate the implicit function 
\[
\om(v) = \om_0 - \frac{ x_1^3 }{ x_1^2 + \ldots + x_n^2 },
\]
which is analytic on $\R^n \setminus \{ 0 \}$, 
Lipschitz continuous in $\R^n$, but not differentiable at $v=0$.
The map $\Phi(\om, \rho, y) = \rho^{-2} F(\om, \rho y)$, 
or, more precisely, its analytic extension, is 
\[
\Phi(\om, \rho, y) = (\om - \om_0) (y_1^2 + \ldots + y_n^2) + y_1^3 \rho,
\]
which is analytic in $(\om, \rho, y) \in \R^{n+2}$ ($\Phi$ is a polynomial), 
and the function $\Om(\rho, y)$ is 
\[
\Om(\rho, y) = \om_0 - \frac{ y_1^3 \rho }{ y_1^2 + \ldots + y_n^2 },
\]
which is analytic in a neighborhood of $(\rho,y) = (0, y_0)$, for any $y_0 \in \R^n$, $|y_0| = 1$. 

This basic example shows that $\om(v)$ can be not differentiable at $v=0$ 
even when $F(\om,v)$ is analytic around $(\om_0, 0)$, 
\eqref{der.F} holds, 
and $\Om(\rho, y)$ is analytic around $(0, y_0)$ for all $|y_0| = 1$. 
\end{remark}

\section{The constraint}\label{sec:constraint}

We define the \emph{reduced Hamiltonian} and the \emph{reduced angular momentum} 
\begin{equation} \label{reduced.Hamiltonian} 
\mathcal{H}_{\sigma_0, V_N} (v) := \mathcal{H}_{\sigma_0}( u(v) ), \quad \ 
\mI_{V_N} (v) := \mI( u(v) ), \quad \ 
u(v) := v + w( \om(v), v),
\end{equation}
and, for $a \in \R$, we define the set 
\begin{equation} \label{reduced.constraint}
\mS_{V_N} (a) := \{ v \in B_{V_N}(\e_1) : \ \mI_{V_N} (v)=a \},
\end{equation}
where $\om(v)$ is the function constructed in Lemma \ref{lemma:choice.of.omega},
defined for $v \in B_{V_N}(\e_1) \subset V_N$. 
Since $\om(v)$ is Lipschitz around $v=0$, the function $v \mapsto w(\om(v), v)$ is also Lipschitz; 
in fact, we observe in the next lemma that its regularity is higher. 

\begin{lemma} \label{lemma:reg.u(v)}
The function $B_{V_N}(\e_1) \to W^s$, $v \mapsto w(\om(v), v)$ 
is analytic in $B_{V_N}(\e_1) \setminus \{ 0 \}$,
it is of class $C^1 ( B_{V_N}(\e_1) )$, 
and its differential is Lipschitz continuous in $B_{V_N}(\e_1)$. 
Moreover $w(\om(\mT_\th v), \mT_\th v) = \mT_\th w(\om(v), v)$ for all $\th \in \T$.
\end{lemma}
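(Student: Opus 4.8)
The plan is to set $W(v) := w(\om(v), v)$ for $v \in B_{V_N}(\e_1)$, which is well defined since, by Lemma~\ref{lemma:choice.of.omega}, the graph of $\om$ is contained in the set $\mU_{\e_0}$ on which $w$ is defined. The equivariance $W(\mT_\th v) = \mT_\th W(v)$ is then immediate: by \eqref{om.group.action} one has $\om(\mT_\th v) = \om(v)$, and \eqref{w.group.action} gives $w(\om(v), \mT_\th v) = \mT_\th w(\om(v), v)$; note that $\mT_\th$ is an $L^2(\S^2)$-isometry mapping $V_N$ onto $V_N$ by \eqref{VD.VN.group.action}, so $\mT_\th v$ again lies in $B_{V_N}(\e_1)$. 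Analyticity of $W$ on $B_{V_N}(\e_1) \setminus \{0\}$ is clear as well, since there $\om$ is analytic (Lemma~\ref{lemma:choice.of.omega}) and $w$ is analytic on $\mU_{\e_0}$ (Lemma~\ref{lemma:range.eq}), so $W$ is a composition of analytic maps.

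For the $C^1$ regularity on all of $B_{V_N}(\e_1)$ I would first observe that $W$ is differentiable at $v = 0$ with $dW(0) = 0$: combining \eqref{w.estimate} with \eqref{om.estimate} yields $\| W(v) \|_{W^s} \le C(|v|^2 + |v|\,|\om(v) - \om_0|) \le C' |v|^2$, so $\| W(v) - W(0) \|_{W^s} = o(|v|)$. On the punctured ball the chain rule gives $dW(v)[\hat v] = \big(d\om(v)[\hat v]\big)\,(\pa_\om w)(\om(v), v) + (\pa_v w)(\om(v), v)[\hat v]$, and each summand is $O(|v|)$: since $w(\om, 0) = 0$ for all $\om$ near $\om_0$ (see \eqref{w.pav.w}) one has $\pa_\om w(\om, 0) = 0$, hence $\| (\pa_\om w)(\om, v) \|_{W^s} \le C|v|$ on a compact subneighborhood, while $\| d\om(v) \|$ is bounded by the Lipschitz constant of $\om$; and since $\pa_v w(\om_0, 0) = 0$ by \eqref{w.pav.w}, the $C^1$ regularity of $w$ gives $\| (\pa_v w)(\om, v) \|_{W^s} \le C(|\om - \om_0| + |v|) \le C' |v|$ via \eqref{om.estimate}. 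Thus $\| dW(v) \|_{W^s} \le C|v| \to 0$ as $v \to 0$, so $dW$ extends continuously by $dW(0) = 0$, $W \in C^1(B_{V_N}(\e_1))$, and we record the global bound $\| dW(v) \|_{W^s} \le C|v|$.

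The main point is the Lipschitz continuity of $dW$, and here the structure of $\om$ near the origin is essential. Recall from the proof of Lemma~\ref{lemma:choice.of.omega} that $\om(v) = \Om(|v|, v/|v|)$ with $\Om$ analytic near $(0, y_0)$ for every $|y_0| = 1$, and that $\Om(0, y) = \om_0$ for all such $y$, because $\Phi(\om_0, 0, y) = \tfrac12 \pa_{vv} F(\om_0, 0)[y, y] = 0$ by \eqref{der.F} so that $\om_0$ solves the implicit equation and uniqueness applies. Hence $\Om(\rho, y) - \om_0 = \rho\, \Om_1(\rho, y)$ with $\Om_1$ analytic, and a direct computation of the derivatives of $\rho = |v|$ and $y = v/|v|$ gives $\| d^2 \om(v) \| \le C/|v|$ on a small punctured ball. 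I would then bound $\| dW(v_1) - dW(v_2) \|_{W^s}$ by splitting into two cases. The summand $v \mapsto (\pa_v w)(\om(v), v)$ is globally Lipschitz on $B_{V_N}(\e_1)$ because $\pa_v w$ is locally Lipschitz (being $C^1$) and $\om$ is Lipschitz. For the summand $A(v) := (\pa_\om w)(\om(v), v)\, d\om(v)$: if $|v_1 - v_2| \le \tfrac12 \min(|v_1|, |v_2|)$, the segment $[v_1, v_2]$ stays in an annulus $\{ r/2 \le |v| \le 3r/2 \}$ with $r \sim |v_1| \sim |v_2|$, and using $\| (\pa_\om w)(\om(v), v) \|_{W^s} \le C|v| \le Cr$, $\| d\om(v_1) - d\om(v_2) \| \le (C/r)|v_1 - v_2|$, $\| d\om(v) \| \le C$, and the local Lipschitz continuity of $\pa_\om w$ composed with the Lipschitz map $\om$, one finds all terms bounded by $C|v_1 - v_2|$ with a constant independent of $r$ — the factor $r$ from $\pa_\om w$ cancelling the factor $1/r$ coming from $d^2 \om$. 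In the complementary case $|v_1 - v_2| > \tfrac12 \min(|v_1|, |v_2|)$ one has $|v_1|, |v_2| \le 3|v_1 - v_2|$, so the global bound $\| dW(v) \|_{W^s} \le C|v|$ with $dW(0) = 0$ and the triangle inequality through $v = 0$ give $\| dW(v_1) - dW(v_2) \|_{W^s} \le C(|v_1| + |v_2|) \le C'|v_1 - v_2|$.

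The hardest step is this Lipschitz bound for $dW$: the function $\om$ itself is merely Lipschitz (not $C^1$) at the origin, so $d\om$ is not Lipschitz there, and the Lipschitz character of the differential of $W = w(\om(\cdot), \cdot)$ relies precisely on the cancellation between the linear vanishing of $\pa_\om w$ at $v = 0$ (forced by $w(\om, 0) \equiv 0$) and the $|v|^{-1}$ blow-up of $d^2 \om$, which is itself that mild only thanks to the factorization $\Om - \om_0 = \rho\, \Om_1$, i.e.\ to $\pa_{vv} F(\om_0, 0) = 0$. It may be necessary to shrink $\e_1$ slightly so that all the compactness and local-Lipschitz estimates above hold with uniform constants.
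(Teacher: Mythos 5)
Your proposal is correct, and for the equivariance, the analyticity on the punctured ball, and the $C^1$ property it follows exactly the same route as the paper: compose with $\om(\cdot)$ using the graph containment from Lemma \ref{lemma:choice.of.omega}, establish $\|W(v)\|_{W^s}\le C|v|^2$ from \eqref{w.estimate} and \eqref{om.estimate} to get differentiability at the origin with $dW(0)=0$, and use the chain rule together with $\pa_\om w(\om,0)=0$, $\pa_v w(\om_0,0)=0$ and the Lipschitz bound on $\om$ to get $\|dW(v)\|_{W^s}\le C|v|$, whence continuity of $dW$ at $0$.

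Where you genuinely diverge from, and in fact improve on, the paper is in the Lipschitz continuity of $dW$. The paper's proof stops at the estimate $\|\pa_v\{w(\om(v),v)\}[\tilde v]\|_{W^s}\le C|v||\tilde v|$ and from this directly asserts that the differential is ``Lipschitz continuous around $v=0$.'' As you correctly observe, that pointwise bound $\|dW(v)-dW(0)\|\le C|v|$ does not by itself give Lipschitz continuity of the map $v\mapsto dW(v)$ on the ball (one needs, in effect, a uniform bound on $d^2W$ near the origin, which the pointwise estimate does not provide). Your proposal supplies exactly the missing ingredient: since $\Om(0,y)=\om_0$ (because $\Phi(\om_0,0,y)=\tfrac12\pa_{vv}F(\om_0,0)[y,y]=0$ by \eqref{der.F}), one can factor $\Om(\rho,y)-\om_0=\rho\,\Om_1(\rho,y)$ with $\Om_1$ analytic, whence $\|d^2\om(v)\|\le C/|v|$; this $|v|^{-1}$ blow-up is then cancelled against the linear vanishing $\|(\pa_\om w)(\om(v),v)\|\le C|v|$, giving $\|d^2W(v)\|\le M$ uniformly on the punctured ball and hence the Lipschitz bound by the mean-value theorem in the ``near'' regime; the ``far'' regime $|v_1-v_2|>\tfrac12\min(|v_1|,|v_2|)$ is handled by the triangle inequality through $0$ using $\|dW(v)\|\le C|v|$. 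This is a cleaner and more complete argument than the one printed in the paper, and the identification of the cancellation between $\pa_\om w$ and $d^2\om$ — traced back to $\pa_{vv}F(\om_0,0)=0$ — is a genuine observation not spelled out there.

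One small caveat: your two-case analysis requires all the local estimates ($\|\pa_\om w\|\le C|v|$, the bound on $d^2\om$, the local Lipschitz constants of $\pa_\om w$, $\pa_v w$) to hold with uniform constants on a fixed ball. You flag this yourself by allowing a further shrinking of $\e_1$; since $\om$ maps $B_{V_N}(\e_1)$ into a fixed compact neighborhood of $\om_0$ and $w$ is analytic on $\mU_{\e_0}$, this is indeed harmless and your argument closes.
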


\begin{proof}
The function $v \mapsto w(\om(v),v)$ is Lipschitz continuous in $B_{V_N}(\e_1)$
and analytic in $B_{V_N}(\e_1) \setminus \{ 0 \}$ 
by Lemmas \ref{lemma:range.eq}, \ref{lemma:choice.of.omega}.
By \eqref{w.estimate} and \eqref{om.estimate}, one has 
\begin{equation} \label{w.comp.estimate}
\| w(\om(v), v) \|_{W^s} \leq C |v|^2
\end{equation}
for all $v \in B_{V_N}(\e_1)$. 
Hence the function $v \mapsto w(\om(v),v)$ is differentiable at $v=0$ 
with zero differential. 
Its differential at any point $v \in B_{V_N}(\e_1) \setminus \{ 0 \}$ 
in direction $\tilde v \in V_N$ is 
\begin{equation}  \label{der.w.comp}
\pa_v \{ w( \om(v) , v) \} [\tilde v] 
= (\pa_\om w)( \om(v) , v) \om'(v)[\tilde v] 
+ (\pa_v w)( \om(v), v)[\tilde v].
\end{equation}
For $v \to 0$, one has 
$(\om(v), v) \to (\om_0,0)$ because the function $\om(v)$ in Lemma \ref{lemma:choice.of.omega}
is continuous,
$(\pa_v w)(\om(v), v) \to \pa_v w(\om_0, 0) = 0$ 
and $(\pa_\om w)(\om(v), v) \to \pa_\om w(\om_0, 0) = 0$  
because the function $w$ in Lemma \ref{lemma:range.eq} is analytic 
and by \eqref{w.pav.w}, 
while $\om'(v)[\tilde v]$, which is defined for $v \neq 0$, remains bounded as $v \to 0$
because $\om(v)$ is Lipschitz. 
Hence $\pa_v \{ w( \om(v) , v) \} \to 0$, which implies that $w(\om(v), v)$ 
is of class $C^1 ( B_{V_N}(\e_1) )$. 
Moreover, 
$|\om'(v)[\tilde v]| \leq C |\tilde v|$ for $0 < |v| < \e_1$ 
because $\om(v)$ is Lipschitz, 
and 
\[
\| (\pa_\om w)( \om(v) , v) \|_{W^s} \leq C |v|, \quad \  
\| (\pa_v w)( \om(v), v)[\tilde v] \|_{W^s} \leq C |v| |\tilde v|
\]
because the function $w$ in Lemma \ref{lemma:range.eq} is analytic 
and by \eqref{om.estimate}.
Hence, by \eqref{der.w.comp}, 
\[
\| \pa_v \{ w( \om(v) , v) \} [\tilde v] \|_{W^s}
\leq C |v| |\tilde v|
\]
for $v \in B_{V_N}(\e_1) \setminus \{ 0 \}$, 
so that the map $v \mapsto \pa_v \{ w( \om(v) , v) \}$ is Lipschitz continuous around $v=0$. 

The last property in the statement follows from \eqref{w.group.action} and \eqref{om.group.action}.
\end{proof}

\begin{lemma} \label{lemma:reg.reduced.mH.mI}
The functionals $\mH_{\sigma_0,V_N} : B_{V_N}(\e_1) \to \R$ 
and $\mI_{V_N} : B_{V_N}(\e_1) \to \R$ 
are analytic in $B_{V_N}(\e_1) \setminus \{ 0 \}$
and of class $C^2(B_{V_N}(\e_1))$, 
and their second order differentials are Lipschitz continuous in $B_{V_N}(\e_1)$.
Moreover $\mH_{\sigma_0,V_N} \circ \mT_\th = \mH_{\sigma_0,V_N}$ 
and $\mI_{V_N} \circ \mT_\th = \mI_{V_N}$ for all $\th \in \T$. 
\end{lemma}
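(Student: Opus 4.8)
The plan is to prove the three assertions --- invariance under $\mT_\th$, analyticity on $B_{V_N}(\e_1)\setminus\{0\}$, and $C^2$-regularity with Lipschitz second differential --- separately, the last being the only nontrivial one. For the invariance I would use that $u(\mT_\th v)=\mT_\th v+w(\om(\mT_\th v),\mT_\th v)=\mT_\th v+\mT_\th w(\om(v),v)=\mT_\th u(v)$ by \eqref{om.group.action} and \eqref{w.group.action} (the last statement of Lemma \ref{lemma:reg.u(v)}), together with the $\mT_\th$-invariance of $\mH_{\sigma_0}=\mH-2\sigma_0\mV$ and of $\mI$ from Lemma \ref{lemma:group.action}; composing gives $\mH_{\sigma_0,V_N}\circ\mT_\th=\mH_{\sigma_0,V_N}$ and $\mI_{V_N}\circ\mT_\th=\mI_{V_N}$. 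For the analyticity off the origin: there $v\mapsto\om(v)$ is analytic (Lemma \ref{lemma:choice.of.omega}), hence so are $v\mapsto w(\om(v),v)$ (Lemma \ref{lemma:range.eq}) and $v\mapsto u(v)$; moreover $\mH_{\sigma_0}$ and $\mI$ are analytic functionals on $U$ (their $L^2$-gradients $\grad\mH_{\sigma_0}=\mF(0,\cdot)$ and $\grad\mI$ being analytic by Lemma \ref{lemma:mF.Hs} and \eqref{pa.mI}), so $\mH_{\sigma_0}\circ u$ and $\mI\circ u$ are analytic in $v$ away from the origin.

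The real content is regularity at $v=0$. The obstacle is the one already flagged in the Remark after Lemma \ref{lemma:choice.of.omega}: $\om(v)$ is in general merely Lipschitz at the origin, so $u(v)$ is only of class $C^{1,1}$, and a naive chain rule would give only $C^{1,1}$ for the reduced functionals --- one derivative too few. To gain it, I would exploit the \emph{conical} nature of the singularity of $\om$. From the polar-coordinate implicit-function construction inside the proof of Lemma \ref{lemma:choice.of.omega} one has $\om(v)=\Om(|v|,v/|v|)$ with $\Om$ real-analytic in a neighbourhood of $\{0\}\times\S^{2n-1}$ and $\Om(0,\cdot)\equiv\om_0$ (the equation $\Phi(\om,0,y)=0$ there has $\om=\om_0$ as its only root, with nonzero $\om$-derivative); hence $\om(v)-\om_0=\sum_{k\ge1}|v|^k c_k(v/|v|)$ with each $c_k$ real-analytic near $\S^{2n-1}$ and the series converging with its derivatives for $|v|$ small. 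Feeding this into the jointly analytic map $(\om,v)\mapsto w(\om,v)$ --- more precisely, observing that $(\rho,y,v)\mapsto w(\Om(\rho,y),v)$ is analytic near $\{0\}\times\S^{2n-1}\times\{0\}$, expanding in $(\rho,v)$ and setting $\rho=|v|$, $y=v/|v|$ --- and using $w(\cdot,0)\equiv0$, $w(\om_0,0)=0$, $\pa_v w(\om_0,0)=0$ (from \eqref{w.pav.w}), one gets $w(\om(v),v)=\sum_{m\ge2}|v|^m W_m(v/|v|)$ with $W_m\in W^s$ real-analytic near $\S^{2n-1}$; the crucial point is that this expansion starts at order $m=2$. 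I would pair this with the elementary fact that a function on a ball of the form $\sum_{n\ge n_0}|v|^n f_n(v/|v|)$, with $f_n$ real-analytic near the unit sphere and the series convergent with its derivatives, is real-analytic away from $0$ and of class $C^{n_0-1,1}$ on the ball (a positively homogeneous function of integer degree $n\ge1$ that is smooth off the origin belongs to $C^{n-1,1}$, and such series sum to $C^{n_0-1,1}$ functions).

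Granting these inputs, I would conclude by writing, for $\mG\in\{\mH_{\sigma_0},\mI\}$,
\[
\mG_{V_N}(v)=\mG(u(v))=\mG(v)+\int_0^1\la\,\grad\mG\big(v+t\,w(\om(v),v)\big)\,,\,w(\om(v),v)\,\ra_{L^2(\S^2)}\,dt .
\]
The term $\mG(v)$ is the restriction of an analytic functional to the finite-dimensional space $V_N$, hence real-analytic on $B_{V_N}(\e_1)$. In the integrand, $\grad\mG(0)=0$ (from $\mF(\om,0)=\grad\mH_{\sigma_0}(0)-\om\grad\mI(0)=0$ for all $\om$ one reads off $\grad\mH_{\sigma_0}(0)=0$ and $\grad\mI(0)=0$); so, plugging $v+t\,w(\om(v),v)=v+\sum_{m\ge2}t|v|^m W_m(v/|v|)$ into the Taylor series of the analytic map $\grad\mG$, the factor $\grad\mG(v+t\,w(\om(v),v))$ is $O(|v|)$ and of that homogeneous type starting at order $1$, uniformly in $t\in[0,1]$; the factor $w(\om(v),v)$ is $O(|v|^2)$ of homogeneous type starting at order $2$ (cf.\ \eqref{w.comp.estimate}); hence the $L^2(\S^2)$ pairing is of homogeneous type starting at order $3$, uniformly in $t$. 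By the elementary fact above the $t$-integral is then of class $C^{2,1}$ and analytic off the origin, whence $\mH_{\sigma_0,V_N},\mI_{V_N}\in C^2(B_{V_N}(\e_1))$ with Lipschitz second differential and real-analytic on $B_{V_N}(\e_1)\setminus\{0\}$.

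The step I expect to be the main difficulty is establishing $w(\om(v),v)=\sum_{m\ge2}|v|^m W_m(v/|v|)$ --- i.e.\ transporting the analyticity of $\om$ and of the implicit function $w$ through the polar substitution while controlling the vanishing orders at $(\om_0,0)$; everything after that is routine bookkeeping with homogeneous expansions. (If a sharper homogeneity were needed one could instead rewrite $d\mH_{\sigma_0,V_N}$ using the Lyapunov--Schmidt identity $\mF(\om(v),u(v))\in Z$ and $Z\perp W^s$, so that the $\om(v)$-dependence enters only multiplied by quantities vanishing at $v=0$, but this is not required here.)
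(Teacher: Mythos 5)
Your proposal is correct in substance, and the key structural inputs are the same as the paper's ($\grad\mH_{\sigma_0}(0)=\grad\mI(0)=0$, the quadratic smallness \eqref{w.comp.estimate}, and the group-action identities \eqref{w.group.action}, \eqref{om.group.action} for the invariance), but you prove the hard part — $C^2$ regularity at $v=0$ with Lipschitz second differential — by a genuinely different route. You pass to polar coordinates, expand $\om(v)=\Om(|v|,v/|v|)$ and $w(\om(v),v)$ as conical series starting at orders $1$ and $2$, write $\mG(u(v))=\mG(v)+\int_0^1\la\grad\mG(v+t\,w(\om(v),v)),w(\om(v),v)\ra_{L^2(\S^2)}\,dt$ for $\mG\in\{\mH_{\sigma_0},\mI\}$, and observe that the remainder has conical order $\ge 3$, hence is $C^{2,1}$; the step you flag as delicate (uniform control of the rearranged series together with its derivatives) is real but closable, most cleanly by writing the remainder as $\tilde R(|v|,v/|v|)$ with $\tilde R(\rho,y)=\rho^3 S(\rho,y)$ and $S$ analytic near $\{0\}\times\S^{2n-1}$, so that third derivatives off the origin are bounded. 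The paper avoids all of this and argues as in Lemma \ref{lemma:reg.u(v)}: it computes $\mI_{V_N}'(v)$ and, for $v\neq0$, $\mI_{V_N}''(v)[\tilde z,\tilde v]=\mI''(u(v))[u'(v)[\tilde z],u'(v)[\tilde v]]+\mI'(u(v))[u''(v)[\tilde z,\tilde v]]$ by the chain rule, uses that $v\mapsto u'(v)$ is Lipschitz so $u''(v)$ stays bounded, and uses precisely the cancellation you also rely on, $\mI'(u(v))\to\mI'(0)=0$, to kill the second term; then $\mI_{V_N}''(v)\to\mI''(0)$ as $v\to0$, the estimate $|\mI_{V_N}'(v)[\tilde v]-\mI''(0)[v,\tilde v]|\le C|v|^2|\tilde v|$ gives differentiability of $\mI_{V_N}'$ at $0$ with $\mI_{V_N}''(0)=\mI''(0)$, and the bound $|\mI_{V_N}''(v)[\tilde z,\tilde v]-\mI_{V_N}''(0)[\tilde z,\tilde v]|\le C|v||\tilde z||\tilde v|$ follows similarly; the argument for $\mH_{\sigma_0,V_N}$ is identical. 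Your route buys an explicit conical normal form for the reduced functionals (with an order-$3$ remainder, giving $C^{2,1}$ rather than just $C^2$ with Lipschitz second differential stated pointwise), while the paper's route buys brevity and avoids any series rearrangement, needing only Lipschitz bounds on $u'$ and the vanishing of the gradients at the origin; the invariance and the analyticity away from $0$ are handled the same way in both.
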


\begin{proof}
From Lemmas 
\ref{lemma:mF.Hs}, \ref{lemma:reg.u(v)} and \ref{lemma:group.action}
one deduces the analyticity, the $C^1$ regularity with Lipschitz differentials 
and the invariance with respect to the group action $\mT_\th$. 
To prove the higher regularity, we proceed like in the proof of Lemma \ref{lemma:reg.u(v)}.
By \eqref{reduced.Hamiltonian}, 
the differential of $\mI_{V_N}$ at a point $v \in B_{V_N}(\e_1)$ in direction $\tilde v \in V_N$ is 
\[
\mI_{V_N}'(v)[\tilde v] 
= \mI'(u(v))[ u'(v)[ \tilde v] ], 
\quad \ 
u'(v)[\tilde v] = \tilde v + \pa_v \{ w(\om(v),v) \}[\tilde v],
\]
and the map $v \mapsto \mI_{V_N}'(v)$ is Lipschitz. 
At $v \neq 0$, its differential in direction $\tilde z \in V_N$ is 
\[
\mI_{V_N}''(v)[\tilde z, \tilde v] 
= \mI''(u(v))[ u'(v)[ \tilde z], u'(v)[ \tilde v] ] 
+ \mI'(u(v))[ u''(v)[ \tilde z, \tilde v] ].
\]
As $v \to 0$, one has $u(v) \to 0$, $u'(v)[\tilde v] \to \tilde v$, 
$\mI'(u(v)) \to \mI'(0) = 0$, and $\mI''(u(v)) \to \mI''(0)$, 
while the bilinear map $u''(v) = \pa_{vv} \{ w( \om(v), v ) \}$ 
remains bounded, i.e., $\| u''(v)[ \tilde z, \tilde v] \|_{W^s} \leq C |\tilde z| |\tilde v|$
uniformly as $v \to 0$, because the map $v \mapsto u'(v)$ is Lipschitz. 
Hence $\mI_{V_N}''(v)[\tilde z, \tilde v]$ converges to 
$\mI''(0)[\tilde z, \tilde v]$ as $v \to 0$. 
Similarly, one proves that 
\[
| \mI_{V_N}'(v)[\tilde v] - \mI''(0)[v, \tilde v] | 
\leq C |v|^2 |\tilde v|
\]
for $v \neq 0$. This implies that the map $v \mapsto \mI_{V_N}'(v)$ is differentiable at $v=0$, 
with differential $\mI_{V_N}''(0) = \mI''(0)$. 
Also, from the limit already proved it follows that $\mI_{V_N}$ is of class $C^2$. 
The Lipschitz estimate for the second order differential
\[
| \mI_{V_N}''(v)[ \tilde z, \tilde v] - \mI_{V_N}''(0)[ \tilde z, \tilde v] | 
\leq C |v| |\tilde z| |\tilde v|
\]
is proved similarly. The proof for $\mH_{\sigma_0, V_N}$ is analogous.
\end{proof}

We recall that $V$ is a finite-dimensional space, 
which can be identified with the Euclidean space $\R^{2n+2}$, 
where $2n+2$ is the cardinality of the set $S$ in \eqref{def.S}, 
and $V_N = \{ v \in V : \hat v_{0,0} = \hat v_{1,0} = 0 \}$ 
is a linear subspace of $V$ of dimension $2n$. 
Thus, $n$ is the number of elements $(\ell, m)$ of $S$ with $\ell \geq 2$ and $m \geq 1$.
The set $\mS_{V_N}(a) \subset V_N$ will play the role of a \emph{constraint} for our variational problem. 
To study its geometrical and topological properties, 
we study the functional $\mI_{V_N}$ in more detail. 

\begin{lemma}\label{IZ.formula} 
The functional $\mI_{V_N} : B_{V_N}(\e_1) \to \R$ defined in \eqref{reduced.Hamiltonian} 
satisfies
\begin{equation}\label{IZ.approx}
\mathcal{I}_{V_N} (v) = \mathcal{I}_0 (v) + \mR(v), 
\quad \ |\mR(v)| \leq C |v|^3,
\end{equation}
where 
\begin{equation} \label{def.mI.0}
\mathcal{I}_0(v) := 
\frac12 \la J_0 \mM v , v \ra_{L^2(\S^2)} 
= \sum_{(\ell,m) \in S_N} \frac{ \omega_0 m^2 \ell }{ \ell^2 + \om_0^2 m^2 } \, \hat{v}_{\ell,m}^2,
\end{equation}
for $v = \sum_{ (\ell,m) \in S_N } \hat v_{\ell,m} \, \mathtt v_{\ell,m}$, 
where $J_0$ is defined in \eqref{mF.mF.old} 
and $S_N$ in \eqref{def.SD.SN}.
\end{lemma}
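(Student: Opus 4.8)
The plan is to Taylor-expand the functional $\mI_{V_N}(v) = \mI(v + w(\om(v),v))$ around $v = 0$ up to second order, and to check that the quadratic part equals $\mI_0(v)$ while the remainder is cubic. First I would write $u(v) = v + w(\om(v),v)$ and recall from Lemma \ref{lemma:reg.u(v)} (see \eqref{w.comp.estimate}) that $\| w(\om(v),v) \|_{W^s} \leq C|v|^2$, so that $u(v) = v + O(|v|^2)$ in $W^s$. Since $\mI$ is analytic on a neighborhood of the origin in $W^s$ (Lemma \ref{lemma:mF.Hs} applied to $\grad \mI$, together with \eqref{pa.mI}) and $\mI(0) = 0$, $\grad \mI(0) = 0$ (both because $\mI(\eta,\beta)$ in \eqref{def.mI} is at least quadratic in $(\eta,\beta)$), the Taylor expansion gives
\[
\mI(u(v)) = \tfrac12 \, d(\grad \mI)(0)[u(v), u(v)] + O(\|u(v)\|_{W^s}^3).
\]
Now I would substitute $u(v) = v + O(|v|^2)$: the cross term $d(\grad\mI)(0)[v, O(|v|^2)]$ is $O(|v|^3)$, the term $d(\grad\mI)(0)[O(|v|^2),O(|v|^2)]$ is $O(|v|^4)$, and $O(\|u(v)\|_{W^s}^3) = O(|v|^3)$ since $v$ ranges over the finite-dimensional space $V$ where all norms are equivalent. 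Hence
\[
\mI_{V_N}(v) = \tfrac12 \, d(\grad \mI)(0)[v,v] + \mR(v), \qquad |\mR(v)| \leq C|v|^3.
\]

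Next I would identify $d(\grad \mI)(0)$. Differentiating \eqref{pa.mI} at $(\eta,\beta)=(0,0)$ gives $\pa_\eta(\grad\mI)$-type terms; more directly, as already noted in \eqref{der.g} in the proof of the choice-of-$\om$ lemma, $d(\grad\mI)(0) = J_0 \mM$. Therefore the quadratic part is $\tfrac12 \la J_0 \mM v, v\ra_{L^2(\S^2)}$, which is exactly the definition of $\mI_0(v)$ in \eqref{def.mI.0}. It remains to compute this scalar product explicitly in the basis $\{\mathtt v_{\ell,m}\}$. For $v = \sum_{(\ell,m)\in S} \hat v_{\ell,m}\mathtt v_{\ell,m}$, formula \eqref{formula.J0.mM.v} from Lemma \ref{technical.computations} gives $J_0\mM v$ as a sum over $S_N$ only (the degenerate directions $\mathtt v_{0,0}, \mathtt v_{1,0}$ have $m=0$ and drop out). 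Then I would pair $J_0 \mM v$ with $v$ using the orthonormality \eqref{norm.ttv} of the $\ph_{\ell,m}$: the pairing of $\begin{pmatrix}\om_0 m \ph_{\ell,m}\\ -\ell\ph_{\ell,-m}\end{pmatrix}$ with $\mathtt v_{\ell,m} = (\ell^2+\om_0^2m^2)^{-1/2}\begin{pmatrix}\ell\ph_{\ell,m}\\ -\om_0 m\ph_{\ell,-m}\end{pmatrix}$ yields $(\ell^2+\om_0^2 m^2)^{-1/2}(\om_0 m \ell + \ell\om_0 m) = 2\om_0 m\ell(\ell^2+\om_0^2m^2)^{-1/2}$, and after multiplying by the factor $\hat v_{\ell,m}\,m(\ell^2+\om_0^2m^2)^{-1/2}$ from \eqref{formula.J0.mM.v} and the overall $\tfrac12$, one obtains $\tfrac12 \cdot 2\om_0 m^2\ell(\ell^2+\om_0^2m^2)^{-1}\hat v_{\ell,m}^2 = \om_0 m^2\ell(\ell^2+\om_0^2m^2)^{-1}\hat v_{\ell,m}^2$. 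Summing over $(\ell,m)\in S_N$ gives precisely \eqref{def.mI.0}. Cross terms between $\mathtt v_{\ell,m}$ and $\mathtt v_{\ell',m'}$ with $(\ell',m')\neq(\ell,-m)$ vanish by $L^2(\S^2)$-orthogonality of distinct spherical harmonics, and one checks the $(\ell,m)$ vs $(\ell,-m)$ cross terms also contribute nothing to the diagonal sum, or are already absorbed in the computation above.

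The main obstacle is not conceptual but bookkeeping: one must be careful that the cubic remainder estimate $O(|v|^3)$ genuinely holds, which relies on (i) the analyticity of $\mI$ near the origin in the correct Sobolev scale so that the Taylor remainder is controlled in $W^s$-norm, (ii) the quadratic bound $\|w(\om(v),v)\|_{W^s}\leq C|v|^2$ from Lemma \ref{lemma:reg.u(v)}, and (iii) the equivalence of all norms on the finite-dimensional space $V$ to convert $W^s$-estimates on $u(v)$ into estimates in terms of $|v|$. Once these three ingredients are in place the expansion is routine, and the explicit formula for $\mI_0$ follows from Lemma \ref{technical.computations} by direct computation.
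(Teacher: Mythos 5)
Your proposal is correct and follows essentially the same route as the paper: Taylor-expand $\mI$ around $u=0$, identify the quadratic part as $\tfrac12 \la J_0\mM u, u\ra_{L^2(\S^2)}$ via $d(\grad\mI)(0)=J_0\mM$ from \eqref{der.g}, substitute $u(v)=v+w(\om(v),v)=v+O(|v|^2)$ using \eqref{w.comp.estimate}, and then compute the quadratic form explicitly in the $\mathtt v_{\ell,m}$ basis via Lemma \ref{technical.computations}. Two very minor points: invoking Lemma \ref{lemma:mF.Hs} for the analyticity of $\mI$ is overkill since $\mI$ in \eqref{def.mI} is visibly a polynomial of degree 4 in $(\eta,\beta)$ (the paper even offers a direct expansion $\mI(u)=\la\mM\eta,\beta\ra_{L^2}+O(u^3)$ as an alternative); and the worry about $(\ell,m)$ vs $(\ell,-m)$ cross terms is unnecessary, as $L^2$-orthonormality of the $\ph_{\ell,m}$ already kills all off-diagonal pairings.
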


\begin{proof} 
The Taylor expansion around $u=0$ 
of the analytic functional $\mI$ defined in \eqref{def.mI} is 
\begin{equation} \label{Taylor.mI}
\mI(u) = \mI_0(u) + O(u^3),
\end{equation} 
with $\mI_0$ defined in \eqref{def.mI.0}, 
because $\mI(0) = 0$, $\grad \mI(0) = 0$, and 
\[ 
\frac12 \mI''(0)[u,u] 
= \frac12 \la d(\grad \mI)(0)[u], u \ra_{L^2(\S^2)} 
= \frac12 \la J_0 \mM u , u \ra_{L^2(\S^2)} 
= \mI_0(u).
\]
Note that the identity $d(\grad \mI)(0) = J_0 \mM$ was already observed in \eqref{der.g}. 
Alternatively, \eqref{Taylor.mI} can also be deduced from \eqref{def.mI} by observing that
\[
\mI(u) = \int_{\S^2} (1 + 2 \eta + \eta^2) (\mM \eta) \beta \, d\sigma 
= \la \mM \eta , \beta \ra_{L^2(\S^2)} + O(u^3), 
\quad u = (\eta, \beta),
\]
and that $\la \mM \eta , \beta \ra_{L^2(\S^2)} = \frac12 \la J_0 \mM u , u \ra_{L^2(\S^2)}$
by \eqref{mM.is.anti.symm}. 
Recalling \eqref{w.comp.estimate}, 
plugging $u(v) = v + w(\om(v),v) = v + O(|v|^2)$ 
into the expansion \eqref{Taylor.mI} gives the first identity in \eqref{IZ.approx}. 
The last identity in \eqref{IZ.approx} follows from 
the formula of $J_0 \mM v$ in \eqref{formula.J0.mM.v}
and the definition \eqref{def.ttv} of $\mathtt v_{\ell,m}$. 
\end{proof}

From expansion \eqref{IZ.approx}, by the implicit function theorem, 
we obtain that the set $\mS_{V_N}(a)$ is diffeomorphic to a sphere of $V_N$, 
with a diffeomorphism preserving the $\mT_\th$ group action.  
More precisely, we prove the following lemma.

\begin{lemma} \label{lemma:geom} 
There exist a constant $\e_2 \in (0, \e_1]$ 
and a map $\psi : B_{V_N}(\e_2) \to B_{V_N}(\e_1)$ such that 
\begin{equation} \label{mI.VN.psi}
\mI_{V_N}(\psi(v)) = |v|^2 
\end{equation}
for all $v \in B_{V_N}(\e_2)$. 
The map $\psi$ is analytic in $B_{V_N}(\e_2) \setminus \{ 0 \}$, 
it is of class $C^1( B_{V_N}(\e_2) )$, with Lipschitz differential,  
it is a diffeomorphism of $B_{V_N}(\e_2)$ onto its image $\psi( B_{V_N}(\e_2) )$, 
which is an open neighborhood of $\psi(0)=0$ contained in $B_{V_N}(\e_1)$,   
and 
\begin{equation} \label{psi.group.action}
\psi \circ \mT_\th = \mT_\th \circ \psi
\end{equation}
for all $\th \in \T$. 
Moreover, there exists $a_0 > 0$ such that, for all $a \in (0, a_0)$, 
the set $\mS_{V_N}(a)$ in \eqref{reduced.constraint} is the diffeomorphic image 
\[
\mS_{V_N}(a) = \psi \big( \mathtt{S}(a) \big) 
\]
of the sphere 
\begin{equation} \label{def.mS.0.VN.a}
\mathtt{S}(a) := \{ v \in V_N : |v|^2 = a \},
\end{equation}
and thus $\mS_{V_N}(a)$ is an analytic connected compact manifold of dimension $2n-1$ embedded in $V_N$.
Its tangent and normal space at a point $v \in \mS_{V_N}(a)$ are
\begin{align} 
T_v( \mS_{V_N}(a) ) 
& = \{ \tilde v \in V_N : \la \grad \mI_{V_N} (v) , \tilde v \ra_{L^2(\S^2)} = 0 \}, 
\label{tangent.space}
\\
N_v( \mS_{V_N}(a) ) 
& = \{ \lm \grad \mI_{V_N} (v) : \lm \in \R \},
\label{normal.space}
\end{align}
where $\grad \mI_{V_N}(v) \neq 0$ for all $v \in \mS_{V_N}(a)$, all $a \in (0, a_0)$.
\end{lemma}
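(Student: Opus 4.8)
The plan is to construct $\psi$ explicitly as a ``radial correction'' of the linear isomorphism of $V_N$ that normalizes the quadratic form $\mI_0$, and then to read off the geometry of $\mS_{V_N}(a)$ as the diffeomorphic image of a round sphere. Recall from Lemma \ref{IZ.formula} that $\mI_{V_N} = \mI_0 + \mR$ with $|\mR(v)|\le C|v|^3$, where $\mI_0(v) = \sum_{(\ell,m)\in S_N} c_{\ell,m}\hat v_{\ell,m}^2$ and $c_{\ell,m} = \om_0 m^2\ell/(\ell^2+\om_0^2 m^2)>0$ (recall $\om_0>0$, and every $(\ell,m)\in S_N$ has $\ell\ge2$, $m\neq0$). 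Thus $\mI_0(v) = \la Q v, v\ra_{L^2(\S^2)}$ for the positive self-adjoint operator $Q = \mathrm{diag}(c_{\ell,m})$ on $V_N$ in the orthonormal basis $\{\mathtt v_{\ell,m}\}$, so that $\mI_0(Q^{-1/2}v) = |v|^2$; since $c_{\ell,m}$ is even in $m$, $Q$ acts as a scalar on each block $\mathrm{span}\{\mathtt v_{\ell,m},\mathtt v_{\ell,-m}\}$, hence $Q$ and $Q^{\pm1/2}$ commute with $\mT_\th$, which by \eqref{mT.th.ttv.bis} is block-diagonal and orthogonal on $V_N$. Arguing as in the proof of Lemma \ref{lemma:choice.of.omega} --- writing $v=\rho y$, $\rho=|v|$, $y$ in the unit sphere $\mathtt S(1)$ of $V_N$, and using $\mI_{V_N}(\rho y) = \mI(\rho y + w(\Om(\rho,y),\rho y))$ with $w$ analytic on $\mU_{\e_0}$ and $\om(\rho y)=\Om(\rho,y)$ analytic near $(0,y_0)$ --- the map $(\rho,y)\mapsto\mI_{V_N}(\rho y)$ is analytic near $\{0\}\times\mathtt S(1)$; since $\mI_0(\rho y)=\rho^2\mI_0(y)$ is a polynomial and $|\mR(\rho y)|\le C\rho^3$, it follows (after the linear change $y\mapsto Q^{-1/2}y$) that $\mR(t\,Q^{-1/2}y) = t^3\,\tilde\mR(t,y)$ for $y\in\mathtt S(1)$ and $|t|$ small, with $\tilde\mR$ analytic and $\tilde\mR(t,\mT_\th y)=\tilde\mR(t,y)$ by $\mT_\th$-invariance of $\mI_{V_N}$ and $\mI_0$.

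Next I would solve, by the analytic implicit function theorem, the scalar equation
\[
\mathcal G(\mu,\rho,y) := \mu^2 + \rho\,\mu^3\,\tilde\mR(\rho\mu,y) - 1 = 0,
\]
which satisfies $\mathcal G(1,0,y)=0$ and $\pa_\mu\mathcal G(1,0,y)=2\neq0$: by compactness of $\mathtt S(1)$ there are $\e_2\in(0,\e_1]$ and an analytic $\mu\colon(-\e_2,\e_2)\times\mathtt S(1)\to\R$ with $\mu(0,\cdot)\equiv1$ and $\mathcal G(\mu(\rho,y),\rho,y)\equiv0$. Set $\psi(v):=\mu(|v|,v/|v|)\,Q^{-1/2}v$ for $v\neq0$ and $\psi(0):=0$. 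Putting $t=\rho\mu(\rho,y)$, one computes $\mI_{V_N}(\psi(v)) = t^2\mI_0(Q^{-1/2}y) + t^3\tilde\mR(t,y) = t^2 + t^3\tilde\mR(t,y) = \rho^2 = |v|^2$, the last step by $\mathcal G(\mu,\rho,y)=0$; this is \eqref{mI.VN.psi}. Since $\mu(\rho,y)-1=O(\rho)$ and $\mu(0,\cdot)\equiv1$ (so $\nabla_y\mu=O(\rho)$), we have $\psi(v) = Q^{-1/2}v + O(|v|^2)$, hence $\psi$ is differentiable at $0$ with $D\psi(0)=Q^{-1/2}$ invertible, and the scaling estimates used in the proof of Lemma \ref{lemma:reg.u(v)} (with $\mu$ analytic for $\rho\neq0$, $\mu-1=O(\rho)$, $\pa_\rho\mu=O(1)$, $\nabla_y\mu=O(\rho)$) show $\psi\in C^1(B_{V_N}(\e_2))$ with Lipschitz differential and analytic on $B_{V_N}(\e_2)\setminus\{0\}$. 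By the inverse function theorem, after shrinking $\e_2$, $\psi$ is a diffeomorphism of $B_{V_N}(\e_2)$ onto the open neighborhood $\psi(B_{V_N}(\e_2))$ of $0$, which we may keep inside $B_{V_N}(\e_1)$. Finally $|\mT_\th v|=|v|$, $(\mT_\th v)/|\mT_\th v|=\mT_\th(v/|v|)$, $\tilde\mR(t,\mT_\th y)=\tilde\mR(t,y)$ and uniqueness in the IFT give $\mu(\rho,\mT_\th y)=\mu(\rho,y)$, which with $Q^{-1/2}\mT_\th=\mT_\th Q^{-1/2}$ yields $\psi\circ\mT_\th=\mT_\th\circ\psi$, i.e.\ \eqref{psi.group.action}.

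For the description of $\mS_{V_N}(a)$: from $\mI_{V_N}=\mI_0+O(|v|^3)$ with $\mI_0$ positive definite there is $c>0$ with $c|v|^2\le\mI_{V_N}(v)\le c^{-1}|v|^2$ for small $v$, so $\mI_{V_N}(v)=a$ forces $c^{1/2}\sqrt a\le|v|\le c^{-1/2}\sqrt a$; hence there is $a_0>0$ such that, for $a\in(0,a_0)$, $\mathtt S(a)\subset B_{V_N}(\e_2)\setminus\{0\}$ and $\mS_{V_N}(a)\subset\psi(B_{V_N}(\e_2))$. Then $\psi(\mathtt S(a))\subset\mS_{V_N}(a)$ (since $\mI_{V_N}(\psi(\xi))=|\xi|^2=a$ and $\psi(\xi)\in B_{V_N}(\e_1)$), and any $v\in\mS_{V_N}(a)$ equals $\psi(\xi)$ for a unique $\xi\in B_{V_N}(\e_2)$ with $|\xi|^2=\mI_{V_N}(\psi(\xi))=a$, so $v\in\psi(\mathtt S(a))$; thus $\mS_{V_N}(a)=\psi(\mathtt S(a))$. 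As $\mathtt S(a)$ is an analytic compact connected $(2n-1)$-sphere ($n\ge1$, since $S\supseteq\{(0,0),(1,0),(\ell_0,\pm m_0)\}$) lying in $B_{V_N}(\e_2)\setminus\{0\}$ where $\psi$ is an analytic embedding, $\mS_{V_N}(a)$ is an analytic compact connected $(2n-1)$-manifold embedded in $V_N$. Moreover $\grad\mI_{V_N}(v)=2Qv+\grad\mR(v)$ with $|\grad\mR(v)|\le C|v|^2$ (by the $C^2$ regularity of Lemma \ref{lemma:reg.reduced.mH.mI}), so $|\grad\mI_{V_N}(v)|\ge(2c_{\min}-C|v|)|v|>0$ for $0<|v|$ small, $c_{\min}:=\min_{(\ell,m)\in S_N}c_{\ell,m}$; shrinking $a_0$ once more, $\grad\mI_{V_N}\neq0$ on $\mS_{V_N}(a)$ for all $a\in(0,a_0)$, so $\mS_{V_N}(a)$ is a regular level set of $\mI_{V_N}$ and \eqref{tangent.space}, \eqref{normal.space} are the standard tangent/normal space formulas.

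The main obstacle is that $\mI_{V_N}$ fails to be smooth at the origin: it is only $C^2$ with Lipschitz second differential, and analytic merely away from $0$ (and, in polar coordinates, along rays), so the analytic Morse lemma is not available. The delicate step is to show that the radial correction $\psi(v)=\mu(|v|,v/|v|)Q^{-1/2}v$ extends across the origin to a genuine $C^1$ diffeomorphism with Lipschitz differential; this hinges on the uniform limit $\mu(\rho,y)\to1$ as $\rho\to0$ --- which keeps $\psi$ close to the fixed linear isomorphism $Q^{-1/2}$, not merely bounded --- together with the scaling estimates already developed for Lemma \ref{lemma:reg.u(v)}. A lighter, bookkeeping-level issue is to propagate the $\mT_\th$-equivariance through the linear normalization $Q^{-1/2}$ and the implicit construction of $\mu$.
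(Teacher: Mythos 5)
Your construction is essentially the paper's own proof: there too one sets $\psi(v)=(1+\mu(v))\Lm v$, where $\Lm$ is exactly your $Q^{-1/2}$ (the diagonal map normalizing $\mI_0$), determines the scalar correction implicitly from the equation $\mI_{V_N}(\psi(v))=|v|^2$, gets $C^1$ regularity with Lipschitz differential from $|\mu(v)|\le C|v|$ and the bound on $\mu'$, and deduces \eqref{psi.group.action} from $\Lm\mT_\th=\mT_\th\Lm$, the $\mT_\th$-invariance of the remainder, and uniqueness of the implicitly defined factor. The only difference is technical and harmless: the paper produces $\mu(v)$ by a contraction mapping with $v$ as a parameter (using only $|\mR(v)|\le C|v|^3$, with analyticity off the origin recovered afterwards by the implicit function theorem), whereas you pass to polar coordinates, factor the remainder as $t^3\tilde\mR(t,y)$ with $\tilde\mR$ analytic, and apply the analytic implicit function theorem at $(\mu,\rho)=(1,0)$ uniformly on the compact unit sphere; both routes yield the stated properties.
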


\begin{proof}
Define the diagonal linear map 
\begin{equation} \label{def.Lm}
\Lm : V_N \to V_N, \quad 
\Lm \mathtt{v}_{\ell,m} := \lm_{\ell,m} \mathtt{v}_{\ell,m}, \quad 
\lm_{\ell,m} := (\ell^2 + \om_0^2 m^2)^{\frac12} (\om_0 m^2 \ell)^{-\frac12}.
\end{equation}
Let $|\Lm| := \max \lm_{\ell,m}$, 
so that $|\Lm v| \leq |\Lm| |v|$ for all $v \in V_N$.  
By \eqref{norm.ttv} and \eqref{IZ.approx}, one has 
\begin{equation} \label{hat.square}
\mI_0(\Lm v) = \sum_{(\ell,m) \in S_N} \hat v_{\ell,m}^2 = |v|^2.
\end{equation}
Given $v$, we look for a real number $\mu$ such that 
\begin{equation} \label{19:22}
\mI_{V_N}( (1+\mu) \Lm v ) = |v|^2. 
\end{equation}
We look for $\mu$ in the interval $[-\delta, \delta]$, 
with $\delta = \frac14$, 
and we assume that $|v| < \e_2$, where $\e_2 \in (0, \e_1]$ is such that 
$\frac54 |\Lm| \e_2 < \e_1$, so that, for $v \in B_{V_N}(\e_2)$, 
the point $(1+\mu) \Lm v$ is in the ball $B_{V_N}(\e_1)$ where $\mI_{V_N}$ is defined. 
Let $\mR$ be the remainder in \eqref{IZ.approx}. 
By \eqref{hat.square}, one has 
$\mI_0( (1+\mu) \Lm v ) = (1+\mu)^2 |v|^2$,  
and, since $\mI_{V_N}= \mI_0 + \mR$, 
\eqref{19:22} becomes 
\begin{equation} \label{19:23}
(2 \mu + \mu^2) |v|^2 + \mR( (1+\mu) \Lm v ) = 0.
\end{equation}
For $v \neq 0$, \eqref{19:23} is the fixed point equation $\mu = \mK(\mu)$
for the unknown $\mu$, where 
\begin{equation} \label{def.mK}
\mK(\mu) := - \frac{\mu^2}{2} - \frac{\mR( (1+\mu) \Lm v)}{2 |v|^2}.
\end{equation}
By the inequality in \eqref{IZ.approx}, 
for some constants $C_1, C_2$ one has
\[
| \mK(\mu) | \leq \frac{\delta^2}{2} + C_1 \e_2 \leq \delta, 
\quad \ 
| \mK'(\mu) | \leq \delta + C_2 \e_2 \leq \frac12 
\]
for all $|\mu| \leq \delta$, $|v| < \e_2$,  
provided $\e_2$ is sufficiently small, namely
$C_1 \e_2 \leq \frac12 \delta$ and $C_2 \e_2 \leq \frac14$. 
Hence, by the contraction mapping theorem, 
in the interval $[- \delta, \delta]$ there exists a unique fixed point of $\mK$, 
which we denote by $\mu(v)$. 
Hence 
\begin{equation}  \label{def.psi.in.the.proof}
\mI_{V_N}( \psi (v) ) = |v|^2, \quad \ 
\psi(v) := (1 + \mu(v)) \Lm v,  
\end{equation}
for all $v \in B_{V_N}(\e_2) \setminus \{ 0 \}$. 
From the implicit function theorem 
applied to equation \eqref{19:23} around any pair $(v, \mu(v))$ 
it follows that the map $v \mapsto \mu(v)$ is analytic in $B_{V_N}(\e_2) \setminus \{ 0 \}$. 
Moreover, 
$|\mu(v)| = |\mK(\mu(v))| \leq \frac12 \mu^2(v) + C_1 |v| \leq \frac18 |\mu(v)| + C_1 |v|$, 
whence 
\[
|\mu(v)| \leq C |v|.
\]
Thus, defining $\mu(0) := 0$, the function $\mu(v)$ is also Lipschitz in $B_{V_N}(\e_2)$. 
As a consequence, the function $\psi$ is analytic in $B_{V_N}(\e_2) \setminus \{ 0 \}$
and Lipschitz in $B_{V_N}(\e_2)$. In addition, $|\psi(v) - \psi(0) - \Lm v| \leq C |v|^2$, 
which means that $\psi$ is differentiable also at $v=0$, 
with differential $\psi'(0)[\tilde v] = \Lm \tilde v$. 
At $v \neq 0$, the differential is 
\[
\psi'(v)[\tilde v] = \mu'(v)[\tilde v] \Lm v + (1 + \mu(v)) \Lm \tilde v,
\]
and $\psi'(v) \to \psi'(0) = \Lm$ as $v \to 0$ because $\mu(v) \to 0$, $\Lm v \to 0$, 
and $|\mu'(v)[\tilde v]| \leq C |\tilde v|$ uniformly as $v \to 0$.
Thus $\psi$ is of class $C^1$ in $B_{V_N}(\e_2)$. 
Moreover 
\[
|\psi'(v)[\tilde v] - \psi'(0)[\tilde v]| \leq C |v| |\tilde v|,
\]
i.e., the differential map $v \mapsto \psi'(v)$ is Lipschitz continuous. 

The function $\psi$ is a diffeomorphism of open sets of $V_N$, and, 
for each $a \in (0, a_0)$ sufficiently small, one has 
\begin{align*}
\mS_{V_N}(a) 
& = \{ v \in B_{V_N}(\e_1) : \mI_{V_N}(v) = a \} 
\\ 
& = \{ v = \psi(y) : y \in B_{V_N}(\e_2), \ a = \mI_{V_N}(v) = \mI_{V_N}(\psi(y)) = |y|^2 \}
\\ 
& = \psi ( \{ y \in V_N : |y|^2 = a \} ),
\end{align*}
namely $\mS_{V_N}(a)$ is the image of the sphere $\{ |y|^2 = a \}$ by the diffeomorphism $\psi$. 

Since the eigenvalues in \eqref{def.Lm} satisfy 
$\lm_{\ell, -m} = \lm_{\ell,m}$, by \eqref{mT.th.v} we have
\begin{align*}
\Lm (\mT_\th v) 
& = \sum_{(\ell,m) \in S_N} \{ \cos(m \th) \hat v_{\ell,m} 
+ \sin(m \th) \hat v_{\ell,-m} \} \lm_{\ell,m} \mathtt{v}_{\ell,m}
\\ 
& = \sum_{(\ell,m) \in S_N} \{ \cos(m \th) (\lm_{\ell,m} \hat v_{\ell,m})
+ \sin(m \th) (\lm_{\ell,-m} \hat v_{\ell,-m}) \} \mathtt{v}_{\ell,m} 
= \mT_\th (\Lm v)
\end{align*}
for all $v \in V_N$, i.e., 
\begin{equation} \label{Lm.group.action} 
\Lm \mT_\th = \mT_\th \Lm.
\end{equation} 
By \eqref{def.mT.theta} and \eqref{def.norma.V}, 
with the change of integration variable $y = R(\th)x$ on $\S^2$, 
one proves that 
\begin{equation} \label{norm.group.action}
|\mT_\th v|^2 = |v|^2.
\end{equation}
Alternatively, the same identity can also be proved using \eqref{mT.th.v} 
and the fact that $\{ \mathtt{v}_{\ell,m} : (\ell,m) \in S_N \}$ is an orthonormal basis of $V_N$.  
From \eqref{Lm.group.action}, \eqref{hat.square}, and \eqref{norm.group.action}, one has 
\[
\mI_0( \mT_\th \Lm v ) = \mI_0( \Lm \mT_\th v ) 
= |\mT_\th v|^2 = |v|^2 = \mI_0( \Lm v)
\] 
for all $v \in V_N$, 
and this implies that $\mI_0 \circ \mT_\th = \mI_0$ 
because $\{ \Lm v : v \in V_N \} = V_N$. 
By Lemma \ref{lemma:reg.reduced.mH.mI}, the functional $\mI_{V_N}$ has the same invariance property,  
and therefore the difference $\mR = \mI_{V_N} - \mI_0$ also satisfies
\begin{equation}  \label{mR.group.action}
\mR \circ \mT_\th = \mR
\end{equation}
for all $\th \in \T$. 
Now denote by $\mK(\mu, v)$ the scalar quantity $\mK(\mu)$ in \eqref{def.mK}. 
By the contraction mapping theorem, we have proved that $\mK(\mu(v), v) = \mu(v)$ 
for all $v \in B_{V_N}(\e_2) \setminus \{ 0 \}$, 
and, if a pair $(\mu, v) \in [- \delta, \delta] \times (B_{V_N}(\e_2) \setminus \{ 0 \})$ 
satisfies $\mK(\mu, v) = \mu$, then $\mu = \mu(v)$. 
Moreover, by \eqref{norm.group.action} and \eqref{mR.group.action}, one has
$\mK(\mu, \mT_\th v) = \mK(\mu, v)$ for all pairs $(\mu, v)$, all $\th \in \T$. 
Then 
\[
\mu(v) = \mK( \mu(v), v) = \mK( \mu(v), \mT_\th v),
\]
whence $\mu(v) = \mu( \mT_\th v)$. 
This identity, together with  \eqref{Lm.group.action}, 
gives \eqref{psi.group.action}.
\end{proof}

\section{Variational structure of the reduced non-degenerate bifurcation equation}
\label{sec:reduced.variational}

In this section we prove that the projection on the non-degenerate subspace $Z_N$ 
of the bifurcation equation \eqref{Bifurcation.equation} 
restricted to the graph $\om = \om(v)$ has a variational structure, 
namely it is the critical point equation of a functional. 

The reduced Hamiltonian $\mH_{\sigma_0, V_N}$ and the reduced angular momentum $\mI_{V_N}$ 
are defined in \eqref{reduced.Hamiltonian}, 
and the function $\om(v)$ is defined in Lemma \ref{lemma:choice.of.omega}. 
Now for $v \in B_{V_N}(\e_1)$ we define the functional 
\begin{equation} \label{def.mE}
\mE_{V_N}(v) := \mH_{\sigma_0, V_N}(v) - \om(v) \big( \mI_{V_N}(v) - a \big).
\end{equation}

\begin{lemma} For all $a \in \R$, the functional $\mE_{V_N} : B_{V_N}(\e_1) \to \R$ 
is Lipschitz continuous in $B_{V_N}(\e_1)$ and analytic in $B_{V_N}(\e_1) \setminus \{ 0 \}$.
Moreover, for all $\th \in \T$, one has  
\begin{equation} \label{mE.VN.group.action}
\mE_{V_N} \circ \mT_\th = \mE_{V_N}.
\end{equation}
\end{lemma}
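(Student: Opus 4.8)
The plan is to deduce all three assertions from the regularity and symmetry properties of the three building blocks entering \eqref{def.mE} --- the reduced Hamiltonian $\mH_{\sigma_0,V_N}$, the reduced angular momentum $\mI_{V_N}$, and the function $\om(v)$ --- which were already established in Lemmas \ref{lemma:reg.reduced.mH.mI} and \ref{lemma:choice.of.omega}, together with the elementary fact that the classes of real-analytic functions and of Lipschitz functions are stable under finite sums and products. For the analyticity on $B_{V_N}(\e_1)\setminus\{0\}$: by Lemma \ref{lemma:reg.reduced.mH.mI} the functionals $\mH_{\sigma_0,V_N}$ and $\mI_{V_N}$ are analytic there, and by Lemma \ref{lemma:choice.of.omega} so is $v\mapsto\om(v)$; since $V_N$ is finite-dimensional, finite sums and products of real-analytic functions on an open subset of $V_N$ are again real-analytic, hence $\mE_{V_N}(v)=\mH_{\sigma_0,V_N}(v)-\om(v)(\mI_{V_N}(v)-a)$ is analytic on $B_{V_N}(\e_1)\setminus\{0\}$.

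For the Lipschitz continuity on the whole ball $B_{V_N}(\e_1)$ I would argue as follows. Each of $\mH_{\sigma_0,V_N}$ and $\mI_{V_N}$ is of class $C^2$ on $B_{V_N}(\e_1)$ with Lipschitz continuous second differential by Lemma \ref{lemma:reg.reduced.mH.mI}; in particular its first differential is bounded on $B_{V_N}(\e_1)$, so, $B_{V_N}(\e_1)$ being a bounded convex set, the functional itself is globally Lipschitz on $B_{V_N}(\e_1)$. Thus $\mI_{V_N}(v)-a$ is bounded and Lipschitz on $B_{V_N}(\e_1)$. The function $\om(v)$ is Lipschitz on $B_{V_N}(\e_1)$ by Lemma \ref{lemma:choice.of.omega}, and bounded there because $|\om(v)-\om_0|\le C|v|\le C\e_1$ by \eqref{om.estimate}. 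Since a product of two bounded Lipschitz real-valued functions on a common set is Lipschitz, and a sum of Lipschitz functions is Lipschitz, $\mE_{V_N}$ is Lipschitz on $B_{V_N}(\e_1)$.

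For the invariance \eqref{mE.VN.group.action}, I would substitute $\mT_\th v$ for $v$ in \eqref{def.mE} and invoke the three identities $\mH_{\sigma_0,V_N}\circ\mT_\th=\mH_{\sigma_0,V_N}$, $\mI_{V_N}\circ\mT_\th=\mI_{V_N}$ from Lemma \ref{lemma:reg.reduced.mH.mI}, and $\om(\mT_\th v)=\om(v)$ from \eqref{om.group.action}: each of $\mH_{\sigma_0,V_N}(v)$, $\om(v)$ and $\mI_{V_N}(v)-a$ is then left unchanged, so $\mE_{V_N}(\mT_\th v)=\mE_{V_N}(v)$ for all $\th\in\T$. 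I do not expect any genuine obstacle; the only point requiring a little care is that ``$C^1$ on an open ball'' does not by itself yield global Lipschitz continuity, which is why one appeals to the boundedness of the differentials provided by the $C^2$-with-Lipschitz-second-differential conclusion of Lemma \ref{lemma:reg.reduced.mH.mI} (alternatively, one may shrink $\e_1$ slightly and work on a closed subball, where continuity of the $C^1$ data already suffices).
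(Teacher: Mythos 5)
Your proof is correct and follows essentially the same route as the paper, whose entire proof of this lemma consists of citing Lemmas \ref{lemma:choice.of.omega} and \ref{lemma:reg.reduced.mH.mI}. You have merely spelled out the elementary stability of the Lipschitz and analytic classes under sums and products (and the $\mT_\th$-invariance bookkeeping) that the paper leaves implicit.
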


\begin{proof}
It follows from Lemmas \ref{lemma:choice.of.omega} and \ref{lemma:reg.reduced.mH.mI}.
\end{proof}

\begin{lemma} 
The differential of the functional $\mE_{V_N}$ 
at a point $v \in B_{V_N}(\e_1) \setminus \{ 0 \}$ 
in any direction $\tilde v \in V_N$ is 
\begin{equation} \label{der.mE.V}
\mE_{V_N}'(v)[\tilde v] 
= \la \grad \mE_{V_N}(v) , \tilde v \ra_{L^2(\S^2)}  
= \la \Pi_{Z_N} \mF (\om(v), u(v)) , \tilde v \ra_{L^2(\S^2)}  
- \omega'(v)[\tilde v] \big( \mI_{V_N} (v) - a \big).
\end{equation}
\end{lemma}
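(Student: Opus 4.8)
The statement to prove is the formula \eqref{der.mE.V} for the differential of $\mE_{V_N}(v) = \mH_{\sigma_0,V_N}(v) - \om(v)(\mI_{V_N}(v) - a)$ at a point $v \in B_{V_N}(\e_1)\setminus\{0\}$. The plan is a direct computation via the product rule, followed by identification of the various terms using the chain rule and the orthogonal projection $\Pi_{Z_N}$. Since $v \neq 0$, both $\om(v)$ (Lemma \ref{lemma:choice.of.omega}) and $w(\om(v),v)$ (Lemma \ref{lemma:reg.u(v)}), hence $u(v) = v + w(\om(v),v)$ and the reduced functionals $\mH_{\sigma_0,V_N}, \mI_{V_N}$ (Lemma \ref{lemma:reg.reduced.mH.mI}), are analytic there, so differentiation is legitimate.

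First I would apply the product rule to \eqref{def.mE}:
\[
\mE_{V_N}'(v)[\tilde v]
= \mH_{\sigma_0,V_N}'(v)[\tilde v]
- \om'(v)[\tilde v]\,\big(\mI_{V_N}(v) - a\big)
- \om(v)\,\mI_{V_N}'(v)[\tilde v].
\]
Next, by the chain rule applied to \eqref{reduced.Hamiltonian}, writing $u = u(v)$ and $u'(v)[\tilde v] = \tilde v + \pa_v\{w(\om(v),v)\}[\tilde v]$, one has $\mH_{\sigma_0,V_N}'(v)[\tilde v] = \la \grad \mH_{\sigma_0}(u), u'(v)[\tilde v]\ra_{L^2}$ and $\mI_{V_N}'(v)[\tilde v] = \la \grad \mI(u), u'(v)[\tilde v]\ra_{L^2}$. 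Combining the first and third terms, and recalling $\mF(\om,u) = \grad \mH_{\sigma_0}(u) - \om\grad\mI(u)$ from \eqref{def.mF}, gives
\[
\mH_{\sigma_0,V_N}'(v)[\tilde v] - \om(v)\,\mI_{V_N}'(v)[\tilde v]
= \la \mF(\om(v), u(v)), u'(v)[\tilde v]\ra_{L^2}.
\]

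The remaining step is to show that this equals $\la \Pi_{Z_N}\mF(\om(v),u(v)), \tilde v\ra_{L^2}$. Split $u'(v)[\tilde v] = \tilde v + \pa_v\{w(\om(v),v)\}[\tilde v]$; the second piece lies in $W^s = R$ (it is a variation of the range-component $w$), while $\tilde v \in V_N$. Since the range equation \eqref{IFT.w} holds identically, $\Pi_{R^s}\mF(\om(v),u(v)) = 0$, so $\mF(\om(v),u(v)) \in Z = Z_D \oplus Z_N$, which is orthogonal to $R = W$; hence the pairing with $\pa_v\{w\}[\tilde v] \in W$ vanishes. Thus $\la \mF(\om(v),u(v)), u'(v)[\tilde v]\ra_{L^2} = \la \mF(\om(v),u(v)), \tilde v\ra_{L^2}$, and since $\tilde v \in V_N = Z_N$ and $Z_N \perp Z_D$, this equals $\la \Pi_{Z_N}\mF(\om(v),u(v)), \tilde v\ra_{L^2}$. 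Adding back the term $-\om'(v)[\tilde v](\mI_{V_N}(v)-a)$ yields \eqref{der.mE.V}. The first equality in \eqref{der.mE.V}, $\mE_{V_N}'(v)[\tilde v] = \la \grad\mE_{V_N}(v), \tilde v\ra_{L^2}$, is just the definition of the $L^2(\S^2)$-gradient of $\mE_{V_N}$ restricted to $V_N$.

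I do not expect a serious obstacle here; the only point requiring a little care is the bookkeeping with the projections — specifically that $\mF(\om(v),u(v))$ has already been forced into $Z$ by the range equation being solved, so that only the $Z_N$-component survives the pairing with $\tilde v \in V_N$, and the $W$-valued part of $u'(v)[\tilde v]$ drops out. This orthogonality between $R$ and $Z$ (and between $Z_D$ and $Z_N$) is exactly what makes the variational reduction work, and it is already available from Proposition \ref{prop:L0}, the definitions in Section \ref{sec:linearized}, and the decomposition \eqref{def.VD.VN}.
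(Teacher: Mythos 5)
Your proof is correct and follows essentially the same route as the paper: product rule, chain rule, regrouping via $\mF = \grad\mH_{\sigma_0} - \om\grad\mI$, splitting $u'(v)[\tilde v]$ into its $V_N$- and $W$-components, and then using the range equation \eqref{IFT.w} together with the orthogonal decomposition $L^2 = Z_D \oplus Z_N \oplus R$ (with $R = W$ and $Z = V$) to reduce the pairing to the $\Pi_{Z_N}$-term. The only cosmetic difference is the order of operations — you use $\Pi_{R^s}\mF = 0$ to place $\mF$ in $Z$ before discarding the $W$-paired term, whereas the paper first inserts the projections $\Pi_{Z_N}$ and $\Pi_R$ and then invokes the range equation — but this is the same argument. (One small caveat: writing ``$W^s = R$'' is loose; what you actually use, and what the paper records in \eqref{Z.is.V.R.is.W}, is that $R = W$ as subspaces of $L^2 \times L^2$, while $\pa_v\{w\}[\tilde v] \in W^s \subset W$.)
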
  

\begin{proof} 
We calculate 
\begin{align*}
& \mE_{V_N}'(v)[\tilde v] 
= \mathcal{H}_{\sigma_0, V_N}'(v)[\tilde v] - \omega'(v)[\tilde v] \big( \mI_{V_N} (v) - a \big) 
- \om(v) \mI_{V_N}'(v) [\tilde v] 
\notag \\
& \quad 
= \la (\grad \mH_{\sigma_0})(u(v)) , u'(v)[\tilde v] \ra_{L^2(\S^2)} 
- \omega'(v)[\tilde v] \big( \mI_{V_N}(v) - a \big)  
- \om(v) \la (\grad \mI)(u(v)) , u'(v)[\tilde v] \ra_{L^2(\S^2)} 
\notag \\
& \quad 
= \la \mF (\om(v), u(v)) , u'(v)[\tilde v] \ra_{L^2(\S^2)}  
- \omega'(v)[\tilde v] \big( \mI_{V_N}(v) - a \big)  
\notag \\
& \quad 
= \la \mF (\om(v), u(v)) , \tilde v \ra_{L^2(\S^2)}  
+ \la \mF (\om(v), u(v)) , \pa_v \{ w(\om(v), v) \} [\tilde v] \ra_{L^2(\S^2)}  
- \omega'(v)[\tilde v] \big( \mI_{V_N} (v) - a \big)  
\notag \\
& \quad 
= \la \Pi_{Z_N} \mF (\om(v), u(v)) , \tilde v \ra_{L^2(\S^2)}  
+ \la \Pi_R \mF (\om(v), u(v)) , \pa_v \{ w(\om(v), v) \} [\tilde v] \ra_{L^2(\S^2)}  
\notag \\ 
& \quad \quad \ 
- \omega'(v)[\tilde v] \big( \mI_{V_N} (v) - a \big)  
\notag \\
& \quad 
= \la \Pi_{Z_N} \mF (\om(v), u(v)) , \tilde v \ra_{L^2(\S^2)}  
- \omega'(v)[\tilde v] \big( \mI_{V_N} (v) - a \big),  
\end{align*}
where we have used the definition \eqref{reduced.Hamiltonian} of $\mH_{\sigma_0, V_N}, \mI_{V_N}$, $u(v)$,
the chain rule, 
the definition \eqref{def.mF} of $\mF$, 
the fact that $\tilde v \in V_N$ and $\pa_v \{ w(\om(v), v) \}[\tilde v] \in W$,
the identities in \eqref{Z.is.V.R.is.W}, 
and the range equation \eqref{IFT.w}.
\end{proof}

The manifold $\mS_{V_N}(a)$ has been defined in \eqref{reduced.constraint} 
and studied in Lemma \ref{lemma:geom} for $a \in (0,a_0)$.
For points $v$ on $\mS_{V_N}(a)$ we immediately deduce the following lemma. 

\begin{lemma} \label{lemma:variational.nature.ZN}
Let $a \in (0, a_0)$, where $a_0$ is given in Lemma \ref{lemma:geom}. 
For any $v \in \mS_{V_N}(a)$, one has 
\begin{equation} \label{grad.mE.V}
\grad \mE_{V_N}(v) = \Pi_{Z_N} \mF (\om(v), u(v)).
\end{equation}
\end{lemma}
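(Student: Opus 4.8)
The plan is to read off the result directly from the differential formula \eqref{der.mE.V} established just above, after observing that the constraint kills the extra term. First I would note that for $a \in (0, a_0)$ every point $v \in \mS_{V_N}(a)$ is nonzero: indeed $\mI_{V_N}(v) = a \neq 0$, while $\mI_{V_N}(0) = \mI(0) = 0$, so $v \neq 0$. Consequently $v$ lies in $B_{V_N}(\e_1) \setminus \{0\}$, where, by the preceding lemma, $\mE_{V_N}$ is analytic (hence differentiable), so formula \eqref{der.mE.V} is applicable at $v$.

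Next I would simply substitute $\mI_{V_N}(v) = a$, which holds by the very definition \eqref{reduced.constraint} of $\mS_{V_N}(a)$. Then the term $\omega'(v)[\tilde v]\,(\mI_{V_N}(v) - a)$ in \eqref{der.mE.V} vanishes for every direction $\tilde v \in V_N$, leaving
\[
\mE_{V_N}'(v)[\tilde v] = \langle \Pi_{Z_N} \mF(\om(v), u(v)) , \tilde v \rangle_{L^2(\S^2)} \qquad \forall \, \tilde v \in V_N.
\]
Since $\Pi_{Z_N} \mF(\om(v), u(v))$ belongs to $Z_N = V_N$ by construction, it is precisely the $L^2(\S^2)$-representative of the differential $\mE_{V_N}'(v)$ on the finite-dimensional space $V_N$, i.e.\ it equals $\grad \mE_{V_N}(v)$ by the very definition of the gradient used throughout (cf.\ the first equality in \eqref{der.mE.V}). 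This gives \eqref{grad.mE.V}.

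There is essentially no substantive obstacle here: the lemma is a bookkeeping consequence of \eqref{der.mE.V}, where all the real work (the cancellation of the Lagrange-multiplier-type term coming from $\om = \om(v)$, and the use of the range equation \eqref{IFT.w} together with $\tilde v \in V_N = Z$, $\pa_v\{w(\om(v),v)\}[\tilde v] \in W = R$) has already been done in the proof of \eqref{der.mE.V}. The only point requiring a word of care is the preliminary remark that $v \neq 0$ on $\mS_{V_N}(a)$ for $a > 0$, which is needed so that the analyticity and the differential formula from the previous lemmas are available at $v$; this is immediate from $\mI_{V_N}(v) = a \neq 0 = \mI_{V_N}(0)$.
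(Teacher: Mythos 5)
Your proof is correct and takes essentially the same route as the paper: substitute $\mI_{V_N}(v) = a$ into \eqref{der.mE.V} and observe that the $\omega'(v)[\tilde v](\mI_{V_N}(v)-a)$ term drops out. Your extra remark that $v\neq 0$ on $\mS_{V_N}(a)$ (so \eqref{der.mE.V} is applicable) is a reasonable small point of care that the paper leaves implicit.
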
  

\begin{proof}
The term $(\mI_{V_N}(v) - a)$ in \eqref{der.mE.V} vanishes 
because $v \in \mS_{V_N}(a)$.
\end{proof}

\begin{lemma}[Constrained critical points solve the reduced bifurcation equation on $Z_N$]
\label{lemma:sol.bif.ZN}
Let $a \in (0, a_0)$, where $a_0$ is given in Lemma \ref{lemma:geom}. 
Suppose that $v \in \mS_{V_N}(a)$ is a constrained critical point 
of the functional $\mE_{V_N}$ on the constraint $\mS_{V_N}(a)$. 
Then $v$ is a critical point of $\mE_{V_N}$ in $V_N$, and 
\begin{equation} \label{bif.eq.ZN}
\Pi_{Z_N} \mF(\om(v), u(v)) = 0.
\end{equation}
\end{lemma}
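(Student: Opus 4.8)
The plan is to apply the Lagrange multiplier rule on the constraint $\mS_{V_N}(a)$ and then to show that the resulting multiplier must vanish, the latter being the crux of the argument and precisely the point where the choice of $\om(v)$ in Section \ref{sec:choice.of.omega} is used.

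By Lemma \ref{lemma:geom}, for $a \in (0,a_0)$ the set $\mS_{V_N}(a)$ is an embedded codimension-one submanifold of $V_N$ with tangent and normal spaces \eqref{tangent.space}--\eqref{normal.space}, and $\grad\mI_{V_N}(v) \neq 0$ on it. Since $v$ is a constrained critical point, $\mE_{V_N}'(v)[\tilde v] = \la\grad\mE_{V_N}(v),\tilde v\ra_{L^2(\S^2)} = 0$ for every $\tilde v \in T_v(\mS_{V_N}(a))$, hence $\grad\mE_{V_N}(v) \in N_v(\mS_{V_N}(a))$, i.e.\ there is $\lm \in \R$ with $\grad\mE_{V_N}(v) = \lm\,\grad\mI_{V_N}(v)$. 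By Lemma \ref{lemma:variational.nature.ZN}, $\grad\mE_{V_N}(v) = \Pi_{Z_N}\mF(\om(v),u(v))$, so
\[
\Pi_{Z_N}\mF(\om(v),u(v)) = \lm\,\grad\mI_{V_N}(v).
\]
Taking the $L^2(\S^2)$ scalar product of this identity with $\grad\mI(u(v))$ and recalling the definitions \eqref{def.F}--\eqref{def.f.g}, the left-hand side equals exactly $F(\om(v),v)$, which vanishes by \eqref{IFT.om}; therefore $\lm\,\la\grad\mI_{V_N}(v),\grad\mI(u(v))\ra_{L^2(\S^2)} = 0$. Once we know the scalar product $\la\grad\mI_{V_N}(v),\grad\mI(u(v))\ra_{L^2(\S^2)}$ is nonzero on $\mS_{V_N}(a)$, we conclude $\lm = 0$, hence $\grad\mE_{V_N}(v) = 0$ and, again by Lemma \ref{lemma:variational.nature.ZN}, $\Pi_{Z_N}\mF(\om(v),u(v)) = 0$, which is \eqref{bif.eq.ZN}.

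Thus the main step is the non-degeneracy estimate $\la\grad\mI_{V_N}(v),\grad\mI(u(v))\ra_{L^2(\S^2)} \neq 0$ for $v \in \mS_{V_N}(a)$, $a \in (0,a_0)$, which I would prove by expanding around $v=0$. From $\mI_{V_N} = \mI_0 + \mR$ with $|\mR(v)| \leq C|v|^3$ (Lemma \ref{IZ.formula}) and the $C^2$ regularity with Lipschitz second differential of $\mI_{V_N}$ (Lemma \ref{lemma:reg.reduced.mH.mI}), and since the gradient on $V_N$ of the quadratic form $\mI_0$ in \eqref{def.mI.0} is $\Pi_{Z_N}J_0\mM v$ (by \eqref{formula.J0.mM.v}, using that $\{\mathtt v_{\ell,m}\}_{(\ell,m)\in S_N}$ is orthonormal), one gets $\grad\mI_{V_N}(v) = \Pi_{Z_N}J_0\mM v + O(|v|^2)$. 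Likewise, from $\grad\mI(0)=0$, $d(\grad\mI)(0) = J_0\mM$ (see \eqref{der.g}) and $u(v) = v + O(|v|^2)$ (by \eqref{w.comp.estimate}), one gets $\grad\mI(u(v)) = J_0\mM v + O(|v|^2)$ in $L^2(\S^2,\R^2)$. Since $\Pi_{Z_N}J_0\mM v \in Z_N$ while $(\mathrm{Id}-\Pi_{Z_N})J_0\mM v \perp Z_N$, the leading term of the scalar product is $|\Pi_{Z_N}J_0\mM v|^2$, which by \eqref{approx.I.Z.norm} equals $\sum_{(\ell,m)\in S_N}4\om_0^2 m^4\ell^2(\ell^2+\om_0^2 m^2)^{-2}\hat v_{\ell,m}^2$; as every $(\ell,m)\in S_N$ has $\ell \geq 2$ and $m \neq 0$, this is a positive-definite quadratic form on $V_N$, so $|\Pi_{Z_N}J_0\mM v|^2 \geq c|v|^2$ for some $c>0$. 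Hence $\la\grad\mI_{V_N}(v),\grad\mI(u(v))\ra_{L^2(\S^2)} \geq c|v|^2 - C|v|^3 > 0$ whenever $v \neq 0$ and $|v|$ is sufficiently small. Finally, on $\mS_{V_N}(a)$ with $a>0$ one has $v \neq 0$ (since $\mI_{V_N}(0)=0\neq a$), and $\mS_{V_N}(a) = \psi(\mathtt S(a))$ with $\psi$ continuous and $\psi(0)=0$ (Lemma \ref{lemma:geom}), so, after shrinking $a_0$ if necessary, every $v \in \mS_{V_N}(a)$ lies in the small ball where the above positivity holds; this closes the argument. The only genuine obstacle is this non-degeneracy estimate, and it is precisely what the identity $F(\om(v),v)=0$ — that is, $\Pi_{Z_N}\mF(\om(v),u(v)) \perp \grad\mI(u(v))$ — is designed to exploit, since pairing the Lagrange relation with $\grad\mI(u(v))$ annihilates the $Z_N$-gradient and isolates $\lm$.
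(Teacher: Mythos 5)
Your proof is correct and follows essentially the same route as the paper: Lagrange multipliers on the constraint, Lemma \ref{lemma:variational.nature.ZN} to identify $\grad\mE_{V_N}(v)$ with $\Pi_{Z_N}\mF(\om(v),u(v))$, and then the key observation that pairing the Lagrange identity with $\grad\mI(u(v))$ and invoking $F(\om(v),v)=0$ (i.e.\ \eqref{IFT.om}) isolates $\lm$ times a quantity that is strictly positive near the bifurcation point. Your nondegeneracy estimate $\la\grad\mI_{V_N}(v),\grad\mI(u(v))\ra_{L^2} = |\Pi_{Z_N}J_0\mM v|^2 + O(|v|^3)\geq c|v|^2 - C|v|^3>0$ is exactly the paper's bound $|\grad\mI_0(v)|^2 + O(|v|^3)\geq\tfrac12|\grad\mI_0(v)|^2>0$, just spelled out more explicitly with the positive-definiteness of the quadratic form \eqref{approx.I.Z.norm} on $V_N$.
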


\begin{proof}
Let $v$ be a constrained critical point of $\mE_{V_N}$ on the constraint $\mS_{V_N}(a)$. 
Then 
\[
\mE_{V_N}'(v)[\tilde v] 
= \la \grad \mE_{V_N}(v) , \tilde v \ra_{L^2(\S^2)} 
= 0 
\]
for all tangent vectors $\tilde v \in T_v ( \mS_{V_N}(a) )$, 
and therefore $\grad \mE_{V_N}(v)$ (which is a vector of $V_N$)
belongs to the normal space $N_v ( \mS_{V_N}(a) )$, that is, by \eqref{normal.space},
\begin{equation} \label{Lag.mult.mE.VN}
\grad \mE_{V_N}(v) = \lm \grad \mI_{V_N}(v)
\end{equation}
for some Lagrange multiplier $\lm \in \R$.
Then, using \eqref{IFT.om}, \eqref{def.F}, \eqref{def.f.g}, 
the definition of $u(v)$ in \eqref{reduced.Hamiltonian},
\eqref{grad.mE.V}, and \eqref{Lag.mult.mE.VN}, 
we obtain
\begin{align} \label{0=lm.scal.prod}
0 & = F(\om(v), v) 
= \la \Pi_{Z_N} \mF (\om(v), u(v)) , (\grad \mI)(u(v)) \ra_{L^2(\S^2)}
= \lm \la \grad \mI_{V_N}(v) , (\grad \mI)(u(v)) \ra_{L^2(\S^2)}.
\end{align}
By \eqref{IZ.approx}, \eqref{Taylor.mI}, and \eqref{w.comp.estimate}, one has 
\[
\grad \mI_{V_N}(v) = \grad \mI_0(v) + O(|v|^2), 
\quad 
(\grad \mI)(u(v)) = \grad \mI_0(v) + O(|v|^2),
\]
and
\[
\la \grad \mI_{V_N}(v) , (\grad \mI)(u(v)) \ra_{L^2(\S^2)} 
= |\grad \mI_0(v)|^2 + O(|v|^3) 
\geq \tfrac12 |\grad \mI_0(v)|^2 
> 0. 
\]
This means that the coefficient of $\lm$ in \eqref{0=lm.scal.prod} is nonzero, 
whence $\lm = 0$. Thus, by \eqref{Lag.mult.mE.VN}, $\grad \mE_{V_N}(v) = 0$, 
and \eqref{bif.eq.ZN} follows from \eqref{grad.mE.V}.
\end{proof}

Since $\mS_{V_N}(a)$ is a smooth compact manifold, 
the functional $\mE_{V_N}$ constrained to $\mS_{V_N}(a)$ 
has at least one minimum point and one maximum point 
(if $\mE_{V_N}$ is constant, there are infinitely many constrained critical points). 
In fact, in Lemma \ref{lemma:n.critical.orbits} we give a more accurate lower estimate 
on the number of distinct constrained critical points of $\mE_{V_N}$ on $\mS_{V_N}(a)$, 
using the following result from \cite{Mahwin.Willem}.

\begin{lemma}[Lemma 6.10 in \cite{Mahwin.Willem}, Sec.\ 6.4] 
\label{lemma:6.10.Mawhin.Willem}
Let $A$ be an open $\S^1$-invariant neighborhood of $\S^{2k-1}$ and
let $\ph \in C^1(A, \R)$ be a $\S^1$-invariant function. 
Then there exist at least $k$ $\S^1$-orbits of critical points of $\ph$ 
restricted to $\S^{2k-1}$.
\end{lemma}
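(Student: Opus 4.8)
Since Lemma~\ref{lemma:6.10.Mawhin.Willem} is quoted from \cite{Mahwin.Willem}, I only outline the equivariant critical point argument behind it. Identify $\R^{2k}$ with $\C^k$, so that $\S^{2k-1} = \{ z \in \C^k : |z| = 1 \}$ and the $\S^1$-action is the Hopf action $\theta \cdot z = e^{i\theta} z$, which is free; this is the action with respect to which $A$ and $\ph$ are invariant, and its restriction to $\S^{2k-1}$ is again free. The restriction $\ph|_{\S^{2k-1}}$ is a $C^1$, $\S^1$-invariant function on the \emph{compact} manifold $\S^{2k-1}$, so the Palais--Smale condition holds automatically and there is no compactness issue to check. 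The plan is to run an equivariant minimax scheme based on the Benci $\S^1$-index $i(\cdot)$ in the sense of \cite{Benci}.

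The scheme uses the standard properties of $i$: monotonicity ($A \subseteq B \Rightarrow i(A) \le i(B)$), subadditivity ($i(A \cup B) \le i(A) + i(B)$), the neighborhood (continuity) property (every compact invariant $K$ has an invariant neighborhood $N$ with $i(N) = i(K)$), the normalization $i(\S^{2k-1}) = k$ for the free Hopf action, and the rigidity fact that a compact invariant set on which $\S^1$ acts freely and which has $i \ge 2$ cannot be a finite union of orbits, hence contains infinitely many $\S^1$-orbits (the orbit space of a finite union of free orbits is a finite set, of index $\le 1$). One then defines, for $j = 1, \dots, k$,
\[
c_j := \inf_{\,K \in \Gamma_j}\ \sup_{v \in K} \ph(v), \qquad
\Gamma_j := \{\, K \subseteq \S^{2k-1} : K \ \text{compact, } \S^1\text{-invariant, } i(K) \ge j \,\},
\]
which are finite because $\S^{2k-1} \in \Gamma_j$ for all $j \le k$ by the normalization, and satisfy $c_1 \le c_2 \le \dots \le c_k$ since $\Gamma_{j+1} \subseteq \Gamma_j$.

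Using the $\S^1$-equivariant deformation lemma on $\S^{2k-1}$ (a descending pseudo-gradient flow of $\ph|_{\S^{2k-1}}$, made equivariant by averaging over $\S^1$ and globally defined by compactness), together with monotonicity, subadditivity and the continuity property of $i$, one shows in the classical way that each $c_j$ is a critical value of $\ph|_{\S^{2k-1}}$, and that if $c := c_j = c_{j+1} = \dots = c_{j+p}$ for some $p \ge 1$ then the critical set $K_c := \{ v \in \S^{2k-1} : \ph(v) = c,\ \nabla_{\S^{2k-1}}\ph(v) = 0 \}$ has $i(K_c) \ge p+1 \ge 2$, hence contains infinitely many $\S^1$-orbits of critical points; otherwise $c_1 < \dots < c_k$ are $k$ distinct critical values, each carrying at least one orbit. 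In either case $\ph$ restricted to $\S^{2k-1}$ has at least $k$ distinct $\S^1$-orbits of critical points.

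The only points that need real care are the normalization $i(\S^{2k-1}) = k$ and the construction of an $\S^1$-equivariant descending flow that is tangent to $\S^{2k-1}$, so that the classes $\Gamma_j$ are preserved; both are classical for a free action on a compact manifold, the first via the cohomological description of the $\S^1$-index and the second by projecting a pseudo-gradient field onto the tangent bundle of the sphere and symmetrizing it over the group. Everything else is routine minimax bookkeeping, and it is precisely the compactness of $\S^{2k-1}$ that allows the argument to proceed under the sole hypotheses that $A$ and $\ph$ be $\S^1$-invariant and $\ph \in C^1(A,\R)$.
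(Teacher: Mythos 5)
The paper does not prove this lemma at all: it is quoted verbatim as a black-box result from the reference (Lemma~6.10 of Mawhin--Willem), so there is no in-paper proof to compare against. Your sketch is a correct reconstruction of the classical Benci $\S^1$-index minimax argument that underlies the cited lemma (free Hopf action, index normalization $i(\S^{2k-1})=k$, minimax levels $c_1\le\dots\le c_k$, equivariant tangential pseudo-gradient flow on the compact sphere, and the standard dichotomy distinct-levels/high-index critical set), and it matches the proof in the reference; nothing to correct.
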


\begin{lemma} \label{lemma:n.critical.orbits}
For any $a \in (0, a_0)$, the functional $\mE_{V_N}$ has at least $n$ distinct 
orbits $\{ \mT_\th v^{(i)} : \th \in \T \}$, 
$i = 1, \ldots, n$, 
of constrained critical points on the constraint $\mS_{V_N}(a)$.
\end{lemma}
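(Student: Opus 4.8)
The plan is to transport the constrained variational problem from the manifold $\mS_{V_N}(a)$ to a round sphere via the equivariant diffeomorphism $\psi$ of Lemma \ref{lemma:geom}, and then to apply the $\S^1$-index estimate of Lemma \ref{lemma:6.10.Mawhin.Willem} with $k=n$. First I would collect the structural ingredients: by \eqref{mE.VN.group.action} the functional $\mE_{V_N}$ is $\mT_\th$-invariant, while by Lemma \ref{lemma:geom} the map $\psi$ satisfies $\psi \circ \mT_\th = \mT_\th \circ \psi$ and $\mI_{V_N}(\psi(v)) = |v|^2$, it is of class $C^1$ with Lipschitz differential, analytic away from the origin, and it is a diffeomorphism of $B_{V_N}(\e_2)$ onto an open neighbourhood of $0$ with $\mS_{V_N}(a) = \psi(\mathtt{S}(a))$, where $\mathtt{S}(a) = \{ v \in V_N : |v|^2 = a \}$. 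Hence the restriction of $\psi$ to $\mathtt{S}(a)$ is a $\mT_\th$-equivariant diffeomorphism onto $\mS_{V_N}(a)$, and the pulled-back functional $\widetilde{\mE} := \mE_{V_N}\circ\psi$ has the same constrained critical points on $\mathtt{S}(a)$ as $\mE_{V_N}$ on $\mS_{V_N}(a)$, with $\mT_\th$-orbits corresponding bijectively to $\mT_\th$-orbits.

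Next I would cast $\widetilde{\mE}$ in the form required by Lemma \ref{lemma:6.10.Mawhin.Willem}. Identifying $V_N \cong \R^{2n}$ (it has real dimension $2n$, $n$ being the number of pairs $(\ell,m)\in S$ with $\ell\geq 2$, $m\geq 1$), the linear rescaling $v \mapsto \sqrt{a}\, v$ commutes with the $\mT_\th$-action and carries the unit sphere $\S^{2n-1}$ onto $\mathtt{S}(a)$. For $a \in (0, \e_2^2)$ the set $A := \{ v \in V_N : 0 < |v| < \e_2 a^{-1/2} \}$ is an open, $\mT_\th$-invariant neighbourhood of $\S^{2n-1}$ on which $\ph(v) := \widetilde{\mE}(\sqrt{a}\, v)$ is well defined and of class $C^1$ (indeed analytic, since $A$ and the spheres in it stay away from the origin, where $\om(\cdot)$, and hence $\mE_{V_N}$ and $\psi$, are merely Lipschitz), and $\ph$ is $\S^1$-invariant by \eqref{psi.group.action}, \eqref{mE.VN.group.action} and the equivariance of the dilation. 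I would also record that the $\mT_\th$-action on $V_N$ is an orthogonal $\S^1$-action (by \eqref{norm.group.action}) with no nonzero fixed point: by \eqref{mT.th.ttv.bis} it acts on each plane $\mathrm{span}\{\mathtt v_{\ell,m},\mathtt v_{\ell,-m}\}$, $(\ell,m)\in S_N$, as a rotation of angle $m\th$ with $m\neq 0$ (the defining relation of $S$ forces $m\neq 0$whenever $\ell\geq 2$), so the origin is the only common fixed point.

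At this point Lemma \ref{lemma:6.10.Mawhin.Willem}, applied to $\ph$ on $A$ with $k=n$, produces at least $n$ distinct $\S^1$-orbits of critical points of $\ph|_{\S^{2n-1}}$. Undoing first the dilation and then $\psi$, and using that both maps are $\mT_\th$-equivariant diffeomorphisms, these yield $n$ distinct $\mT_\th$-orbits $\{ \mT_\th v^{(i)} : \th \in \T \}$, $i=1,\ldots,n$, of constrained critical points of $\mE_{V_N}$ on $\mS_{V_N}(a)$, which is the assertion.

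The step I expect to require the most care is matching the hypotheses of Lemma \ref{lemma:6.10.Mawhin.Willem} to our situation: one must make sure the $\S^1$-index estimate is being invoked for a general linear circle action without fixed points, with the two-dimensional blocks rotating at the a priori distinct speeds $m$, $(\ell,m)\in S_N$, rather than only for the standard Hopf action, and one must keep the whole argument away from the point $v=0$ at which $\om(v)$, $\mE_{V_N}$ and $\psi$ lose differentiability. Both issues are dealt with precisely by transporting everything through the equivariant diffeomorphism $\psi$ and working on the punctured neighbourhood $A$; the remaining verifications (regularity and $\S^1$-invariance of $\ph$, and the bijection of orbits under $\psi$ and the dilation) are routine.
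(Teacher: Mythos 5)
Your proof is correct and takes essentially the same route as the paper: pull back $\mE_{V_N}$ through the equivariant diffeomorphism $\psi$ to the sphere $\mathtt{S}(a)\subset V_N$, observe that the pulled-back functional is $\mT_\th$-invariant, and apply the $\S^1$-index estimate of Lemma~\ref{lemma:6.10.Mawhin.Willem}. The only substantive difference is that you make explicit the dilation from $\mathtt{S}(a)$ to the unit sphere $\S^{2n-1}$ and the verification that the linear $\mT_\th$-action on $V_N$ is fixed-point free (since $m\neq 0$ for every $(\ell,m)\in S_N$), both of which the paper leaves implicit when invoking the same lemma.
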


\begin{proof}
Fix $a \in (0, a_0)$. 
By Lemma \ref{lemma:geom}, one has $\mS_{V_N}(a) = \psi (\mathtt S(a) )$. 
Consider the function
\begin{equation}  \label{def.comp.mE.VN.psi}
f : B_{V_N}(\e_2) \to \R, \quad \ f(y) := \mE_{V_N} (\psi(y)). 
\end{equation}
The differential of $f$ at a point $y \in B_{V_N}(\e_2)$  
in direction $\tilde y \in V_N$ is 
\[
f'(y)[\tilde y] = \mE_{V_N}'(\psi(y))[ \psi'(y)[\tilde y]],
\]
and, for $y \in \mathtt S(a)$, 
$\tilde y$ is in the tangent space $T_y( \mathtt S(a) )$ 
iff $\psi'(y)[\tilde y]$ is in the tangent space $T_{\psi(y)} ( \mS_{V_N}(a) )$. 
Hence a point $v = \psi(y) \in \mS_{V_N}(a)$ 
is a constrained critical point of $\mE_{V_N}$ 
on the constraint $\mS_{V_N}(a)$ iff 
$y \in \mathtt S(a)$ is a constrained critical point of $f$ 
on the constraint $\mathtt S(a)$. 
By \eqref{psi.group.action} and \eqref{mE.VN.group.action}, one has 
\begin{equation} \label{f.group.action}
f \circ \mT_\th = f
\end{equation}
for all $\th \in \T$. 
Then, by Lemma \ref{lemma:6.10.Mawhin.Willem}, 
$f$ has at least $n$ orbits of critical points on $\mathtt S(a)$.
\end{proof}

Compared with the basic observation that $\mE_{V_N}$ 
constrained to $\mS_{V_N}(a)$ always has at least one minimum and one maximum point, 
Lemma \ref{lemma:n.critical.orbits} provides more solutions 
even in the case $n=1$, as it gives the existence of an \emph{orbit} of solutions
(therefore infinitely many solutions). 
In fact, for $n=1$ we have the following result.

\begin{lemma} \label{lemma:n=1}
If $n=1$, then, for every $a \in (0, a_0)$, 
the constraint $\mS_{V_N}(a)$ is diffeomorphic to $\S^1 = \T$  
and it consists of one orbit of critical points of $\mE_{V_N}$.  
Moreover the orbit $\mS_{V_N}(a)$ depends analytically on $a$ in the interval $(0, a_0)$. 
\end{lemma}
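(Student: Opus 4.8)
The plan is to exploit the fact that, when $n=1$, the constraint manifold $\mS_{V_N}(a)$ is itself a single orbit of the torus action, so the constrained variational problem on it becomes trivial. First I would unwind the hypothesis: $n=1$ means $S_N=\{(\ell_1,m_1),(\ell_1,-m_1)\}$ for a single pair $(\ell_1,m_1)$ with $\ell_1\ge 2$ and $m_1\ge 1$, so by \eqref{def.VD.VN} the space $V_N$ is $2$-dimensional, with orthonormal basis $\mathtt v_{\ell_1,m_1},\mathtt v_{\ell_1,-m_1}$. Hence the sphere $\mathtt S(a)$ of \eqref{def.mS.0.VN.a} is a circle of radius $\sqrt a$ in $V_N\cong\R^2$, so it is diffeomorphic to $\S^1=\T$; since by Lemma \ref{lemma:geom} the map $\psi$ is a diffeomorphism of $B_{V_N}(\e_2)$ onto its image and $\mS_{V_N}(a)=\psi(\mathtt S(a))$, the constraint $\mS_{V_N}(a)$ is diffeomorphic to $\T$, which is the first assertion.

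Next I would observe that $\mathtt S(a)$ is a single $\mT_\th$-orbit. By \eqref{mT.th.v} --- equivalently, by the block-diagonal matrix $M_1$ displayed at the end of Section \ref{sec:degenerate.decomposition} --- the operator $\mT_\th$ acts on $V_N\cong\R^2$ as the planar rotation of angle $-m_1\th$. Since $m_1\neq 0$, as $\th$ runs over $\T$ the quantity $m_1\th$ covers all of $\T$, so $\{\,\mT_\th|_{V_N}:\th\in\T\,\}$ is the whole rotation group $\mathrm{SO}(2)$; consequently the $\mT_\th$-orbit of any nonzero $v\in V_N$ is the entire circle $\{\,v'\in V_N:|v'|=|v|\,\}=\mathtt S(|v|^2)$. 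Then the equivariance \eqref{psi.group.action} gives that $\mS_{V_N}(a)=\psi(\mathtt S(a))$ is likewise a single $\mT_\th$-orbit. Since $\mE_{V_N}$ is $\mT_\th$-invariant by \eqref{mE.VN.group.action}, it is constant along this orbit, hence constant on $\mS_{V_N}(a)$; therefore every point of $\mS_{V_N}(a)$ is (trivially) a constrained critical point of $\mE_{V_N}$ on $\mS_{V_N}(a)$, and by Lemma \ref{lemma:sol.bif.ZN} a critical point of $\mE_{V_N}$ in $V_N$. Thus $\mS_{V_N}(a)$ consists of exactly one orbit of critical points, in accordance with the case $n=1$ of Lemma \ref{lemma:n.critical.orbits}.

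For the analytic dependence on $a$, I would fix the unit vector $e:=\mathtt v_{\ell_1,m_1}\in V_N$ and set $y(a):=\sqrt a\,e$ for $a\in(0,a_0)$. Then $a\mapsto y(a)$ is analytic on $(0,a_0)$ and $y(a)\in\mathtt S(a)\subset B_{V_N}(\e_2)$ with $y(a)\neq 0$; since $\psi$ is analytic on $B_{V_N}(\e_2)\setminus\{0\}$ by Lemma \ref{lemma:geom}, the map $v(a):=\psi(y(a))$ is analytic on $(0,a_0)$ and satisfies $v(a)\in\mS_{V_N}(a)$. Because $\mS_{V_N}(a)$ is a single $\mT_\th$-orbit, $\mS_{V_N}(a)=\{\,\mT_\th v(a):\th\in\T\,\}$, and $(\th,a)\mapsto\mT_\th v(a)$ is jointly analytic (the $\th$-dependence being through the trigonometric rotation matrix), which is the precise sense in which the orbit depends analytically on $a$. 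I would conclude by remarking that the associated rotating traveling waves $u(v(a))=v(a)+w(\om(v(a)),v(a))$ and angular velocities $\om(v(a))$ then also depend analytically on $a\in(0,a_0)$, using that $\om(\cdot)$ and $v\mapsto w(\om(v),v)$ are analytic away from the origin by Lemmas \ref{lemma:choice.of.omega} and \ref{lemma:reg.u(v)}.

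As for the main difficulty: there is no serious analytic obstacle here; the content is the structural remark that for $n=1$ the constraint coincides with a single group orbit, and the only points requiring care are (i) checking that $\mT_\th$ restricted to $V_N$ surjects onto $\mathrm{SO}(2)$ even when $m_1>1$, so that $\mathtt S(a)$ really is one orbit and not a union of orbits, and (ii) phrasing the ``analytic dependence of the orbit on $a$'' correctly, bearing in mind that $\psi$ is analytic only off the origin --- which is harmless since $y(a)\neq 0$ for $a>0$.
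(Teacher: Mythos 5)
Your proof is correct and follows the same route as the paper: exploit that for $n=1$ the $\mT_\th$-action is transitive on circles in $V_N\cong\R^2$, so $\mS_{V_N}(a)$ is a single orbit on which the $\mT_\th$-invariant functional $\mE_{V_N}$ is constant, hence every point is a (constrained and, via Lemma \ref{lemma:sol.bif.ZN}, unconstrained) critical point, and the orbit varies analytically in $a$. The paper states this more tersely; your version usefully makes explicit the transitivity (including the $m_1>1$ case where $\th\mapsto m_1\th$ wraps multiply but still surjects onto $\T$) and the analytic parametrization $a\mapsto\psi(\sqrt a\,\mathtt v_{\ell_1,m_1})$, but the underlying argument is the same.
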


\begin{proof}
For each value $a \in (0, a_0)$, 
the functional $\mE_{V_N}$ is constant on the constraint $\mS_{V_N}(a)$ by \eqref{mE.VN.group.action}. 
Hence the orbit of critical points of $\mE_{V_N}$ 
is exactly the level set $\mS_{V_N}(a)$ of the restricted angular momentum $\mI_{V_N}$, 
which depends analytically on $a$.
\end{proof}

\section{Solution of the bifurcation equation in the degenerate directions} 
\label{sec:sol.bif.deg}

In this section we deal with the degenerate subspace $V_D = Z_D$, 
and we prove that the constrained critical points of Lemma \ref{lemma:sol.bif.ZN}
give solutions of the bifurcation equation also on $Z_D$.

\begin{lemma}[Solution of the bifurcation equation]
\label{lemma:sol.bif}
Let $a \in (0, a_0)$, and suppose that $v \in \mS_{V_N}(a)$ is a constrained critical point 
of the functional $\mE_{V_N}$ on the constraint $\mS_{V_N}(a)$. 
Then $(\om(v), u(v))$ solve the (full) bifurcation equation
\begin{equation} \label{bif.eq.ZN.bis}
\Pi_Z \mF(\om(v), u(v)) = 0.
\end{equation}
\end{lemma}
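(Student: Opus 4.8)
The plan is to bootstrap from Lemma~\ref{lemma:sol.bif.ZN}, which already gives $\Pi_{Z_N}\mF(\om(v),u(v)) = 0$, and show that the remaining $Z_D$-component also vanishes. Since $Z = Z_D \oplus Z_N$ and $Z_D = V_D = \mathrm{span}_\R\{\mathtt v_{0,0}, \mathtt v_{1,0}\}$ is $2$-dimensional, it suffices to prove that $\mF(\om(v),u(v))$ is $L^2(\S^2)$-orthogonal to the two basis vectors $\mathtt v_{0,0}$ and $\mathtt v_{1,0}$. By \eqref{def.ttv}, these are $\mathtt v_{0,0} = (0,\ph_{0,0})$ and $\mathtt v_{1,0} = (\ph_{1,0},0)$ (up to a normalizing constant, since $\om_0\cdot 0 = 0$), where $\ph_{1,0}$ is a multiple of $x_3$.

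The key input is the orthogonality Lemma~\ref{lemma:orthogonality.mF}. By \eqref{eq:orthogonal.vectors}, for $u = (\eta,\beta)$ near the origin one has
\[
\mF(\om, u) \perp \begin{pmatrix} 0 \\ 1 \end{pmatrix},
\qquad
\mF(\om, u) \perp \begin{pmatrix} x_3 \\ 0 \end{pmatrix} - \frac{1}{1+\eta}\begin{pmatrix} (\nabla_{\S^2}\eta)_3 \\ (\nabla_{\S^2}\beta)_3 \end{pmatrix}.
\]
The first relation is exactly orthogonality to $\mathtt v_{0,0}$ (note $(0,1)$ is a constant multiple of $(0,\ph_{0,0})$, and orthogonality to a vector is unaffected by scaling), so the $\mathtt v_{0,0}$-component of $\mF(\om(v),u(v))$ is automatically zero. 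For the $\mathtt v_{1,0}$-direction, I would write, for $v \in \mS_{V_N}(a)$ and $u(v) = (\eta(v),\beta(v))$,
\[
\langle \mF(\om(v),u(v)), \mathtt v_{1,0}\rangle_{L^2(\S^2)}
= c\,\langle \mF(\om(v),u(v)), (\ph_{1,0},0)\rangle_{L^2(\S^2)}
= c\,c'\,\langle \mF(\om(v),u(v)), (x_3,0)\rangle_{L^2(\S^2)},
\]
and then use the second orthogonality relation to replace $(x_3,0)$ by $\frac{1}{1+\eta(v)}\big((\nabla_{\S^2}\eta(v))_3, (\nabla_{\S^2}\beta(v))_3\big)$. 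This reduces the claim to showing that $\mF(\om(v),u(v))$ is orthogonal to this perturbation term, i.e.\ that $\langle \mF(\om(v),u(v)), (1+\eta(v))^{-1}((\nabla_{\S^2}\eta(v))_3,(\nabla_{\S^2}\beta(v))_3)\rangle_{L^2(\S^2)} = 0$.

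The main obstacle is precisely this last step: the perturbation vector is \emph{not} in $Z$, so we cannot simply invoke $\Pi_{Z_N}\mF = 0$ together with $\mF \perp R = W$. The strategy I would follow is to split that perturbation vector along the decomposition $L^2(\S^2)^2 = Z_D \oplus Z_N \oplus W$. Since $\mF(\om(v),u(v)) \in R^s \subset W$ up to its $Z$-component, and by the range equation \eqref{IFT.w} we have $\Pi_{R^s}\mF(\om(v),u(v)) = 0$, it follows that $\mF(\om(v),u(v)) \in Z = Z_D \oplus Z_N$. Combined with $\Pi_{Z_N}\mF(\om(v),u(v)) = 0$ from Lemma~\ref{lemma:sol.bif.ZN}, we get $\mF(\om(v),u(v)) \in Z_D = \mathrm{span}\{\mathtt v_{0,0},\mathtt v_{1,0}\}$. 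So it remains only to pair with these two vectors. The $\mathtt v_{0,0}$ pairing vanishes by the first orthogonality relation as noted. For $\mathtt v_{1,0}$, use the second orthogonality relation: $\langle \mF, (x_3,0)\rangle = \langle \mF, (1+\eta)^{-1}((\nabla_{\S^2}\eta)_3,(\nabla_{\S^2}\beta)_3)\rangle$. Now decompose the right-hand vector as $z_D + z_N + w$ with $z_D \in Z_D$, $z_N \in Z_N$, $w \in W$; then $\langle \mF, w\rangle = 0$ (since $\mF \in Z$), $\langle \mF, z_N\rangle = 0$ (since $\mF \in Z_D \perp Z_N$), and $\langle \mF, z_D\rangle = \langle \mathtt v_{1,0}\text{-component of }\mF\rangle \cdot \langle\text{coefficient}\rangle$. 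This gives a scalar equation of the form $\kappa \cdot \langle \mF(\om(v),u(v)),\mathtt v_{1,0}\rangle = \langle \mF(\om(v),u(v)),\mathtt v_{1,0}\rangle$ where $\kappa$ is the $\mathtt v_{1,0}$-coefficient of $z_D$; since $u(v) \to 0$ as $a \to 0$, the perturbation vector is close to $(x_3,0)$, so $\kappa$ is close to (a nonzero constant times) $1$ and in particular $\kappa \neq 1$ for $a$ small, forcing $\langle \mF(\om(v),u(v)),\mathtt v_{1,0}\rangle = 0$. Hence $\Pi_{Z_D}\mF(\om(v),u(v)) = 0$, and combined with $\Pi_{Z_N}\mF(\om(v),u(v)) = 0$ we conclude $\Pi_Z\mF(\om(v),u(v)) = 0$, which is \eqref{bif.eq.ZN.bis}.
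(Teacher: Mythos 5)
Your proof follows the same route as the paper: deduce $\mF(\om(v),u(v)) \in Z_D$ from the range equation \eqref{IFT.w} and Lemma \ref{lemma:sol.bif.ZN}, kill the $\mathtt v_{0,0}$-component with the first orthogonality in \eqref{eq:orthogonal.vectors}, and kill the $\mathtt v_{1,0}$-component with the second. The overall structure and the key ingredients (in particular, the transversality supplied by the conservation of $\mB_3$) are identical to the paper's, so the proposal is essentially correct.

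That said, the last step as written has a minor internal inconsistency. You decompose the perturbation $P := (1+\eta)^{-1}\big((\nabla_{\S^2}\eta)_3,(\nabla_{\S^2}\beta)_3\big)$ (not $(x_3,0)-P$) as $z_D + z_N + w$, so the $\mathtt v_{1,0}$-coefficient $\kappa$ of $z_D$ is $O(|v|)$ and tends to $0$ as $a\to 0$ — not ``close to a nonzero constant times $1$'' as you assert. Moreover the scalar equation should carry the normalization constant: since $(x_3,0)=c^{-1}\mathtt v_{1,0}$ with $c=c_1^{(0)}\neq 1$, the correct relation is $\kappa\,\la\mF,\mathtt v_{1,0}\ra = c^{-1}\la\mF,\mathtt v_{1,0}\ra$, not $\kappa\,\la\mF,\mathtt v_{1,0}\ra = \la\mF,\mathtt v_{1,0}\ra$. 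Neither issue damages the conclusion — $(\kappa-c^{-1})\to -c^{-1}\neq 0$, so $\la\mF,\mathtt v_{1,0}\ra=0$ follows — but the reasoning given for why the coefficient is nonvanishing is off. The paper sidesteps this by writing $\mF(\om(v),u(v))=\mu\,\mathtt v_{1,0}$ and substituting directly into the (rescaled) second orthogonality $\la\mF,\mathtt v_{1,0}-p(v)\ra = 0$ with $\|p(v)\|_{L^2(\S^2)}=O(|v|)$, giving $\mu\,(1+O(|v|))=0$ and hence $\mu=0$.
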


\begin{proof} 
By \eqref{bif.eq.ZN} and \eqref{IFT.w}, one has 
\[
\mF(\om(v), u(v)) = \Pi_{Z_D} \mF(\om(v), u(v)).
\]
This means that $\mF(\om(v), u(v))$ belongs to the degenerate space $Z_D$, 
which is the 2-dimensional linear space generated by $\mathtt v_{0,0}$ and $\mathtt v_{1,0}$. 
By the first orthogonality relation in \eqref{eq:orthogonal.vectors}, 
recalling the definition \eqref{def.ttv} of $\mathtt v_{0,0}$, 
where $\ph_{0,0}$ is a constant, one has 
\[
\la \mF(\om(v), u(v)) , \, \mathtt v_{0,0} \ra_{L^2(\S^2)} = 0.
\]
Hence 
\begin{equation}  \label{mF.mu.ttv.1.0}
\mF(\om(v), u(v)) = \mu \, \mathtt v_{1,0}
\end{equation}
for some $\mu \in \R$. 
Recalling the definition \eqref{def.ttv} of $\mathtt v_{1,0}$ 
and the fact that $\ph_{1,0}(x) = c x_3$ for some nonzero normalizing coefficient $c$, 
the second orthogonality relation in \eqref{eq:orthogonal.vectors} can be written as
\begin{equation}  \label{orthog.ttv.1.0}
\la \mF(\om(v), u(v)) , \mathtt v_{1,0} - p(v) \ra_{L^2(\S^2)} = 0,
\end{equation}
where $\| p(v) \|_{L^2(\S^2)} = O(|v|)$. 
Plugging \eqref{mF.mu.ttv.1.0} into \eqref{orthog.ttv.1.0} gives
\[
0 = \mu \la \mathtt v_{1,0} , \, \mathtt v_{1,0} - p(v) \ra_{L^2(\S^2)}
= \mu \big( |\mathtt v_{1,0}|^2 - \la \mathtt v_{1,0} , \, p(v) \ra_{L^2(\S^2)} \big)
= \mu ( 1 + O(|v|) ).
\]
The coefficient of $\mu$ is nonzero, and therefore $\mu = 0$. 
Then \eqref{mF.mu.ttv.1.0} gives \eqref{bif.eq.ZN.bis}.
\end{proof}

Applying Lemmas \ref{lemma:n.critical.orbits}, \ref{lemma:n=1}, \ref{lemma:sol.bif},
and defining $(\eta_a^{(i)}, \beta_a^{(i)}) := u^{(i)} := u( v^{(i)} )$ and 
$\om_a^{(i)} := \om (v^{(i)})$, $i = 1, \ldots, n$, 
the proof of Theorem \ref{thm:main} is complete.

\section{The symmetric case}

Given $u = (\eta, \beta) \in L^2(\S^2, \R) \times L^2(\S^2, \R)$, 
we define 
\begin{equation} \label{def.mY.2.3.23}
(\mY_2 u)(x) := \begin{pmatrix} 
\eta(x_1, -x_2, x_3) \\ 
- \beta(x_1, -x_2, x_3)
\end{pmatrix}, 
\quad 
(\mY_3 u)(x) := \begin{pmatrix} 
\eta(x_1, x_2, - x_3) \\ 
\beta(x_1, x_2, -x_3)
\end{pmatrix}, 
\quad 
\mY_{23} := \mY_2 \circ \mY_3
\end{equation}
for all $x \in \S^2$, 
and we note that $\mY_2 \circ \mY_3 = \mY_3 \circ \mY_2$ 
and $\mY_i^{-1} = \mY_i = \mY_i^T$, $i=2,3$, 
where $\mY_i^{-1}$ is the inverse operator, 
and $\mY_i^T$ is the transpose operator with respect to the 
$L^2(\S^2, \R) \times L^2(\S^2, \R)$ scalar product.
We define 
\begin{align} 
Y_2 & := \{ u \in L^2(\S^2, \R) \times L^2(\S^2, \R) : \mY_2 u = u \}, 
\notag \\
Y_3 & := \{ u \in L^2(\S^2, \R) \times L^2(\S^2, \R) : \mY_3 u = u \}, 
\quad \ 
Y_{23} := Y_2 \cap Y_3, 
\label{def.Y2.ecc}
\end{align}
so that $Y_3$ is the subspace of the functions $u$ that are even in $x_3$, 
and $Y_2$ is the subspace of the pairs $u = (\eta, \beta)$ where $\eta$ is even in $x_2$ 
and $\beta$ is odd in $x_2$. 
We denote  
\begin{equation}
V_2 := V \cap Y_2, \quad \ 
V_3 := V \cap Y_3, \quad \ 
V_{23} := V \cap Y_{23} = V_2 \cap V_3,
\end{equation}
and we use analogous notation for the spaces $V_D, V_N, W, R, Z$. 

\begin{lemma} 
One has 
\begin{equation}  \label{funct.mY}
\mH \circ \mY_i = \mH, \quad \ 
\mV \circ \mY_i = \mV, \quad \ 
\mI \circ \mY_i = \mI, \quad \ 
\mB_3 \circ \mY_i = - \mB_3, \quad \ 
i = 2, 3
\end{equation}
and 
\begin{alignat}{2}  
(\grad \mH) \circ \mY_i & = \mY_i \circ (\grad \mH), \quad \ &
(\grad \mV) \circ \mY_i & = \mY_i \circ (\grad \mV), 
\label{grad.mY.prima} \\
(\grad \mI) \circ \mY_i & = \mY_i \circ (\grad \mI), \quad \ & 
(\grad \mB_3) \circ \mY_i & = - \mY_i \circ (\grad \mB_3), \quad \ 
i = 2, 3.
\label{grad.mY.seconda}
\end{alignat}
As a consequence, 
\begin{equation}  \label{mF.mY}
\mF(\om, \mY_i u) = \mY_i \mF(\om, u).
\end{equation}
\end{lemma}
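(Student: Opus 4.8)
First I would introduce the two reflection matrices $Q_2 := \mathrm{diag}(1,-1,1)$ and $Q_3 := \mathrm{diag}(1,1,-1)$ in $O(3)$, and write $\mT_Q f := f \circ Q$ for the composition of a function $f$ on $\S^2$ with an orthogonal matrix $Q$. By \eqref{def.mY.2.3.23}, $\mY_3(\eta,\beta) = (\mT_{Q_3}\eta, \mT_{Q_3}\beta)$ while $\mY_2(\eta,\beta) = (\mT_{Q_2}\eta, -\mT_{Q_2}\beta)$. The plan is to first prove the functional identities \eqref{funct.mY}, then deduce the equivariance identities \eqref{grad.mY.prima}--\eqref{grad.mY.seconda} by differentiation exactly as in the proof of Lemma \ref{lemma:conj.mF}, and finally obtain \eqref{mF.mY} from the definition $\mF(\om,\cdot) = \grad \mH - 2\sigma_0 \grad \mV - \om \grad \mI$ coming from \eqref{def.mF} and \eqref{def.mH.sigma.0}.

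The key inputs for step one are the reflection analogues of the transformation rules used in Lemma \ref{lemma:group.action} (see Lemma 6.2 of \cite{B.J.LM}), which hold for every $Q \in O(3)$ since $\Delta$ and the unit outer normal are invariant under orthogonal maps: one has $G(\mT_Q h)[\mT_Q \psi] = \mT_Q(G(h)\psi)$, and, using $\mE_0(\mT_Q f) = (\mE_0 f)\circ Q$ together with $Q^T = Q$ for $Q \in \{Q_2, Q_3\}$, also $\grad_{\S^2}(\mT_Q f) = Q\, \mT_Q(\grad_{\S^2} f)$ and hence $|\grad_{\S^2}(\mT_Q h)| = \mT_Q |\grad_{\S^2} h|$. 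Substituting these into \eqref{def.mH} and \eqref{def.volume} and changing the integration variable $y = Q x$ (which preserves $d\sigma$) yields $\mH(\mT_Q h, \mT_Q \psi) = \mH(h,\psi)$ and $\mV(\mT_Q h) = \mV(h)$; the extra sign on $\beta$ in $\mY_2$ is immaterial because $\mH$ is quadratic in $\psi$ (the two sign changes cancel, using linearity of $G(h)$) and $\mV$ does not depend on $\psi$. This gives the first two identities in \eqref{funct.mY}.

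For $\mI$ and $\mB_3$ the orientation of $Q_i$ enters. Using $\mM f(x) = \la \mJ x, \grad \tilde f(x)\ra$ with an arbitrary extension $\tilde f$ (as in \eqref{mM.explicit}) and the chain rule, one finds $\mM(\mT_Q f) = \mT_Q(\mM f)$ when $Q \mJ Q = \mJ$ and $\mM(\mT_Q f) = -\mT_Q(\mM f)$ when $Q \mJ Q = -\mJ$; an elementary computation with \eqref{def.mJ} gives $Q_3 \mJ Q_3 = \mJ$ and $Q_2 \mJ Q_2 = -\mJ$, so $\mM \circ \mT_{Q_3} = \mT_{Q_3} \circ \mM$ and $\mM \circ \mT_{Q_2} = -\mT_{Q_2} \circ \mM$. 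Inserting this into $\mI(\eta,\beta) = \int_{\S^2}(1+\eta)^2 (\mM\eta)\beta\, d\sigma$ from \eqref{def.mI} and changing variables $y = Q_i x$: for $\mY_3$ the identity $\mI \circ \mY_3 = \mI$ is immediate, and for $\mY_2$ the sign from $\mM \circ \mT_{Q_2} = -\mT_{Q_2}\circ\mM$ cancels exactly the sign coming from replacing $\beta$ by $-\beta$, giving $\mI \circ \mY_2 = \mI$. For $\mB_3$, apply the transformation rule for $\grad_{\S^2}$ to the first form $\mB(h,\psi) = \frac12 \int_{\S^2}(1+h)^2 \grad_{\S^2}\psi\, d\sigma$ in \eqref{def.mB}, pull the constant matrix $Q_i$ out of the integral, and change variables to get $\mB(\mT_{Q_i} h, \mT_{Q_i}\psi) = Q_i \mB(h,\psi)$; taking third components and using linearity of $\mB$ in $\psi$ (so that the $-\beta$ in $\mY_2$ contributes an overall minus while $(Q_2 v)_3 = v_3$, whereas $(Q_3 v)_3 = -v_3$) gives $\mB_3 \circ \mY_i = -\mB_3$ for $i = 2,3$, completing \eqref{funct.mY}.

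With \eqref{funct.mY} in hand, \eqref{grad.mY.prima}--\eqref{grad.mY.seconda} follow by differentiating $\mA(\mY_i u) = \pm \mA(u)$ (with $+$ for $\mH, \mV, \mI$ and $-$ for $\mB_3$) in an arbitrary direction $\tilde u$: this gives $\la (\grad\mA)(\mY_i u), \mY_i \tilde u\ra_{L^2(\S^2)} = \pm \la (\grad\mA)(u), \tilde u\ra_{L^2(\S^2)}$ for all $\tilde u$, and since $\mY_i^T = \mY_i = \mY_i^{-1}$ we obtain $(\grad\mA)\circ\mY_i = \pm\, \mY_i \circ (\grad\mA)$, precisely the argument of Lemma \ref{lemma:conj.mF}. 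Finally, applying the three $\mY_i$-equivariance identities for $\grad\mH, \grad\mV, \grad\mI$ termwise to $\mF(\om,\mY_i u) = \grad\mH(\mY_i u) - 2\sigma_0 \grad\mV(\mY_i u) - \om\, \grad\mI(\mY_i u)$ yields $\mF(\om, \mY_i u) = \mY_i \mF(\om, u)$, which is \eqref{mF.mY}. The only genuinely delicate point in the whole argument is the sign bookkeeping in the $\mY_2$ case: the minus sign built into $\mY_2$ on the $\beta$-component must be matched against the anti-equivariance $\mM \circ \mT_{Q_2} = -\mT_{Q_2}\circ\mM$ (to keep $\mI$ invariant) and against linearity in $\psi$ together with $(Q_2 v)_3 = v_3$ (to get $\mB_3 \circ \mY_2 = -\mB_3$); once the reflection versions of the transformation rules of \cite{B.J.LM} are recorded, everything else is a routine change of variables.
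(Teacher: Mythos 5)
Your proof is correct and follows essentially the same route as the paper's: invoke the $O(3)$-equivariance of $G$, $\grad_{\S^2}$ and $|\grad_{\S^2}\cdot|$ from Lemma 6.2 of \cite{B.J.LM} together with a change of variables for $\mH$ and $\mV$; track the sign of $\mM$ under reflection to handle $\mI$; use the vector transformation of $\mB$ (and the third component of the reflection matrix) for $\mB_3$; then differentiate and use $\mY_i^T = \mY_i^{-1}$, as in Lemma \ref{lemma:conj.mF}, to pass to the gradient identities and to $\mF$. The one thing you make more explicit than the paper is the clean criterion $Q\mJ Q = \pm\mJ$ governing whether $\mM$ commutes or anti-commutes with $\mT_Q$, which removes the paper's mild abuse of notation in writing $\mM\circ\mY_i$ for a scalar operator; this is a cosmetic improvement rather than a different argument.
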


\begin{proof}
As is observed in Lemma 6.2 of \cite{B.J.LM}, 
given any $3 \! \times \! 3$ matrix $M$ such that $M M^T = I$, 
one has 
\[
G(\tilde \eta)\tilde \beta (x) = ( G(\eta)\beta )(Mx), 
\quad \ 
\grad_{\S^2} \tilde \eta (x) = M^T (\grad_{\S^2} \eta) (Mx), 
\quad \ 
| \grad_{\S^2} \tilde \eta (x)| = | (\grad_{\S^2} \eta) (Mx) | 
\]
for all $x \in \S^2$, all functions $\eta, \beta$, 
where $\tilde \eta(x) := \eta(Mx)$ and $\tilde \beta(x) := \beta(Mx)$. 
Hence the first two identities in \eqref{funct.mY} follows from the change of integration variable 
$Mx = y$, where $M = \operatorname{diag}(1, -1, 1)$ for $i=2$
and $M = \operatorname{diag}(1, 1, -1)$ for $i=3$.
To prove the third identity in \eqref{funct.mY}, we note that the operator $\mM$ in \eqref{def.mM} 
satisfies 
\[
\mM \circ \mY_2 = - \mY_2 \circ \mM, \quad \ 
\mM \circ \mY_3 = \mY_3 \circ \mM,
\]
and that $\mI(\eta, \beta)$ is linear in $\beta$. 
Finally, one has 
$\mB(\mY_2 u) = - M \mB(u)$ where $M = \operatorname{diag}(1, -1, 1)$, 
and $\mB (\mY_3 u) = M \mB(u)$ where $M = \operatorname{diag}(1, 1, -1)$.
Taking the third component of these vector identities, 
we obtain the fourth identity in \eqref{funct.mY}.
The identities in \eqref{grad.mY.prima}, \eqref{grad.mY.seconda} 
follow from \eqref{funct.mY} by taking the gradient and using the fact that 
the transpose operator $\mY_i^T$ is the inverse of $\mY_i$.
Identity \eqref{mF.mY} holds because $\mF(\om,u) = \grad \mH(u) - 2 \sigma_0 \grad \mV(u) 
- \om \grad \mI(u)$. 
\end{proof}

From formula \eqref{mF.mY} it follows that 
if $u \in Y_i$, then $\mF(\om,u) \in Y_i$, $i=2,3$, 
namely $Y_i$ is an invariant subspace for the nonlinear operator $\mF(\om, \cdot)$.

\begin{lemma}
\label{lemma:parity.x3.ph.ell.m}
The spherical harmonics $\ph_{\ell,m}$ in \eqref{def.ph.ell.m.Re.Im} satisfy
\[
\ph_{\ell,m} (x_1, x_2, - x_3) = (-1)^{\ell-m} \ph_{\ell,m}(x) 
\quad \ \forall (\ell, m) \in T
\]
and 
\begin{alignat*}{2}
\ph_{\ell,m} (x_1, - x_2, x_3) 
& = \ph_{\ell,m}(x),  
\quad & m & = 0, \ldots, \ell,
\\
\ph_{\ell,m} (x_1, - x_2, x_3) 
& = - \ph_{\ell,m}(x), 
\quad & m & = -\ell, \ldots, -1,
\end{alignat*}
for all $x = (x_1, x_2, x_3) \in \S^2$, 
all $\ell \in \N_0$.
As a consequence, the vectors $\mathtt{v}_{\ell,m}$ defined in \eqref{def.ttv} satisfy
\begin{equation} \label{ttv.mY}
\mY_2 \mathtt{v}_{\ell,m} = \sigma(m) \mathtt v_{\ell,m}, \quad \ 
\mY_3 \mathtt{v}_{\ell,m} = (-1)^{\ell-m} \mathtt v_{\ell,m} 
\quad \ \forall (\ell, m) \in S,
\end{equation}
where $\sigma(m) := 1$ if $m \geq 0$ 
and $\sigma(m) := -1$ if $m < 0$. 
\end{lemma}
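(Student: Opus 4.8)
The plan is to verify the parity statements about the Legendre spherical harmonics directly from their explicit formulas \eqref{def.ph.ell.m.Re.Im}, and then translate these into the claimed transformation rules for the vectors $\mathtt v_{\ell,m}$.

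First I would handle the reflection $x_3 \mapsto -x_3$. Recall that $\ph_{\ell,0}(x) = c_\ell^{(0)} P_\ell(x_3)$ and $\ph_{\ell,\pm m}(x) = c_\ell^{(m)} P_\ell^{(m)}(x_3) \cdot \{ \Re \text{ or } \Im \}[(x_1 + i x_2)^m]$ for $m \geq 1$. The factor $\Re[(x_1 + ix_2)^m]$ or $\Im[(x_1+ix_2)^m]$ does not involve $x_3$ at all, so the only effect of $x_3 \mapsto -x_3$ is on $P_\ell^{(m)}(x_3)$. The ordinary Legendre polynomial $P_\ell$ has parity $(-1)^\ell$, i.e. $P_\ell(-t) = (-1)^\ell P_\ell(t)$; each derivative flips the parity once, so $P_\ell^{(m)}(-t) = (-1)^{\ell-m} P_\ell^{(m)}(t)$. (In particular this is consistent with the $m=0$ case.) Hence $\ph_{\ell,m}(x_1,x_2,-x_3) = (-1)^{\ell-m}\ph_{\ell,m}(x)$ for every $(\ell,m) \in T$, which is the first claimed identity.

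Next I would handle the reflection $x_2 \mapsto -x_2$. Under this map $x_1 + i x_2 \mapsto x_1 - i x_2 = \overline{x_1 + i x_2}$, so $(x_1+ix_2)^m \mapsto \overline{(x_1+ix_2)^m}$, whence $\Re[(x_1+ix_2)^m]$ is unchanged and $\Im[(x_1+ix_2)^m]$ changes sign. Since $P_\ell^{(m)}(x_3)$ is untouched, we get $\ph_{\ell,m}(x_1,-x_2,x_3) = \ph_{\ell,m}(x)$ for $m = 0,\ldots,\ell$ (using that $\ph_{\ell,0}$ has no $x_1,x_2$ dependence and $\ph_{\ell,m}$ for $m>0$ uses the $\Re$ part), and $\ph_{\ell,m}(x_1,-x_2,x_3) = -\ph_{\ell,m}(x)$ for $m = -\ell,\ldots,-1$ (the $\Im$ part).

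Finally I would read off \eqref{ttv.mY} from \eqref{def.ttv} and the definition \eqref{def.mY.2.3.23} of $\mY_2,\mY_3$. For $\mathtt v_{0,0} = (0,\ph_{0,0})^{\mathsf T}$: since $\ph_{0,0}$ is a nonzero constant, it is fixed by both reflections, and the sign on the $\beta$-component of $\mY_2$ acts on the zero entry, so $\mY_2 \mathtt v_{0,0} = \mathtt v_{0,0} = \sigma(0)\mathtt v_{0,0}$ and $\mY_3 \mathtt v_{0,0} = \mathtt v_{0,0} = (-1)^{0-0}\mathtt v_{0,0}$. For $(\ell,m) \in S$ with $\ell \geq 1$, $\mathtt v_{\ell,m}$ is proportional to $(\ell\,\ph_{\ell,m}, -\om_0 m\, \ph_{\ell,-m})^{\mathsf T}$; applying $\mY_3$ multiplies both $\ph_{\ell,m}$ and $\ph_{\ell,-m}$ by $(-1)^{\ell-m}$ (note $(-1)^{\ell-(-m)} = (-1)^{\ell+m} = (-1)^{\ell-m}$), giving $\mY_3 \mathtt v_{\ell,m} = (-1)^{\ell-m}\mathtt v_{\ell,m}$. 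Applying $\mY_2$: the first component $\ph_{\ell,m}$ picks up the factor $\sigma(m)$ from the parity rule, while the second component $-\om_0 m\, \ph_{\ell,-m}$ picks up $\sigma(-m) = -\sigma(m)$ from the parity rule and an extra $-1$ from the sign on the $\beta$-component of $\mY_2$, for a net factor $\sigma(m)$; hence $\mY_2 \mathtt v_{\ell,m} = \sigma(m)\mathtt v_{\ell,m}$. This completes the proof. There is no real obstacle here; the only point requiring a moment of care is the bookkeeping of the extra sign in the second component of $\mY_2$ and the check that $\sigma(-m) = -\sigma(m)$ for $m \neq 0$, which makes the $\om_0 m$ factor and the parity signs combine consistently.
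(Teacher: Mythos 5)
Your parity computations for $\ph_{\ell,m}$ under $x_3 \mapsto -x_3$ (using $P_\ell^{(m)}(-t) = (-1)^{\ell-m}P_\ell^{(m)}(t)$) and under $x_2 \mapsto -x_2$ (via conjugation of $(x_1+ix_2)^m$) are correct, and the paper offers no proof of this lemma, so the only question is whether your argument holds. Your derivation of the $\mY_3$ identity, including the key point $(-1)^{\ell+m}=(-1)^{\ell-m}$, and your derivation of the $\mY_2$ identity for $(\ell,m)\in S$ with $\ell\geq 1$ (where the extra sign carried by $\mY_2$ on the second component combines with $\sigma(-m)=-\sigma(m)$) are both correct.

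However, your treatment of $\mathtt v_{0,0}$ under $\mY_2$ contains a bookkeeping error. You write that ``the sign on the $\beta$-component of $\mY_2$ acts on the zero entry,'' but in $\mathtt v_{0,0} = (0,\ph_{0,0})^{\mathsf T}$ the zero is the first ($\eta$) component, while the sign in $\mY_2$ acts on the second ($\beta$) component, which is $\ph_{0,0}\neq 0$. The correct computation gives $\mY_2 \mathtt v_{0,0} = (0,-\ph_{0,0})^{\mathsf T} = -\mathtt v_{0,0}$, which disagrees with the stated $\sigma(0)\mathtt v_{0,0} = +\mathtt v_{0,0}$. This is also clear geometrically: $\mathtt v_{0,0}\notin Y_2$ because its nonzero entry $\ph_{0,0}$ is constant, hence even rather than odd in $x_2$. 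The identity \eqref{ttv.mY} for $\mY_2$ as written therefore fails at $(\ell,m)=(0,0)$; it holds for $(\ell,m)\in S$ with $\ell\geq 1$. This is inessential to the rest of the paper, since $(0,0)\in S_D$ and $\mathtt v_{0,0}$ never enters the $V_N$-analysis nor the space $Y_2$, but you should have detected the discrepancy rather than reproduce the displayed formula through a misidentified component.
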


From \eqref{ttv.mY} it follows that $\mY_i(V_i) = V_i$, $\mY_i(W_i) = W_i$, $i = 2,3$. 
We observe that, 
if a function $\eta$ is even in $x_3$, 
then only the spherical harmonics $\ph_{\ell,m}$ that are even in $x_3$ 
appear in its expansion, i.e., 
$\hat \eta_{\ell,m} = 0$ for all $(\ell,m) \in T$ such that $\ell - m$ is odd, 
and only coefficients $\hat \eta_{\ell,m}$ with $\ell - m$ even  
can be nonzero. 
Also, if $\eta$ is even in $x_2$ and $\beta$ is odd in $x_2$,  
then their coefficients satisfy
$\hat \eta_{\ell,m} = 0$ for $m < 0$ 
and $\hat \beta_{\ell,m} = 0$ for $m \geq 0$, 
and only coefficients $\hat \eta_{\ell,m}$ with $m \geq 0$ 
and $\hat \beta_{\ell,m}$ with $m < 0$ 
can be nonzero. 

On the subspace $Y_3$ one has the torus action $\mT_\th$, $\th \in \T$, 
because the matrices $R(\th)$ and $\mathrm{diag}(1, 1, -1)$ commute, and therefore 
\[
\mY_3 \circ \mT_\th = \mT_\th \circ \mY_3.
\]

\begin{proof}[Proof of Theorem \ref{thm:symmetric}]
$(i)$ The arguments of the proof of Theorem \ref{thm:main} can be repeated in the subspace $Y_3$.
$(ii)$ The arguments of the proof of Theorem \ref{thm:main} can be repeated in the subspace $Y_2$, 
except for the fact that we do not have the torus action $\mT_\th$, 
and therefore we do not obtain $n$ orbits of critical points of the restricted functional, 
but only two critical points, namely one minimum point and one maximum point for the functional. 
$(iii)$ is similar to $(ii)$. 
\end{proof}

\bigskip

\begin{flushright}

\textbf{Pietro Baldi}

Dipartimento di Matematica e Applicazioni ``R. Caccioppoli''

University of Naples Federico II

Via Cintia, Monte Sant'Angelo, 80126 Naples, Italy

pietro.baldi@unina.it

\medskip

\textbf{Domenico Angelo La Manna}

Dipartimento di Matematica e Applicazioni ``R. Caccioppoli''

University of Naples Federico II

Via Cintia, Monte Sant'Angelo, 80126 Naples, Italy

domenicoangelo.lamanna@unina.it

\medskip

\textbf{Giuseppe La Scala}

Mathematical and Physical Sciences for Advanced Materials and Technologies

Scuola Superiore Meridionale

Via Mezzocannone, 4, 80138 Naples, Italy

giuseppe.lascala-ssm@unina.it

\end{flushright}
\end{document}